\documentclass[12pt,a4paper]{amsart}
\usepackage[T1]{fontenc}
\usepackage[czech,english]{babel}
\usepackage{amssymb,eucal}
\usepackage{mathrsfs}
\usepackage[all,cmtip]{xy}

\textwidth=36pc
\calclayout

\pagestyle{plain}
\raggedbottom

\emergencystretch=2em

\hyphenation{con-tra-acyclic co-acyclic con-tra-de-rived co-de-rived
con-tra-mod-ule con-tra-mod-ules}

\newcommand{\+}{\protect\nobreakdash-}
\renewcommand{\:}{\colon}

\newcommand{\rarrow}{\longrightarrow}
\newcommand{\larrow}{\longleftarrow}
\newcommand{\ot}{\otimes}
\newcommand{\ocn}{\odot}
\newcommand{\tim}{\rightthreetimes}
\newcommand{\st}{\star}

\newcommand{\lrarrow}{\mskip.5\thinmuskip\relbar\joinrel\relbar\joinrel
 \rightarrow\mskip.5\thinmuskip\relax}
\newcommand{\llarrow}{\mskip.5\thinmuskip\leftarrow\joinrel\relbar
 \joinrel\relbar\mskip.5\thinmuskip\relax}
\newcommand{\bu}{{\text{\smaller\smaller$\scriptstyle\bullet$}}}

\newcommand{\plim}
 {\mathop{\text{\normalfont``$\varprojlim$''\!\!}}\nolimits}
\newcommand{\proocn}{\mathbin{\widehat\odot}}

\DeclareMathOperator{\Hom}{Hom}
\DeclareMathOperator{\Ext}{Ext}
\DeclareMathOperator{\Tot}{Tot}
\DeclareMathOperator{\coker}{coker}
\DeclareMathOperator{\im}{im}
\DeclareMathOperator{\Prod}{\mathsf{Prod}}
\DeclareMathOperator{\hocolim}{hocolim}
\DeclareMathOperator{\holim}{holim}

\newcommand{\Vect}{{\operatorname{\mathsf{--Vect}}}}
\newcommand{\Modl}{{\operatorname{\mathsf{--Mod}}}}
\newcommand{\Modr}{{\operatorname{\mathsf{Mod--}}}}
\newcommand{\Modrcs}{{\operatorname{\mathsf{Mod_{cs}--}}}}
\newcommand{\Modrcsom}{{\operatorname{\mathsf{Mod_{cs}^\omega--}}}}
\newcommand{\coh}{{\operatorname{\mathsf{coh--}}}}
\newcommand{\Cohpro}{{\operatorname{\mathsf{Coh_{pro}--}}}}
\newcommand{\Discr}{{\operatorname{\mathsf{Discr--}}}}
\newcommand{\Discrinj}{{\operatorname{\mathsf{Discr^\inj--}}}}
\newcommand{\Contra}{{\operatorname{\mathsf{--Contra}}}}

\newcommand{\Sets}{\mathsf{Sets}}

\newcommand{\Hot}{\mathsf{Hot}}
\newcommand{\Ac}{\mathsf{Ac}}
\newcommand{\Ab}{\mathsf{Ab}}
\newcommand{\Pro}{\mathsf{Pro}}
\newcommand{\SPro}{\mathsf{SPro}}

\newcommand{\inj}{\mathsf{inj}}
\newcommand{\proj}{\mathsf{proj}}
\renewcommand{\flat}{\mathsf{flat}}

\newcommand{\bctr}{\mathsf{bctr}}
\newcommand{\bco}{\mathsf{bco}}
\newcommand{\bb}{\mathsf{b}}
\newcommand{\fp}{\mathsf{fp}}
\newcommand{\cont}{\mathrm{cont}}
\newcommand{\pro}{\mathsf{pro}}

\newcommand{\sop}{{\mathsf{op}}}
\newcommand{\id}{{\mathrm{id}}}
\newcommand{\ev}{{\mathrm{ev}}}

\newcommand{\M}{\mathcal M}
\newcommand{\N}{\mathcal N}
\newcommand{\cA}{\mathcal A}
\newcommand{\cB}{\mathcal B}
\newcommand{\cU}{\mathcal U}
\newcommand{\cS}{\mathcal S}

\newcommand{\R}{\mathfrak R}
\newcommand{\fP}{\mathfrak P}
\newcommand{\fQ}{\mathfrak Q}
\newcommand{\fF}{\mathfrak F}
\newcommand{\fG}{\mathfrak G}
\newcommand{\fH}{\mathfrak H}
\newcommand{\fI}{\mathfrak I}
\newcommand{\fJ}{\mathfrak J}
\newcommand{\fM}{\mathfrak M}

\newcommand{\fA}{\mathfrak A}
\newcommand{\fB}{\mathfrak B}

\newcommand{\fU}{\mathfrak U}
\newcommand{\fV}{\mathfrak V}

\newcommand{\fZ}{\mathfrak Z}

\newcommand{\sA}{\mathsf A}
\newcommand{\sB}{\mathsf B} 
\newcommand{\sC}{\mathsf C}
\newcommand{\sD}{\mathsf D}
\newcommand{\sE}{\mathsf E}

\newcommand{\sK}{\mathsf K}

\newcommand{\sP}{\mathsf P}
\newcommand{\sS}{\mathsf S}
\newcommand{\sT}{\mathsf T}
\newcommand{\sU}{\mathsf U}

\newcommand{\rD}{\mathscr D}
\newcommand{\rE}{\mathscr E}
\newcommand{\rL}{\mathscr L}
\newcommand{\rM}{\mathscr M}
\newcommand{\rN}{\mathscr N}
\newcommand{\rP}{\mathscr P}
\newcommand{\rU}{\mathscr U}

\newcommand{\bB}{\boldsymbol B}
\newcommand{\bC}{\boldsymbol C}
\newcommand{\bD}{\boldsymbol D}
\newcommand{\bE}{\boldsymbol E}
\newcommand{\bF}{\boldsymbol F}
\newcommand{\bK}{\boldsymbol K}
\newcommand{\bL}{\boldsymbol L}

\newcommand{\bP}{\boldsymbol P}

\newcommand{\bbe}{\boldsymbol e}
\newcommand{\bbf}{\boldsymbol f}
\newcommand{\bbg}{\boldsymbol g}
\newcommand{\bbh}{\boldsymbol h}
\newcommand{\bbp}{\boldsymbol p}

\newcommand{\bcM}{\boldsymbol{\mathcal M}}
\newcommand{\bcN}{\boldsymbol{\mathcal N}}

\newcommand{\boZ}{\mathbb Z}
\newcommand{\boQ}{\mathbb Q}

\newcommand{\Section}[1]{\bigskip\section{#1}\medskip}
\setcounter{tocdepth}{1}

\theoremstyle{plain}
\newtheorem{thm}{Theorem}[section]

\newtheorem{lem}[thm]{Lemma}
\newtheorem{prop}[thm]{Proposition}
\newtheorem{cor}[thm]{Corollary}
\theoremstyle{definition}
\newtheorem{ex}[thm]{Example}

\newtheorem{rem}[thm]{Remark}

\begin{document}

\title{Compact generators of the contraderived category
of contramodules}

\author{Leonid Positselski}

\address{Leonid Positselski, Institute of Mathematics, Czech Academy
of Sciences \\ \v Zitn\'a~25, 115~67 Praha~1 \\ Czech Republic}

\email{positselski@math.cas.cz}

\author{Jan \v S\v tov\'\i\v cek}

\address{Jan {\v S}{\v{t}}ov{\'{\i}}{\v{c}}ek, Charles University,
Faculty of Mathematics and Physics, Department of Algebra,
Sokolovsk\'a 83, 186 75 Praha, Czech Republic}

\email{stovicek@karlin.mff.cuni.cz}

\begin{abstract}
 We consider the contraderived category of left contramodules over
a right linear topological ring $\R$ with a countable base of
neighborhoods of zero.
 Equivalently, this is the homotopy category of unbounded complexes of
projective left $\R$\+contramodules.
 Assuming that the abelian category of discrete right $\R$\+modules is
locally coherent, we show that the contraderived category of left
$\R$\+contramodules is compactly generated, and describe its full
subcategory of compact objects as the opposite category to the bounded
derived category of finitely presentable discrete right $\R$\+modules.
 Under the same assumptions, we also prove the flat and projective
periodicity theorem for $\R$\+contramodules.
\end{abstract}

\maketitle

\tableofcontents

\section*{Introduction}
\medskip

 The study of the homotopy category $\Hot(R\Modl_\proj)$ of unbounded
complexes of projective modules over an associative ring $R$ goes back
to the paper of J\o rgensen~\cite{Jor}.
 Under certain assumptions including right coherence of a ring $R$,
it was shown in~\cite[Theorems~2.4 and~3.2]{Jor} that the triangulated
category $\Hot(R\Modl_\proj)$ is compactly generated, and its full
subcategory of compact objects is anti-equivalent to the bounded
derived category of finitely presentable right $R$\+modules.
 A full generality in this context was achieved in Neeman's
paper~\cite[Proposition~7.4]{Neem}, where the same assertion was proved
for all right coherent rings~$R$.
 This result was further generalized to the homotopy category of
graded-projective left CDG-modules over a curved DG\+ring $(B^*,d,h)$
with a graded right coherent underlying graded ring $B^*$ in the present
authors' preprint~\cite[Theorems~6.14 and~6.16]{PS7}.

 The contemporary point of view emphasizes the utility of considering
the triangulated category $\Hot(R\Modl_\proj)$ also as a quotient
category (rather than only a full subcategory) of the homotopy category
of arbitrary complexes of $R$\+modules $\Hot(R\Modl)$.
 A complex of left $R$\+modules $B^\bu$ is said to be 
\emph{contraacyclic} (\emph{in the sense of
Becker}~\cite[Propositions~1.3.8(1)]{Bec}) if, for any complex of
projective left $R$\+modules $P^\bu$, the complex of abelian groups
$\Hom_R(P^\bu,B^\bu)$ is acyclic.
 The triangulated Verdier quotient category $\sD^\bctr(R\Modl)=
\Hot(R\Modl)/\Ac^\bctr(R\Modl)$ of the homotopy category
$\Hot(R\Modl)$ by the thick subcategory $\Ac^\bctr(R\Modl)$ of all
contraacyclic complexes of left $R$\+modules is called
the \emph{Becker contraderived category} (of the abelian
category $R\Modl$).
 For any associative ring $R$, the composition of functors
$\Hot(R\Modl_\proj)\rarrow\Hot(R\Modl)\rarrow\sD^\bctr(R\Modl)$
is a triangulated equivalence
$$
 \Hot(R\Modl_\proj)\simeq\sD^\bctr(R\Modl)
$$
\cite[Proposition~8.1]{Neem}, \cite[Proposition~1.3.6(1)]{Bec}.

 In J\o rgensen's paper, the proofs of~\cite[Theorems~2.4 and~3.2]{Jor}
depended on the assumption that all flat left $R$\+modules have finite
projective dimensions.
 One of the advances in Neeman's paper~\cite{Neem}, which 
(essentially) made it possible to get rid of this unnecessary
assumption, was a proof of the \emph{flat and projective periodicity
theorem}.
 The latter can be interpreted by saying that, in certain respect,
flat modules always behave as if they had finite projective dimension
(even when they don't).
 Specifically, the periodicity theorem claims that in any acyclic
complex of projective modules with flat modules of cocycles (over
any associative ring~$R$), the modules of cocycles are actually
projective~\cite[Remark~2.15 and Theorem~8.6]{Neem}.
 This result is actually an equivalent reformulation of an earlier
theorem of Benson and Goodearl~\cite[Theorem~2.5]{BG}, as explained
in the paper~\cite[Proposition~7.6]{CH}.

 The flat and projective periodicity theorem provides yet another
equivalent definition of the contraderived category of $R$\+modules. 
 Specifically, the triangulated category $\Hot(R\Modl_\proj)\simeq
\sD^\bctr(R\Modl)$ is also equivalent to the \emph{derived category
of the exact category of flat $R$\+modules},
$$
 \Hot(R\Modl_\proj)\simeq\sD(R\Modl_\flat)\simeq\sD^\bctr(R\Modl).
$$
 More precisely, the claim here is that a complex of flat
$R$\+modules is contraacyclic if and only if it is acyclic with
flat modules of cocycles.
 This result, which can be called ``the full form of the flat
and projective periodicity theorem'', is also a part
of~\cite[Theorem~8.6]{Neem}.
 For a generalization of the flat and projective periodicity theorem
to curved DG\+modules over curved DG\+rings, see~\cite[Corollary~4.12,
Theorem~4.13, and Theorem~5.5]{PS7}.

 The aim of the present paper is to generalize the results mentioned
above, and first of all the description of compact generators, from
discrete rings $R$ to certain topological rings~$\R$.
 For any complete, separated topological associative ring $\R$ with
a base of neighborhoods of zero formed by open right ideals,
there is an abelian category of \emph{left\/ $\R$\+contramodules}
$\R\Contra$.
 The abelian category $\R\Contra$ is always locally presentable and
has enough projective objects.
 So one can consider the homotopy category $\Hot(\R\Contra_\proj)$ and
the Becker contraderived category $\sD^\bctr(\R\Contra)=
\Hot(\R\Contra)/\Ac^\bctr(\R\Contra)$.
 The result of~\cite[Corollary~7.4]{PS4}, which holds for any
locally presentable abelian category with enough projective objects,
claims that the composition of the obvious functors
$\Hot(\R\Contra_\proj)\rarrow\Hot(\R\Contra)\rarrow\sD^\bctr(\R\Contra)$
is a triangulated equivalence, just as in the module case,
$$
 \Hot(\R\Contra_\proj)\simeq\sD^\bctr(\R\Contra).
$$

 Periodicity theorems for contramodule categories were proved in
the first-named author's papers~\cite{Pflcc,Pbc} under the assumption
that $\R$ has a \emph{countable} base of neighborhoods of zero
consisting of open \emph{two-sided} ideals.
 The assumptions in the present paper are less restrictive: we only 
assume a countable base of neighborhoods of zero consisting of open
\emph{right} ideals.
 Just as in Neeman's paper~\cite{Neem}, in order to prove the compact
generation we need to assume a suitable version of right coherence
condition on~$\R$.
 (See~\cite[Example~7.16]{Neem} for a counterexample showing that
this condition cannot be simply dropped in the module case.)
 Since our proof of the flat/projective periodicity for contramodules
in this paper is based on the construction of compact generators, it
also depends on a right coherence assumption.

 The appropriate version of right coherence condition on a topological
ring $\R$ with a base of neighborhoods of zero formed by open right
ideals is called the \emph{topological right coherence}.
 This concept goes back to the paper of Roos~\cite[Section~2 and
Definition~4.3]{Roo}.
 Specifically, a topological ring $\R$ is called topologically right
coherent if the Grothendieck abelian category of discrete right
$\R$\+modules $\Discr\R$ is locally coherent (i.~e., it has a set of
coherent generators, or equivalently, it is locally finitely
presentable with an abelian full subcategory of finitely presentable
objects).
 Let us \emph{warn} the reader that an object of $\Discr\R$ is
finitely generated if and only if it is finitely generated
in $\Modr\R$, but a finitely presentable/coherent object of
$\Discr\R$ \emph{need not} be finitely presentable in $\Modr\R$.
 A further discussion can be found in~\cite[Section~13]{PS3}
and~\cite[Section~8.2]{PS5}.

 Dually to the discussion of the homotopy categories of unbounded
complexes of projective modules/objects above, for any Grothendieck
abelian category $\sA$ one can consider the homotopy category of
unbounded complexes of injective objects $\Hot(\sA^\inj)$.
 A complex $A^\bu$ in $\sA$ is said to be \emph{coacyclic}
(in the sense of Becker~\cite[Proposition~1.3.8(2)]{Bec}) if, for any
complex of injective objects $J^\bu$ in $\sA$, the complex of abelian
groups $\Hom_\sA(A^\bu,J^\bu)$ is acyclic.
 The triangulated Verdier quotient category $\sD^\bco(\sA)=
\Hot(\sA)/\Ac^\bco(\sA)$ of the homotopy category $\Hot(\sA)$ by
the thick subcategory $\Ac^\bco(\sA)\subset\Hot(\sA)$ of all coacyclic
complexes is called the \emph{Becker coderived category} of~$\sA$.
 For any Grothendieck category $\sA$, the composition of functors
$\Hot(\sA^\inj)\rarrow\Hot(\sA)\rarrow\sD^\bco(\sA)$ is a triangulated
equivalence
$$
 \Hot(\sA^\inj)\simeq\sD^\bco(\sA)
$$
\cite[Theorem~2.13]{Neem2}, \cite[Corollary~5.13]{Kra3},
\cite[Theorem~4.2]{Gil4}, \cite[Corollary~9.5]{PS4}.

 For a locally Noetherian Grothendieck category $\sA$, it was shown
in the paper of Krause~\cite[Proposition~2.3]{Kra} that the homotopy
category $\Hot(\sA^\inj)$ is compactly generated, and its full
subcategory of compact objects is equivalent to the bounded derived
category of finitely generated objects in~$\sA$.
 This result was generalized to locally coherent Grothendieck
categories $\sA$ in the second-named author's
preprint~\cite[Corollary~6.13]{Sto}.
 The assertion was similar: the homotopy category $\Hot(\sA^\inj)$ is
compactly generated, and its full subcategory of compact objects is
equivalent to the bounded derived category of the abelian category
of finitely presentable objects in~$\sA$.
 A further generalization to locally coherent abelian DG\+categories,
such as the DG\+category of CDG\+modules over a graded left coherent
curved DG\+ring or the DG\+category of quasi-coherent matrix
factorizations over a coherent scheme, was obtained in the present
authors' paper~\cite[Theorem~8.19, Corollary~8.20, and
Corollary~9.4]{PS5}.

 In particular, for a topologically right coherent topological ring
$\R$, the Becker coderived category $\Hot(\Discrinj\R)\simeq
\sD^\bco(\Discr\R)$ is compactly generated, and its full subcategory
of compact objects is equivalent to the bounded derived category
$\sD^\bb(\coh\R)$ of the abelian category $\coh\R$ of finitely
presentable/coherent objects in the locally coherent Grothendieck
category $\Discr\R$.
 The aim of this paper is to prove, assuming additionally that $\R$
is complete, separated, and has a \emph{countable} base of
neighborhoods of zero consisting of open right ideals, that
the Becker contraderived category $\Hot(\R\Contra_\proj)\simeq
\sD^\bctr(\R\Contra)$ is compactly generated, and its full subcategory
of compact objects is anti-equivalent to $\sD^\bb(\coh\R)$.

 As a corollary of the description of compact generators, under
the same assumptions we also deduce the full form of the flat and
projective periodicity theorem for $\R$\+contramodules.
 So we obtain triangulated equivalences
$$
 \Hot(\R\Contra_\proj)\simeq\sD(\R\Contra_\flat)
 \simeq\sD^\bctr(\R\Contra),
$$
based on the theorem that a complex of flat left $\R$\+contramodules is
contraacyclic if and only if it is acyclic with flat contramodules
of cocycles.

 Let us emphasize once again that the latter result, continuing
the discussion of the flat and projective periodicity for contramodules
started in the first-named author's previous
papers~\cite[Proposition~12.1]{Pflcc}, \cite[Theorems~5.1 and~6.1]{Pbc},
is proved in the present paper under assumptions \emph{both more and
less general} than in~\cite{Pflcc,Pbc}.
 The setting in this paper is more general in the important aspect
that we do not assume a topology base of open two-sided ideals in
the ring $\R$, but only a topology base of open right ideals.
 But on the other hand, the proof of the flat and projective
periodicity for $\R$\+contramodules in the present paper uses
the assumption of topological right coherence of the ring~$\R$,
which was not needed in~\cite{Pflcc,Pbc}.

 To end, let us say a few words about our methods.
 The present paper uses a variety of techniques.
 Strict pro-objects in abelian categories are the main basic language
for our constructions and proofs, and we discuss them at length,
working out the foundations.
 Right linear topological modules over right linear topological rings
and the construction of Pontryagin duality for topological rings
developed in our previous paper~\cite[Section~3]{PSsp} play
an important role as well.
 The crucial construction of the fully faithful contravariant
triangulated functor $\Xi\:\sD^\bb(\coh\R)^\sop\rarrow
\Hot(\R\Contra_\proj)$ describing the compact generators in
the contraderived category goes through the derived category of
the exact category $\Cohpro\R$ of countably indexed strict pro-objects
in $\coh\R$ and the Pontryagin duality.

 As a complement to the Pontryagin duality, we also define and use
the functor of \emph{pro-contratensor product} of right linear
topological right modules and left contramodules over a right linear
topological ring.
 As a final touch, to finish the proof of compact generatedness and
make the application to flat/projective periodicity possible, we
use a result from the theory of well-generated triangulated categories
obtained in the joint paper of Saor\'\i n and the second-named
author~\cite[Proposition~4.9]{SaoSt}.

\subsection*{Acknowledgement}
 The authors are grateful to Michal Hrbek for helpful discussions.
 This research is supported by the GA\v CR project 23-05148S.
 The first-named author is also supported by
the Czech Academy of Sciences (RVO~67985840).

\Section{Preliminaries on Pro-Objects}

 Throughout this paper, the category of sets is denoted by $\Sets$,
and the category of abelian groups by~$\Ab$.
 Given two categories $\sC$ and $\sD$, the (possibly not locally
small) category of functors $\sD\rarrow\sC$ is denoted by $\sC^\sD$.

 Let $\sC$ be a category.
 A \emph{pro-object} in $\sC$ is covariant functor $\sC\rarrow\Sets$
from $\sC$ to the category of sets that is isomorphic to a directed
colimit of corepresentable functors.
 The category of pro-objects $\Pro(\sC)$ is the opposite category to
the full subcategory in the category of functors $\Sets^\sC$ formed
by all the functors that can be constructed as directed colimits
of corepresentables.
 When the category $\sC$ is additive, one can just as well view
pro-objects in $\sC$ as covariant functors $\sC\rarrow\Ab$ from $\sC$
to the category of abelian groups; the resulting category $\Pro(\sC)$
is the same.

 Equivalently, a pro-object $\bD$ in $\sC$ is a downwards directed
commutative diagram $(D_\delta\to D_\gamma)_{\gamma<\delta\in\Gamma}$
of objects $D_\gamma\in\sC$ and morphisms $D_\delta\rarrow D_\gamma$
in $\sC$ indexed by a directed poset~$\Gamma$.
 One denotes the pro-object $\bD\in\Pro(\sC)$ corresponding to
a diagram $(D_\gamma)_{\gamma\in\Gamma}$ by
$\bD=\plim_{\gamma\in\Gamma}D_\gamma$.

 For any two pro-objects $\bC=\plim_{\gamma\in\Gamma}C_\gamma$ and
$\bD=\plim_{\delta\in\Delta}D_\delta$, the set of morphisms
$\Hom_{\Pro(\sC)}(\bC,\bD)$ is computed as the directed limit of
directed colimits
\begin{equation} \label{hom-in-pro-objects-formula}
 \Hom_{\Pro(\sC)}(\plim_{\gamma\in\Gamma}C_\gamma,
 \plim_{\delta\in\Delta}D_\delta)
 =\varprojlim\nolimits_{\delta\in\Delta}
 \varinjlim\nolimits_{\gamma\in\Gamma}\Hom_\sC(C_\gamma,D_\delta)
\end{equation}
taken in the category of sets.

 The dual concept to the pro-objects is known as the \emph{ind-objects}.
 We refer to~\cite[Chapter~6]{KS} or~\cite[Section~9]{Pextop} for
an additional discussion.

 We will denote by $\Pro_\omega(\sC)\subset\Pro(\sC)$ the full
subcategory consisting of all the \emph{countable} directed colimits
of corepresentable functors $\sC\rarrow\Sets$, or equivalently,
of all the pro-objects corresponding to downwards directed diagrams
in $\sC$ indexed by \emph{countable} directed posets.

\begin{lem} \label{pro-objects-full-subcategory}
 Let\/ $\sD\subset\sC$ be a full subcategory.
 Then\/ $\Pro(\sD)$ is naturally a full subcategory in\/ $\Pro(\sC)$.
 In particular, $\Pro_\omega(\sD)$ is a full subcategory in\/
$\Pro_\omega(\sC)$.
\end{lem}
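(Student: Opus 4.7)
The plan is to construct a natural functor $\iota\:\Pro(\sD)\to\Pro(\sC)$ and to verify that it is fully faithful; its essential image will then serve as the desired full subcategory of $\Pro(\sC)$. The functor $\iota$ is defined at the diagrammatic level: a downwards directed diagram $(D_\gamma)_{\gamma\in\Gamma}$ in $\sD$ is sent to the same diagram viewed in $\sC$ via the inclusion $\sD\hookrightarrow\sC$. No choices are involved, so the assignment is natural.

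The main step is fully faithfulness, and here the key tool is the Hom formula~\eqref{hom-in-pro-objects-formula}. For any pro-objects $\bC=\plim_{\gamma\in\Gamma}C_\gamma$ and $\bD=\plim_{\delta\in\Delta}D_\delta$ represented by diagrams in $\sD$, applying~\eqref{hom-in-pro-objects-formula} inside $\Pro(\sD)$ expresses $\Hom_{\Pro(\sD)}(\bC,\bD)$ as the double limit $\varprojlim_\delta\varinjlim_\gamma\Hom_\sD(C_\gamma,D_\delta)$, while applying it inside $\Pro(\sC)$ gives the analogous expression for $\Hom_{\Pro(\sC)}(\iota\bC,\iota\bD)$ with $\Hom_\sC$ in place of $\Hom_\sD$. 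Since $\sD$ is by assumption a full subcategory of $\sC$, we have $\Hom_\sD(C_\gamma,D_\delta)=\Hom_\sC(C_\gamma,D_\delta)$ at every stage, so the two double limits coincide and $\iota$ induces bijections on Hom sets. As a by-product, this shows that $\iota$ is well defined on isomorphism classes of pro-objects and not merely on diagrams: isomorphic pro-objects in $\Pro(\sD)$ are carried to isomorphic pro-objects in $\Pro(\sC)$, and conversely.

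For the ``in particular'' clause there is no further work to do: a diagram indexed by a countable directed poset in $\sD$ remains a diagram indexed by the same countable directed poset when viewed in $\sC$, so the fully faithful $\iota$ restricts to a fully faithful functor $\Pro_\omega(\sD)\to\Pro_\omega(\sC)$. I expect no genuine obstacle in carrying out this plan; the lemma is essentially an unwinding of~\eqref{hom-in-pro-objects-formula} against the definition of a full subcategory. The only matter requiring a little care is articulating that $\iota$ truly defines a functor on $\Pro(\sD)$ rather than only on the category of diagrams, but this is automatic once fully faithfulness is in hand.
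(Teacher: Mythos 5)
Your proposal is correct and follows essentially the same route as the paper: both define the induced functor on pro-objects by applying the inclusion termwise to representing diagrams and then deduce fully faithfulness directly from the Hom formula~\eqref{hom-in-pro-objects-formula}, using that $\Hom_\sD(C_\gamma,D_\delta)=\Hom_\sC(C_\gamma,D_\delta)$ for a full subcategory. Nothing further is needed.
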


\begin{proof}
 Given a functor $F\:\sD\rarrow\sC$, the induced functor $\Pro(F)\:
\Pro(\sD)\rarrow\Pro(\sC)$ assigns to a pro-object $\bD=
\plim_{\gamma\in\Gamma}D_\gamma$ in $\sD$ represented by
a $\Gamma^\sop$\+indexed diagram $(D_\gamma)_{\gamma\in\Gamma}$
the pro-object $\Pro(F)(\bD)=\plim_{\gamma\in\Gamma}F(D_\gamma)$
in $\sC$ represented by the diagram $(F(D_\gamma))_{\gamma\in\Gamma}$.
 It is clear from the formula~\eqref{hom-in-pro-objects-formula}
that the functor $\Pro(F)\:\Pro(\sD)\rarrow\Pro(\sC)$ is fully faithful
whenever a functor $F\:\sD\rarrow\sC$ is.
\end{proof}

 In the next two lemmas, given a category $\sC$ and a directed poset
$\Gamma$, we consider the category $\sC^{\Gamma^\sop}$ of
functors/diagrams $\Gamma^\sop\rarrow\sC$. 

\begin{lem} \label{lim-in-quotes-preserves-finite-co-limits}
\textup{(a)} Let\/ $\sC$ be a category with finite colimits.
 Then the functor\/ $\plim\,\:\sC^{\Gamma^\sop}\rarrow\Pro(\sC)$
preserves finite colimits.
 When\/ $\Gamma$ is countable, the same applies to the functor\/
$\plim\,\:\sC^{\Gamma^\sop}\rarrow\Pro_\omega(\sC)$. \par
\textup{(b)} Let\/ $\sC$ be a category with finite limits.
 Then the functor\/ $\plim\,\:\sC^{\Gamma^\sop}\rarrow\Pro(\sC)$
preserves finite limits.
 When\/ $\Gamma$ is countable, the same applies to the functor\/
$\plim\,\:\sC^{\Gamma^\sop}\rarrow\Pro_\omega(\sC)$.
\end{lem}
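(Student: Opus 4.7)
The plan is to verify the universal property of the (co)limit in $\Pro(\sC)$ directly via the $\Hom$ formula~\eqref{hom-in-pro-objects-formula}, relying on the single key fact that filtered colimits commute with finite limits in $\Sets$.

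For part~(a), given a finite diagram $(\bD^{(i)})_{i\in I}$ in $\sC^{\Gamma^\sop}$, I form the pointwise colimit $\bD=(\varinjlim_{i\in I} D^{(i)}_\gamma)_{\gamma\in\Gamma}$ in $\sC^{\Gamma^\sop}$, which exists because $\sC$ has finite colimits. To show that $\plim_{\gamma\in\Gamma} D_\gamma$ is the colimit in $\Pro(\sC)$ of the pro-objects $\plim_{\gamma\in\Gamma} D^{(i)}_\gamma$, I expand $\Hom_{\Pro(\sC)}(\plim_\gamma D_\gamma,\bE)$ for an arbitrary target $\bE=\plim_\epsilon E_\epsilon$ using~\eqref{hom-in-pro-objects-formula}, pull the finite limit $\varprojlim_i$ out of $\Hom_\sC(\varinjlim_i D^{(i)}_\gamma,E_\epsilon)$, and then commute it past the filtered colimit $\varinjlim_\gamma$ and the outer inverse limit $\varprojlim_\epsilon$. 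This reassembles into $\varprojlim_i\Hom_{\Pro(\sC)}(\plim_\gamma D^{(i)}_\gamma,\bE)$, which is the desired universal property.

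Part~(b) is completely parallel. I form the pointwise limit $\bD=(\varprojlim_{i\in I} D^{(i)}_\gamma)_{\gamma\in\Gamma}$ and test $\Hom_{\Pro(\sC)}(\bE,\plim_\gamma D_\gamma)$ for a source $\bE=\plim_\epsilon E_\epsilon$ via the same formula. Here $\Hom_\sC(E_\epsilon,-)$ immediately pulls $\varprojlim_i$ outside; then swapping $\varinjlim_\epsilon$ past $\varprojlim_i$ (again by finiteness of $I$) and freely commuting the two inverse limits $\varprojlim_\gamma$ and $\varprojlim_i$ yields $\varprojlim_i\Hom_{\Pro(\sC)}(\bE,\plim_\gamma D^{(i)}_\gamma)$, as required.

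For the countable versions of both statements, no extra work is needed: the pro-object $\plim_\gamma D_\gamma$ that I construct is still indexed by the original poset $\Gamma$, so it lies in $\Pro_\omega(\sC)$ whenever $\Gamma$ is countable, and since $\Pro_\omega(\sC)$ is a full subcategory of $\Pro(\sC)$ the universal property automatically descends. The one delicate step in the argument—and the only place where finiteness of $I$ is used—is the interchange of the filtered colimit over $\Gamma$ (or $\Delta$) with the finite limit indexed by $I$; this is the only potential obstacle, but it is a standard fact in $\Sets$ and hence in $\Ab$ (which we may use interchangeably with $\Sets$ when $\sC$ is additive, as noted after the definition of $\Pro(\sC)$).
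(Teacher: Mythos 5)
Your argument is correct and is essentially the paper's own proof, just written out in more detail: the paper likewise reduces everything to the $\Hom$ formula~\eqref{hom-in-pro-objects-formula} together with the facts that the finite limit in $\Sets$ expressing the universal property of a finite (co)limit commutes with both directed colimits and arbitrary limits. Your observation that the countable case follows because the constructed pro-object stays indexed by $\Gamma$ and full subcategories inherit (co)limits is also the intended reading of the paper's "the same applies" clause.
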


\begin{proof}
 Both parts~(a) and~(b) follow from
the formula~\eqref{hom-in-pro-objects-formula} in view of the fact
that all limits and all directed colimits preserve finite limits
in the category of sets.
 The point is that the assertion about a particular object being
a limit or a colimit of a given finite diagram in some category means
a description of a certain finite limit in the category of sets.
\end{proof}

\begin{lem} \label{morphism-in-pro-objects-arise-from}
\textup{(a)} Any morphism in\/ $\Pro(\sC)$ is isomorphic to a morphism
coming from a morphism in\/ $\sC^{\Gamma^\sop}$ via the functor\/
$\plim\,\:\sC^{\Gamma^\sop}\rarrow\Pro(\sC)$ for some directed
poset\/~$\Gamma$. \par
\textup{(b)} Any morphism in\/ $\Pro_\omega(\sC)$ is isomorphic to
a morphism coming from a morphism in\/ $\sC^{\Gamma^\sop}$ via
the functor\/ $\plim\,\:\sC^{\Gamma^\sop}\rarrow\Pro_\omega(\sC)$
for some countable directed poset\/~$\Gamma$.
\end{lem}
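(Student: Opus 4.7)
The plan is to realize $f$ as a level-wise morphism by re-indexing both pro-objects over a common directed poset.

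Let $f\:\bC\to\bD$ be a morphism in $\Pro(\sC)$, with $\bC=\plim_{\gamma\in\Gamma_0}C_\gamma$ and $\bD=\plim_{\delta\in\Delta_0}D_\delta$. By formula~\eqref{hom-in-pro-objects-formula}, $f$ is determined by a compatible family of elements $f_\delta\in\varinjlim_{\gamma\in\Gamma_0}\Hom_\sC(C_\gamma,D_\delta)$, one for each $\delta\in\Delta_0$, each represented by some morphism $\phi\:C_\gamma\to D_\delta$ in~$\sC$.

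For part~(a), I would take $\Lambda$ to be the set of triples $(\gamma,\delta,\phi)$ with $\gamma\in\Gamma_0$, $\delta\in\Delta_0$, and $\phi\:C_\gamma\to D_\delta$ a morphism in $\sC$ whose class in $\varinjlim_\gamma\Hom_\sC(C_\gamma,D_\delta)$ equals~$f_\delta$. I would order $\Lambda$ by declaring $(\gamma',\delta',\phi')\geq(\gamma,\delta,\phi)$ iff $\gamma'\geq\gamma$, $\delta'\geq\delta$, and the square built from the transition maps $C_{\gamma'}\to C_\gamma$, $D_{\delta'}\to D_\delta$ together with $\phi$ and $\phi'$ commutes on the nose in~$\sC$. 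The three things to verify are that $\Lambda$ is directed (given two triples, pick upper bounds for the $\gamma$'s in $\Gamma_0$ and for the $\delta$'s in $\Delta_0$; the two resulting candidate maps into the larger $D_\delta$ agree in the colimit by the very definition of a pro-morphism, hence agree on the nose after further enlarging $\gamma$), that the two projections $\Lambda\to\Gamma_0$ and $\Lambda\to\Delta_0$ are cofinal (so the reindexed diagrams still represent $\bC$ and $\bD$), and that the assignment $(\gamma,\delta,\phi)\mapsto\phi$ is a natural transformation from $(C_\gamma)_{\Lambda}$ to $(D_\delta)_{\Lambda}$ whose $\plim$ recovers~$f$. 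After replacing $\Lambda$ by a cofinal directed sub-poset if strict antisymmetry is desired, this proves~(a).

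For part~(b), the set $\Lambda$ above need not be countable even when $\Gamma_0$ and $\Delta_0$ are, because the fiber of $\Lambda\to\Gamma_0\times\Delta_0$ over a given pair can be a large set of representatives. To remain countable, I would proceed recursively. First, by passing to cofinal chains, assume $\Gamma_0=\Delta_0=\boZ_{\geq 0}$. Then I build an increasing function $\psi\:\boZ_{\geq 0}\to\boZ_{\geq 0}$ with $\psi(n)\geq n$ together with chosen representatives $\phi_n\:C_{\psi(n)}\to D_n$ of $f_n$, arranging at step $n$ that for every $m<n$ the two compositions $C_{\psi(n)}\to D_m$---one via $\phi_n$ followed by the transition $D_n\to D_m$, the other via the transition $C_{\psi(n)}\to C_{\psi(m)}$ followed by $\phi_m$---are literally equal in $\sC$. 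This is possible because these two compositions already agree in $\varinjlim_\gamma\Hom_\sC(C_\gamma,D_m)$ by compatibility of the family $(f_\delta)$, so they coincide on the nose after enlarging $\psi(n)$; there are only finitely many new conditions to satisfy at each step~$n$. The resulting family $(C_{\psi(n)}\to D_n)_{n\in\boZ_{\geq 0}}$ is a morphism of $\boZ_{\geq 0}^\sop$\+indexed diagrams in $\sC$ whose $\plim$ is $f$, and $\boZ_{\geq 0}$ is a countable directed poset, as required.

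The main obstacle, shared by both parts, is the passage from equalities ``in the colimit'' to equalities on the nose in~$\sC$: the directedness check in~(a) and the recursive construction in~(b) are two guises of the same combinatorial step, easy once formula~\eqref{hom-in-pro-objects-formula} is unpacked but requiring careful bookkeeping, especially to keep the index countable in part~(b).
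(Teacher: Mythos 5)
Your proposal is correct and follows essentially the same route as the paper: part~(a) is the standard reindexing over a common directed poset with cofinal projections (this is exactly the explicit argument the paper delegates to~\cite[Proposition~6.1.3]{KS} and~\cite[Section~9]{Pextop}), and your recursive diagonal construction in part~(b) matches the countable-index argument the paper points to in the proof of Lemma~\ref{filtered-limits-in-pro-objects}(b). The key step you identify---upgrading equality in the directed colimit of $\Hom$ sets to on-the-nose equality after enlarging the index---is handled correctly in both parts.
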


\begin{proof}
 Part~(a): more generally, for any finite category $\sK$ having no
nonidentity endomorphisms of objects, the category $\Pro(\sC)^\sK$
of $\sK$\+indexed diagrams in $\Pro(\sC)$ is equivalent to
the category $\Pro(\sC^\sK)$ of pro-objects in the category of
$\sK$\+indexed diagrams in~$\sC$.
 This result goes back to~\cite[Expos\'e~I, Proposition~8.8.5]{SGA4};
see also~\cite[Theorem~6.4.3]{KS} or~\cite[Theorem~1.3]{Hen}.
 So any $\sK$\+indexed diagram of pro-objects in $\sC$ arises from
a $\sK\times\Gamma^\sop$\+indexed diagram in $\sC$ for some directed
poset~$\Gamma$.

 An explicit argument for the case when $\sK$ is the category
$\bullet\to\bullet$ (so $\sK$\+indexed diagrams are just morphisms) can
be found in~\cite[Proposition~6.1.3]{KS} or~\cite[Section~9]{Pextop}.
 The point is that for any morphism $\bbf\:\plim_{\gamma\in\Gamma}
C_\gamma\rarrow\plim_{\delta\in\Delta}D_\delta$ in $\Pro(\sC)$ there
exists a directed poset $\Upsilon$ together with two cofinal maps of
directed posets $\gamma\:\Upsilon\rarrow\Gamma$ and $\delta\:\Upsilon
\rarrow\Delta$ such that $\bbf=\plim_{\upsilon\in\Upsilon}f_\upsilon$,
where $(f_\upsilon\:C_{\gamma(\upsilon)}\to
D_{\delta(\upsilon)})_{\upsilon\in\Upsilon}$ is a morphism of
$\Upsilon^\sop$\+indexed diagrams in~$\sC$.
 Part~(b) is provable similarly (see also the proof of
Lemma~\ref{filtered-limits-in-pro-objects}(b) below).
\end{proof}

\begin{cor} \label{pro-objects-abelian}
 Let\/ $\sA$ be an abelian category.  In this context: \par
\textup{(a)} The category\/ $\Pro(\sA)$ is abelian.
 Moreover, for any directed poset\/ $\Gamma$, the functor\/
$\plim\,\:\sA^{\Gamma^\sop}\rarrow\Pro(\sA)$ is exact, and every short
exact sequence in $\Pro(\sA)$ is isomorphic to a short exact sequence
coming via this functor from a short exact sequence in the abelian
category\/ $\sA^{\Gamma^\sop}$ for some directed poset\/~$\Gamma$. \par
\textup{(b)} The category\/ $\Pro_\omega(\sA)$ is abelian.
 Moreover, for any countable directed poset\/ $\Gamma$, the functor\/
$\plim\,\:\sA^{\Gamma^\sop}\rarrow\Pro_\omega(\sA)$ is exact, and every
short exact sequence in $\Pro_\omega(\sA)$ is isomorphic to a short
exact sequence coming via this functor from a short exact sequence in
the abelian category\/ $\sA^{\Gamma^\sop}$ for a countable directed
poset\/~$\Gamma$.
\end{cor}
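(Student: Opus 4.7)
The plan is to exploit Lemmas~\ref{lim-in-quotes-preserves-finite-co-limits} and~\ref{morphism-in-pro-objects-arise-from} in tandem: the latter lets us pull a morphism or short exact sequence in $\Pro(\sA)$ down to a diagram-level morphism in $\sA^{\Gamma^\sop}$, while the former pushes finite (co)limits in the abelian category $\sA^{\Gamma^\sop}$ back up to $\Pro(\sA)$. I will argue part~(a); part~(b) follows by the same reasoning restricted throughout to countable directed posets.

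First I would verify that $\Pro(\sA)$ is additive and preabelian. The abelian group structure on Hom-sets is supplied by formula~\eqref{hom-in-pro-objects-formula} read in $\Ab$, and finite biproducts exist because any two pro-objects can be represented by diagrams over a common directed poset~$\Gamma$ (after reindexing by $\Gamma_1\times\Gamma_2$, say), whereupon Lemma~\ref{lim-in-quotes-preserves-finite-co-limits} turns their pointwise biproduct in $\sA^{\Gamma^\sop}$ into a biproduct in $\Pro(\sA)$. Given a morphism $\bbf\:\bA\rarrow\bB$ in $\Pro(\sA)$, Lemma~\ref{morphism-in-pro-objects-arise-from}(a) furnishes $f\:(A_\gamma)\rarrow(B_\gamma)$ in $\sA^{\Gamma^\sop}$ with $\bbf\cong\plim f$, and Lemma~\ref{lim-in-quotes-preserves-finite-co-limits} identifies $\plim\ker(f)$ and $\plim\coker(f)$ as kernel and cokernel of~$\bbf$.

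Next, the canonical morphism $\coker(\ker\bbf)\rarrow\ker(\coker\bbf)$ in $\Pro(\sA)$ is the image under $\plim$ of the analogous morphism in $\sA^{\Gamma^\sop}$, which is an isomorphism there because $\sA^{\Gamma^\sop}$ is abelian; Lemma~\ref{lim-in-quotes-preserves-finite-co-limits} transports the isomorphism to $\Pro(\sA)$, proving that $\Pro(\sA)$ is abelian. The exactness of $\plim\:\sA^{\Gamma^\sop}\rarrow\Pro(\sA)$ is then formal: a functor between abelian categories that preserves kernels and cokernels is exact.

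Finally, for the representation of short exact sequences, given $0\rarrow\bA\rarrow\bB\overset{\bbf}{\rarrow}\bC\rarrow0$ exact in $\Pro(\sA)$, I would write $\bbf\cong\plim g$ for some $g\:(B_\gamma)\rarrow(C_\gamma)$ via Lemma~\ref{morphism-in-pro-objects-arise-from}(a) and then replace $(C_\gamma)$ by the subdiagram $\im(g)\subset(C_\gamma)$, which is preserved by the transition maps. This produces an honest short exact sequence $0\rarrow\ker(g)\rarrow(B_\gamma)\rarrow\im(g)\rarrow0$ in $\sA^{\Gamma^\sop}$; applying the now exact functor $\plim$ recovers the given sequence up to isomorphism, since $\plim\im(g)\cong\im(\bbf)\cong\bC$ (the final identification using that $\bbf$ is an epimorphism). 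The only real subtlety, and the main obstacle to anticipate, is precisely this last step: Lemma~\ref{morphism-in-pro-objects-arise-from} lifts individual morphisms, not short exact sequences, so a naive lift yields only a three-term complex at the diagram level, and the image trick is what upgrades it to an actual short exact sequence.
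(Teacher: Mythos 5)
Your proposal is correct and follows exactly the route the paper takes: it derives both parts from Lemmas~\ref{lim-in-quotes-preserves-finite-co-limits} and~\ref{morphism-in-pro-objects-arise-from} (the paper's own proof is just a citation of these two lemmas together with a pointer to Kashiwara--Schapira, Theorem~8.6.5, and your write-up fills in the details of that argument, including the image trick needed to upgrade a lifted morphism to a diagram-level short exact sequence). No gaps.
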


\begin{proof}
 Both parts~(a) and~(b) follow from the respective assertions of
Lemmas~\ref{lim-in-quotes-preserves-finite-co-limits}
and~\ref{morphism-in-pro-objects-arise-from}.
 See~\cite[Theorem~8.6.5]{KS} for some further details.
\end{proof}

 A pro-object $\bD$ in an abelian category $\sA$ is said to be
\emph{strict} if it can be represented by a downwards directed diagram
$(D_\delta\to D_\gamma)_{\gamma<\delta\in\Gamma}$ of epimorphisms
in~$\sA$.
 So, $\bD\in\Pro(\sA)$ is strict if one can choose $\Gamma$ and
the diagram $(D_\gamma)_{\gamma\in\Gamma}$ so that
$\bD\simeq\plim_{\gamma\in\Gamma}D_\gamma$ in $\Pro(\sA)$ and all
the transition morphisms $D_\delta\rarrow D_\gamma$ are epimorphisms.

\begin{lem} \label{strict-omega-pro-unambigous}
 A pro-object in\/ $\sA$ belonging to\/ $\Pro_\omega(\sA)\subset
\Pro(\sA)$ is strict if and only if it can be represented by
a countable downwards directed diagram of epimorphisms in\/~$\sA$.
 So the notion of a strict object in\/ $\Pro_\omega(\sA)$ is unambigous.
\end{lem}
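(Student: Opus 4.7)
The ``if'' direction is immediate: a countable directed diagram of epimorphisms in $\sA$ witnesses simultaneously that the pro-object lies in $\Pro_\omega(\sA)$ and that it is strict in $\Pro(\sA)$. Conversely, suppose $\bD \in \Pro_\omega(\sA)$ is strict. Since every countable directed poset contains a cofinal chain of order type $\mathbb{N}$, I may choose a representation $\bD \simeq \plim_{n \in \mathbb{N}} E_n$ indexed by~$\mathbb{N}$, with transitions $e_n \: E_{n+1} \to E_n$. Independently, strictness provides some representation $\bD \simeq \plim_{\gamma \in \Gamma} D_\gamma$ (over a possibly uncountable directed poset $\Gamma$) whose transitions $D_{\gamma'} \to D_\gamma$ are all epimorphisms in $\sA$. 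The plan is to use the second representation to cut each $E_n$ down to a subobject $E'_n \subseteq E_n$ on which the induced transitions become epimorphisms.

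For each $n$, the formula~\eqref{hom-in-pro-objects-formula} lets me represent the canonical morphism $\bD \rarrow E_n$ by some $\phi_n \: D_{\gamma_n} \rarrow E_n$, and by directedness of~$\Gamma$ I arrange $\gamma_1 \le \gamma_2 \le \cdots$ inductively. The key observation is formal: because every transition $D_{\gamma'} \to D_{\gamma_{n+1}}$ in the $D$-system is an epimorphism and epimorphisms are right-cancellable, two morphisms $D_{\gamma_{n+1}} \rarrow E_n$ in $\sA$ which become equal in $\varinjlim_{\gamma} \Hom_\sA(D_\gamma, E_n)$ must already be equal in~$\sA$. Applied to the two representatives $\phi_n \circ \pi_n$ and $e_n \circ \phi_{n+1}$ of the morphism $\bD \rarrow E_n$, where $\pi_n \: D_{\gamma_{n+1}} \to D_{\gamma_n}$ is the transition, this forces the on-the-nose equality $\phi_n \circ \pi_n = e_n \circ \phi_{n+1}$ in~$\sA$. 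I expect this right-cancellation step to be the crucial input; everything afterwards is bookkeeping.

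Now set $E'_n = \im(\phi_n) \subseteq E_n$. Since $\pi_n$ is epi, $\im(\phi_n \circ \pi_n) = E'_n$, while $\im(e_n \circ \phi_{n+1})$ equals the image of the restriction of $e_n$ to $E'_{n+1}$; combined with the on-the-nose equality from the previous paragraph, this shows that $e_n$ restricts to an epimorphism $E'_{n+1} \twoheadrightarrow E'_n$, so $(E'_n)_{n \in \mathbb{N}}$ is a countable diagram of epimorphisms in~$\sA$. To identify $\plim_n E'_n$ with~$\bD$, observe that the monomorphisms $E'_n \hookrightarrow E_n$ assemble, via the exactness of $\plim$ from Corollary~\ref{pro-objects-abelian}, into a monomorphism $\iota \: \plim_n E'_n \hookrightarrow \bD$. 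Factoring each $\phi_n$ through its image produces epimorphisms $\phi'_n \: D_{\gamma_n} \twoheadrightarrow E'_n$; cancelling the mono $E'_n \hookrightarrow E_n$ in the equation $\phi_n \circ \pi_n = e_n \circ \phi_{n+1}$ shows that the $\phi'_n$ are compatible with the transitions of both the $D$-system and the $E'$-system, and therefore define a morphism $\pi \: \bD \rarrow \plim_n E'_n$ satisfying $\iota \circ \pi = \id_\bD$. Then $\pi \circ \iota = \id$ follows formally from $\iota$ being monic, completing the identification $\bD \simeq \plim_n E'_n$.
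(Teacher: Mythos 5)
Your proof is correct and follows essentially the same route as the paper's: represent the projections to a countable cofinal chain by morphisms from the epimorphic system, use right-cancellation against the epimorphic transition maps to upgrade colimit-equality to on-the-nose equality, cut down to images to get a countable diagram of epimorphisms, and identify its $\plim$ with the original pro-object via a mono that is also split epi. The paper packages the same mechanism slightly more generally (as an epi--mono factorization $\bC\to\bE\to\bD$ for an arbitrary morphism from a strict pro-object), but the key steps coincide.
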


\begin{proof}
 More generally, let $\bbf\:\bC\rarrow\bD$ be a morphism of pro-objects
in $\sA$, where the pro-object $\bC=\plim_{\gamma\in\Gamma}C_\gamma$
is represented by a downwards directed diagram of epimorphisms
$(C_{\gamma''}\to C_{\gamma'})_{\gamma'<\gamma''\in\Gamma}$, while
the pro-object $\bD=\plim_{\delta\in\Delta}D_\delta$ is represented by
some downwards directed diagram
$(D_{\delta''}\to D_{\delta'})_{\delta'<\delta''\in\Delta}$.
 Here the transition morphisms $D_{\delta''}\rarrow D_{\delta'}$
need not be epimorphisms.
 For every index $\delta\in\Delta$, consider the composition
$\bC\rarrow\bD\rarrow D_\delta$.
 Then there exists an index $\gamma\in\Gamma$ and a morphism
$f_{\gamma\delta}\:C_\gamma\rarrow D_\delta$ in $\sA$ such that
the square diagram $\bC\rarrow\bD\rarrow D_\delta$, \ $\bC\rarrow
C_\gamma\rarrow D_\delta$ is commutative in $\Pro(\sA)$.

 Firstly, suppose that there exists another morphism
$f'_{\gamma\delta}\:C_\gamma\rarrow D_\delta$ in $\sA$ making
the square diagram $\bC\rarrow\bD\rarrow D_\delta$, \ $\bC\rarrow
C_\gamma\rarrow D_\delta$ commutative.
 Then, by the definition of morphisms in $\Pro(\sA)$, there exists
an index $\gamma'\in\Gamma$, $\gamma'\ge\gamma$, such that
the morphisms $f_{\gamma\delta}$ and $f'_{\gamma\delta}\:C_\gamma
\rarrow D_\delta$ have equal compositions with the morphism
$C_{\gamma'}\rarrow C_\gamma$.
 Since the latter morphism is an epimorphism, it follows that
$f_{\gamma\delta}=f'_{\gamma\delta}$.

 Now let us define the subobject $E_\delta\subset D_\delta$ as
the image of the morphism $f_{\gamma\delta}\:C_\gamma\rarrow D_\delta$
in the abelian category~$\sA$.
 Since the morphism $C_{\gamma'}\rarrow C_\gamma$ is an epimorphism
for every $\gamma'>\gamma\in\Gamma$, the subobject $E_\delta\subset
D_\delta$ does not depend on the choice of an index $\gamma\in\Gamma$
as above.
 For all $\delta'<\delta''\in\Delta$, the morphisms $D_{\delta''}
\rarrow D_{\delta'}$ restrict to epimorphisms between the subobjects
$E_{\delta''}\rarrow E_{\delta'}$.
 We have constructed a downwards directed diagram of epimorphisms
$(E_{\delta''}\to E_{\delta'})_{\delta'<\delta''\in\Delta}$ in $\sA$
together with a morphism of diagrams $(E_\delta)_{\delta\in\Delta}
\rarrow(D_\delta)_{\delta\in\Delta}$ that is a termwise monomorphism.

 Put $\bE=\plim_{\delta\in\Delta}E_\delta$.
 Now the original morphism $\bbf\:\bC\rarrow\bD$ factorizes naturally
as $\bC\overset\bbg\rarrow\bE\overset\bbh\rarrow\bD$, where
$\bbh\:\bE\rarrow\bD$ is a monomorphism in $\Pro(\sA)$
(by Lemma~\ref{lim-in-quotes-preserves-finite-co-limits}(b)).
 In particular, if the morphism~$\bbf$ is an epimorphism in $\Pro(\sA)$,
then the morphism~$\bbh$ is an isomorphism.
 In the special case when $\bbf$~is an isomorphism and $\Delta$ is
countable, we obtain the desired assertion of the lemma.
\end{proof}

 Let us denote the full subcategories of strict pro-objects by
$\SPro(\sA)\subset\Pro(\sA)$ and $\SPro_\omega(\sA)=
\Pro_\omega(\sA)\cap\SPro(\sA)\subset\Pro_\omega(\sA)$.

\begin{prop} \label{strict-pro-objects-closed-under-extensions}
 Let\/ $\sA$ be an abelian category.  In this context: \par
\textup{(a)} The full subcategory\/ $\SPro(\sA)$ is closed under
extensions and quotients in the abelian category\/ $\Pro(\sA)$.
 Any short exact sequence in\/ $\Pro(\sA)$ with the terms belonging to\/
$\SPro(\sA)$ comes from a short exact sequence in\/ $\sA^{\Gamma^\sop}$
whose terms are three $\Gamma^\sop$\+indexed diagrams of epimorphisms
in\/ $\sA$, for some directed poset\/~$\Gamma$. \par
\textup{(b)} The full subcategory\/ $\SPro_\omega(\sA)$ is closed
under extensions and quotients in the abelian category\/
$\Pro_\omega(\sA)$.
 Any short exact sequence in\/ $\Pro_\omega(\sA)$ with the terms
belonging to\/ $\SPro_\omega(\sA)$ comes from a short exact sequence
in\/ $\sA^{\Gamma^\sop}$ whose terms are three $\Gamma^\sop$\+indexed
diagrams of epimorphisms in\/ $\sA$, for a countable directed
poset\/~$\Gamma$. \par
\end{prop}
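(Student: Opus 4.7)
The plan is to reduce both parts to Corollary~\ref{pro-objects-abelian} and to the image factorization developed in the proof of Lemma~\ref{strict-omega-pro-unambigous}. I would handle parts~(a) and~(b) in parallel, since all the constructions preserve the underlying index poset, and hence countability.

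For closure under quotients I would take $\bB\in\SPro(\sA)$ together with an epimorphism $\bbf\:\bB\rarrow\bC$ in $\Pro(\sA)$ and apply the image construction of Lemma~\ref{strict-omega-pro-unambigous} directly to $\bbf$. This yields a factorization $\bB\overset\bbg\rarrow\bE\overset\bbh\rarrow\bC$ in $\Pro(\sA)$ with $\bE\in\SPro(\sA)$ and $\bbh$ a monomorphism. Since $\bbf$ is an epimorphism, so is $\bbh$; being also monic in the abelian category $\Pro(\sA)$, the morphism $\bbh$ is an isomorphism, so $\bC\simeq\bE\in\SPro(\sA)$.

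For closure under extensions together with the structural description of short exact sequences, I would start with a s.e.s.\ $0\rarrow\bA\rarrow\bB\rarrow\bC\rarrow0$ in $\Pro(\sA)$ with $\bA,\bC\in\SPro(\sA)$ and use Corollary~\ref{pro-objects-abelian} to realize it as coming from a termwise s.e.s.\ $0\rarrow(A_\gamma)\rarrow(B_\gamma)\rarrow(C_\gamma)\rarrow0$ in $\sA^{\Gamma^\sop}$ for some directed poset~$\Gamma$ (countable in case~(b)), whose transition morphisms are not a priori epimorphisms. Applying the image construction of Lemma~\ref{strict-omega-pro-unambigous} to the identity morphisms of $\bA$ and of $\bC$, with the target representations being $(A_\gamma)$ and $(C_\gamma)$, produces subdiagrams $E^A_\gamma\subset A_\gamma$ and $E^C_\gamma\subset C_\gamma$ whose transitions are epimorphisms and whose $\plim$'s recover $\bA$ and $\bC$. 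I then define $E^B_\gamma\subset B_\gamma$ to be the preimage of $E^C_\gamma$ under $B_\gamma\rarrow C_\gamma$; a short diagram chase in $\sA$ shows that the rows $0\rarrow E^A_\gamma\rarrow E^B_\gamma\rarrow E^C_\gamma\rarrow 0$ are short exact. By the exactness of $\plim$ on $\sA^{\Gamma^\sop}$ (Corollary~\ref{pro-objects-abelian}) and the short five lemma applied to the map of s.e.s.\ from $\plim(E^A_\gamma,E^B_\gamma,E^C_\gamma)$ to $(\bA,\bB,\bC)$, whose outer verticals are isomorphisms, the middle vertical is an isomorphism too. Hence $\bB\simeq\plim E^B_\gamma$, and both conclusions of the proposition follow simultaneously provided the transitions in $(E^B_\gamma)$ are epimorphisms.

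The main obstacle is therefore this final epimorphy check on $(E^B_\gamma)$. Given a generalized element of $E^B_{\gamma'}$, one lifts its image in $E^C_{\gamma'}$ to an element of $E^C_{\gamma''}$ (possible since those transitions are epi), lifts further to $E^B_{\gamma''}$ by row exactness and surjectivity of $B_{\gamma''}\rarrow C_{\gamma''}$, and corrects by an element of $E^A_{\gamma''}$ (again using its epi transitions) to hit the original element. For this chase to close with a single index $\gamma''$, one may need to pass to a cofinal subposet of $\Gamma$ — preserving countability in case~(b) — on which $\im(A_{\gamma''}\rarrow A_{\gamma'})=E^A_{\gamma'}$ and $\im(C_{\gamma''}\rarrow C_{\gamma'})=E^C_{\gamma'}$ actually hold; both stabilizations are guaranteed by the strictness of $\bA$ and $\bC$ via the same factorization argument used in the proof of Lemma~\ref{strict-omega-pro-unambigous}.
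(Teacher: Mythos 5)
Your argument for closure under quotients is correct, and it is exactly one of the two routes the paper indicates (the image factorization from the proof of Lemma~\ref{strict-omega-pro-unambigous} applied to an epimorphism out of a strict pro-object). The extension part, however, contains a genuine error. If $E^B_\gamma$ is the full preimage of $E^C_\gamma$ under $B_\gamma\rarrow C_\gamma$, then $E^B_\gamma$ contains the entire kernel $A_\gamma$ of $B_\gamma\rarrow C_\gamma$, so $\ker(E^B_\gamma\to E^C_\gamma)=A_\gamma$ rather than $E^A_\gamma$. The rows $0\rarrow E^A_\gamma\rarrow E^B_\gamma\rarrow E^C_\gamma\rarrow0$ are therefore \emph{not} exact unless $E^A_\gamma=A_\gamma$, which fails in general; no diagram chase can establish them. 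The same defect sinks the final epimorphy chase: the discrepancy you must correct is an arbitrary element of $A_{\gamma'}$, whereas after stabilization the transition maps $A_{\gamma''}\rarrow A_{\gamma'}$ only reach $E^A_{\gamma'}$, so the needed correction term is not available. And with inexact rows, the short five lemma comparison between $\plim$ of your candidate sequence and $(\bA,\bB,\bC)$ has nothing to apply to.

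The repair --- essentially what the cited proof of \cite[Proposition~9.2]{Pextop} does --- is to fix the kernel side \emph{before} defining $E^B$. Represent the monomorphism $\bA\rarrow\bB$ by a level morphism $(f_\gamma)\:(A'_\gamma)\rarrow(B_\gamma)$ whose source is a diagram of epimorphisms (possible by the explicit proof of Lemma~\ref{morphism-in-pro-objects-arise-from}, since $\bA$ is strict), and replace $A_\gamma$ by $\im(f_\gamma)$: this is again a diagram of epimorphisms, and $\plim_{\gamma}\im(f_\gamma)=\im(\bA\to\bB)=\bA$ because $\plim$ preserves kernels and cokernels, hence images (Lemma~\ref{lim-in-quotes-preserves-finite-co-limits}). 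Then $(B_\gamma/\im f_\gamma)$ represents $\bC$; shrink it to an epi-subdiagram $(E^C_\gamma)$ using strictness of $\bC$ exactly as you propose, and only now take $E^B_\gamma$ to be the preimage of $E^C_\gamma$ in $B_\gamma$. This time $\ker(E^B_\gamma\to E^C_\gamma)=\im(f_\gamma)$, the rows are exact, and the transition maps of $(E^B_\gamma)$ are epimorphisms automatically by the snake lemma, since they sit between epimorphisms on the sub and quotient diagrams --- no separate chase is needed. With this modification the rest of your plan (exactness of $\plim$, identification of the middle term, preservation of countability of the index poset in part~(b)) goes through and yields both assertions of the proposition.
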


\begin{proof}
 Part~(a): the closedness under quotients follows from the explicit
proof of Lemma~\ref{morphism-in-pro-objects-arise-from}, or 
alternatively, from the proof of
Lemma~\ref{strict-omega-pro-unambigous}.
 Both the closedness under quotients and extensions are the results
of~\cite[Proposition~9.2]{Pextop}.
 The last assertion of part~(a) also follows from~\cite[proof of
Proposition~9.2]{Pextop}.
 The argument is based on the explicit proof of
Lemma~\ref{morphism-in-pro-objects-arise-from} and the preservation
of pullbacks and pushouts by the functors $\plim\,\:\sA^{\Gamma^\sop}
\rarrow\Pro(\sA)$, as per
Lemma~\ref{lim-in-quotes-preserves-finite-co-limits}.
 The proof of part~(b) is similar.
\end{proof}

\begin{lem} \label{pro-objects-in-full-subcategory-extension-closed}
 Let\/ $\sA$ be an abelian category and\/ $\sB\subset\sA$ be a full
subcategory closed under extensions.  In this context: \par
\textup{(a)} The full subcategory\/ $\Pro(\sB)$ is closed under
extensions in\/ $\Pro(\sA)$.
 In particular, the full subcategory\/ $\SPro(\sB)$ is closed under
extensions in\/ $\SPro(\sA)$. \par
\textup{(b)} The full subcategory\/ $\Pro_\omega(\sB)$ is closed
under extensions in\/ $\Pro_\omega(\sA)$.
 In particular, the full subcategory\/ $\SPro_\omega(\sB)$ is closed
under extensions in\/ $\SPro_\omega(\sA)$.
\end{lem}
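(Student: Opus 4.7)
Part (a) and part (b) have parallel proofs; I will sketch part (a), and part (b) follows by the same argument tracking that all index posets remain countable (using Corollary \ref{pro-objects-abelian}(b) and the countable form of Lemma \ref{morphism-in-pro-objects-arise-from}). The ``in particular'' assertions are immediate: by definition $\SPro(\sB) = \Pro(\sB) \cap \SPro(\sA)$, so any short exact sequence in $\SPro(\sA)$ whose outer terms lie in $\SPro(\sB)$ has outer terms in $\Pro(\sB)$, and once the main assertion is known its middle term lies in $\Pro(\sB) \cap \SPro(\sA) = \SPro(\sB)$; the same works for $\SPro_\omega$.

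For part (a), let $0 \to \bB' \to \bC \to \bB'' \to 0$ be a short exact sequence in $\Pro(\sA)$ with $\bB', \bB'' \in \Pro(\sB)$, and I wish to show $\bC \in \Pro(\sB)$. By Corollary \ref{pro-objects-abelian}(a), this sequence is isomorphic to the $\plim$ of a short exact sequence of diagrams
$$
 0 \rarrow (B'_\gamma)_{\gamma\in\Gamma} \rarrow (C_\gamma)_{\gamma\in\Gamma} \rarrow (B''_\gamma)_{\gamma\in\Gamma} \rarrow 0
$$
in $\sA^{\Gamma^\sop}$, a priori with no control over where the outer diagrams take values. The strategy is to refine this representation to one in which $B'_\gamma, B''_\gamma \in \sB$ for every $\gamma$; then $C_\gamma \in \sB$ by extension-closedness of $\sB$ in $\sA$, and $\bC = \plim C_\gamma$ lies in $\Pro(\sB)$.

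To perform the refinement, fix arbitrary representations $\bB' \cong \plim_{\alpha\in A}\tilde B'_\alpha$ and $\bB'' \cong \plim_{\delta\in D}\tilde B''_\delta$ with $\tilde B'_\alpha, \tilde B''_\delta \in \sB$. The diagram-unification technique from the explicit proof of Lemma \ref{morphism-in-pro-objects-arise-from} realizes an isomorphism in $\Pro(\sA)$ between two pro-objects via a common cofinal refinement of their indexing posets. Applying this technique simultaneously to the two isomorphisms $\plim B'_\gamma \cong \plim \tilde B'_\alpha$ and $\plim B''_\gamma \cong \plim \tilde B''_\delta$, one obtains a directed poset $\Upsilon$ with cofinal maps into $\Gamma$, $A$, and $D$ along which both isomorphisms are realized term by term. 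Pulling the short exact sequence of diagrams back along the cofinal map $\Upsilon \to \Gamma$ and then replacing the outer terms by the $\sB$\+valued $\tilde B'_{\alpha(\upsilon)}$ and $\tilde B''_{\delta(\upsilon)}$ (using the term-level morphisms provided by the refinement) yields a short exact sequence of $\Upsilon^\sop$\+indexed diagrams in $\sA$ with outer terms in $\sB$ whose $\plim$ is the original sequence; this completes the argument.

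The principal obstacle is precisely this simultaneous cofinal refinement of three directed posets in a way that preserves short-exactness at the diagram level. It is essentially the content of \cite[Proposition~9.2]{Pextop} (already cited in Proposition \ref{strict-pro-objects-closed-under-extensions} for the strict setting), transposed here from the strict-subcategory setting to the extension-closed-subcategory setting.
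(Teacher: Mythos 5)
Your overall strategy coincides with the paper's (which simply defers to the argument of \cite[proof of Proposition~9.2]{Pextop}): realize the extension as the $\plim$ of a levelwise short exact sequence of diagrams whose outer terms lie in $\sB$, and conclude termwise by extension-closedness of $\sB$ in $\sA$. The gap is in how you propose to produce such a levelwise sequence. Starting from a short exact sequence of $\Gamma^\sop$\+indexed diagrams $0\to(B'_\gamma)\to(C_\gamma)\to(B''_\gamma)\to0$ and from $\sB$\+valued diagrams $(\tilde B'_\alpha)$, $(\tilde B''_\delta)$ that are merely \emph{pro-isomorphic} to the outer ones, you cannot simply ``replace the outer terms'' after a common cofinal reindexing: the objects $B'_{\gamma}$ and $\tilde B'_{\alpha}$ are not isomorphic in $\sA$, only linked by maps in both directions whose composites agree with transition maps, so there is no reason why the unchanged middle term $C_{\gamma}$ should sit in a short exact sequence $0\to\tilde B'_{\alpha}\to C_{\gamma}\to\tilde B''_{\delta}\to0$ in $\sA$ --- and if the middle term is to be changed, your sketch never says how. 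The simultaneous reindexing of three posets, which you single out as the principal obstacle, is in fact the routine bookkeeping part; the substantive step you omit is the transport of the extension itself.

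The missing idea is the pullback--pushout construction: for suitable indices one pulls the extension $0\to B'_\gamma\to C_\gamma\to B''_\gamma\to 0$ back along a map $\tilde B''_\delta\to B''_\gamma$ and then pushes it out along a map $B'_\gamma\to\tilde B'_\alpha$, producing a \emph{new} middle object that is an extension of $\tilde B''_\delta$ by $\tilde B'_\alpha$ and hence lies in $\sB$; Lemma~\ref{lim-in-quotes-preserves-finite-co-limits} then guarantees that the $\plim$ of the resulting diagram of extensions is still the original extension $\bC$. This is exactly the mechanism behind \cite[proof of Proposition~9.2]{Pextop}, which the paper's proof of Proposition~\ref{strict-pro-objects-closed-under-extensions} explicitly describes as resting on ``the preservation of pullbacks and pushouts by the functors $\plim$''. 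A secondary caveat: your identification $\SPro(\sB)=\Pro(\sB)\cap\SPro(\sA)$ ``by definition'' needs a word of justification if $\SPro(\sB)$ is read as ``representable by a diagram with terms in $\sB$ and epimorphic transition maps'', since that reading is a priori stronger than membership in the intersection; under the intersection reading your deduction of the ``in particular'' clauses from the main assertion together with Proposition~\ref{strict-pro-objects-closed-under-extensions} is fine.
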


\begin{proof}
 Part~(a): first of all, $\Pro(\sB)$ is a full subcategory in
$\Pro(\sA)$ by Lemma~\ref{pro-objects-full-subcategory}.
 The assertion that $\Pro(\sB)$ is closed under extensions in
$\Pro(\sA)$ is provable by the argument from~\cite[proof of
Proposition~9.2]{Pextop}.
 Part~(b) is similar.
\end{proof}

\Section{Exact Categories of Strict Pro-Objects in
Abelian Categories}

 We suggest the survey paper~\cite{Bueh} as a background reference
source on exact categories in the sense of Quillen.
 In particular, a discussion of quasi-abelian categories can be
found in~\cite[Section~4]{Bueh}.

 Let $\sA$ be an abelian category.
 Then the category of pro-objects $\Pro(\sA)$ is abelian by
Corollary~\ref{pro-objects-abelian}(a), and the full subcategory
$\SPro(\sA)$ is closed under extensions (and quotients) in
$\Pro(\sA)$ by
Proposition~\ref{strict-pro-objects-closed-under-extensions}(a).
 We will consider $\SPro(\sA)$ as an exact category with
the exact structure inherited from the abelian exact structure of
$\Pro(\sA)$, as per~\cite[Lemma~10.20]{Bueh}.

 Similarly, the category $\Pro_\omega(\sA)$ is abelian
Corollary~\ref{pro-objects-abelian}(b), and the full subcategory
$\SPro_\omega(\sA)$ is closed under extensions (and quotients) in
$\Pro_\omega(\sA)$ by
Proposition~\ref{strict-pro-objects-closed-under-extensions}(b).
 We will consider $\SPro_\omega(\sA)$ as an exact category with
the exact structure inherited from the abelian exact structure of
$\Pro_\omega(\sA)$.

 A category $\sA$ is said to be
\emph{well-powered}~\cite[Section~0.6]{AR} if there is only a set
(rather than a proper class) of subobjects in any given object
of~$\sA$.

\begin{prop} \label{spro-quasi-abelian-characterization}
 For any well-powered abelian category\/ $\sA$, the following five
conditions are equivalent:
\begin{enumerate}
\item any set of subobjects of an object of\/ $\sA$ has an intersection;
\item any well-ordered descending chain of subobjects of an object of\/
$\sA$ has an intersection;
\item the additive category\/ $\SPro(\sA)$ is quasi-abelian
(and the exact structure inherited from the abelian exact structure
of\/ $\Pro(\sA)$ coincides with the quasi-abelian exact structure
on\/ $\SPro(\sA)$);
\item a right adjoint functor to the fully faithful inclusion functor\/
$\SPro(\sA)\rarrow\Pro(\sA)$ (a coreflector) exists;
\item the kernels of all morphisms exist in the additive category\/
$\SPro(\sA)$.
\end{enumerate}
\end{prop}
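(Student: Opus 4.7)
The plan is to establish $(1)\Leftrightarrow(2)$ and then close the cycle $(1)\Rightarrow(4)\Rightarrow(3)\Rightarrow(5)\Rightarrow(1)$. The equivalence $(1)\Leftrightarrow(2)$ is standard: $(1)\Rightarrow(2)$ is trivial, and for the converse I would use well-poweredness to well-order a given set $\{A_i\subset A\}_{i\in I}$ of subobjects and form the descending transfinite chain of partial intersections $B_\alpha=\bigcap_{i\le\alpha}A_i$, using binary meets in $\sA$ at successor ordinals and condition~(2) at limit ordinals to complete the recursion.

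For $(1)\Rightarrow(4)$, I plan to construct the coreflector $R\:\Pro(\sA)\rarrow\SPro(\sA)$ by strictification. Given $\bD=\plim_{\delta\in\Delta}D_\delta$, put $E_\delta=\bigcap_{\delta'\ge\delta}\im(D_{\delta'}\to D_\delta)$, which exists by~(1). The transitions $D_{\delta''}\to D_{\delta'}$ then restrict to maps $E_{\delta''}\to E_{\delta'}$, which a Mittag--Leffler-style argument in the directed poset $\Delta$ shows to be epimorphisms, producing $\bE=\plim E_\delta\in\SPro(\sA)$ together with a canonical morphism $\bE\to\bD$. Its universal property follows from the factorization argument in the proof of Lemma~\ref{strict-omega-pro-unambigous}: any $\bbf\:\bC\to\bD$ with $\bC\in\SPro(\sA)$ factors through the image diagram of $\bbf$ in $\bD$, which by the very definition of $E_\delta$ is contained in $\bE$.

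For $(4)\Rightarrow(3)$, the coreflector $R$ supplies all limits in $\SPro(\sA)$ (hence kernels) as $R$-images of limits in $\Pro(\sA)$, while cokernels and pushouts in $\SPro(\sA)$ coincide with those in $\Pro(\sA)$ by Proposition~\ref{strict-pro-objects-closed-under-extensions}(a); thus pushout-stability of monomorphisms is inherited from the abelian category $\Pro(\sA)$. For pullback-stability of admissible epimorphisms I would invoke the concrete form of $R$: a pullback in $\Pro(\sA)$ of an epi $\bB\twoheadrightarrow\bA$ along $\bC\to\bA$ surjects onto $\bC$, and since $R(\bC)=\bC$, the termwise-intersection description of $R$ preserves this surjection. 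The coincidence of exact structures is then immediate from the construction, and $(3)\Rightarrow(5)$ is by definition of quasi-abelian. For $(5)\Rightarrow(1)$, given $\{A_i\subset A\}_{i\in I}$, let $\Gamma$ be the directed poset of finite subsets $\gamma\subset I$ and set $A_\gamma=\bigcap_{i\in\gamma}A_i$; form $\bB=\plim_\gamma A/A_\gamma\in\SPro(\sA)$ together with the canonical morphism $f\:A\to\bB$, and let $\bK\to A$ be the kernel of $f$ in $\SPro(\sA)$, which exists by~(5). By the formula~\eqref{hom-in-pro-objects-formula}, this morphism is represented by a single $K_\lambda\to A$ in $\sA$; its image $M\subseteq A$ lies in every $A_i$ because the composition $K_\lambda\to A\to A/A_{\{i\}}$ vanishes, and the kernel property of $\bK$ shows that every subobject of $A$ contained in all $A_i$ lies in $M$, giving $M=\bigcap_iA_i$.

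The hard part will be the Mittag--Leffler claim inside $(1)\Rightarrow(4)$: that images of intersections stabilize enough to make the restricted transitions $E_{\delta''}\twoheadrightarrow E_{\delta'}$ epimorphic. This does not follow formally from~(1) alone, but requires a careful argument exploiting the directedness of $\Delta$ and passage to cofinal subchains. The same explicit description of $R$ is then re-used crucially in the pullback-stability check of $(4)\Rightarrow(3)$, so that these two technical points constitute the real work of the proof.
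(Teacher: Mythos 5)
Your cycle $(1)\Leftrightarrow(2)$ and your argument for $(5)\Rightarrow(1)$ are fine (the latter is even a little slicker than the paper's, which instead shows that the kernel $\bK$ is a constant pro-object). But the construction at the heart of your proposal, the coreflector in $(1)\Rightarrow(4)$, has a genuine gap, and you have correctly located it yourself: the claim that the universal images $E_\delta=\bigcap_{\delta'\ge\delta}\im(D_{\delta'}\to D_\delta)$ form a diagram of \emph{epimorphisms} is simply false, and no Mittag--Leffler-type argument can repair it. Concretely, over a field $k$ take the $\omega^\sop$\+indexed diagram with $D_0=k$, \ $D_n=\bigoplus_{i\ge0}k\,e_i$ for $n\ge1$, where $D_1\to D_0$ sends every $e_i$ to $1$ and $D_{n+1}\to D_n$ sends $e_i\mapsto e_i$ for $i\ge n$ and $e_i\mapsto0$ for $i<n$. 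Then $\im(D_m\to D_0)=k$ for all $m$, so $E_0=k$, while $\im(D_m\to D_1)=\bigoplus_{i\ge m-1}k\,e_i$, so $E_1=0$ and $E_1\to E_0$ is not an epimorphism. This is exactly why the paper's proof of $(2)\Rightarrow(4)$ does not stop after one application of the image construction: it iterates it \emph{transfinitely} (taking intersections at limit ordinals, which is where condition~(1)/(2) enters, and replacing by images and pulling back at successor ordinals), and uses well-poweredness to bound the length of the iteration, arriving at the unique \emph{maximal} subdiagram of epimorphisms. In my example a second iteration already kills $E_0$; in general arbitrarily long iterations are needed. So the ``careful argument exploiting directedness and cofinal subchains'' you defer to does not exist; what is needed is a different (iterative) construction.

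The gap propagates: your $(4)\Rightarrow(3)$ invokes ``the termwise-intersection description of $R$'' to check pullback-stability of admissible epimorphisms, so it rests on the same faulty description. Moreover, the substantive content of~(3) --- that every kernel--cokernel pair in $\SPro(\sA)$ is already a short exact sequence in $\Pro(\sA)$ --- is not really addressed by your sketch. The delicate point is that the kernel of $\bbg\:\bD\to\bE$ in $\SPro(\sA)$ is the coreflection of the $\Pro(\sA)$\+kernel, hence in general a \emph{proper} subobject of it, and one must still verify that the cokernel of (coreflected kernel)${}\to\bD$ recovers $\bE$; the paper does this by representing $\bbg$ by a levelwise morphism of diagrams of epimorphisms and computing kernels and cokernels termwise. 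I would recommend reworking $(1)\Rightarrow(4)$ along the paper's transfinite lines and then following its explicit diagram argument for the quasi-abelian property.
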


\begin{proof}
 The implications (1)~$\Longrightarrow$~(2) and
(3)~$\Longrightarrow$~(5) are obvious.

 (2)~$\Longrightarrow$~(1) Follows from the fact that any finite
set of subobjects of an object of any abelian category has
an intersection.
 Given a set $S$ of subobjects of an object $A\in\sA$, finite 
intersections of subobjects belonging to $S$ form a downwards directed
family of subobjects in $A$, and it remains, e.~g., to refer
to~\cite[Corollary~1.7]{AR}.

 (2)~$\Longrightarrow$~(4) Let $\bB=\plim_{\gamma\in\Gamma}B_\gamma$
be a pro-object in~$\sA$ represented by a $\Gamma^\sop$\+indexed
diagram $(B_\gamma)_{\gamma\in\Gamma}$ for a directed poset~$\Gamma$.
 Let $\lambda$~be a cardinal greater than the cardinality of
the disjoint union over $\gamma\in\Gamma$ of the sets of all subobjects
in~$B_\gamma$.
 Proceeding by transfinite induction, we construct for every ordinal
$i\le\lambda$ and every index $\gamma\in\Gamma$ a subobject
$B_\gamma(i)\subset B_\gamma$ such that
$(B_\gamma(i))_{\gamma\in\Gamma}$ is a subdiagram in
$(B_\gamma)_{\gamma\in\Gamma}$.
 
 For $i=0$, we put $B_\gamma(0)=B_\gamma$ for all $\gamma\in\Gamma$.
 For a limit ordinal~$i$, put $B_\gamma(i)=\bigcap_{j<i}B_\gamma(j)$.
 For a successor ordinal $i=j+1$, look for a pair of indices
$\alpha<\beta\in\Gamma$ such that the transition morphism
$B_\beta(j)\rarrow B_\alpha(j)$ is not an epimorphism.
 If such a pair of indices does not exist, put $B_\gamma(i)=
B_\gamma(j)$ for all $\gamma\in\Gamma$.
 If a pair if indices $\alpha<\beta$ as above was found, let
$B_\alpha(i)\subset B_\alpha(j)$ be the image of the morphism
$B_\beta(j)\rarrow B_\alpha(j)$.
 For all indices $\gamma\in\Gamma$, \ $\gamma>\alpha$, let
$B_\gamma(i)\subset B_\gamma(j)$ be the pullback of the subobject
$B_\alpha(i)\subset B_\alpha(j)$ with respect to the transition
morphism $B_\gamma(j)\rarrow B_\alpha(j)$.
 For all the other indices $\gamma\in\Gamma$, put
$B_\gamma(i)=B_\gamma(j)$.

 After the construction has been finished, the resulting diagram 
$(C_\gamma=B_\gamma(\lambda))_{\gamma\in\Gamma}$ is the unique maximal
subdiagram in $(B_\gamma)_{\gamma\in\Gamma}$ that is a diagram
of epimorphisms in~$\sA$.
 The pro-object $\bC=\plim_{\gamma\in\Gamma}C_\gamma$ is the desired
coreflection of the object $\bB=\plim_{\gamma\in\Gamma}B_\gamma$ onto
the full subcategory $\SPro(\sA)\subset\Pro(\sA)$.

 (4)~$\Longrightarrow$~(5) The kernel of any morphism $\bbg\:\bD
\rarrow\bE$ in $\SPro(\sA)$ can be constructed as the coreflection
of the kernel of~$\bbg$ in the abelian category $\Pro(\sA)$ onto
the full subcategory $\SPro(\sA)\subset\Pro(\sA)$.

 (2)~$\Longrightarrow$~(3) Let $\bC\overset\bbf\rarrow\bD\overset\bbg
\rarrow\bE$ be a pair of morphisms in $\SPro(\sA)$ such that $\bbf$~is
the kernel of~$\bbg$ and $\bbg$~is the cokernel of~$\bbf$
in $\SPro(\sA)$.
 It suffices to show that $0\rarrow\bC\rarrow\bD\rarrow\bE\rarrow0$ is
an admissible short exact sequence in the exact structure on $\SPro(\sA)$
inherited from the abelian exact structure on $\Pro(\sA)$ (i.~e.,
an exact sequence in the abelian category $\Pro(\sA)$).
 Then, in view of
Proposition~\ref{strict-pro-objects-closed-under-extensions}(a),
it will follow that the collection of all such short sequences forms
an exact category structure on $\SPro(\sA)$, which precisely means that
the additive category $\SPro(\sA)$ is quasi-abelian.
 So, we need to show that every kernel-cokernel pair of morphisms in
$\SPro(\sA)$ is also a kernel-cokernel pair in $\Pro(\sA)$.

 By Lemma~\ref{morphism-in-pro-objects-arise-from}(a), there exists
a directed poset $\Gamma$ such that the morphism~$\bbg$ can be
represented by a morphism of $\Gamma^\sop$\+indexed diagrams
$(g_\gamma)_{\gamma\in\Gamma}\:(D_\gamma)_{\gamma\in\Gamma}\rarrow
(E_\gamma)_{\gamma\in\Gamma}$ in~$\sA$.
 Moreover, the explicit proof of
Lemma~\ref{morphism-in-pro-objects-arise-from}(a) above that can be
found in~\cite[Section~9]{Pextop} shows that, for a morphism~$\bbg$
in $\SPro(\sA)$, one can choose $(D_\gamma)_{\gamma\in\Gamma}$ and
$(E_\gamma)_{\gamma\in\Gamma}$ to be diagrams of epimorphisms in~$\sA$.

 Put $B_\gamma=\ker(g_\gamma)$.
 By Lemma~\ref{lim-in-quotes-preserves-finite-co-limits}(b),
the object $\bB=\plim_{\gamma\in\Gamma}B_\gamma$ is the kernel of
the morphism~$\bbg$ in the abelian category $\Pro(\sA)$.
 Following the arguments for the implications (2)~$\Longrightarrow$
(4)~$\Longrightarrow$ (5) above, the object $\bC\in\SPro(\sA)$ can be
represented by a subdiagram $(C_\gamma)_{\gamma\in\Gamma}$ of
the diagram $(B_\gamma)_{\gamma\in\Gamma}$ such that
$(C_\gamma)_{\gamma\in\Gamma}$ is a diagram of epimorphisms in~$\sA$.
 So we have monomorphisms $C_\gamma\rarrow B_\gamma$ and
$B_\gamma\rarrow D_\gamma$; denote their compositions by
$f_\gamma\:C_\gamma\rarrow D_\gamma$.
 Put $E'_\gamma=\coker(f_\gamma)\in\sA$.
 
 By Lemma~\ref{lim-in-quotes-preserves-finite-co-limits}(a),
the object $\plim_{\gamma\in\Gamma}E'_\gamma$ is the cokernel of
the morphism~$\bbf$ in $\Pro(\sA)$.
 Since $(E'_\gamma)_{\gamma\in\Gamma}$ is a diagram of epimorphisms
in $\sA$, we have $\plim_{\gamma\in\Gamma}E'_\gamma\in\SPro(\sA)$;
so $\plim_{\gamma\in\Gamma}E'_\gamma$ is also the cokernel of~$f$
in $\SPro(\sA)$.
 Thus we have $\bE=\plim_{\gamma\in\Gamma}E'_\gamma$.
 We have shown that the pair of morphisms $(\bbf,\bbg)$ can be obtained
by applying the functor $\plim_{\gamma\in\Gamma}$ to a short
exact sequence of $\Gamma^\sop$\+indexed diagrams $0\rarrow C_\gamma
\rarrow D_\gamma\rarrow E'_\gamma\rarrow0$.
 Therefore, $0\rarrow\bC\rarrow\bD\rarrow\bE\rarrow0$ is an admissible
short exact sequence in the exact structure on $\SPro(\sA)$ inherited
from the abelian exact structure of $\Pro(\sA)$, as desired.

 (5)~$\Longrightarrow$~(1) Let $A\in\sA$ be an object and
$A_\gamma\subset A$ be a downwards directed family of subobjects in $A$
indexed by a directed poset~$\Gamma$ (so $A_\gamma\supset A_\delta$
whenever $\gamma<\delta\in\Gamma$).
 Consider $A$ as an object of $\SPro(\sA)$, and consider also
the object $\bB=\plim_{\gamma\in\Gamma}A/A_\gamma\in\SPro(\sA)$.
 The morphism of $\Gamma^\sop$\+indexed diagrams
$(A\to A/A_\gamma)_{\gamma\in\Gamma}$ induces a morphism of
pro-objects $\bbg\:A\rarrow\bB$ in~$\sA$.
 Let $\bbf\:\bK\rarrow A$ be the kernel of~$\bbg$ in $\SPro(\sA)$.
 Then we have $\bK=\plim_{\delta\in\Delta}K_\delta$, where
$(K_\delta)_{\delta\in\Delta}$ is a $\Delta^\sop$\+indexed diagram
of epimorphisms in $\sA$ for some directed poset~$\Delta$.

 Following the explicit proof of
Lemma~\ref{morphism-in-pro-objects-arise-from}(a) above that can be
found in~\cite[Section~9]{Pextop}, there exists a directed poset
$\Upsilon$ and two cofinal maps of directed posets $\delta\:\Upsilon
\rarrow\Delta$ and $\gamma\:\Upsilon\rarrow\Gamma$ such that
the morphism~$f$ can be represented by a morphism of
$\Upsilon^\sop$\+indexed diagrams
$(f_\upsilon)_{\upsilon\in\Upsilon}\:
(K_{\delta(\upsilon)})_{\upsilon\in\Upsilon}\rarrow
(A_{\gamma(\upsilon)}=A)_{\upsilon\in\Upsilon}$.
 Let $\alpha<\beta\in\Upsilon$ be two indices.
 We know that the morphism $K_{\delta(\beta)}\rarrow
K_{\delta(\alpha)}$ is an epimorphism in~$\sA$; let us show that it
is also a monomorphism in~$\sA$.

 Indeed, let $L_\beta\subset K_{\delta(\beta)}$ be the kernel of
the morphism $K_{\delta(\beta)}\rarrow K_{\delta(\alpha)}$.
 Denote by $\Upsilon'\subset\Upsilon$ the subset of all indices
$\upsilon'\in\Upsilon$ such that $\upsilon'\ge\beta$ in~$\Upsilon$.
 Endow $\Upsilon'$ with the restricted order; then $\Upsilon'$ is
a cofinal subset in~$\Upsilon$.
 For every $\upsilon'\in\Upsilon'$, denote by $L_{\upsilon'}\subset
K_{\delta(\upsilon')}$ the pullback of the subobject
$L_\beta\subset K_{\delta(\beta)}$ with respect to the transition
morphism $K_{\delta(\upsilon')}\rarrow K_{\delta(\beta)}$.
 Clearly, the transition morphisms $L_{\upsilon''}\rarrow L_{\upsilon'}$
are epimorphisms for all $\upsilon''>\upsilon'\in\Upsilon'$.
 Put $\bL=\plim_{\upsilon'\in\Upsilon'}L_{\upsilon'}\in\SPro(\sA)$.
 For every $\upsilon'\in\Upsilon'$, let $h_{\upsilon'}\:L_{\upsilon'}
\rarrow K_{\delta(\upsilon')}$ denote the natural monomorphism.
 Then $\bbh=\plim_{\upsilon'\in\Upsilon'}h_{\upsilon'}\:\bL\rarrow\bK$
is a morphism in $\SPro(\sA)$.

 For every index $\upsilon'\in\Upsilon'$, the composition of morphisms
$$
 L_{\upsilon'}\overset{h_{\upsilon'}}\lrarrow K_{\delta(\upsilon')}
 \overset{f_{\upsilon'}}\lrarrow A
$$
vanishes, as one can see from the commutative diagram
$$
 \xymatrix{
  L_{\upsilon'} \ar[r]^-{h_{\upsilon'}} \ar[d]
  & K_{\delta(\upsilon')} \ar[r]^-{f_{\upsilon'}} \ar[d]
  & A \ar@{=}[d] \\
  L_\beta \ar[r]^-{h_\beta} \ar[d]
  & K_{\delta(\beta)} \ar[r]^-{f_\beta} \ar[d]
  & A \ar@{=}[d] \\
  0 \ar[r] & K_{\delta(\alpha)} \ar[r]^-{f_\alpha} & A
 }
$$
 Applying the functor $\plim_{\upsilon'\in\Upsilon'}$, we conclude that
the composition of morphisms of pro-objects
$$
 \bL\overset\bbh\lrarrow\bK\overset\bbf\lrarrow A
$$
vanishes as well.
 Since the morphism $\bbf\:\bK\rarrow A$, being a kernel of
the morphism~$\bbg$ in $\SPro(\sA)$, is a monomorphism in $\SPro(\sA)$
by assumption, it follows that $\bbh=0$.

 At the same time, $\bbh$~is a monomorphism in $\Pro(\sA)$ (hence also
in $\SPro(\sA)$), since it is represented by a termwise monomorphism
of $\Upsilon'{}^\sop$\+indexed diagrams
$(h_{\upsilon'})_{\upsilon'\in\Upsilon'}$.
 Therefore, we have $\bL=0$.
 On the other hand, the natural morphism $\bL\rarrow L_\beta$ is
an epimorphism in $\Pro(\sA)$, and in fact even an admissible
epimorphism in $\SPro(\sA)$, since the morphisms $L_{\upsilon'}\rarrow
L_\beta$ are epimorphisms in $\sA$ and their kernels form
$\Upsilon'{}^\sop$\+indexed diagram of epimorphisms in~$\sA$.
 We can finally conclude that $L_\beta=0$ and $K_{\delta(\beta)}
\rarrow K_{\delta(\alpha)}$ is an isomorphism in~$\sA$.
 
 Thus $(K_{\delta(\upsilon)})_{\upsilon\in\Upsilon}$ is
an $\Upsilon^\sop$\+indexed diagram of isomorphisms in~$\sA$;
so $\bK=K\in\sA\subset\SPro(\sA)$.
 It remains to notice that, given a subobject $M\subset A$ of
the object $A$ in the category $\sA$, the composition $M\rarrow A
\overset\bbg\rarrow\bB$ vanishes if and only if $M$ is contained in
$A_\gamma$ for every $\gamma\in\Gamma$.
 Since $K$ is the kernel of~$\bbg$ in $\SPro(\sA)$, it follows that
$K=\bigcap_{\upsilon\in\Upsilon}A_{\gamma(\upsilon)}=
\bigcap_{\gamma\in\Gamma}A_\gamma$ is the desired intersection of
all the subobjects $A_\gamma$ in the object $A\in\sA$.
\end{proof}

\begin{prop} \label{spro-omega-quasi-abelian-characterization}
 Given a well-powered abelian category\/ $\sA$, consider the following
seven conditions:
\begin{enumerate}
\item any set of subobjects of an object of\/ $\sA$ has an intersection;
\item any well-ordered descending chain of subobjects of an object of\/
$\sA$ has an intersection;
\item the additive category\/ $\SPro_\omega(\sA)$ is quasi-abelian,
and the exact structure inherited from the abelian exact structure
of\/ $\Pro_\omega(\sA)$ coincides with the quasi-abelian exact
structure on\/ $\SPro_\omega(\sA)$;
\item a right adjoint functor to the fully faithful inclusion functor\/
$\SPro_\omega(\sA)\rarrow\Pro_\omega(\sA)$ (a coreflector) exists;
\item the kernels of all morphisms exist in the additive category\/
$\SPro_\omega(\sA)$;
\item any countable set of subobjects of an object of\/ $\sA$ has
an intersection;
\item any descending chain of subobjects of an object of\/ $\sA$
indexed by the well-ordered set of nonnegative integers has
an intersection.
\end{enumerate}
 Then the implications (1) $\Longleftrightarrow$ (2) $\Longrightarrow$
(3) $\Longrightarrow$ (4) $\Longleftrightarrow$ (5)
$\Longrightarrow$ (6) $\Longleftrightarrow$ (7) hold.
\end{prop}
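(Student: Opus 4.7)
The plan is to follow the proof of Proposition~\ref{spro-quasi-abelian-characterization} step by step, tracking countable indexings throughout, and then to supply one genuinely new implication (5)~$\Longrightarrow$~(7) in place of (5)~$\Longrightarrow$~(1).

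First, (1)~$\Longleftrightarrow$~(2) and (6)~$\Longleftrightarrow$~(7) are instances of the same argument: a (countable) family of subobjects $(A_\gamma)_{\gamma\in\Gamma}$ of an object $A\in\sA$ gives rise to the downwards directed (countable) family of finite intersections, whose intersection exists by~(2) (respectively,~(7)) and coincides with $\bigcap_\gamma A_\gamma$, using \cite[Corollary~1.7]{AR} exactly as in the proof of the previous proposition. For (2)~$\Longrightarrow$~(3), (2)~$\Longrightarrow$~(4), (3)~$\Longrightarrow$~(5), and (4)~$\Longleftrightarrow$~(5), the arguments of Proposition~\ref{spro-quasi-abelian-characterization} transcribe without change once $\Pro$ and $\SPro$ are replaced by $\Pro_\omega$ and $\SPro_\omega$: starting from a pro-object $\bB=\plim_{\gamma\in\Gamma}B_\gamma\in\Pro_\omega(\sA)$ with $\Gamma$ countable, the transfinite construction of the subdiagram $(C_\gamma=B_\gamma(\lambda))_{\gamma\in\Gamma}$ preserves the (countable) indexing poset~$\Gamma$, so the resulting coreflection lies in $\SPro_\omega(\sA)$; kernels in $\SPro_\omega(\sA)$ can be built as coreflections of kernels in $\Pro_\omega(\sA)$; and Lemma~\ref{morphism-in-pro-objects-arise-from}(b) furnishes countably indexed representing diagrams of epimorphisms for kernel-cokernel pairs in $\SPro_\omega(\sA)$, so the quasi-abelian exact structure coincides with the one inherited from $\Pro_\omega(\sA)$.

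The new implication is (5)~$\Longrightarrow$~(7). Given a descending chain $A=A_0\supset A_1\supset A_2\supset\cdots$ of subobjects of $A\in\sA$ indexed by the nonnegative integers, the index set is countable, so $\bB=\plim_{n\ge 0}A/A_n$ lies in $\SPro_\omega(\sA)$, and the projections $A\rarrow A/A_n$ assemble into a morphism $\bbg\:A\rarrow\bB$. Let $\bbf\:\bK\rarrow A$ be the kernel of~$\bbg$ in $\SPro_\omega(\sA)$, which exists by~(5). The proof of (5)~$\Longrightarrow$~(1) in Proposition~\ref{spro-quasi-abelian-characterization} now applies \emph{mutatis mutandis}: one represents~$\bbf$ by a morphism of $\Upsilon^\sop$\+indexed diagrams with $\Upsilon$ countable, via Lemma~\ref{morphism-in-pro-objects-arise-from}(b); for any pair $\alpha<\beta\in\Upsilon$, one forms $\bL=\plim_{\upsilon'\in\Upsilon'}L_{\upsilon'}\in\SPro_\omega(\sA)$ from the kernel $L_\beta$ of $K_{\delta(\beta)}\rarrow K_{\delta(\alpha)}$ pulled back along the transition morphisms; one deduces $\bL=0$ from the fact that $\bbf$ is a monomorphism, whence $L_\beta=0$ and the transition $K_{\delta(\beta)}\rarrow K_{\delta(\alpha)}$ is an isomorphism. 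Consequently $\bK\cong K\in\sA$, and the universal property of the kernel identifies $K=\bigcap_n A_n$ as the desired intersection.

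The main obstacle I anticipate is essentially bookkeeping: one must verify at each step that no construction inadvertently leaves the countable world, i.e., that every indexing poset $\Gamma$, $\Upsilon$, $\Upsilon'$ appearing in the transcribed arguments remains countable when the inputs are. This is automatic from the countable versions of Lemmas~\ref{lim-in-quotes-preserves-finite-co-limits}--\ref{morphism-in-pro-objects-arise-from} and Proposition~\ref{strict-pro-objects-closed-under-extensions}; no new categorical idea beyond those already used in Proposition~\ref{spro-quasi-abelian-characterization} is required.
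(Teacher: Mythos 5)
Your proposal covers most of the proposition correctly and in essentially the paper's way, but it has one genuine gap: the implication (5)~$\Longrightarrow$~(4). You assert that ``(4)~$\Longleftrightarrow$~(5)'' transcribes without change from Proposition~\ref{spro-quasi-abelian-characterization}, yet the only direction you actually justify is (4)~$\Longrightarrow$~(5) (``kernels in $\SPro_\omega(\sA)$ can be built as coreflections of kernels in $\Pro_\omega(\sA)$''). In the uncountable proposition the converse direction is never proved directly: it falls out of the cycle (5)~$\Longrightarrow$~(1)~$\Longrightarrow$~(2)~$\Longrightarrow$~(4). That cycle is unavailable here, precisely because in the countable setting (5) only yields the \emph{countable} intersection conditions (6)/(7) and not (1)/(2) --- which is the whole point of the weaker chain of implications in the statement. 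So (5)~$\Longrightarrow$~(4) requires a new argument, and your proposal supplies none.

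The paper's argument for (5)~$\Longrightarrow$~(4) is this: a coreflection of an arbitrary object $\bB=\plim_{n\in\omega}B_n\in\Pro_\omega(\sA)$ onto $\SPro_\omega(\sA)$ is the same thing as a kernel in $\SPro_\omega(\sA)$ of a morphism $\bbg\:\bD\rarrow\bE$ between objects of $\SPro_\omega(\sA)$ whose kernel in $\Pro_\omega(\sA)$ is $\bB$. Such a morphism is constructed explicitly: put $D_n=\bigoplus_{m=0}^n B_m$ with the direct summand projections as transition maps (so $(D_n)_{n\in\omega}$ is a diagram of split epimorphisms and $\bD\in\SPro_\omega(\sA)$), embed $B_n\rarrow D_n$ by the split monomorphism whose components are the transition maps $B_n\rarrow B_m$, and let $E_n$ be the cokernel. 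Applying $\plim_{n\in\omega}$ to the resulting termwise split short exact sequence gives $0\rarrow\bB\rarrow\bD\rarrow\bE\rarrow0$ in $\Pro_\omega(\sA)$ with $\bD,\bE\in\SPro_\omega(\sA)$, and the kernel of $\bbg$ in $\SPro_\omega(\sA)$, which exists by~(5), is the desired coreflection of~$\bB$. The remainder of your proposal --- the transcription of (1)~$\Leftrightarrow$~(2), (2)~$\Rightarrow$~(3), (2)~$\Rightarrow$~(4), (4)~$\Rightarrow$~(5), the new implication (5)~$\Rightarrow$~(7), and (6)~$\Leftrightarrow$~(7) --- matches the paper's proof and is fine.
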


\begin{proof}
 The equivalence (1)~$\Longleftrightarrow$~(2) is a part of
Proposition~\ref{spro-quasi-abelian-characterization}, and
the equivalence (6)~$\Longleftrightarrow$~(7) is similar.
 The implication (3)~$\Longrightarrow$~(5) is obvious.

 The implications (2) $\Longrightarrow$ (4) $\Longrightarrow$ (5) and
(2)~$\Longrightarrow$~(3) are similar to the respective implications
in Proposition~\ref{spro-quasi-abelian-characterization}.
 The implication (5)~$\Longrightarrow$~(6) is similar to
Proposition~\ref{spro-quasi-abelian-characterization}\,%
(5)\,$\Rightarrow$\,(1).

 (5)~$\Longrightarrow$~(4) Let $\bbg\:\bD\rarrow\bE$ be a morphism in
$\SPro_\omega(\sA)$, and let $\bB$ be the kernel of~$\bbg$ in
the abelian category $\Pro_\omega(\sA)$.
 Then a kernel of the morphism~$\bbg$ in the category
$\SPro_\omega(\sA)$ is the same thing as a coreflection of the object
$\bB\in\Pro_\omega(\sA)$ onto the full subcategory $\SPro_\omega(\sA)
\subset\Pro_\omega(\sA)$.
 If any one of these two objects exists, then the other one exists, too;
and they are naturally isomorphic.
 It remains to construct a morphism $\bbg\:\bD\rarrow\bE$ in
$\SPro_\omega(\sA)$ for a given object $\bB\in\Pro_\omega(\sA)$.

 Let $(B_n)_{n\in\omega}$ be
an $\omega^\sop$\+indexed diagram in $\sA$ representing an object
$\bB=\plim_{n\in\omega}B_n\in\Pro_\omega(\sA)$.
 Consider the $\omega^\sop$\+indexed diagram $(D_n)_{n\in\omega}$
in $\sA$ with the components $D_n=\bigoplus_{m=0}^n B_m$ and
the natural direct summand projections $D_{n+1}\rarrow D_n$ as
the transition maps.
 So the transition maps in the diagram $(D_n)_{n\in\omega}$ are
split epimorphisms in~$\sA$.
 Then there is a natural morphism of $\omega^\sop$\+indexed diagrams
$(f_n)_{n\in\omega}\:(B_n)_{n\in\omega}\rarrow(D_n)_{n\in\omega}$,
where the components of the map $f_n\:B_n\rarrow D_n$ are
the transition maps $B_n\rarrow B_m$, \ $0\le m\le n$.
 Clearly, the morphism~$f_n$ is a split monomorphism in $\sA$ for
every $n\in\omega$.

 Denote by $g_n\:D_n\rarrow E_n$ the cokernel of the split
monomorphism $f_n\:B_n\rarrow D_n$.
 Then we have a short exact sequence of $\omega^\sop$ indexed
diagrams $0\rarrow(B_n)_{n\in\omega}\rarrow(D_n)_{n\in\omega}
\rarrow(E_n)_{n\in\omega}\rarrow0$.
 Applying the functor $\plim_{n\in\omega}$, we obtain a short
exact sequence $0\rarrow\bB\overset\bbf\rarrow\bD\overset\bbg\rarrow\bE
\rarrow0$ in $\Pro_\omega(\sA)$ with the objects $\bD$ and $\bE$
belonging to $\SPro_\omega(\sA)$.
 Let $\bC$ be the kernel of the morphism $\bbg\:\bD\rarrow\bE$ in
the category $\SPro_\omega(\sA$).
 Then $\bC$ is the coreflection of the object $\bB\in\Pro_\omega(\sA)$
onto the full subcategory $\SPro_\omega(\sA)\subset\Pro_\omega(\sA)$.
\end{proof}

 Let $\sE$ be an exact category.
 A full subcategory $\sA\subset\sE$ is said to be
\emph{self-cogenerating} if for every morphism $A\rarrow E$ in $\sE$
with $A\in\sA$ there exists a morphism $E\rarrow B$ in $\sE$ with
$B\in\sA$ such that the composition $A\rarrow B$ is an admissible
monomorphism in~$\sE$.
 A self-cogenerating full subcategory $\sA\subset\sE$ is said to be
\emph{self-coresolving} if $\sA$ is closed under extensions and
cokernels of admissible monomorphisms in~$\sE$.

\begin{lem} \label{objects-coresolving-in-strict-pro-objects}
 Let $\sA$ be an abelian category.  In this context: \par
\textup{(a)} The essential image of the natural exact fully faithful
functor\/ $\sA\rarrow\SPro(\sA)$ is a self-coresolving full subcategory
in the exact category\/ $\SPro(\sA)$.
 The exact structure on\/ $\sA$ inherited from the exact structure of\/
$\SPro(\sA)$ coincides with the abelian exact structure on\/~$\sA$. \par
\textup{(b)} The essential image of the natural exact fully faithful
functor\/ $\sA\rarrow\SPro_\omega(\sA)$ is a self-coresolving full
subcategory in the exact category\/ $\SPro_\omega(\sA)$.
 The exact structure on\/ $\sA$ inherited from the exact structure
of\/ $\SPro_\omega(\sA)$ coincides with the abelian exact structure
on\/~$\sA$.
\end{lem}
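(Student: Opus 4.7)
I prove part~(a); part~(b) follows by the same argument, everywhere substituting $\Pro_\omega(\sA)$ and $\SPro_\omega(\sA)$ for $\Pro(\sA)$ and $\SPro(\sA)$ and insisting that the indexing posets $\Gamma$ and $\Delta$ stay countable, invoking Corollary~\ref{pro-objects-abelian}(b) and Proposition~\ref{strict-pro-objects-closed-under-extensions}(b) in place of their non-omega analogues. All the poset manipulations used below (cofinal restrictions, threshold indices $\gamma_0$, choices of $\delta$) remain within countable directed posets.

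The easy pieces come first. The inclusion $\sA\rarrow\SPro(\sA)$ sends $A$ to the constant pro-object indexed by a one-point poset; it is fully faithful by the Hom formula~\eqref{hom-in-pro-objects-formula} and exact by Corollary~\ref{pro-objects-abelian}(a). Being fully faithful and exact between an abelian category and $\SPro(\sA)$, it reflects short exact sequences, yielding the claim about coincidence of exact structures. Closure of $\sA$ under cokernels of admissible monomorphisms in $\SPro(\sA)$ is immediate: the cokernel in $\Pro(\sA)$ of a morphism between constant pro-objects is again computed in $\sA$ by this same exactness.

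The heart of the argument is closure under extensions. Given an admissible short exact sequence $0\rarrow A\rarrow\bX\rarrow C\rarrow0$ in $\SPro(\sA)$ with $A,C\in\sA$, Proposition~\ref{strict-pro-objects-closed-under-extensions}(a) represents it as a short exact sequence of $\Gamma^\sop$-indexed diagrams of epimorphisms $0\rarrow(A_\gamma)\rarrow(X_\gamma)\rarrow(C_\gamma)\rarrow0$. The essential sublemma is a \emph{stabilization statement}: any strict pro-object $(Y_\gamma)_{\gamma\in\Gamma}$ isomorphic in $\Pro(\sA)$ to a single object $Y\in\sA$ satisfies $Y_\gamma\simeq Y$ (via explicit mutual inverses) and has transitions that are isomorphisms past some threshold $\gamma_0$. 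One proves this by writing the isomorphism $\phi\:Y\rarrow\plim Y_\gamma$ as a compatible family $s_\gamma\:Y\rarrow Y_\gamma$ and its inverse $\psi\:\plim Y_\gamma\rarrow Y$ as the class $[r_{\gamma_0}\:Y_{\gamma_0}\rarrow Y]$, then unpacking both composition identities via~\eqref{hom-in-pro-objects-formula}. From $\psi\phi=\id_Y$ one reads off $r_{\gamma_0}s_{\gamma_0}=\id_Y$; from $\phi\psi=\id$ one obtains, for each $\gamma$, the equation $s_\gamma r_{\gamma_0}p_{\delta\gamma_0}=p_{\delta\gamma}$ in $\Hom_\sA(Y_\delta,Y_\gamma)$ for some $\delta\ge\gamma,\gamma_0$, and cancelling the epimorphic transitions on the right gives $s_\gamma r_\gamma=\id_{Y_\gamma}$ with $r_\gamma:=r_{\gamma_0}\circ p_{\gamma\gamma_0}$. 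Applying stabilization to $(A_\gamma)$ and $(C_\gamma)$ and restricting to a cofinal subset of $\Gamma$, I may assume $A_\gamma=A$ and $C_\gamma=C$ with identity transitions; the 5-lemma then forces every $X_{\gamma'}\rarrow X_\gamma$ (a morphism of extensions fixing the outer terms) to be an isomorphism, so $\bX\simeq X_{\gamma_0}\in\sA$.

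Finally, for self-cogeneration, I consider an admissible monomorphism $\alpha\:A\rarrow\bE=\plim_{\delta\in\Delta}E_\delta$ in $\SPro(\sA)$ with $A\in\sA$, represented by a compatible family $f_\delta\:A\rarrow E_\delta$. Lemma~\ref{lim-in-quotes-preserves-finite-co-limits}(b) computes the kernel of $\alpha$ in $\Pro(\sA)$ as $\plim_\delta K_\delta$ with $K_\delta:=\ker(f_\delta)$, and the epimorphic transitions of $\bE$ force the $K_\delta$ to be subobjects of $A$ with transitions realised as subobject inclusions $K_{\delta'}\subset K_\delta$ for $\delta'\ge\delta$. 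The vanishing $\plim K_\delta=0$ in $\Pro(\sA)$ therefore forces $K_\delta=0$ for some $\delta$, and for such $\delta$ the natural projection $\beta\:\bE\rarrow E_\delta$ realises $\beta\circ\alpha=f_\delta$ as an admissible monomorphism in $\SPro(\sA)$ landing in $\sA$. The main technical obstacle in the whole argument is the stabilization sublemma, which requires careful unpacking of the double limit/colimit Hom formula together with two successive cancellations of epimorphisms.
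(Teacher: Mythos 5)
Your proof is correct, and it verifies the same list of properties as the paper's proof (self-cogeneration, closure under extensions, closure under cokernels of admissible monomorphisms, coincidence of the inherited and abelian exact structures), but the two substantive checks are carried out by a genuinely different mechanism. The paper disposes of both self-cogeneration and extension-closure in one stroke by citing the explicit construction in the proof of~\cite[Proposition~9.2]{Pextop}, which produces a representing short exact sequence of $\Gamma^\sop$\+indexed diagrams in which the outer term $A$ (and, for extensions, also $D$) appears as a \emph{literally constant} diagram; the component $A\rarrow E_{\gamma_0}$ is then automatically a monomorphism in $\sA$, and the middle term of a termwise extension of two constant diagrams is constant. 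You instead start from the weaker representation supplied by Proposition~\ref{strict-pro-objects-closed-under-extensions}(a) and recover the needed rigidity by hand: your stabilization sublemma (a strict pro-object isomorphic to a constant one has eventually invertible transitions, proved by unwinding the formula~\eqref{hom-in-pro-objects-formula} and cancelling epimorphic transition maps on the right) substitutes for the paper's appeal to the external construction, and your self-cogeneration argument replaces the constant-subdiagram representation by a termwise kernel computation together with the observation that a pro-zero system whose transition maps are subobject inclusions must have a vanishing term. I checked the stabilization computation ($r_{\gamma_0}s_{\gamma_0}=\id_Y$ from one composite, $s_\gamma r_\gamma=\id_{Y_\gamma}$ for $\gamma\ge\gamma_0$ from the other after cancelling the epimorphism $p_{\delta\gamma}$) and it is sound, as is the reduction of part~(b) to countable posets. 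Your route is more self-contained, at the price of the extra sublemma; the paper's is shorter because the hard work is outsourced to~\cite{Pextop}. The remaining points are handled essentially identically in both arguments.
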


\begin{proof}
 Part~(a): Let $A\in\sA\subset\SPro(\sA)$ be an object and
$0\rarrow A\rarrow\bE\rarrow\bD\rarrow0$ be a short exact sequence
in $\Pro(\sA)$.
 It is clear from the proof of~\cite[Proposition~9.2]{Pextop}
(cf.\ Proposition~\ref{strict-pro-objects-closed-under-extensions})
that the exists a directed poset $\Gamma$ and a short exact sequence
of $\Gamma^\sop$\+indexed diagrams $0\rarrow(A_\gamma)_{\gamma\in\Gamma}
\rarrow(E_\gamma)_{\gamma\in\Gamma}\rarrow(D_\gamma)_{\gamma\in\Gamma}
\rarrow0$ in $\sA$ representing the short exact sequence
$0\rarrow A\rarrow\bE\rarrow\bD\rarrow0$ in $\Pro(\sA)$ such that
$(A_\gamma=A)_{\gamma\in\Gamma}$ is the constant diagram~$A$.
 Pick an element $\gamma_0\in\Gamma$, and put $B=E_{\gamma_0}\in\sA$.
 Then the composition $A\rarrow\bE\rarrow B$ is a monomorphism in $\sA$,
hence also a monomorphism in $\Pro(\sA)$ and an admissible monomorphism
in $\SPro(\sA)$.
 This proves that the full subcategory $\sA$ is self-cogenerating in
both $\Pro(\sA)$ and $\SPro(\sA)$.

 To prove that $\sA$ is closed under extensions in $\Pro(\sA)$,
consider two objects $A$, $D\in\sA\subset\SPro(\sA)$ and a short exact
sequence $0\rarrow A\rarrow\bE\rarrow D\rarrow0$ in $\Pro(\sA)$.
 Similarly to the argument above, it is clear from the proof
of~\cite[Proposition~9.2]{Pextop} that there exists a directed poset
$\Gamma$ and a short exact sequence of $\Gamma$\+indexed diagrams
in $\sA$ representing the short exact sequence
$0\rarrow A\rarrow\bE\rarrow D\rarrow0$ in $\Pro(\sA)$ such that both
$(A_\gamma=A)_{\gamma\in\Gamma}$ and $(D_\gamma=D)_{\gamma\in\Gamma}$
are the constant diagrams $A$ and~$D$.
 Then it follows that $(E_\gamma)_{\gamma\in\Gamma}$ is also
a constant diagram in~$\sA$.
 Therefore, the full subcategory $\sA$ is also closed under extensions
in $\Pro(\sA)$ and $\SPro(\sA)$.

 The assertion that the exact category structure on $\sA$ inherited
from $\Pro(\sA)$ or $\SPro(\sA)$ coincides with the abelian exact
structure on $\sA$ follows from exactness of the fully faithful
functor $\sA\rarrow\Pro(\sA)$, as there cannot be any admissible
short exact sequences in an exact structure on $\sA$ that are not
exact in the abelian exact structure.
 Furthermore, any monomorphism in $\Pro(\sA)$ or $\SPro(\sA)$ between
two objects from $\sA$ is a monomorphism in~$\sA$.
 As the cokernels of all (mono)morphisms exist in $\sA$, we can conclude
that $\sA$ is closed under the cokernels of admissible monomorphisms
in both $\Pro(\sA)$ and $\SPro(\sA)$.
 The proof of part~(b) is similar.
\end{proof}

 A discussion of the derived categories of exact categories can be
found in~\cite[Section~10]{Bueh}.

\begin{cor} \label{fully-faithful-on-D-b-and-D-plus}
 Let $\sA$ be an abelian category.  In this context: \par
\textup{(a)} The natural exact fully faithful functor\/
$\sA\rarrow\SPro(\sA)$ induces an isomorphism on all the Yoneda\/
$\Ext$ groups.
 In other words, the induced triangulated functor between the bounded
derived categories\/ $\sD^\bb(\sA)\rarrow\sD^\bb(\SPro(\sA))$ is
fully faithful.
 Moreover, the similar functor between the bounded below derived
categories\/ $\sD^+(\sA)\rarrow\sD^+(\SPro(\sA))$ is
fully faithful, too. \par
\textup{(b)} The natural exact fully faithful functor\/
$\sA\rarrow\SPro_\omega(\sA)$ induces an isomorphism on all
the Yoneda\/ $\Ext$ groups.
 In other words, the induced triangulated functor between the bounded
derived categories\/ $\sD^\bb(\sA)\rarrow\sD^\bb(\SPro_\omega(\sA))$ is
fully faithful.
 Moreover, the similar functor between the bounded below derived
categories\/ $\sD^+(\sA)\rarrow\sD^+(\SPro_\omega(\sA))$ is
fully faithful, too.
\end{cor}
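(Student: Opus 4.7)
The plan is to derive both parts from a single general fact about self-coresolving full subcategories of exact categories, applied to the conclusion of Lemma~\ref{objects-coresolving-in-strict-pro-objects}: that lemma establishes precisely that $\sA$ sits as a self-coresolving exact full subcategory of both $\SPro(\sA)$ and $\SPro_\omega(\sA)$, with the inherited exact structure being the abelian one on~$\sA$. The general fact I want to invoke reads as follows: for any self-coresolving full subcategory $\sB$ of an exact category $\sE$, the comparison maps $\Ext^n_\sB(X,Y)\to\Ext^n_\sE(X,Y)$ are isomorphisms for all $X,Y\in\sB$ and $n\ge0$, and the canonical triangulated functors $\sD^\bb(\sB)\to\sD^\bb(\sE)$ and $\sD^+(\sB)\to\sD^+(\sE)$ are fully faithful. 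Granting this, applying it with $\sB=\sA$ and $\sE\in\{\SPro(\sA),\SPro_\omega(\sA)\}$ yields both parts~(a) and~(b) at once.

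To justify the isomorphism on Yoneda $\Ext$ groups, I would argue via an extension-modification procedure. The case $n=0$ is simply the full faithfulness of $\sB\hookrightarrow\sE$. For $n\ge1$, given an $n$-extension $0\to B\to X_n\to\cdots\to X_1\to A\to0$ in $\sE$ with $A,B\in\sB$, apply the self-cogenerating property to the admissible monomorphism $B\to X_n$ to obtain $X_n\to Y_n$ with $Y_n\in\sB$ and $B\to Y_n$ an admissible monomorphism; closure of $\sB$ under cokernels of admissible monomorphisms places $Y_n/B\in\sB$. A pushout construction in $\sE$ (of the map $X_n/B\to Y_n/B$ against the admissible monomorphism $X_n/B\hookrightarrow X_{n-1}$) then produces an equivalent $n$-extension in which $X_n$ has been replaced by $Y_n\in\sB$, with the endpoints $A$ and $B$ preserved. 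At the very bottom step---where the next term to modify is the one mapping onto $A$---one uses instead a pullback along $A\to Y_1/Z_1$, so as to preserve $A$; closure of $\sB$ under extensions then places the new bottom middle term in $\sB$. Iterating through the middle terms produces an equivalent Yoneda $n$-extension entirely in $\sB$, giving surjectivity of the comparison map; injectivity follows from an analogous modification of chains of equivalences of extensions together with the already-established full faithfulness of the inclusion.

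The derived category assertions are deduced from the Yoneda $\Ext$ comparison by standard arguments: for bounded complexes $X^\bu,Y^\bu$ in $\sB$ one proceeds by induction on the combined amplitude, using the distinguished triangles arising from cohomological truncations in the abelian category $\sB$ (which remain distinguished triangles in $\sD^\bb(\sE)$ via the exact inclusion) together with the five lemma, reducing at each step to the already-established isomorphism on cohomology objects in~$\sB$; the bounded below case is handled analogously using brutal truncations and passage to the limit via a convergent hyper-$\Ext$ spectral sequence. I expect the main technical point of the whole argument to be the verification that these derived-category manipulations go through in the generality of an exact (as opposed to abelian) ambient category $\sE$, but this is managed by systematically working with truncations in the abelian category~$\sB$ and transferring them to $\sE$ via the exact fully faithful inclusion; the required machinery for derived categories of exact categories is laid out in~\cite{Bueh}, which may alternatively be cited directly for the general principle in place of the above sketch.
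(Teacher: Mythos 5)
Your reduction is exactly the one the paper uses: everything is funneled through Lemma~\ref{objects-coresolving-in-strict-pro-objects} and the general principle that a self-coresolving full subcategory $\sB$ of an exact category $\sE$ induces fully faithful functors $\sD^\bb(\sB)\rarrow\sD^\bb(\sE)$ and $\sD^+(\sB)\rarrow\sD^+(\sE)$. The paper's entire proof is a citation of that principle to \cite[Theorem~12.1(2)]{Kel} or \cite[Proposition~A.2.1]{Pcosh}; where you diverge is in trying to reprove the principle by hand. Your Yoneda $\Ext$ comparison is sound: the top-down modification by self-cogeneration, pushout along $Z_{i-1}\rarrow Y_i/Z_i$, and the final pullback step (using closure under extensions to keep the bottom middle term in $\sB$) is a correct and classical argument, and the dévissage by canonical truncations plus the five lemma does give the $\sD^\bb$ statement, granted the standard identification of $\Hom_{\sD^\bb(\sE)}(X,Y[n])$ with Yoneda $\Ext^n_\sE(X,Y)$. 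One small correction: \cite{Bueh} does not contain the general principle you want to cite as a fallback; the correct references are precisely the ones the paper gives.

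The genuine gap is in the passage from $\sD^\bb$ to $\sD^+$. Neither brutal nor canonical truncations reduce the bounded-below case to the bounded case here: for $X^\bu,Y^\bu\in\sD^+$ both with cohomology in arbitrarily high degrees, the error terms $\Hom(\sigma_{\ge n}X^\bu,\allowbreak Y^\bu)$ (respectively $\Hom(X^\bu,\tau_{\ge n}Y^\bu)$) do not vanish for large~$n$, so there is no finite-stage reduction, and the hyper-$\Ext$ spectral sequence you invoke has infinitely many terms contributing to each total degree and is not obviously convergent; $\sD^+$ of an exact category also lacks the countable products one would need for a homotopy-limit or Milnor-sequence argument. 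The standard proofs run in the opposite direction: using the self-cogenerating property one builds, for any bounded-below complex of $\sE$ receiving a morphism from a complex over $\sB$, a quasi-isomorphism into a bounded-below complex over $\sB$, and then the (co)calculus of fractions gives full faithfulness on $\sD^+$ directly, with the $\sD^\bb$ statement and the $\Ext$ comparison falling out as corollaries. So your argument needs to be replaced (or supplemented) by this coresolution argument for the ``moreover'' clauses of both parts; as written, only the $\sD^\bb$ assertions are actually established.
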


\begin{proof}
 The assertions follow from the respective assertions of
Lemma~\ref{objects-coresolving-in-strict-pro-objects} by virtue
of~\cite[Theorem~12.1(2)]{Kel} or~\cite[Proposition~A.2.1]{Pcosh}.
\end{proof}

\Section{Products and Projectives in Countably Indexed
Strict~Pro-Objects}

 We start with a discussion of filtered limits and products in
the category $\Pro_\omega(\sC)$ for a possibly nonadditive category
$\sC$ before passing to the abelian category case.

\begin{lem} \label{filtered-limits-in-pro-objects}
\textup{(a)} For any category\/ $\sC$, all filtered limits exits in
the category\/ $\Pro(\sC)$. \par
\textup{(b)} For any category\/ $\sC$, all countable filtered limits
exist in the category\/ $\Pro_\omega(\sC)$.
 Moreover, the full subcategory\/ $\Pro_\omega(\sC)$ is closed under
countable filtered limits in\/ $\Pro(\sC)$.
\end{lem}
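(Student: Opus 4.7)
\emph{Plan.} I will use the standard identification of $\Pro(\sC)$ with the full subcategory of $(\Sets^\sC)^\sop$ whose objects, regarded as functors $\sC\to\Sets$, are directed colimits of corepresentables $\Hom_\sC(C,-)$. Under this identification a filtered limit in $\Pro(\sC)$ is nothing but a filtered colimit in $\Sets^\sC$ of functors of that form, and the task reduces to checking that such a colimit is again a directed colimit of corepresentables, with suitable control on the indexing poset in part~(b).

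\emph{Part (a).} Let $(F_i)_{i\in I}$ be a small filtered diagram in $\Pro(\sC)$, with $F_i=\varinjlim_{\gamma\in\Gamma_i}\Hom_\sC(C_{i,\gamma},-)$. Using Lemma~\ref{morphism-in-pro-objects-arise-from}(a), after cofinally enlarging the $\Gamma_i$ one may arrange that each transition morphism $F_i\to F_j$ arises from a morphism of diagrams in~$\sC$. The iterated colimit $\varinjlim_{i\in I}\varinjlim_{\gamma\in\Gamma_i}\Hom_\sC(C_{i,\gamma},-)$ then combines into a single directed colimit $\varinjlim_{(i,\gamma)\in J}\Hom_\sC(C_{i,\gamma},-)$, where $J$ is a directed poset cofinal in the filtered category assembled from $I$ and the $\Gamma_i$ via a Grothendieck-type construction. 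The universal property of the resulting pro-object is read off the formula~\eqref{hom-in-pro-objects-formula}, using that limits commute with limits and directed colimits commute with directed colimits in $\Sets$.

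\emph{Part (b).} Under countability of $I$ and of each $\Gamma_i$, the same construction yields a countable total category, which admits a countable cofinal directed subposet $J$, so the resulting limit lies in $\Pro_\omega(\sC)$. The ``moreover'' statement is then immediate: the object just constructed already realises the limit universal property in the larger category $\Pro(\sC)$, so the inclusion $\Pro_\omega(\sC)\hookrightarrow\Pro(\sC)$ preserves countable filtered limits.

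\emph{Main obstacle.} The essential technical step is the reindexing: reorganising the iterated filtered colimits $\varinjlim_i\varinjlim_\gamma$ into a single directed colimit, and, in part~(b), verifying that countability is preserved. This rests on two standard facts, namely that the Grothendieck construction of a filtered-valued diagram over a filtered base is filtered, and that every small filtered category admits a cofinal small directed subposet (countable when the original is countable).
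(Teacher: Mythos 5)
Your overall plan — identify $\Pro(\sC)$ with (the opposite of) the directed colimits of corepresentables inside $\Sets^\sC$, reorganize an iterated filtered colimit into a single one, and keep track of countability for part~(b) — is the right shape, and for part~(b) it is essentially the route the paper takes. But there is a genuine gap at the step you call the ``essential technical step,'' namely the rectification. Lemma~\ref{morphism-in-pro-objects-arise-from}(a) lets you represent \emph{one} morphism (or a finite diagram with no nonidentity endomorphisms) by a morphism of $\Gamma^\sop$\+indexed diagrams; applying it separately to each transition morphism $F_i\to F_j$ of an arbitrary filtered diagram does \emph{not} produce level representations compatible with composition in $I$, and without that compatibility there is no functor $I\to\mathsf{Cat}$ to feed into a Grothendieck construction. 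Rectifying an entire infinite diagram of pro-objects to a pro-object of diagrams is precisely the problem of whether $\Pro(\sC)^I\simeq\Pro(\sC^I)$, which fails in general (this is the subject of the cited preprint~\cite{Hen}). So as written, your argument for part~(a) would break down for a general filtered index category. The paper avoids rectification entirely in part~(a): it cites \cite[Theorem~6.1.8]{KS}, and sketches the alternative argument that corepresentables are finitely presentable objects of $\Sets^\sC$, so the class of their directed colimits is closed under directed colimits.

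For part~(b) the rectification \emph{is} the correct strategy, because after reducing to a cofinal copy of $\omega$ one can straighten an $\omega^\sop$\+indexed chain of objects of $\Pro_\omega(\sC)$ inductively — but this is exactly where the work lies, and your proposal omits it. The paper's proof constructs, step by step, cofinal subdiagrams $(D_{1,k_{0,m}})_m$, $(D_{2,k_{1,m}})_m$, \dots, at each stage passing far enough along the chain that the relevant squares commute \emph{in $\sC$} (not merely in $\Pro_\omega(\sC)$), before assembling the resulting $\omega^\sop\times\omega^\sop$\+diagram and taking its diagonal. Your one-sentence appeal to ``the same construction'' inherits the unjustified compatibility claim from part~(a); the countability bookkeeping you describe (countable filtered categories admit cofinal countable directed, indeed $\omega$\+indexed, subdiagrams) is fine, but it is not the hard part. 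To repair the proposal: for~(a), replace the rectification by the finite-presentability argument (or the citation), and for~(b), supply the inductive straightening of the $\omega^\sop$\+chain.
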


\begin{proof}
 Part~(a) is~\cite[Theorem~6.1.8]{KS}.
 Assuming $\sC$ to be a small category, this argument can be also
phrased as follows.
 The category $\Sets^\sC$ is locally finitely presentable, and
the corepresentable functors $\sC\rarrow\Sets$ are finitely presentable
objects in $\Sets^\sC$.
 Consequently, the class of all directed colimits of corepresentable
functors is closed under directed colimits in $\Sets^\sC$ by
a nonadditive version of~\cite[Section~4.1]{CB}
or~\cite[Proposition~5.11]{Kra0}, or by the special case
of~\cite[Proposition~1.2]{Pacc} for $\kappa=\aleph_0$.

 Part~(b): without loss of generality, one can restrict oneself to
the countable directed poset of nonnegative integers $\Gamma=\omega$.
 Then one observes that any $\omega^\sop$\+indexed diagram in
$\Pro_\omega(\sC)$ comes from an $\omega^\sop$\+indexed diagram in
$\sC^{\omega^\sop}$ via the functor $\plim_\omega\,\:\sC^{\omega^\sop}
\rarrow\Pro_\omega(\sC)$.

 Indeed, let $\dotsb\rarrow\bD_2\rarrow\bD_1\rarrow\bD_0$ be
an $\omega^\sop$\+indexed diagram in $\Pro_\omega(\sC)$.
 For every $n\ge0$, the pro-object $\bD_n$ can be represented as
$\bD_n=\plim_{k\in\omega}D_{n,k}$, where $(D_{n,k})_{k\in\omega}$
is an $\omega^\sop$\+indexed diagram in~$\sC$.

 The composition $\bD_1\rarrow\bD_0\rarrow D_{0,0}$ factorizes through
the morphism $\bD_1\rarrow D_{1,k_{0,0}}$ for some $k_{0,0}\in\omega$.
 So we obtain a morphism $f_{k_{0,0},0}^0\:D_{1,k_{0,0}}\rarrow
D_{0,0}$ in $\sC$ making the square diagram
$$
 \xymatrix{
  \bD_1 \ar[r] \ar[d] & \bD_0 \ar[d] \\
  D_{1,k_{0,0}} \ar[r] & D_{0,0}
 }
$$
commutative in $\Pro_\omega(\sC)$.
 The composition $\bD_1\rarrow\bD_0\rarrow D_{0,1}$ factorizes through
the morphism $\bD_1\rarrow D_{1,k'_{0,1}}$ for some $k'_{0,1}\in\omega$,
$k'_{0,1}\ge k_{0,0}$.
 Then the two compositions $D_{1,k'_{0,1}}\rarrow D_{1,k_{0,0}}\rarrow
D_{0,0}$ and $D_{1,k'_{0,1}}\rarrow D_{0,1}\rarrow D_{0,0}$ need not be
equal to each other as morphisms in~$\sC$.
 However, there exists an integer $k_{0,1}\in\omega$ such that $k_{0,1}
\ge k'_{0,1}$, and the two compositions $D_{1,k_{0,1}}\rarrow
D_{1,k'_{0,1}}\rarrow D_{1,k_{0,0}}\rarrow D_{0,0}$ and
$D_{1,k_{0,1}}\rarrow D_{1,k'_{0,1}}\rarrow D_{0,1}\rarrow D_{0,0}$
are equal to each other in~$\sC$.
 So we obtain a morphism $f^0_{k_{0,1},1}\:D_{1,k_{0,1}}\rarrow D_{0,1}$
in $\sC$ making the rightmost square diagram commutative in $\sC$, while
the leftmost square diagram is commutative in $\Pro_\omega(\sC)$:
$$
 \xymatrix{
  \bD_1 \ar[r] \ar[d] & \bD_0 \ar[d] \\
  D_{1,k_{0,1}} \ar[r] & D_{0,1}
 }
 \qquad\qquad
 \xymatrix{
  D_{1,k_{0,1}} \ar[r] \ar[d] & D_{0,1} \ar[d] \\
  D_{1,k_{0,0}} \ar[r] & D_{0,0}
 }
$$
 Proceeding in this way, we construct a cofinal subdiagram
$(D_{1,k_{0,m}})_{m\in\omega}$ in the diagram $(D_{1,k})_{k\in\omega}$
with the property that the morphism $\bD_1\rarrow\bD_0$ in
$\Pro_\omega(\sC)$ can be obtained by applying the functor $\plim$
to a morphism of diagrams $f^0\:(D_{1,k_{0,m}})_{m\in\omega}\rarrow
(D_{0,m})_{m\in\omega}$.
 Then, applying the same construction to the morphism of pro-objects
$\bD_2\rarrow\bD_1$, we obtain a cofinal subdiagram
$(D_{2,k_{1,m}})_{m\in\omega}$ in the diagram $(D_{2,k})_{k\in\omega}$
such that that the morphism $\bD_2\rarrow\bD_1$ in $\Pro_\omega(\sC)$
can be obtained by applying the functor $\plim$ to a morphism of
diagrams $f^1\:(D_{2,k_{1,m}})_{m\in\omega}\rarrow
(D_{1,k_{0,m}})_{m\in\omega}$, etc.

 We have shown that our $\omega^\sop$\+indexed diagram
$(\bD_n)_{n\in\omega}$ in $\Pro_\omega(\sC)$ comes from
an $\omega^\sop$\+indexed diagram in $\sC^{\omega^\sop}$, which means
(after a suitable renumbering)
a $\omega^\sop\times\nobreak\omega^\sop$\+indexed diagram
$(D_{n,m}\in\sC)_{n,m\in\omega}$ with commutative squares of morphisms
$$
 \xymatrix{
  D_{n+1,m+1} \ar[r] \ar[d] & D_{n,m+1} \ar[d] \\
  D_{n+1,m} \ar[r] & D_{n,m}
 }
$$
 Now the desired countable filtered limit in $\Pro_\omega(\sC)$
can be constructed as
$$
 \varprojlim\nolimits_{n\in\omega}\bD_n=
 \plim_{(n,m)\in\omega\times\omega}D_{n,m}=\plim_{n\in\omega}D_{n,n}.
$$

 The moreover clause follows from the constructions above.
 In fact, the constructions show that the full subcategory $\Pro(\sC)$
is closed under filtered limits in $\bigl(\Sets^\sC\bigr)^\sop$, and
the full subcategory $\Pro_\omega(\sC)$ is closed under countable
filtered limits in $\bigl(\Sets^\sC\bigr)^\sop$.
\end{proof}

\begin{cor} \label{products-of-pro-objects}
 Let\/ $\sC$ be a category with finite products.  Then \par
\textup{(a)} all products exist in the category\/ $\Pro(\sC)$; \par
\textup{(b)} all countable products exist in the category\/
$\Pro_\omega(\sC)$.
 Moreover, the full subcategory\/ $\Pro_\omega(\sC)$ is closed
under countable products in\/ $\Pro(\sC)$.
\end{cor}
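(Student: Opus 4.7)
The plan is to reduce the construction of arbitrary (resp. countable) products to the combination of finite products and filtered limits (resp. countable filtered limits), invoking Lemma~\ref{filtered-limits-in-pro-objects}.

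First I would establish that finite products exist in $\Pro(\sC)$. Given pro-objects $\bC=\plim_{\gamma\in\Gamma}C_\gamma$ and $\bD=\plim_{\delta\in\Delta}D_\delta$, I define
$$\bC\times\bD\;=\;\plim_{(\gamma,\delta)\in\Gamma\times\Delta}\,C_\gamma\times D_\delta,$$
using that $\sC$ has binary products. The universal property is a direct calculation from the Hom formula~\eqref{hom-in-pro-objects-formula}: for any pro-object $\bB$,
$$\Hom_{\Pro(\sC)}(\bB,\bC\times\bD)=\varprojlim\nolimits_{(\gamma,\delta)}\varinjlim\nolimits_\beta\Hom_\sC(B_\beta,C_\gamma\times D_\delta),$$
which splits as the product $\Hom_{\Pro(\sC)}(\bB,\bC)\times\Hom_{\Pro(\sC)}(\bB,\bD)$ since products in $\Sets$ commute with cofiltered limits and with filtered colimits (the latter being finite products). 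When $\Gamma$ and $\Delta$ are countable, so is $\Gamma\times\Delta$, so the same construction yields binary (hence finite) products in $\Pro_\omega(\sC)$, and these agree with the ones in $\Pro(\sC)$.

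Next, for an arbitrary family $(\bD_i)_{i\in I}$ in $\Pro(\sC)$, consider the poset $\fF(I)$ of finite subsets $J\subseteq I$ ordered by inclusion. This poset is filtered, and I form the $\fF(I)^\sop$\+indexed diagram whose value at $J$ is the finite product $\prod_{j\in J}\bD_j$ and whose transition morphisms $\prod_{j\in J'}\bD_j\rarrow\prod_{j\in J}\bD_j$ (for $J\subseteq J'$) are the canonical projections. By Lemma~\ref{filtered-limits-in-pro-objects}(a), this filtered limit exists in $\Pro(\sC)$; and one checks in the standard way that it represents the product $\prod_{i\in I}\bD_i$, since a compatible family of morphisms from $\bB$ into all $\bD_i$ is the same thing as a compatible family of morphisms from $\bB$ into all finite products $\prod_{j\in J}\bD_j$. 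This proves~(a).

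For part~(b), assume $I$ is countable. Then $\fF(I)$ is a countable directed poset, and each finite product $\prod_{j\in J}\bD_j$ lies in $\Pro_\omega(\sC)$ by the previous paragraph. Applying Lemma~\ref{filtered-limits-in-pro-objects}(b), the filtered limit exists in $\Pro_\omega(\sC)$ and coincides with the one computed in $\Pro(\sC)$; thus it is the product of $(\bD_i)_{i\in I}$ both in $\Pro_\omega(\sC)$ and in $\Pro(\sC)$, which also yields the closedness statement. I do not anticipate a substantive obstacle here: the essential content has already been packaged into Lemma~\ref{filtered-limits-in-pro-objects}, and the only point requiring care is the verification of the universal property of $\bC\times\bD$ via the Hom formula, together with the observation that the indexing poset $\fF(I)$ of finite subsets of a countable set is itself countable.
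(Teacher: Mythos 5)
Your proof is correct and follows essentially the same route as the paper's: finite products are obtained by passing to the product of the indexing posets (a version of Lemma~\ref{lim-in-quotes-preserves-finite-co-limits}(b)), and arbitrary (resp.\ countable) products are then realized as filtered (resp.\ countable filtered) limits of finite subproducts over the directed poset of finite subsets, via Lemma~\ref{filtered-limits-in-pro-objects}, whose ``moreover'' clause also yields the closedness assertion. The only difference is that the paper supplements this argument with explicit diagrams representing the products (e.g.\ the diagonal diagram $\prod_{n+m=k}D_{n,m}$ in the countable case), which are not needed for the corollary itself but are relied upon in several later results.
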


\begin{proof}
 Part~(a) is~\cite[Proposition~6.1.18]{KS}.
 Simply put, all products are filtered limits of finite products.
 By a version of
Lemma~\ref{lim-in-quotes-preserves-finite-co-limits}(b),
finite products exist in $\Pro(\sC)$; and filtered limits exist in
$\Pro(\sC)$ by Lemma~\ref{filtered-limits-in-pro-objects}(a).
 The proof of part~(b) is similar and based on
Lemmas~\ref{lim-in-quotes-preserves-finite-co-limits}(b)
and~\ref{filtered-limits-in-pro-objects}(b).
 Simply put, all countable products are countable filtered limits
of finite products.

 Let us describe an explicit construction of products in $\Pro(\sC)$.
 Suppose given a family $(\bD_\pi)_{\pi\in\Pi}$ of objects in
$\Pro(\sC)$, each represented by a $\Gamma_\pi^\sop$\+indexed diagram
$(D_\gamma)_{\gamma\in\Gamma_\pi}$ in $\sC$ for some directed
poset~$\Gamma_\pi$; so $\bD_\pi=\plim_{\gamma\in\Gamma_\pi}D_\gamma$.
 Without loss of generality we can assume that each directed poset
$\Gamma_\pi$ contains a minimal element $0_\pi\in\Gamma_\pi$ such
that $0_\pi\le\gamma$ for all $\gamma\in\Gamma_\pi$ and
$D_{0_\pi}={*}$ is the terminal object of~$\sC$.
 (Adjoin a minimal element to each directed poset $\Gamma_\pi$ if
necessary.)
 
 Now let $\Delta\subset\prod_{\pi\in\Pi}\Gamma_\pi$ be the set of
all families of indices
$\delta=(\gamma_\pi(\delta)\in\Gamma_\pi)_{\pi\in\Pi}$ such that
$\gamma_\pi(\delta)=0_\pi$ for all but a finite subset of
indices $\pi\in\Pi$.
 Introduce a partial order on the set $\Delta$ by the rule
$\delta'\le\delta''$ if $\gamma_\pi(\delta')\le\gamma_\pi(\delta'')$
for all $\pi\in\Pi$.
 For every $\delta\in\Delta$, put $C_\delta=\prod_{\pi\in\Pi}
D_{\gamma_\pi(\delta)}$, where the product over $\pi\in\Pi$ is
taken in the category~$\sC$.
 The product exists in $\sC$ because it is essentially finite:
all but a finite subset of the objects $D_{\gamma_\pi(\delta)}$
are the terminal objects.
 The transition morphisms $C_{\delta''}\rarrow C_{\delta'}$ for
$\delta'<\delta''$ are the products of the transition morphisms
in the diagrams $(D_\gamma)_{\gamma\in\Gamma_\pi}$.

 It is clear that the desired product $\prod_{\pi\in\Pi}\bD_\pi$
in $\Pro(\sC)$ can be computed as
$$
 \prod\nolimits_{\pi\in\Pi}\plim_{\gamma\in\Gamma_\pi}D_\gamma =
 \plim_{\delta\in\Delta}C_\delta =
 \plim_{\delta\in\Delta}
 \prod\nolimits_{\pi\in\Pi}D_{\gamma_\pi(\delta)}.
$$

 Let us present a separate explicit construction of countable products
in $\Pro_\omega(\sC)$.
 Suppose given a countable family $(\bD_n)_{n\in\omega}$ of
objects in $\Pro_\omega(\sC)$, each represented by
an $\omega^\sop$\+indexed diagram $\dotsb\rarrow D_{n,2}\rarrow
D_{n,1}\rarrow D_{n,0}$ in~$\sC$.
 Then the product $\prod_{n\in\omega}\bD_n$ in $\Pro_\omega(\sC)$ is
represented by the diagram
$$
 \dotsb\lrarrow D_{0,2}\times D_{1,1}\times D_{2,0}
 \lrarrow D_{0,1}\times D_{1,0}\lrarrow D_{0,0},
$$
that is
$$
 \prod\nolimits_{n\in\omega}\plim_{m\in\omega}D_{n,m}=
 \plim_{k\in\omega}\prod\nolimits_{n+m=k}D_{n,m}.
$$
 Here the transition morphism $D_{0,k+1}\times\dotsb\times D_{k+1,0}
\rarrow D_{0,k}\times\dotsb\times D_{k,0}$ is the product of
the transition morphisms $D_{n,m+1}\rarrow D_{n,m}$ taken over
all $n+m=k$, precomposed with the subproduct projection morphism
$D_{0,k+1}\times\dotsb\times D_{k,1}\times D_{k+1,0}
\rarrow D_{0,k+1}\times\dotsb\times D_{k,1}$.

 The moreover clause follows from the moreover clause of
Lemma~\ref{filtered-limits-in-pro-objects}(b).
\end{proof}

\begin{cor} \label{morphism-from-countable-product-factorizes}
 Let\/ $\sC$ be a category with finite products.
 Then, for any countable family of objects $\bD_n\in\Pro_\omega(\sC)$,
\ $n\in\omega$, and for any object $C\in\sC$, any morphism
$\prod_{n\in\omega}\bD_n\rarrow C$ in $\Pro_\omega(\sC)$ factorizes
through a finite subproduct,
$$
 \prod\nolimits_{n\in\omega}\bD_n\lrarrow
 \prod\nolimits_{n=0}^m \bD_n\rarrow C,
$$
for some $m\in\omega$.
 Here\/ $\prod_{n\in\omega}\bD_n\rarrow\prod_{n=0}^m\bD_n$ is
the canonical subproduct projection.
\end{cor}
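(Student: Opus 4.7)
The plan is to combine the Hom formula~\eqref{hom-in-pro-objects-formula} with the explicit construction of countable products from the proof of Corollary~\ref{products-of-pro-objects}(b). Writing each $\bD_n$ as $\plim_{j\in\omega}D_{n,j}$, the latter gives
$$
  \prod\nolimits_{n\in\omega}\bD_n \;=\; \plim_{k\in\omega}E_k,
  \qquad\text{where } E_k \;=\; \prod\nolimits_{n+j=k}D_{n,j}
    \;=\; D_{0,k}\times D_{1,k-1}\times\dotsb\times D_{k,0}.
$$
Since the target $C$ belongs to $\sC\subset\Pro_\omega(\sC)$, the outer inverse limit in~\eqref{hom-in-pro-objects-formula} is indexed by a one-point poset and disappears, yielding
$$
  \Hom_{\Pro_\omega(\sC)}\!\Bigl(\prod\nolimits_{n\in\omega}\bD_n,\;C\Bigr)
  \;=\; \varinjlim_{k\in\omega}\Hom_\sC(E_k,\,C).
$$
Thus any morphism $\bbf\:\prod_{n\in\omega}\bD_n\rarrow C$ is represented by some morphism $f\:E_k\rarrow C$ in $\sC$, in the sense that $\bbf=f\circ\pi_k$, where $\pi_k\:\prod_{n\in\omega}\bD_n\rarrow E_k$ is the canonical morphism from the pro-object onto the $k$-th term of its representing diagram.

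I would then take $m=k$ and exhibit the required factorization through $\prod_{n=0}^m\bD_n$. Taking the product of the projections $\bD_n\rarrow D_{n,k-n}$ for $n=0,\dotsc,m$, one obtains from the universal property of products in $\Pro_\omega(\sC)$ (Corollary~\ref{products-of-pro-objects}(b)) a morphism
$$
  q\:\prod\nolimits_{n=0}^m\bD_n\lrarrow
  \prod\nolimits_{n=0}^m D_{n,k-n}=E_k.
$$
The proposed factorization is then $\bbf = f\circ q\circ p_m$, where $p_m\:\prod_{n\in\omega}\bD_n\rarrow\prod_{n=0}^m\bD_n$ denotes the canonical subproduct projection from the statement.

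The only item left to check is the identity $q\circ p_m = \pi_k$, which I expect to be routine rather than an actual obstacle. It reduces, by the universal property of $E_k$ as the product of the $D_{n,k-n}$, to comparing the two sides after composition with each factor projection $E_k\rarrow D_{n,k-n}$ for $0\le n\le m$. Both compositions evaluate to the obvious composite $\prod_{n\in\omega}\bD_n\rarrow\bD_n\rarrow D_{n,k-n}$, so $q\circ p_m=\pi_k$ and hence $f\circ q\circ p_m = f\circ\pi_k = \bbf$, completing the argument.
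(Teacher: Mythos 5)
Your argument is correct and is essentially the paper's proof with the details filled in: the paper disposes of this corollary by simply citing the explicit construction $\prod_{n\in\omega}\bD_n=\plim_{k}\prod_{n+j=k}D_{n,j}$ from the proof of Corollary~\ref{products-of-pro-objects}(b), and your use of formula~\eqref{hom-in-pro-objects-formula} to represent the morphism by some $f\:E_k\rarrow C$, followed by the routine verification that $\pi_k$ factors through the finite subproduct $\prod_{n=0}^{k}\bD_n$, is exactly the intended elaboration.
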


\begin{proof}
 Follows from the explicit construction of the countable product
$\prod_{n\in\omega}\bD_n$ in the proof of
Corollary~\ref{products-of-pro-objects}(b).
 We do not state the similar assertion for arbitrary products in
$\Pro(\sC)$, which is also clear from the explicit proof
of Corollary~\ref{products-of-pro-objects}(a).
\end{proof}

\begin{cor} \label{products-preserve-strict-pro-objects}
 For any abelian category\/ $\sA$, the full subcategory of strict
pro-objects\/ $\SPro(\sA)$ is closed under infinite products in\/
$\Pro(\sA)$.
 The full subcategory of countably indexed strict pro-objects\/
$\SPro_\omega(\sA)$ is closed under countable products in\/
$\Pro_\omega(\sA)$.
\end{cor}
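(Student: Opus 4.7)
The plan is to apply the explicit constructions of products in $\Pro(\sA)$ and $\Pro_\omega(\sA)$ from the proof of Corollary~\ref{products-of-pro-objects}, and to verify in each case that the representing diagram consists of epimorphisms, whence the product lies in the strict pro-subcategory.

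For the first assertion, I would take a family $(\bD_\pi)_{\pi\in\Pi}$ of strict pro-objects, each represented by a downwards directed diagram $(D_\gamma)_{\gamma\in\Gamma_\pi}$ of epimorphisms in $\sA$. Adjoining a minimal element $0_\pi\in\Gamma_\pi$ with $D_{0_\pi}=0$ (the zero object) preserves strictness, since in an abelian category the map $D_\gamma\to 0$ is always an epimorphism. In the notation of the construction, the product is represented by $C_\delta=\prod_{\pi\in\Pi}D_{\gamma_\pi(\delta)}$, which is essentially a finite product in $\sA$, and the transition morphism $C_{\delta''}\to C_{\delta'}$ for $\delta'<\delta''$ is the product of the individual transitions $D_{\gamma_\pi(\delta'')}\to D_{\gamma_\pi(\delta')}$. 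Each component morphism is an epimorphism, either by strictness of the corresponding diagram $(D_\gamma)_{\gamma\in\Gamma_\pi}$ or, when the target is $0$, trivially. Since finite products of epimorphisms in an abelian category are epimorphisms (finite products being biproducts), each $C_{\delta''}\to C_{\delta'}$ is an epimorphism, so $\prod_{\pi\in\Pi}\bD_\pi$ belongs to $\SPro(\sA)$.

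For the countable case, I would start with countably many strict pro-objects $\bD_n=\plim_{m\in\omega}D_{n,m}$ with epimorphic transitions $D_{n,m+1}\to D_{n,m}$, and use the explicit representation $\plim_{k\in\omega}\prod_{n+m=k}D_{n,m}$ of $\prod_{n\in\omega}\bD_n$ to check that the transition from level $k+1$ to level $k$ is an epimorphism. According to the description recalled above, this transition factors as the subproduct projection $D_{0,k+1}\times\dotsb\times D_{k,1}\times D_{k+1,0}\to D_{0,k+1}\times\dotsb\times D_{k,1}$, a split epimorphism since products are biproducts in $\sA$, followed by the product $\prod_{n+m=k}(D_{n,m+1}\to D_{n,m})$ of the given epimorphisms, which is again an epimorphism by the same abelian-category argument as above. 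A composition of epimorphisms is an epimorphism, so the representing diagram consists of epimorphisms, placing $\prod_{n\in\omega}\bD_n$ in $\SPro_\omega(\sA)$.

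I do not anticipate any substantive obstacle; the key point is that only essentially finite products in $\sA$ enter the construction at each stage, so one needs no more than the standard fact that finite products of epimorphisms in an abelian category are epimorphisms, and no AB4$^*$\+type assumption on $\sA$ is required.
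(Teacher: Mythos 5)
Your proof is correct and follows the same route as the paper, whose proof of this corollary consists precisely of the remark that the claim follows from the explicit constructions of products in the proof of Corollary~\ref{products-of-pro-objects}; you have simply filled in the routine verification that the transition morphisms in the representing diagrams are epimorphisms (essentially finite products of epimorphisms plus split subproduct projections). Your closing observation that no AB4${}^*$-type hypothesis on $\sA$ is needed is accurate and consistent with the paper's setup.
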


\begin{proof}
 Follows from the explicit constructions in the proof of
Corollary~\ref{products-of-pro-objects}.
\end{proof}

\begin{lem} \label{products-exact-in-pro-objects}
 For any abelian category\/ $\sA$, the functors of infinite product
are exact in the abelian category\/ $\Pro(\sA)$, while the functor
of countable product is exact in the abelian category\/
$\Pro_\omega(\sA)$.
\end{lem}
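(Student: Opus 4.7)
The plan is to reduce exactness of products in $\Pro(\sA)$ and $\Pro_\omega(\sA)$ to two facts already at our disposal: first, the explicit formulas for products from the proof of Corollary~\ref{products-of-pro-objects}, which express every product in $\Pro(\sA)$ (resp.\ countable product in $\Pro_\omega(\sA)$) as $\plim$ of a diagram whose entries are \emph{finite} products in $\sA$; and second, the fact from Corollary~\ref{pro-objects-abelian} that $\plim\colon \sA^{\Gamma^\sop}\to\Pro(\sA)$ (resp.\ $\plim\colon\sA^{\Gamma^\sop}\to\Pro_\omega(\sA)$ for countable $\Gamma$) is an exact functor.

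Suppose given a family of short exact sequences $0\to\bA_\pi\to\bB_\pi\to\bC_\pi\to0$ in $\Pro(\sA)$, indexed by $\pi\in\Pi$. By Corollary~\ref{pro-objects-abelian}(a), each such short exact sequence is, up to isomorphism, obtained by applying $\plim$ to a short exact sequence of $\Gamma_\pi^\sop$-indexed diagrams $0\to (A_{\pi,\gamma})\to(B_{\pi,\gamma})\to(C_{\pi,\gamma})\to0$ in $\sA$. Following the construction in the proof of Corollary~\ref{products-of-pro-objects}(a), we adjoin to each $\Gamma_\pi$ a minimal element $0_\pi$ corresponding to the zero object of $\sA$ (which is simultaneously the terminal object); we then form the directed poset $\Delta\subset\prod_\pi\Gamma_\pi$ of families $\delta$ with $\gamma_\pi(\delta)=0_\pi$ for all but finitely many $\pi$. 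For every fixed $\delta\in\Delta$ the sequence
\[
0\lrarrow \prod\nolimits_{\pi\in\Pi}A_{\pi,\gamma_\pi(\delta)}
\lrarrow \prod\nolimits_{\pi\in\Pi}B_{\pi,\gamma_\pi(\delta)}
\lrarrow \prod\nolimits_{\pi\in\Pi}C_{\pi,\gamma_\pi(\delta)}\lrarrow 0
\]
is a short exact sequence in $\sA$, because it is in fact a finite product of short exact sequences (all but finitely many factors are zero), and finite biproducts are exact in any abelian category. So we have a short exact sequence of $\Delta^\sop$-indexed diagrams in $\sA$; applying the exact functor $\plim\colon\sA^{\Delta^\sop}\to\Pro(\sA)$ and invoking the explicit formula for products gives the desired short exact sequence $0\to\prod_\pi\bA_\pi\to\prod_\pi\bB_\pi\to\prod_\pi\bC_\pi\to 0$ in $\Pro(\sA)$.

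For the countable case in $\Pro_\omega(\sA)$, the same outline applies, now using the explicit formula $\prod_{n\in\omega}\bD_n = \plim_{k\in\omega}\prod_{n+m=k}D_{n,m}$ from Corollary~\ref{products-of-pro-objects}(b). A family of countably many short exact sequences $0\to\bA_n\to\bB_n\to\bC_n\to 0$ in $\Pro_\omega(\sA)$ can, after passing to a common cofinal sequence of indices (as in the proof of Lemma~\ref{filtered-limits-in-pro-objects}(b) or by choosing $\omega$-indexed representatives directly), be represented by $\omega^\sop$-indexed short exact sequences in $\sA$. The product diagram has $k$-th entry $\prod_{n+m=k}A_{n,m}$, etc., which is a genuine \emph{finite} product in $\sA$ and hence exact; exactness of $\plim$ on countable diagrams from Corollary~\ref{pro-objects-abelian}(b) then concludes.

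The only nontrivial point is verifying that the essentially finite products in $\sA$ assemble into an honest short exact sequence of diagrams indexed by $\Delta$ (or $\omega$) and that the transition morphisms in $\Delta$ (which are componentwise transition morphisms in the individual $\Gamma_\pi$'s composed with subproduct projections) are compatible with the three rows; this is a bookkeeping check built into the explicit product construction and presents no real obstacle beyond care with notation.
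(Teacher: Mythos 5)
Your proof is correct and follows essentially the same route as the paper, which simply asserts that the lemma "follows from the explicit descriptions of infinite products\ldots given in the proof of Corollary~\ref{products-of-pro-objects}"; you have supplied the bookkeeping the paper leaves to the reader, correctly combining the termwise-finite product formula with the exactness of $\plim$ from Corollary~\ref{pro-objects-abelian}. (The paper also sketches a second, slicker argument---limits are left exact, and filtered limits in $\Pro(\sA)$ are right exact because filtered colimits are left exact in $\Ab^\sA$---but your elaboration of the first argument is complete as it stands.)
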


\begin{proof}
 The assertions follow from the explicit descriptions of infinite
products in $\Pro(\sA)$ and countable products in $\Pro_\omega(\sA)$
given in the proof of Corollary~\ref{products-of-pro-objects}.
 Alternatively, one can observe that all limits in abelian categories
are left exact; and filtered limits are right exact in $\Pro(\sA)$,
since filtered colimits are left exact in $\Ab^\sA$ and the inclusion
$\Pro(\sA)^\sop\rarrow\Ab^\sA$ takes cokernels to kernels.
 So filtered limits are exact in $\Pro(\sA)$.
 In particular, countable filtered limits are exact in
$\Pro_\omega(\sA)$.
\end{proof}

\begin{cor} \label{products-exact-in-strict-pro-objects}
 For any abelian category\/ $\sA$, all products of admissible short
exact sequences are admissible short exact sequences in the exact
category\/ $\SPro(\sA)$, while all countable products of admissible
short exact sequences are admissible short exact sequences
in the exact category\/ $\SPro_\omega(\sA)$.
 In other words, the functors of infinite product are exact in
the exact category\/ $\SPro(\sA)$, while the functor of countable
product is exact in the exact category\/ $\SPro_\omega(\sA)$.
\end{cor}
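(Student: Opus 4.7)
The plan is to deduce this corollary as a direct combination of the two preceding results, namely Lemma~\ref{products-exact-in-pro-objects} (exactness of products in the ambient abelian categories $\Pro(\sA)$ and $\Pro_\omega(\sA)$) and Corollary~\ref{products-preserve-strict-pro-objects} (closure of the full subcategories $\SPro(\sA)\subset\Pro(\sA)$ and $\SPro_\omega(\sA)\subset\Pro_\omega(\sA)$ under the relevant products). The only conceptual point to keep in mind is the definition of the inherited exact structure: by~\cite[Lemma~10.20]{Bueh}, a short sequence in $\SPro(\sA)$ is admissible exact precisely when it is short exact in $\Pro(\sA)$ (and all three of its terms, automatically, lie in $\SPro(\sA)$); and the analogous statement holds for $\SPro_\omega(\sA)\subset\Pro_\omega(\sA)$.

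Granted that, I would proceed as follows. Let $(0\to\bC_\pi\to\bD_\pi\to\bE_\pi\to0)_{\pi\in\Pi}$ be a family of admissible short exact sequences in $\SPro(\sA)$. By the characterization of the inherited exact structure, each sequence is a short exact sequence in the abelian category $\Pro(\sA)$. By Lemma~\ref{products-exact-in-pro-objects}, the product
\[
 0\lrarrow\prod\nolimits_{\pi\in\Pi}\bC_\pi\lrarrow
 \prod\nolimits_{\pi\in\Pi}\bD_\pi\lrarrow
 \prod\nolimits_{\pi\in\Pi}\bE_\pi\lrarrow0
\]
is short exact in $\Pro(\sA)$. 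By Corollary~\ref{products-preserve-strict-pro-objects}, each of its three terms belongs to $\SPro(\sA)$. Applying the characterization of the inherited exact structure once again, the displayed product is admissible short exact in $\SPro(\sA)$.

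For the countable case, one repeats the same two-step argument inside $\Pro_\omega(\sA)$ instead of $\Pro(\sA)$, using the countable-product halves of Lemma~\ref{products-exact-in-pro-objects} and Corollary~\ref{products-preserve-strict-pro-objects}. There is essentially no obstacle: the substantive content (exactness of products in $\Pro(\sA)$ and $\Pro_\omega(\sA)$, and closure of the strict subcategories under products) has already been established, and the corollary is a purely formal consequence of the way the exact structure on $\SPro(\sA)$ and $\SPro_\omega(\sA)$ was defined.
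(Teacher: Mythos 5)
Your proposal is correct and follows exactly the paper's own argument, which simply combines Lemma~\ref{products-exact-in-pro-objects} (exactness of products in $\Pro(\sA)$ and $\Pro_\omega(\sA)$) with Corollary~\ref{products-preserve-strict-pro-objects} (closure of $\SPro(\sA)$ and $\SPro_\omega(\sA)$ under the relevant products), using the definition of the inherited exact structure. Your write-up just makes explicit the formal steps the paper leaves to the reader.
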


\begin{proof}
 Compare Lemma~\ref{products-exact-in-pro-objects}
with Corollary~\ref{products-preserve-strict-pro-objects}.
\end{proof}

 A discussion of projective objects in exact categories can be found
in~\cite[Section~11]{Bueh}.

\begin{lem} \label{projectivity-criterion-in-spro-omega}
 Let\/ $\sA$ be an abelian category.
 Then an object $\bP\in\SPro_\omega(\sA)$ is projective in the exact
category\/ $\SPro_\omega(\sA)$ if and only if, for any epimorphism
$B\rarrow C$ in the abelian category\/ $\sA$, any morphism
$\bP\rarrow C$ can be lifted to a morphism $\bP\rarrow B$ in\/
$\SPro_\omega(\sA)$.
\end{lem}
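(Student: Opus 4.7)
The forward implication is immediate: by Lemma~\ref{objects-coresolving-in-strict-pro-objects}(b), every epimorphism $B\to C$ in $\sA$ is an admissible epimorphism in the exact category $\SPro_\omega(\sA)$, so any morphism $\bP\to C$ lifts to $\bP\to B$ by the very definition of projectivity.

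For the converse, fix an admissible short exact sequence $0\to\bK\to\bD\overset{\bbg}\to\bE\to0$ in $\SPro_\omega(\sA)$ together with a morphism $\bP\to\bE$ that we wish to lift. By Proposition~\ref{strict-pro-objects-closed-under-extensions}(b), this sequence can be represented by a short exact sequence of $\omega^\sop$-indexed diagrams $0\to(K_n)_{n\in\omega}\to(D_n)_{n\in\omega}\to(E_n)_{n\in\omega}\to0$ in $\sA$ in which all three diagrams consist of epimorphisms. In particular, $D_n\to E_n$ is an epimorphism in $\sA$ for every $n$, and the transition morphisms $K_{n+1}\to K_n$, $D_{n+1}\to D_n$, and $E_{n+1}\to E_n$ are all epimorphisms.

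The plan is to inductively construct a compatible family of morphisms $\bbh_n\:\bP\to D_n$ in $\Pro_\omega(\sA)$ lifting the compositions $\bP\to\bE\to E_n$. For the base case $n=0$, one applies the hypothesis directly to the epimorphism $D_0\to E_0$ in $\sA$ and the morphism $\bP\to E_0$. For the inductive step, assume $\bbh_n$ has been built, and form the pullback $Q_n=D_n\times_{E_n}E_{n+1}$ in $\sA$. Since $\bbh_n\:\bP\to D_n$ and $\bP\to E_{n+1}$ agree after composition with the morphisms to $E_n$, they induce a morphism $\bP\to Q_n$. The crucial observation is that the canonical comparison morphism $D_{n+1}\to Q_n$ is an epimorphism in $\sA$: pulling back the short exact sequence $0\to K_n\to D_n\to E_n\to0$ along $E_{n+1}\to E_n$ produces an exact sequence $0\to K_n\to Q_n\to E_{n+1}\to0$, and the snake lemma applied to its comparison with $0\to K_{n+1}\to D_{n+1}\to E_{n+1}\to0$ (outer maps $K_{n+1}\to K_n$ and the identity on $E_{n+1}$) shows $D_{n+1}\to Q_n$ is an epimorphism precisely because $K_{n+1}\to K_n$ is. The hypothesis then yields a lift $\bbh_{n+1}\:\bP\to D_{n+1}$ of $\bP\to Q_n$, and the universal property of $Q_n$ guarantees the compatibility $\bbh_n=(D_{n+1}\to D_n)\circ\bbh_{n+1}$.

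Finally, by the Hom formula~\eqref{hom-in-pro-objects-formula}, the compatible family $(\bbh_n)_{n\in\omega}$ assembles into a single morphism $\bbh\:\bP\to\bD$ satisfying $\bbg\circ\bbh=(\bP\to\bE)$, which is the required lift. The main technical point of the argument is the epimorphism $D_{n+1}\to Q_n$ in $\sA$; this is where the ``diagrams of epimorphisms'' clause of Proposition~\ref{strict-pro-objects-closed-under-extensions}(b) is used essentially, since without the surjectivity of $K_{n+1}\to K_n$ the inductive lifting could break down.
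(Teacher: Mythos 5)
Your proof is correct and follows essentially the same route as the paper's: represent the admissible short exact sequence by a termwise exact sequence of $\omega^\sop$-diagrams of epimorphisms, lift inductively through the pullbacks $D_n\times_{E_n}E_{n+1}$ using that the comparison map from $D_{n+1}$ is an epimorphism, and pass to the limit. The only difference is cosmetic: you spell out the snake-lemma justification for that comparison map being an epimorphism, which the paper leaves as an implicit consequence of the surjectivity of the transition maps on kernels.
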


\begin{proof}
 The ``only if'' implication holds since any epimorphism in $\sA$ is
an admissible epimorphism in $\SPro_\omega(\sA)$.
 Conversely, let $0\rarrow\bD\rarrow\bE\rarrow\bF\rarrow0$ be
an admissible short exact sequence in $\SPro_\omega(\sA)$.
 By Proposition~\ref{strict-pro-objects-closed-under-extensions}(b),
the short sequence $0\rarrow\bD\rarrow\bE\rarrow\bF\rarrow0$ can be
obtained by applying the functor $\plim_\omega$ to a short exact
sequence $0\rarrow(D_n)_{n\in\omega}\rarrow(E_n)_{n\in\omega}\rarrow
(F_n)_{n\in\omega}\rarrow0$ in the abelian category $\sA^{\omega^\sop}$
(i.~e., a termwise short exact sequence of $\omega^\sop$\+indexed
diagrams in~$\sA$) such that, in all the three diagrams
$(D_n)_{n\in\omega}$, \ $(E_n)_{n\in\omega}$, and $(F_n)_{n\in\omega}$,
all the transition morphisms are epimorphisms.
 Then it follows that, for every $n\in\omega$, the natural map to
the fibered product $E_{n+1}\rarrow E_n\times_{F_n}F_{n+1}$ is
an epimorphism, too.

 Suppose given a morphism $\bbf\:\bP\rarrow\bF$ in $\SPro_\omega(\sA)$.
 Consider the compositions $\bP\rarrow\bF\rarrow F_n$ and denote them
by $\bbf_n\:\bP\rarrow F_n$.
 By assumption, any morphism $\bP\rarrow F_0$ can be lifted to
a morphism $\bP\rarrow E_0$ in $\SPro_\omega(\sA)$.
 Denote a morphism lifting~$\bbf_0$ by $\bbe_0\:P\rarrow E_0$.
 The pair of morphisms $\bbf_1$ and~$\bbe_0$ induces a morphism
$\bbg_1\:\bP\rarrow E_0\times_{F_0}F_1$.
 By assumption, the morphism~$\bbg_1$ can be lifted to a morphism
$\bbe_1\:\bP\rarrow E_1$.
 The pair of morphisms $\bbf_2$ and~$\bbe_1$ induces a morphism
$\bbg_2\:\bP\rarrow E_1\times_{F_1}F_2$.
 By assumption, the morphism~$\bbg_2$ can be lifted to a morphism
$\bbe_2\:\bP\rarrow E_2$, etc.
 Proceeding in this way, and passing to the filtered limit, we construct
a morphism $\bbe\:\bP\rarrow\bE$ in $\SPro_\omega(\sA)$ lifting the given
morphism $\bbf\:\bP\rarrow\bF$.
\end{proof}

\begin{lem} \label{projectives-closed-under-countable-products}
 For any abelian category\/ $\sA$, the class of projective objects in
the exact category\/ $\SPro_\omega(\sA)$ is closed under countable
products.
\end{lem}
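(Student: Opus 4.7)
The plan is to verify the projectivity criterion of Lemma~\ref{projectivity-criterion-in-spro-omega} for a countable product of projectives, using the factorization property of Corollary~\ref{morphism-from-countable-product-factorizes}.

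Let $(\bP_n)_{n\in\omega}$ be a family of projective objects in $\SPro_\omega(\sA)$. By Corollary~\ref{products-preserve-strict-pro-objects}, the product $\bP=\prod_{n\in\omega}\bP_n$ exists in $\SPro_\omega(\sA)$ and is computed in $\Pro_\omega(\sA)$. By Lemma~\ref{projectivity-criterion-in-spro-omega}, in order to prove that $\bP$ is projective it suffices to check that, for every epimorphism $B\to C$ in $\sA$, every morphism $\bbf\:\bP\rarrow C$ in $\SPro_\omega(\sA)$ lifts to a morphism $\bP\rarrow B$.

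Given such an~$\bbf$, I would invoke Corollary~\ref{morphism-from-countable-product-factorizes}: since the target $C$ lies in $\sA\subset\SPro_\omega(\sA)$, the morphism~$\bbf$ factorizes as
\[
 \bP\lrarrow\prod\nolimits_{n=0}^m\bP_n\overset{\bar\bbf}\lrarrow C
\]
for some $m\in\omega$, through the canonical projection onto a finite subproduct. Now the finite product $\prod_{n=0}^m\bP_n$ in the additive category $\SPro_\omega(\sA)$ coincides with the finite coproduct $\bigoplus_{n=0}^m\bP_n$, and a finite coproduct of projective objects in any exact category with finite coproducts is again projective. Applying the projectivity of $\prod_{n=0}^m\bP_n$ to the epimorphism $B\rarrow C$ in $\sA$ (which is an admissible epimorphism in $\SPro_\omega(\sA)$), one lifts~$\bar\bbf$ to a morphism $\prod_{n=0}^m\bP_n\rarrow B$. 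Composing with the projection $\bP\rarrow\prod_{n=0}^m\bP_n$ yields the desired lift $\bP\rarrow B$ of~$\bbf$.

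There is no serious obstacle: the argument is essentially a one-line combination of Lemma~\ref{projectivity-criterion-in-spro-omega} (which reduces the verification of projectivity to testing lifting against epimorphisms in $\sA$, i.e., against morphisms into objects of~$\sA$) and Corollary~\ref{morphism-from-countable-product-factorizes} (which reduces any such morphism out of a countable product to a morphism out of a finite subproduct, where projectivity is automatic).
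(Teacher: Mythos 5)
Your proposal is correct and follows essentially the same route as the paper's proof: reduce via the criterion of Lemma~\ref{projectivity-criterion-in-spro-omega} to lifting against epimorphisms in $\sA$, use Corollary~\ref{morphism-from-countable-product-factorizes} to factor any morphism $\prod_{n\in\omega}\bP_n\rarrow C$ with $C\in\sA$ through a finite subproduct, and then lift from the finite (co)product of projectives. The paper states the last lifting step as ``clear''; your explicit justification via finite product $=$ finite coproduct is exactly the intended argument.
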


\begin{proof}
 Let us check that the class of all pro-objects satisfying
the criterion of Lemma~\ref{projectivity-criterion-in-spro-omega}
is closed under countable products in $\SPro_\omega(\sA)$.
 The point is that, for any family of projective objects
$\bP_n\in\SPro_\omega(\sA)$ and any object $C\in\sA$, any morphism
$\prod_{n\in\omega}\bP_n\rarrow C$ in $\SPro_\omega(\sA)$ factorizes
through a finite subproduct $\prod_{n=0}^m\bP_n$ by
Corollary~\ref{morphism-from-countable-product-factorizes}.
 Now, given an epimorphism $B\rarrow C$ in $\sA$, it is clear
that any morphism $\prod_{n=0}^m \bP_n\rarrow C$ can be lifted to
a morphism $\prod_{n=0}^m \bP_n\rarrow B$.
\end{proof}

 Given an additive category $\sE$ with countable products and a class
of objects $\sP\subset\sE$, we denote by $\Prod_\omega(\sP)\subset\sE$
the class of all direct summands of countable products of objects
from $\sP$ in~$\sE$.
 More generally, if all infinite products exist in $\sE$, the notation
$\Prod(\sP)\subset\sE$ stands for the class of all direct summands of
arbitrary products of copies of objects from~$\sP$.

\begin{prop} \label{projectives-as-summands-of-products}
 Let\/ $\sA$ be an abelian category, and let\/ $\sP\subset
\SPro_\omega(\sA)$ be a class of (some) projective objects.
 Assume that for every object $A\in\sA\subset\SPro_\omega(\sA)$
there exists an admissible epimorphism $\bP\rarrow A$ in\/
$\SPro_\omega(\sA)$ with $\bP\in\sP$.
 Then the class of all projective objects in\/ $\SPro_\omega(\sA)$
coincides with\/ $\Prod_\omega(\sP)\subset\SPro_\omega(\sA)$, and
there are enough projective objects in\/ $\SPro_\omega(\sA)$.
\end{prop}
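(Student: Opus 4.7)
The plan is to establish the existence of enough projectives (the second assertion) and then deduce the characterization of all projectives as $\Prod_\omega(\sP)$ as a formal consequence.

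Fix $\bD \in \SPro_\omega(\sA)$ and represent it as $\bD = \plim_{n \in \omega} D_n$ by a countable tower with epimorphic transitions $t_{n+1}\: D_{n+1} \rarrow D_n$ in $\sA$. Set $K_0 = D_0$ and $K_n = \ker(t_n) \in \sA$ for $n \ge 1$. By hypothesis, pick admissible epimorphisms $p_n\: \bP_n \rarrow K_n$ in $\SPro_\omega(\sA)$ with $\bP_n \in \sP$. Form $\bP = \prod_{n \in \omega} \bP_n$, which exists in $\SPro_\omega(\sA)$ by Corollary~\ref{products-preserve-strict-pro-objects}, belongs to $\Prod_\omega(\sP)$ by definition, and is projective by Lemma~\ref{projectives-closed-under-countable-products}.

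The crux is to construct an admissible epimorphism $\bP \rarrow \bD$. Build morphisms $r_n\: \bR_n \rarrow D_n$ by induction on $n$, where $\bR_n = \prod_{k=0}^n \bP_k$, demanding that each $r_n$ be an admissible epi and that $t_{n+1} \circ r_{n+1} = r_n \circ \pi_1$, with $\pi_1\: \bR_{n+1} = \bR_n \times \bP_{n+1} \rarrow \bR_n$ the projection onto the first factor. Take $r_0 = p_0$. Given $r_n$, apply projectivity of $\bR_n$ (a finite biproduct of projectives) together with Lemma~\ref{projectivity-criterion-in-spro-omega} to the epi $t_{n+1}$ in $\sA$ to obtain a lift $u_n\: \bR_n \rarrow D_{n+1}$ of $r_n$, and define
\[
 r_{n+1} = u_n \circ \pi_1 + \iota_{n+1} \circ p_{n+1} \circ \pi_2\: \bR_n \times \bP_{n+1} \lrarrow D_{n+1},
\]
where $\iota_{n+1}\: K_{n+1} \hookrightarrow D_{n+1}$ is the inclusion and $\pi_2$ is the other projection. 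Compatibility with $t_{n+1}$ is immediate since $\iota_{n+1}$ factors through $\ker(t_{n+1})$. For surjectivity, precomposing $r_{n+1}$ with the canonical summand inclusions $\bR_n \rarrow \bR_{n+1}$ and $\bP_{n+1} \rarrow \bR_{n+1}$ recovers $u_n$ and $\iota_{n+1} \circ p_{n+1}$, so $\im(r_{n+1}) \supset \im(u_n) + K_{n+1} = D_{n+1}$, the equality holding because $t_{n+1} \circ u_n = r_n$ is epi. Composing each $r_n$ with the split projection $\bP \rarrow \bR_n$ yields a compatible family of admissible epis $\bP \rarrow D_n$, which induce a morphism $\bP \rarrow \bD$ whose cokernel in $\Pro_\omega(\sA)$ may be computed as $\plim_n \coker(\bP \rarrow D_n) = 0$ using exactness of $\plim\,\:\sA^{\omega^\sop}\rarrow\Pro_\omega(\sA)$ (Corollary~\ref{pro-objects-abelian}(b)). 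Hence $\bP \rarrow \bD$ is an admissible epi in $\SPro_\omega(\sA)$.

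With enough projectives in hand, the characterization follows formally. The inclusion $\Prod_\omega(\sP) \subset \{\text{projectives}\}$ is immediate from Lemma~\ref{projectives-closed-under-countable-products} together with closure of projectives under direct summands. Conversely, any projective $\bX$ admits an admissible epi $\bP \rarrow \bX$ with $\bP \in \Prod_\omega(\sP)$ by the preceding, which splits by projectivity of $\bX$; so $\bX$ is a direct summand of $\bP$ and hence lies in $\Prod_\omega(\sP)$. The main obstacle is engineering the inductive definition of $r_{n+1}$ so as to be simultaneously compatible with the transition $t_{n+1}$ and surjective onto $D_{n+1}$. The key insight is to take projective covers of the successive kernels $K_n$ rather than of the $D_n$'s themselves; then the $\bP_{n+1}$\+summand contributes exactly the part of $D_{n+1}$ not captured by a lift $u_n$ of $r_n$, as expressed by the identity $\im(u_n) + K_{n+1} = D_{n+1}$.
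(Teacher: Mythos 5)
Your overall construction is the same as the paper's: cover $D_0$ and the successive kernels $K_n=\ker(D_n\to D_{n-1})$ by admissible epimorphisms from objects of $\sP$, assemble these into a map $\prod_{n\in\omega}\bP_n\rarrow\bD$, and then deduce the description of all projectives formally from Lemma~\ref{projectives-closed-under-countable-products}. The gap is in the very last step of the construction: you conclude that $\bP\rarrow\bD$ is an \emph{admissible} epimorphism in $\SPro_\omega(\sA)$ from the vanishing of its cokernel in $\Pro_\omega(\sA)$. In the exact structure on $\SPro_\omega(\sA)$ inherited from $\Pro_\omega(\sA)$, an admissible epimorphism is an epimorphism of $\Pro_\omega(\sA)$ whose kernel is again a \emph{strict} pro-object, and this extra condition does not follow from the cokernel being zero. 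The paper's own example (the one following Lemma~\ref{proocn-with-flat-exact-for-countable}) is exactly a counterexample to your inference: the compatible family of surjections $\boZ\rarrow\boZ/p^n\boZ$ induces a morphism $\boZ\rarrow\plim_{n\in\omega}\boZ/p^n\boZ$ between strict pro-objects which is an epimorphism in $\Pro_\omega(\Ab)$ (its cokernel is $\plim_n 0=0$), yet is not an admissible epimorphism in $\SPro_\omega(\Ab)$, because its kernel $\plim_n p^n\boZ$ (with injective transition maps) is not strict. So "each composite $\bP\rarrow\bD\rarrow D_n$ is an admissible epi" plus "the cokernel of $\bP\rarrow\bD$ vanishes" is genuinely weaker than what you need.

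This is precisely the point where the paper's proof does its real work: it arranges, by repeatedly passing to cofinal subdiagrams, that each lift $\bP_{n+1}\rarrow\ker(\bD\to D_n)$ is represented by a level morphism of $\omega^\sop$\+indexed diagrams, so that the resulting map is the $\plim$ of an explicit morphism of diagrams $p_k\:\bigoplus_{n+m=k}P_{n,m}\rarrow D_k$, and then checks by hand that $(\ker p_k)_{k\in\omega}$ is a diagram of epimorphisms --- which is what certifies admissibility. Your inductive lifting (new summand $\bP_{n+1}$ covering $K_{n+1}$ at each stage) is morally the same construction and can very likely be completed, but the missing verification that the kernel of $\prod_n\bP_n\rarrow\bD$ is strict is the technical heart of the argument, not a formality. (A secondary quibble: the identity $\coker(\bP\to\bD)=\plim_n\coker(\bP\to D_n)$ is not literally an instance of the exactness of $\plim\:\sA^{\omega^\sop}\rarrow\Pro_\omega(\sA)$, since $\bP$ is not an object of $\sA$; one must first rectify $\bP\rarrow\bD$ into a level morphism of diagrams in $\sA$ before applying Corollary~\ref{pro-objects-abelian}(b).)
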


\begin{proof}
 In view of Lemma~\ref{projectives-closed-under-countable-products},
it suffices to show that every object of $\SPro_\omega(\sA)$ is
the codomain of an admissible epimorphism from an object belonging
to $\Prod_\omega(\sP)$.

 Let $\bD=\plim_{n\in\omega}D_n$ be an object of $\SPro_\omega(\sA)$,
where $(D_n)_{n\in\omega}$ is an $\omega^\sop$\+indexed diagram of
epimorphisms in~$\sA$.
 By assumption, there exists an object $\bP_0\in\sP\subset
\SPro_\omega(\sA)$ together with an admissible epimorphism
$\bP_0\rarrow D_0$ in $\SPro_\omega(\sA)$.
 The pro-object $\bP_0$ can be represented by an $\omega^\sop$\+indexed
diagram $(P_{0,m})_{m\in\omega}$ of epimorphisms in~$\sA$.
 Since $D_0\in\sA$, there exists an integer $m_0\in\omega$ such
that the morphism $\bP_0\rarrow D_0$ factorizes as $\bP_0\rarrow
P_{0,m_0}\rarrow D_0$.
 Passing to a cofinal subdiagram in $(P_{0,m})_{m\in\omega}$ and
renumbering as necessary, we can assume without loss of generality
that $m_0=0$.

 Furthermore, the additive category $\SPro_\omega(\sA)$ is
idempotent-complete (in the sense of~\cite[Section~6]{Bueh}), since
it is closed under quotients in the abelian category $\Pro_\omega(\sA)$.
 In particular, $\SPro_\omega(\sA)$ is weakly idempotent-complete
in the sense of~\cite[Section~7]{Bueh}.
 Therefore, the strong form of ``obscure axiom''
\cite[Proposition~7.6(ii)]{Bueh} holds in the exact category
$\SPro_\omega(\sA)$.
 Since the morphism $\bP_0\rarrow D_0$ is an admissible epimorphism,
it follows that $P_{0,0}\rarrow D_0$ is an admissible epimorphism
in $\SPro_\omega(\sA)$.
 For a morphism in $\sA$, being an admissible epimorphism in
$\SPro_\omega(\sA)$ clearly means being an epimorphism in~$\sA$
(in fact, any epimorphism in $\SPro_\omega(\sA)$ between objects
of $\sA$ is an epimorphism in~$\sA$).
 Thus $P_{0,0}\rarrow D_0$ is an epimorphism in~$\sA$.

 In addition, $\bD\rarrow D_0$ is an admissible epimorphism in
$\SPro_\omega(\sA)$ (since the kernels of the morphisms
$D_n\rarrow D_0$, \,$n\in\omega$, form an $\omega^\sop$\+indexed
diagram of epimorphisms in~$\sA$).
 Therefore, the admissible epimorphism $\bP_0\rarrow D_0$ lifts to
a morphism $\bP_0\rarrow\bD$ in $\SPro_\omega(\sA)$.
 Passing to a cofinal subdiagram in $(P_{0,m})_{m\in\omega}$ and
renumbering as necessary again, but keeping the object $P_{0,0}$
and the morphism $P_{0,0}\rarrow D_0$ unchanged, we can assume that
the morphism $\bP_0\rarrow\bD$ comes from a morphism of diagrams
$(P_{0,m})_{m\in\omega}\rarrow(D_m)_{m\in\omega}$ via
the functor $\plim_\omega$.

 Now, for every $n\in\omega$, denote by $K_{n+1}$ the kernel of
the epimorphism $D_{n+1}\rarrow D_n$ in~$\sA$.
 By assumption, there exists an object $\bP_{n+1}\in\sP\subset
\SPro_\omega(\sA)$ together with an admissible epimorphism
$\bP_{n+1}\rarrow K_{n+1}$ in $\SPro_\omega(\sA)$.
 The pro-object $\bP_{n+1}$ can be represented by
an $\omega^\sop$\+indexed diagram $(P_{n+1,m})_{m\in\omega}$ of
epimorphisms in~$\sA$.
 Arguing as above and passing if necessary to a cofinal subdiagram,
we can assume without loss of generality that the morphism
$\bP_{n+1}\rarrow K_{n+1}$ factorizes as $\bP_{n+1}\rarrow P_{n+1,0}
\rarrow K_{n+1}$.

 Furthermore, the same argument as above shows that $P_{n+1,0}
\rarrow K_{n+1}$ is an epimorphism in~$\sA$.
 In addition, $\ker(\bD\to D_n)\rarrow K_{n+1}$ is an admissible
epimorphism in $\SPro_\omega(\sA)$ (since the kernels of
the morphisms $D_{m+n+1}\to D_{n+1}$, \,$m\in\omega$, form
an $\omega^\sop$\+indexed diagram of epimorphisms in~$\sA$).
 Therefore, the admissible epimorphism $\bP_{n+1}\rarrow K_{n+1}$ lifts
to a morphism $\bP_{n+1}\rarrow\ker(\bD\to D_n)$ in $\SPro_\omega(\sA)$.
 Once again we can pass to a cofinal subdiagram in
$(P_{n+1,m})_{m\in\omega}$ (but keep the object $P_{n+1,0}$ and
the morphism $P_{n+1,0}\rarrow K_{n+1}$ unchanged) so that
the morphism $\bP_{n+1}\rarrow\ker(\bD\to D_n)$ in $\SPro_\omega(\sA)$
gets represented by a morphism of diagrams $(P_{n+1,m})_{m\in\omega}
\rarrow(\ker(D_{m+n+1}\to D_n))_{m\in\omega}$.

 Passing to the direct sum over~$m$, we obtain a morphism of
$\omega^\sop$\+indexed diagrams
$$
 p_k\:\bigoplus\nolimits_{n+m=k}P_{n,m}\lrarrow D_k, \qquad k\in\omega,
$$
where the diagram with the components $\bigoplus_{n+m=k}P_{n,m}$
was constructed in the proof of
Corollary~\ref{products-of-pro-objects}(b).

$$
 \xymatrix{
 \dotsb\ar[r] & P_{0,2}\oplus P_{1,1}\oplus P_{2,0}
 \ar[r] \ar[d]^{p_2} & P_{0,1}\oplus P_{1,0} \ar[r] \ar[d]^{p_1}
  & P_{0,0} \ar[d]^{p_0} \\
 \dotsb\ar[r] & D_2 \ar[r] & D_1 \ar[r] & D_0
 }
$$
 
 For every $k\in\omega$, the morphism $P_{0,k+1}\oplus\dotsb\oplus
P_{k,1}\rarrow P_{0,k}\oplus\dotsb\oplus P_{k,0}$ is an epimorphism
as a finite direct sum of epimorphisms $P_{n,m+1}\rarrow P_{n,m}$.
 The morphism $P_{0,0}\rarrow D_0$ is an epimorphism; and for
every $k\ge1$ the morphism $P_{k,0}\rarrow D_k$ is the composition
$P_{k,0}\rarrow K_k\rarrow D_k$, where $K_k\rarrow D_k$ is
the kernel of the epimorphism $D_k\rarrow D_{k-1}$ and $P_{k,0}
\rarrow K_k$ is an epimorphism.
 It follows that $(\ker p_k)_{k\in\omega}$ is a diagram of
epimorphisms, so $\plim_{k\in\omega}p_k$ is an admissible epimorphism
in $\SPro_\omega(\sA)$.
 On the other hand, the pro-object
$\plim_{k\in\omega}\bigoplus_{n+m=k}P_{n,m}$ is the countable product
$\prod_{n\in\omega}P_n$ in $\Pro_\omega(\sA)$ and $\SPro_\omega(\sA)$ 
by the proof of Corollary~\ref{products-of-pro-objects}(b) and
Corollary~\ref{products-preserve-strict-pro-objects}.
\end{proof}

\Section{Left Contramodules over Right Linear Topological Rings}
\label{contramodules-preliminaries-secn}

 We suggest the papers~\cite[Introduction and Sections~5--7]{PR},
\cite[Section~2]{Pcoun}, \cite[Section~2]{Pproperf},
\cite[Sections~6\+-7]{PS1}, or~\cite[Sections~7--9]{Pflcc} and
the survey paper~\cite{Prev} as background reference sources on
contramodules over topological rings.
 The paper~\cite{Pextop} can be used for background material
on topological abelian groups with linear topologies.

 Given an associative ring $R$, we denote by $R\Modl$ the abelian
category of left $R$\+modules and by $\Modr R$ the abelian category
of right $R$\+modules.

 A topological abelian group $A$ is said to have \emph{linear topology}
if open subgroups form a base of neighborhoods of zero in~$A$.
 All topological abelian groups in this paper will be presumed to have
linear topologies.

 The \emph{completion} $\fA$ of a topological abelian group $A$ is
constructed as the projective limit $\fA=\varprojlim_{U\subset A}A/U$,
where $U$ ranges over all the open subgroups of~$A$.
 The kernels of the projection maps $\fA\rarrow A/U$ form a base of
neighborhoods of zero in the natural \emph{completion topology}
on the abelian group~$\fA$.

 The natural \emph{completion map} $\lambda_A\:A\rarrow\fA$ is
a continuous homomorphism of topological abelian groups.
 The topological abelian group $A$ is said to be \emph{separated} if
the map~$\lambda_A$ is injective, and \emph{complete} if
the map~$\lambda_A$ is surjective.
 A topological abelian group $A$ is separated and complete if and only
if the completion map~$\lambda_A$ is an isomorphism of topological
abelian groups.
 For any topological abelian group $A$ with a linear topology,
the completion $\fA=\varprojlim_{U\subset A}A/U$ is a complete,
separated topological abelian group with linear topology.

 An (associative, unital) topological ring $R$ is said to have
\emph{right linear topology} if open right ideals form a base of
neighborhoods of zero in~$R$.
 The completion $\R=\varprojlim_{I\subset R}R/I$ of a right linear
topological ring~$R$ (where $I$ ranges over the open right ideals
of~$R$) has a unique ring structure such that the completion map
$\lambda_R\:R\rarrow\R$ is a ring homomorphism.
 A right linear topological ring is said to be \emph{separated} 
(respectively, \emph{complete}) if it is separated (resp., complete)
as a topological abelian group.
 The completion $\R$ of any right linear topological ring $R$ is
a complete, separated right linear topological ring.

 Given a right linear topological ring $R$, a right $R$\+module $\N$
is said to be \emph{discrete} if the annihilator of any element
$b\in\N$ is an open right ideal in~$R$.
 Equivalently, a right $R$\+module $\N$ is discrete if and only if
the action map $\N\times R\rarrow\N$ is continuous (as a function of
two variables) with respect to the given topology of $R$ and
the discrete topology on~$\N$.
 The full subcategory of discrete right $R$\+modules is denoted by
$\Discr R\subset\Modr R$.
 The full subcategory $\Discr R$ is closed under subobjects, quotients,
and infinite direct sums in the abelian category $\Modr R$.
 The category $\Discr R$ is a locally finitely generated Grothendieck
abelian category (in the sense of the condition
in~\cite[Theorem~1.70]{AR} or of~\cite[Section~V.3]{Sten}; see
Section~\ref{pro-coherent-modules-secn} below for a discussion).

 The action of a topological ring $R$ on any discrete right $R$\+module
$\N$ extends uniquely to an action of the completion $\R$ of
the topological ring $R$ on the same module/abelian group~$\N$.
 So the category $\Discr R$ is naturally equivalent (in fact,
isomorphic) to the category $\Discr\R$.

 Given an abelian group $A$ and a set $X$, we will use the notation
$A[X]=A^{(X)}$ for the direct sum of $X$ copies of~$A$.
 The elements of $A[X]$ are interpreted as finite formal linear
combinations $\sum_{x\in X}a_xx$ of elements of $X$ with
the coefficients $a_x\in A$.
 So, for every element $a\in A[X]$, the set of all $x\in X$ for which
$a_x\ne0$ must be finite.

 Let $\fA$ be a complete, separated topological abelian group with
linear topology, and let $X$ be a set.
 Then we put $\fA[[X]]=\varprojlim_{\fU\subset\fA}(\fA/\fU)[X]$, where
$\fU$ ranges over all the open subgroups of~$\fA$.
 The elements of $\fA[[X]]$ are interpreted as infinite formal linear
combinations $\sum_{x\in X}a_xx$ of elements of $X$ with
\emph{zero-converging} families of coefficients $a_x\in\fA$.
 Here the zero-convergence condition means that, for every open
subgroup $\fU\subset\fA$, the set of all indices $x\in X$ for which
$a_x\notin\fU$ must be finite.

 Any map of sets $f\:X\rarrow Y$ induces a homomorphism of abelian
groups $\fA[[f]]\:\fA[[X]]\rarrow\fA[[Y]]$ taking a formal linear
combination $\sum_{x\in X}a_xx\in\fA[[X]]$ to the formal linear
combination $\sum_{y\in Y}b_yy\in\fA[[Y]]$ with the coefficients
defined by the following rule.
 For every $y\in Y$, put $b_y=\sum_{x\in X}^{f(x)=y}a_x$.
 Here the infinite summation sign in the latter formula is understood
as the limit of finite partial sums in the topology of~$\fA$.
 One can say that $\fA[[f]]$ is the map of ``push-forward of
measures''.
 This construction makes the assignment $X\longmapsto\fA[[X]]$
a covariant functor $\fA[[{-}]]\:\Sets\rarrow\Ab$.
 However, we will mostly consider it as a covariant functor
$\fA[[{-}]]\:\Sets\rarrow\Sets$.

 Now let $\R$ be a complete, separated right linear topological ring.
 Then the functor $\R[[{-}]]\:\Sets\rarrow\Sets$ has a natural structure
of a \emph{monad} on the category of sets (in the sense
of~\cite[Chapter~VI]{MacLane}; see also~\cite[Section~6]{PS1}).
 The monad unit $\epsilon_X\:X\rarrow\R[[X]]$ is the natural 
transformation (a functorial morphism defined for all sets~$X$)
taking an element $x\in X$ to the formal linear combination
$\sum_{y\in X}r_yy$ with $r_x=1\in\R$ and $r_y=0\in\R$ for all
$y\in X$, \ $y\ne x$.
 One can say that $\epsilon_X$~is the ``point measure'' map.
 The monad multiplication $\phi_X\:\R[[\R[[X]]]]\rarrow\R[[X]]$ is
the ``opening of parentheses'' map producing a formal linear combination
from a formal linear combination of formal linear combinations.
 The construction of the map~$\phi_X$ involves taking products of
pairs of elements in $\R$ and then computing infinite sums in~$\R$,
which are interpreted as the limits of finite partial sums in
the topology of~$\R$.
 The conditions imposed on~$\R$ (the completeness and separatedness,
but particularly the right linear topology) guarantee the convergence.

 \emph{Left contramodules} over $\R$ are defined as algebras (or, in
our preferred terminology, modules) over the monad $\R[[{-}]]$ on
the category of sets.
 Explicitly, a left $\R$\+contramodule $\fP$ is a set endowed with
a map of sets $\pi_\fP\:\R[[\fP]]\rarrow\fP$ (called
the \emph{contraaction map}) such that the following
\emph{contraassociativity} and \emph{contraunitality} conditions hold.
 Firstly, the two compositions
$$
 \R[[\R[[\fP]]]]\rightrightarrows\R[[\fP]]\rarrow\fP
$$
of the ``opening of parentheses'' and ``measure push-forward'' maps
$\phi_\fP$ and $\R[[\pi_\fP]]\:\allowbreak\R[[\R[[\fP]]]]\rarrow\R[[\fP]]$
with the contraaction map $\pi_\fP\:\R[[\fP]]\rarrow\fP$ must be equal
to each other.
 Secondly, the composition
$$
 \fP\rarrow\R[[\fP]]\rarrow\fP
$$
of the ``point measure'' map $\epsilon_\fP\:\fP\rarrow\R[[\fP]]$ and
the contraaction map $\pi_\fP\:\R[[\fP]]\allowbreak\rarrow\fP$
must be equal to the identity map~$\id_\fP$.
 We denote the category of left $\R$\+contramodules by $\R\Contra$.

 In particular, for a discrete ring $R$, there is a monad structure
on the functor $R[{-}]\:\Sets\rarrow\Sets$.
 The category of algebras/modules over this monad is naturally
equivalent (in fact, isomorphic) to the category of left $R$\+modules
$R\Modl$.
 Basically, a module structure over the monad $R[{-}]$ on a set $M$
is the same thing as a left $R$\+module structure on~$M$.

 For any complete, separated right linear topological ring $\R$,
the natural inclusion $\R[X]\rarrow\R[[X]]$ of the set of all finite
formal linear combinations of elements of a set $X$ with
the coefficients in $\R$ into the set of all infinite formal linear
combinations with zero-convergent families of coefficients is
a morphism of monads $\R[{-}]\rarrow\R[[{-}]]$ on the category of sets.
 This morphism of monads induces a forgetful functor
$\R\Contra\rarrow\R\Modl$ from the category of left $\R$\+contramodules
to the category of left modules over the ring $\R$ viewed as an abstract
(nontopological) ring.
 Basically, the contramodule forgetful functor $\R\Contra\rarrow\R\Modl$
restricts the contramodule infinite summation operations encoded in
a contraaction map $\pi_\fP\:\R[[\fP]]\rarrow\fP$ to their finite
aspects described by the map $\R[\fP]\rarrow\fP$ obtained by
precomposing the map~$\pi_\fP$ with the natural inclusion
$\R[\fP]\rarrow\R[[\fP]]$.

 The category of left $\R$\+contramodules $\R\Contra$ is abelian
with exact functors of infinite direct products.
 The forgetful functor $\R\Contra\rarrow\R\Modl$ is exact and faithful,
and preserves infinite products.

 For any set $X$, the set/abelian group $\R[[X]]$ has a natural left
$\R$\+contramodule structure with the contraaction map
$\pi_{\R[[X]]}=\phi_X$.
 The $\R$\+contramodule $\R[[X]]$ is called the \emph{free} left
$\R$\+contramodule spanned by the set~$X$.
 This terminology is justified by the fact that, for any left
$\R$\+contramodule $\fQ$, the abelian group of $\R$\+contramodule
morphisms $\R[[X]]\rarrow\fQ$ is naturally isomorphic to the group of
all maps of sets $X\rarrow\fQ$,
$$
 \Hom^\R(\R[[X]],\fQ)\simeq\Hom_\Sets(X,\fQ).
$$
 Here we denote by $\Hom^\R(\fP,\fQ)$ the abelian group of morphisms
$\fP\rarrow\fQ$ in the category $\R\Contra$.

 In other words, the free contramodule functor $\R[[{-}]]\:\Sets
\rarrow\R\Contra$ is left adjoint to the forgetful functor
$\R\Contra\rarrow\Sets$.
 Therefore, the free contramodule functor preserves coproducts.
 So the coproducts of free contramodules can be computed by
the rule
$$
 \coprod\nolimits_{\upsilon\in\Upsilon}^{\R\Contra}\R[[X_\upsilon]]
 =\R\Bigl[\Bigl[
 \coprod\nolimits_{\upsilon\in\Upsilon}^\Sets X_\upsilon
 \Bigr]\Bigr],
$$
where $\Upsilon$ is an arbitrary indexing set and
$\coprod_{\upsilon\in\Upsilon}^\sC$ denotes the $\Upsilon$\+indexed
coproduct in a category~$\sC$.

 All coproducts exist in the abelian category $\R\Contra$.
 One can compute the coproduct of an arbitrary family of left
$\R$\+contramodules $(\fP_\upsilon)_{\upsilon\in\Upsilon}$ by
representing each $\fP_\upsilon$ as the cokernel of a morphism
of free $\R$\+contramodules and using the fact that coproducts
commute with cokernels in any category.
 For example, any left $\R$\+contramodule $\fP$ is the cokernel of
the difference $\R[[\R[[\fP]]]]\rarrow\R[[\fP]]$ of the pair of
maps appearing in the contraassociativity axiom; this gives
a presentation of $\fP$ as the cokernel of a morphism of free
$\R$\+contramodules.

 The abelian category $\R\Contra$ has enough projective objects.
 A left $\R$\+con\-tra\-mod\-ule is projective if and only if it is
a direct summand of a free left $\R$\+contramodule $\R[[X]]$ for
some set~$X$.
 The abelian category $\R\Contra$ is also locally presentable
(in the sense of~\cite[Definition~1.17 and Theorem~1.20]{AR}).
 More precisely, let $\lambda$~be the successor cardinal of
the cardinality of a base of neighborhoods of zero in~$\R$.
 Then the category $\R\Contra$ is locally $\lambda$\+presentable.
 The free left $\R$\+contramodule with one generator $\R=\R[[\{{*}\}]]$
is a $\lambda$\+presentable projective generator of the abelian
category $\R\Contra$.

 In this paper, we are interested in complete, separated right linear
topological rings $\R$ with a \emph{countable} base of neighborhoods
of zero.
 This is the most well-studied and well-behaved case; see
the papers~\cite{PR,Pcoun}.
 In this case, the abelian category $\R\Contra$ is locally
$\aleph_1$\+presentable, and the free contramodule $\R=\R[[\{{*}\}]]$
is its $\aleph_1$\+presentable projective generator.

 Let $\R$ be a complete, separated right linear topological ring.
 For any discrete right $\R$\+module $\N$ and any abelian group $V$,
the abelian group $\fQ=\Hom_\boZ(\N,V)$ has a natural left
$\R$\+contramodule structure.
 The contraaction map $\pi_\fQ\:\R[[\fQ]]\rarrow\fQ$ assigns
to a formal linear combination $t=\sum_{q\in\fQ}r_qq\in\R[[\fQ]]$
the abelian group map $\pi_\fQ(t)\:\N\rarrow V$ given by the rule
$$
 \pi_\fQ(t)(b)=\sum\nolimits_{q\in\fQ}q(br_q)\in V
 \qquad\text{for all $b\in\N$}.
$$
 In this formula, the sum in the right-hand side is finite, since
the family of elements $(r_q\in\R)_{q\in\fQ}$ converges to zero
in the topology of $\R$, while the annihilator of the element
$b\in\N$ is an open right ideal in~$\R$.
 
 Let $\N$ be a discrete right $\R$\+module and $\fP$ be a left
$\R$\+contramodule.
 The \emph{contratensor product} $\N\ocn_\R\fP$ is an abelian group
constructed as the cokernel of (the difference of) the natural pair
of maps
$$
 \N\ot_\boZ\R[[\fP]]\rightrightarrows\N\ot_\boZ\fP.
$$
 Here one map $\N\ot_\boZ\R[[\fP]]\rarrow\N\ot_\boZ\fP$ is induced
by the contraaction map $\pi_\fP\:\R[[\fP]]\rarrow\fP$, while
the other map is the composition $\N\ot_\boZ\R[[\fP]]\rarrow
\N[\fP]\rarrow\N\ot_\boZ\fP$ of the map $\N\ot_\boZ\R[[\fP]]\rarrow
\N[\fP]$ induced by the discrete right action of $\R$ in $\N$ with
the obvious surjective map $\N[\fP]=\N^{(\fP)}\rarrow\N\ot_\boZ\fP$.
 In this context, for any set $X$, the map $\N\ot_\boZ\R[[X]]\rarrow
\N[X]$ induced by the discrete right action of $\R$ in $\N$ is given
by the rule
$$
 b\ot\sum\nolimits_{x\in X}r_xx\longmapsto\sum\nolimits_{x\in X}(br_x)x.
$$
 The expression in the right-hand side denotes an element of $\N[X]$,
i.~e., a finite formal linear combination of elements of $X$ with
the coefficients in~$\N$.
 Once again, this formal linear combination is finite since the family
of elements $(r_x\in\R)_{x\in X}$ converges to zero in the topology
of $\R$, while the annihilator of the element $b\in\N$ is an open
right ideal in~$\R$.

 For any discrete right $\R$\+module $\N$, any left $\R$\+contramodule
$\fP$, and any abelian group $V$, there is a natural isomophism of
abelian groups
$$
 \Hom^\R(\fP,\Hom_\boZ(\N,V))\simeq\Hom_\boZ(\N\ocn_\R\fP,\>V).
$$
 It follows that the contratensor product functor $\ocn_\R$ preserves
all colimits in both of its arguments.
 For any discrete right $\R$\+module $\N$ and any set $X$, there is
a natural isomorphism of abelian groups
$$
 \N\ocn_\R\R[[X]]\simeq\N[X]=\N^{(X)}.
$$

 A left $\R$\+contramodule $\fF$ is said to be \emph{flat} if
the contratensor product with $\fF$ is an exact functor
${-}\ocn_\R\fF\:\Discr\R\rarrow\Ab$.
 The class of flat left $\R$\+contramodules is closed under coproducts
and directed colimits in $\R\Contra$.
 All projective left $\R$\+contramodules are flat.

 Let $\R$ be a complete, separated right linear topological ring and
$\fA\subset\R$ be a closed subgroup.
 Given a left $\R$\+contramodule $\fP$, we denote by $\fA\tim\fP
\subset\fP$ the image of the subgroup $\fA[[\fP]]\subset\R[[\fP]]$
under the contraaction map $\pi_\fP\:\R[[\fP]]\rarrow\fP$.
 So $\fA\tim\fP$ is a subgroup in~$\fP$.

 For any open right ideal $\fI\subset\R$, there is a natural
isomorphism of abelian groups
$$
 (\R/\fI)\ocn_\R\fP\simeq\fP/(\fI\tim\fP).
$$
 For any set $X$ and any closed right ideal $\fJ\subset\R$, one has
$$
 \fJ\tim(\R[[X]])=\fJ[[X]]\subset\R[[X]].
$$
 In particular, for an open right ideal $\fI\subset\R$, one has
$$
 (\R/\fI)\ocn_\R\R[[X]]\simeq(\R/\fI)[X]\simeq\R[[X]]/\fI[[X]].
$$

\Section{Right Linear Topological Modules}
\label{topological-modules-secn}

 Let $\R$ be a complete, separated right linear topological ring.
 A topological $\R$\+module is said to be \emph{separated} 
(respectively, \emph{complete}) if it is separated (resp., complete)
as a topological abelian group.
 A topological right $\R$\+module is said to be \emph{right linear}
if it has a base of neighborhoods of zero consising of open
$\R$\+submodules.
 Following~\cite[Section~2.2]{PSsp}, let us consider the category
$\Modrcs\R$ of complete, separated right linear topological
$\R$\+modules with continuous right $\R$\+linear maps.
 Denote by $\Modrcsom\R\subset\Modrcs\R$ the full subcategory of
all (complete, separated, right linear) topological modules with
a countable base of neighborhoods of zero.
 Obviously, the abelian category of discrete right $\R$\+modules
$\Discr\R$ is a full subcategory in $\Modrcsom\R$.

 Let $\sC$ be a category with directed limits.
 Let us denote by $\varprojlim^\sC$ the limits computed in
the category~$\sC$.
 Then there is a natural functor $\varprojlim\:\Pro(\sC)\rarrow\sC$
taking a pro-object $\bC=\plim_{\gamma\in\Gamma}C_\gamma\in\Pro(\sC)$
to the object $\varprojlim(\bC)=
\varprojlim^\sC_{\gamma\in\Gamma}C_\gamma\in\sC$; that is
$$
 \varprojlim(\plim_{\gamma\in\Gamma}C_\gamma)=
 \varprojlim\nolimits^\sC_{\gamma\in\Gamma}C_\gamma.
$$
 The functor $\varprojlim\:\Pro(\sC)\rarrow\sC$ is right adjoint to
the fully faithful inclusion functor $\sC\rarrow\Pro(\sC)$ (so,
$\varprojlim$ is the coreflector onto $\sC\subset\Pro(\sC)$).

 Let $\sA$ be an abelian category with directed limits (equivalently,
with infinite products).
 Following~\cite[Section~9]{Pextop}, we will say that a pro-object
$\bC\in\Pro(\sA)$ is \emph{limit-epimorphic} if the natural adjunction
morphism $\varprojlim(\bC)\rarrow\bC$ is an epimorphism in the abelian
category $\Pro(\sA)$.
 Equivalently, a pro-object $\bC\in\Pro(\sA)$ is limit-epimorphic if and
only if it can be represented by a downwards directed diagram
$(C_\gamma\in\sA)_{\gamma\in\Gamma}$ indexed by a some directed
poset $\Gamma$ such that the projection morphism
$\varprojlim_{\gamma\in\Gamma}^\sA C_\gamma\rarrow C_\delta$ is
an epimorphism in $\sA$ for every $\delta\in\Gamma$.

 Notice that any limit-epimorphic pro-object in $\sA$ is obviously
strict.
 But the converse is \emph{not} true in general, even when
$\sA=k\Vect$ is the category of vector spaces over a field~$k$
\,\cite[Section~3]{HS}.
 On the other hand, a countably indexed pro-abelian group
$\bC\in\Pro_\omega(\Ab)$ is strict if and only if it is
limit-epimorphic (cf.\ Lemma~\ref{strict-omega-pro-unambigous}).

\begin{prop} \label{spro-discr-described-as-topological-modules}
 Let\/ $\R$ be a complete, separated right linear topological ring.
 In this setting: \par
\textup{(a)} There is a natural fully faithful functor\/
$\Modrcs\R\rarrow\SPro(\Discr\R)$, whose essential image consists
precisely of all the pro-objects in\/ $\Discr\R$ that are
limit-epimorphic when viewed \emph{as pro-objects in $\Modr\R$}. \par
\textup{(b)} There is a natural equivalence of categories\/
$\Modrcsom\R\simeq\SPro_\omega(\Discr\R)$.
\end{prop}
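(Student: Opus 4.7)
The plan is to construct the functor $\Psi \colon \Modrcs\R \rarrow \SPro(\Discr\R)$ that sends a complete separated right linear topological $\R$-module $\fN$ to the pro-object $\plim_\fU \fN/\fU$, where $\fU$ ranges over the directed poset of open right $\R$-submodules of $\fN$. Each quotient $\fN/\fU$ is a discrete right $\R$-module and all transition maps are surjective, so $\Psi(\fN) \in \SPro(\Discr\R)$; a continuous $\R$-linear map $f \colon \fN \rarrow \fM$ induces quotient maps $\fN/f^{-1}(\fV) \rarrow \fM/\fV$ that assemble into $\Psi(f)$ functorially. Fully faithfulness I would verify by the Hom formula~\eqref{hom-in-pro-objects-formula}: an element of $\varinjlim_\fU \Hom_{\Discr\R}(\fN/\fU, \fM/\fV)$ is precisely an $\R$-linear map $\fN \rarrow \fM/\fV$ continuous for the discrete topology on the target, and taking the inverse limit over $\fV$ together with $\fM = \varprojlim_\fV \fM/\fV$ (which holds because $\fM$ is complete and separated) identifies the full Hom-set with continuous $\R$-linear maps $\fN \rarrow \fM$.

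For the description of the essential image in part~(a), I would start with an arbitrary strict pro-object $\bC = \plim_\gamma C_\gamma$ in $\Discr\R$ and set $\fN = \varprojlim^{\Modr\R}_\gamma C_\gamma$, with $\fU_\gamma = \ker(\fN \rarrow C_\gamma)$. Equipping $\fN$ with the right linear topology whose base of neighborhoods of zero consists of the $\fU_\gamma$ makes it a complete separated topological right $\R$-module by construction. Then $\Psi(\fN) \cong \bC$ holds if and only if each projection $\fN \rarrow C_\gamma$ is surjective, i.e., $\fN/\fU_\gamma \cong C_\gamma$, which is exactly the condition that $\bC$ be limit-epimorphic when viewed as a pro-object in $\Modr\R$. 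Conversely, if $\bC \cong \Psi(\fM)$ for some $\fM \in \Modrcs\R$, then $\varprojlim^{\Modr\R} \bC \cong \fM$ and the surjections $\fM \rarrow \fM/\fV$ immediately witness limit-epimorphism in $\Modr\R$.

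For part~(b), the key reduction is to show that every countably indexed strict pro-object $\bC = \plim_{n \in \omega} C_n$ in $\Discr\R$ is automatically limit-epimorphic in $\Modr\R$. Given $c_n \in C_n$, I would define $c_m$ for $m < n$ as the image of $c_n$ under $C_n \rarrow C_m$, and choose $c_{n+1}, c_{n+2}, \ldots$ by inductively lifting along the surjections $C_{m+1} \rarrow C_m$; the resulting compatible family is an element of $\varprojlim^{\Modr\R}_m C_m$ projecting to $c_n$. Combined with part~(a) and the observation that the associated topological module carries a countable base of neighborhoods of zero, this yields the equivalence $\Modrcsom\R \simeq \SPro_\omega(\Discr\R)$. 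The main obstacle I anticipate is bookkeeping the distinction between strict pro-objects (diagrams of epimorphisms) and limit-epimorphic pro-objects (inverse limit in the ambient abelian category surjecting onto each term); these coincide for countable indexing but genuinely diverge in general, which is precisely why part~(a) only identifies the essential image while part~(b) asserts an outright equivalence.
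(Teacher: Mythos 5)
Your overall strategy --- the functor $\rN\longmapsto\plim_{\rU}\rN/\rU$, full faithfulness via the Hom formula~\eqref{hom-in-pro-objects-formula} combined with completeness and separatedness of the target, and the countable (Mittag--Leffler) lifting argument for part~(b) --- is the same as the paper's, and those parts are fine. The gap is in your identification of the essential image. You assert that, for a fixed representation $\bC=\plim_\gamma C_\gamma$ by discrete modules and $\fN=\varprojlim^{\Modr\R}_\gamma C_\gamma$, surjectivity of every projection $\fN\rarrow C_\gamma$ ``is exactly the condition that $\bC$ be limit-epimorphic.'' It is not: limit-epimorphism only says that the adjunction morphism $\fN\rarrow\bC$ is an epimorphism in $\Pro(\Modr\R)$, equivalently that \emph{some} representation has surjective projections from the limit --- and the witnessing representation lives a priori in $\Modr\R$, so its terms need not be discrete; conversely, the chosen representation by discrete modules need not have surjective projections even when $\bC$ is limit-epimorphic. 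If you feed a non-discrete witnessing diagram into your construction, the quotients $\fN/\fU_\gamma$ are not discrete and $\fN$ is not right linear; if you insist on the discrete representation, you have no reason yet to believe the projections are onto.

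The missing step, which is what most of the paper's proof of part~(a) consists of, reconciles the two requirements: represent the epimorphism $\fN\rarrow\bC$ by a levelwise morphism $(p_\upsilon\:\fN\to C_{\gamma(\upsilon)})$ after a cofinal reindexing (Lemma~\ref{morphism-in-pro-objects-arise-from}), replace each $C_{\gamma(\upsilon)}$ by the image $\N_\upsilon=\im(p_\upsilon)$ --- which is discrete, being a submodule of a discrete module --- and check that the termwise monomorphism $(\N_\upsilon)\rarrow(C_{\gamma(\upsilon)})$ induces an isomorphism of pro-objects because $\fN\rarrow\bC$ is an epimorphism (as in the proof of Lemma~\ref{strict-omega-pro-unambigous}). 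This simultaneously yields a representation by discrete modules that is strict and has surjective projections from the limit, so that $\bC$ lies in the image of your functor with $\fN\in\Modrcs\R$, and it shows that every pro-object in $\Discr\R$ that is limit-epimorphic in $\Modr\R$ is in fact strict as a pro-object in $\Discr\R$ --- without which the claimed essential image is not even known to sit inside $\SPro(\Discr\R)$. You flag the strict-versus-limit-epimorphic distinction as an anticipated obstacle at the end, but the argument as written does not resolve it.
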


\begin{proof}
 Part~(a): let $\rN$ be a complete, separated right linear topological
right $\R$\+module.
 Then the corresponding strict pro-object in $\Discr\R$ is constructed
as $\plim_{\rU\subset\rN}\rN/\rU$, where $\rU$ ranges over all the open
$\R$\+submodules of~$\rN$.
 By the definition of separatedness and completeness, we have
$\rN=\varprojlim_{\rU\subset\rN}^\Ab\rN/\rU=
\varprojlim_{\rU\subset\rN}^{\Modr\R}\rN/\rU$, and the maps
$\rN\rarrow\rN/\rU$ are epimorphisms in $\Ab$ and $\Modr\R$.
 So $\plim_{\rU\subset\rN}\rN/\rU$ is a limit-epimorphic pro-object
in $\Ab$ and $\Modr\R$.

 Conversely, let $\bcM=\plim_{\gamma\in\Gamma}\M_\gamma$ be
a pro-object in $\Discr\R$ that is limit-epimorphic in $\Modr\R$.
 Put $N=\varprojlim^{\Modr\R}_{\gamma\in\Gamma}\M_\gamma\in\Modr\R$.
 Then we have a natural morphism $\bbp\:N\rarrow\bcM$ in
$\Pro(\Modr\R)$, and it is an epimorphism by assumption.
 Following the explicit proof of
Lemma~\ref{morphism-in-pro-objects-arise-from}(a) that can be
found in~\cite[Section~9]{Pextop}, there is a directed poset
$\Upsilon$ together with a cofinal map of directed posets
$\gamma\:\Upsilon\rarrow\Gamma$ such that the morphism
$\bbp\:N\rarrow\bcM$ is represented by a morphism of
$\Upsilon^\sop$\+indexed diagrams
$(p_\upsilon\:N\to\M_{\gamma(\upsilon)})_{\upsilon\in\Upsilon}$.
 For every $\upsilon\in\Upsilon$, the map $p_\upsilon\:N\rarrow
\M_{\gamma(\upsilon)}$ is an $\R$\+module morphism
from a right $\R$\+module $N$ to a discrete
right $\R$\+module~$\M_{\gamma(\upsilon)}$.
 Put $\N_\upsilon=\im(p_\upsilon)$; then $\N_\upsilon$ is
a discrete right $\R$\+module.

 By Lemma~\ref{lim-in-quotes-preserves-finite-co-limits}(a\+b),
the functor $\plim_{\upsilon\in\Upsilon}\:(\Modr\R)^{\Upsilon^\sop}
\rarrow\Pro(\Modr\R)$ preserves kernels and cokernels, hence also
images, of all morphisms.
 Since $\bbp\:N\rarrow\bcM$ is an epimorphism in $\Pro(\Modr\R)$, it
follows that the termwise injective map of $\Upsilon^\sop$\+indexed
diagrams $(\N_\upsilon)_{\upsilon\in\Upsilon}\rarrow
(\M_{\gamma(\upsilon)})_{\upsilon\in\Upsilon}$ induces an isomorphism
in $\Pro(\Modr\R$).
 By Lemma~\ref{pro-objects-full-subcategory}, it follows that
the same termwise embedding of diagrams induces also an isomorphism
in $\Pro(\Discr\R)$.

 Obviously, the transition morphisms in the diagram
$(\N_\upsilon)_{\upsilon\in\Upsilon}$ are surjective; so
the pro-object $\bcM\simeq\plim_{\gamma\in\Gamma}\N_\gamma$ belongs to
$\SPro(\Discr\R)\subset\Pro(\Discr\R)$.
 It remains to denote by $\rN$ the right $\R$\+module $N$ endowed with
the topology in which the kernels $\rU_\upsilon$ of the surjective
$\R$\+module maps $N\rarrow\N_\upsilon$ form a base of neighborhoods
of zero.
 Since the $\R$\+modules $\N_\upsilon$ are discrete, the action of
$\R$ in $\rN$ is continuous.
 The $\R$\+submodules $\rU_\upsilon\subset\rN$ form a cofinal subdiagram
in the downwards directed diagram of all open $\R$\+submodules in~$\rN$,
and we have $\rN=\varprojlim_{\upsilon\in\Upsilon}\rN/\rU_\upsilon$.
 Hence $\rN$ is a complete, separated right linear topological
right $\R$\+module.

 It is clear from the constructions that they provide two mutually
inverse equivalences between $\Modrcs\R$ and the full subcategory of
$\Pro(\Discr\R)$ consisting of all the pro-objects that are
limit-epimorphic as pro-objects in $\Modr\R$.
 We have also shown that all such pro-objects are strict as pro-objects
in $\Discr\R$.

 The proof of part~(b) is similar, with the main difference that every
pro-object belonging to $\SPro_\omega(\Modr\R)$ is limit-epimorphic.
\end{proof}

 It is easy to see that all limits (and in particular, infinite
products) exist in the category $\Modrcs\R$.
 Given a small category $\Gamma$ and a functor $\rN\:\Gamma^\sop
\rarrow\Modrcs\R$, the projective limit $\varprojlim_{\gamma\in\Gamma}
\rN(\gamma)$ is constructed as the limit of the same diagram computed
in the category of abstract $\R$\+modules $\Modr\R$, endowed with
the projective limit topology.
 So finite intersections of the preimages of open submodules in
$\rN(\gamma)$, \ $\gamma\in\Gamma$, form a base of neighborhoods
of zero in $\varprojlim_{\gamma\in\Gamma}\rN(\gamma)$.

 Similarly, all countable limits (and in particular, countable
products) exist in the category $\Modrcsom\R$.
 The fully faithful inclusion functor $\Modrcsom\R\rarrow\Modr\R$
preserves countable limits.

\begin{lem} \label{products-in-linear-topol-modules-and-pro-objects}
\textup{(a)} The full subcategory\/ $\Pro(\Discr\R)$ is closed
under infinite products in\/ $\Pro(\Modr\R)$.
 Consequently, the full subcategory\/ $\SPro(\Discr\R)$ is closed under
infinite products in\/ $\SPro(\Modr\R)$. \par
\textup{(b)} For any ring $R$, the full subcategory of limit-epimorphic
objects is closed under infinite products in\/ $\Pro(\Modr R)$
and\/ $\SPro(\Modr R)$. \par
\textup{(c)} The full subcategory\/ $\Modrcs\R$ is closed under
infinite products in\/ $\Pro(\Discr\R)$ and\/ $\SPro(\Discr\R)$.
\end{lem}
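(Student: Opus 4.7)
The plan is to reduce each part to the explicit description of infinite products in $\Pro(\sC)$ furnished by the proof of Corollary~\ref{products-of-pro-objects}(a), combined with elementary facts about discrete modules and with the preservation properties of the adjunction $\sA \leftrightarrows \Pro(\sA)$.

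For part~(a), I would invoke the explicit construction of $\prod_{\pi \in \Pi} \bD_\pi$ in $\Pro(\Modr\R)$: representing each $\bD_\pi$ by a diagram $(D_\gamma)_{\gamma\in\Gamma_\pi}$ with a formally adjoined minimal element $0_\pi$ at which $D_{0_\pi} = 0$, the product is represented by the $\Delta^\sop$-indexed diagram with terms $C_\delta = \prod_{\pi \in \Pi} D_{\gamma_\pi(\delta)}$. Since $\gamma_\pi(\delta) = 0_\pi$ for all but finitely many $\pi$, each $C_\delta$ is essentially a finite direct product in $\Modr\R$. Finite direct products of discrete right $\R$-modules are discrete (the annihilator of a tuple is the finite intersection of open right ideals), so $C_\delta \in \Discr\R$ and hence $\prod_\pi \bD_\pi \in \Pro(\Discr\R)$. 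The consequence for $\SPro$ is then immediate from Corollary~\ref{products-preserve-strict-pro-objects} applied inside the abelian category $\Pro(\Discr\R)$.

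For part~(b), I would exploit the fact that the functor $\varprojlim \: \Pro(\Modr R) \to \Modr R$ is right adjoint to the fully faithful inclusion $\Modr R \hookrightarrow \Pro(\Modr R)$ and therefore preserves products, so $\varprojlim(\prod_\pi \bD_\pi) = \prod_\pi \varprojlim(\bD_\pi)$, and the adjunction counit on the product factors as the product of the counits $\varprojlim(\bD_\pi) \to \bD_\pi$ in $\Pro(\Modr R)$. By Lemma~\ref{products-exact-in-pro-objects}, products are exact in the abelian category $\Pro(\Modr R)$, so a product of epimorphisms is again an epimorphism; thus limit-epimorphicity is preserved under products. Since $\SPro(\Modr R)$ is closed under products in $\Pro(\Modr R)$ by Corollary~\ref{products-preserve-strict-pro-objects}, the same statement passes to $\SPro(\Modr R)$.

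For part~(c), I would combine (a) and (b) with Proposition~\ref{spro-discr-described-as-topological-modules}(a), which identifies $\Modrcs\R$ with the full subcategory of $\SPro(\Discr\R)$ consisting of pro-objects that are limit-epimorphic when viewed in $\Pro(\Modr\R)$. Part~(a) guarantees that the product of such pro-objects is computed inside $\Pro(\Discr\R)$ and coincides with the product taken in $\Pro(\Modr\R)$; part~(b), applied with $R$ the underlying abstract ring of $\R$, ensures that limit-epimorphicity is preserved; so the product lies again in the essential image of $\Modrcs\R$. A final quick check shows that the resulting complete, separated, right linear $\R$-module is indeed the categorical product in $\Modrcs\R$ (endowed with the projective-limit topology), which matches the product constructed in the discussion preceding this lemma. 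I do not expect a substantive obstacle: the only subtlety worth watching is to keep separate the limits computed in $\Modr\R$ (where the limit-epimorphic condition lives, by Proposition~\ref{spro-discr-described-as-topological-modules}(a)) from the purely formal closure properties inside the abelian categories $\Pro(\Discr\R)$ and $\Pro(\Modr\R)$.
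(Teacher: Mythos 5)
Your parts~(a) and~(c) follow the paper's own route and are essentially correct: (a)~reduces to the explicit construction of products in $\Pro(\Modr\R)$ from the proof of Corollary~\ref{products-of-pro-objects}(a), whose terms are essentially finite products of discrete modules and hence discrete; and (c)~is the intended combination of~(a), (b), and Proposition~\ref{spro-discr-described-as-topological-modules}(a).

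The gap is in part~(b). The adjunction counit at the object $\prod_\pi^{\Pro(\Modr R)}\bD_\pi$ is a morphism whose domain is the product of the modules $\varprojlim(\bD_\pi)$ taken \emph{in $\Modr R$} and then regarded as a constant pro-object; the ``product of the counits'' is a morphism whose domain is the product of the same objects taken \emph{in $\Pro(\Modr R)$}. These are genuinely different objects, because the inclusion $\Modr R\rarrow\Pro(\Modr R)$ does not preserve infinite products: already for $R=\boZ$, the product of countably many copies of $\boZ$ in $\Pro(\Ab)$ is $\plim_n\boZ^n$, which is not the constant pro-object~$\boZ^\omega$. Consequently the counit factorizes as the natural comparison morphism
$$
 \prod\nolimits_\pi^{\;\Modr R}\varprojlim(\bD_\pi)\lrarrow
 \prod\nolimits_\pi^{\;\Pro(\Modr R)}\varprojlim(\bD_\pi)
$$
followed by the product of the counits. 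Your argument shows that the second morphism is an epimorphism (exactness of products, Lemma~\ref{products-exact-in-pro-objects}), but says nothing about the first, so the factorization by itself does not yield that the composite is an epimorphism. The missing step --- which is exactly what the paper's proof singles out as the key point --- is that for any family of objects $C_\upsilon\in\sA$ the comparison morphism $\prod_\upsilon^\sA C_\upsilon\rarrow\prod_\upsilon^{\Pro(\sA)}C_\upsilon$ is an epimorphism in $\Pro(\sA)$. This is true and repairable from the explicit construction: the target is $\plim$ over finite subsets $S$ of the finite subproducts $\prod_{\upsilon\in S}C_\upsilon$, the constant diagram with term $\prod_\upsilon^\sA C_\upsilon$ surjects termwise onto that diagram, and Lemma~\ref{lim-in-quotes-preserves-finite-co-limits}(a) then gives an epimorphism of pro-objects. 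Without this step the proof of~(b), and hence of~(c), is incomplete.
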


\begin{proof}
 Part~(a): more generally, for any two categories $\sC$ and $\sD$ with
finite products, and any functor $F\:\sC\rarrow\sD$ preserving finite
products, the induced functor $\Pro(F)\:\Pro(\sC)\rarrow\Pro(\sD)$
preserves infinite products.
 This is clear from the explicit proof of
Corollary~\ref{products-of-pro-objects}(a).
 The second assertion of part~(a) follows from the first one in view of
Corollary~\ref{products-preserve-strict-pro-objects}.

 Part~(b): one observes that the functor $\varprojlim\:\Pro(\Modr R)
\rarrow\Modr R$ preserves all limits (being a right adjoint) and
the products of epimorphisms in $\Pro(\Modr R)$ are epimorphisms again
(by Lemma~\ref{products-exact-in-pro-objects}).
 The key point is that, for any family of objects $C_\upsilon$,
\,$\upsilon\in\Upsilon$ in an abelian category $\sA$ with infinite
products, the natural morphism
$\prod_{\upsilon\in\Upsilon}^\sA C_\upsilon\rarrow
\prod_{\upsilon\in\Upsilon}^{\Pro(\sA)}C_\upsilon$ from the product
of $C_\upsilon$ taken in the category $\sA$ (but viewed as an object
of $\Pro(\sA)$) to the same product taken in the category $\Pro(\sA)$
is an epimorphism in $\Pro(\sA)$.
 This is clear from the explicit proof of
Corollary~\ref{products-of-pro-objects}(a).

 Part~(c) follows from parts~(a) and~(b) in view of
Proposition~\ref{spro-discr-described-as-topological-modules}(a).
 Alternatively, one can see directly that the construction of
infinite products in $\Modrcs\R$ as per the paragraphs preceding
this lemma agrees with the construction of infinite products in
$\Pro(\Discr\R)$ as per the explicit proof of
Corollary~\ref{products-of-pro-objects}(a).
\end{proof}

 Lemma~\ref{products-in-linear-topol-modules-and-pro-objects}(c)
says that one can equivalently compute infinite products in
$\Modrcs\R$ (as described in the preceding paragraps) or the same
infinite products in $\SPro(\Discr\R)$ (as described in
Corollaries~\ref{products-of-pro-objects}(a)
and~\ref{products-preserve-strict-pro-objects}).
 Similarly, it is clear from
Proposition~\ref{spro-discr-described-as-topological-modules}(b)
that one can equivalently compute countable products in
$\Modrcsom\R$ (as described in the preceding paragraphs) or in
$\SPro_\omega(\Discr\R)$ (as described in
Corollaries~\ref{products-of-pro-objects}(b)
and~\ref{products-preserve-strict-pro-objects}).

\medskip

 Before we finish this section, let us say a few words about
the exact category structures on the categories of topological
right $\R$\+modules.
 The additive category $\SPro_\omega(\Discr\R)$ is quasi-abelian,
and its quasi-abelian exact structure coincides with the exact structure
inherited from the abelian exact structure of $\Pro_\omega(\Discr\R)$,
by Proposition~\ref{spro-omega-quasi-abelian-characterization}\,%
(1)\,$\Rightarrow$\,(3).
 According to
Proposition~\ref{spro-discr-described-as-topological-modules}(b), it
follows that the additive category $\Modrcsom\R\simeq
\SPro_\omega(\Discr\R)$ is quasi-abelian, and its quasi-abelian
exact structure coincides with the exact structure inherited from
$\Pro_\omega(\Discr\R)$.
 We will consider $\Modrcsom\R$ as an exact category with this exact
structure.

 The admissible short exact sequences in $\Modrcsom\R$ are precisely
all the short sequences $0\rarrow\rL\rarrow\rM\rarrow\rN\rarrow0$
in $\Modrcsom\R$ satisfying the following conditions: the sequence
must be exact in $\Modr\R$, the injective morphism $\rL\rarrow\rM$ must
be closed, and the surjective morphism $\rM\rarrow\rN$ must be open.
 The key observation here is that the quotient topology on the quotient
group of a topological abelian group by a closed subgroup is
(separated and) complete whenever the subgroup has a countable base of
neighborhoods of zero~\cite[Proposition~1.4]{Pextop}
(see also~\cite[Proposition~11.6]{Pextop}).
 We leave the straightforward details to the reader.

 The additive category $\SPro(\Discr\R)$ is quasi-abelian as well,
and its quasi-abelian exact structure coincides with the exact structure
inherited from the abelian exact structure of $\Pro(\Discr\R)$,
by Proposition~\ref{spro-quasi-abelian-characterization}\,%
(1)\,$\Rightarrow$\,(3).
 However, the additive category $\Modrcs\R$ is \emph{not} quasi-abelian
already in the case when $\R=k$ is a discrete
field~\cite[Corollary~8.6]{Pextop}.
 Furthermore, in the case of a discrete field $\R=k$ already,
the full subcategory $\Modrcs\R$ (embedded as per
Proposition~\ref{spro-discr-described-as-topological-modules}(a)) does
\emph{not} inherit an exact category structure from $\Pro(\Discr\R)$
or $\SPro(\Discr\R)$ \,\cite[Proposition~9.5(b)]{Pextop}.
 Accordingly, we do \emph{not} endow $\Modrcs\R$ with any exact
category structure in this paper.
 We refer to~\cite[Corollary~8.8(b), Conclusion~8.9, and
Sections~10--11 and~13]{Pextop} for further discussion.

\Section{Pontryagin Duality and Pro-Contratensor Product}

 Let $\R$ be a complete, separated right linear topological ring.
 For any left $\R$\+contramodule $\fP$, consider the natural map of
abelian groups
$$
 \lambda_{\R,\fP}\:\fP\lrarrow
 \varprojlim\nolimits_{\fI\subset\R}\fP/(\fI\tim\fP),
$$
where the projective limit in the category of abelian groups is taken
over all the open right ideals $\fI\subset\R$.
 The $\R$\+contramodule $\fP$ is called \emph{separated} if
the map~$\lambda_{\R,\fP}$ is injective, and \emph{complete} if
$\lambda_{\R,\fP}$~is surjective.

 All free left $\R$\+contramodules $\R[[X]]$ are complete and separated,
since
$$
 \R[[X]]=\varprojlim\nolimits_{\fI\subset\R}(\R/\fI)[X]=
 \varprojlim\nolimits_{\fI\subset\R}\R[[X]]/\fI[[X]]=
 \varprojlim\nolimits_{\fI\subset\R}\R[[X]]/(\fI\tim\R[[X]]).
$$
 Hence all projective left $\R$\+contramodules are complete and
separated, too.

 Let $\fP$ and $\fQ$ be two left $\R$\+contramodules.
 Consider the abelian group $\Hom^\R(\fP,\fQ)$ of all morphisms
$\fP\rarrow\fQ$ in the category $\R\Contra$, and endow it with
the following topology.
 For every finite subset $F\subset\fP$ and every open right ideal
$\fI\subset\R$, denote by $\fV_{F,\fI}\subset\Hom^\R(\fP,\fQ)$
the subgroup consisting of all the $\R$\+contramodule morphisms
$f\:\fP\rarrow\fQ$ such that $f(F)\subset\fI\tim\fQ$.
 By definition, the subgroups $\fV_{F,\fI}$ form a base of
neighborhoods of zero in the topology of $\Hom^\R(\fP,\fQ)$.

\begin{lem} \label{contramodule-hom-group-complete-and-separated}
 Assume that the left\/ $\R$\+contramodule\/ $\fQ$ is complete and
separated.
 Then the topological abelian group\/ $\Hom^\R(\fP,\fQ)$ is complete
and separated, too.
\end{lem}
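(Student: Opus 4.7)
The plan is to address separatedness and completeness of $\Hom^\R(\fP,\fQ)$ separately; separatedness is immediate, while completeness reduces to constructing a pointwise limit of a Cauchy net and then verifying it is an $\R$\+contramodule morphism.

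For separatedness: if $f\in\Hom^\R(\fP,\fQ)$ lies in every $\fV_{F,\fI}$, then for each $p\in\fP$ we have $f(p)\in\bigcap_\fI(\fI\tim\fQ)$, an intersection which vanishes by separatedness of $\fQ$; hence $f=0$.

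For completeness, take a Cauchy net $(f_\alpha)$ in $\Hom^\R(\fP,\fQ)$. For each $p\in\fP$, the net $(f_\alpha(p))$ is Cauchy in $\fQ$ with respect to the filtration by the subgroups $\fI\tim\fQ$, hence converges to a unique limit $h(p)\in\fQ$ by completeness and separatedness of $\fQ$. Additivity of the resulting map $h\:\fP\rarrow\fQ$ is automatic from pointwise limits, and once $h$ is known to be an $\R$\+contramodule morphism, the convergence $f_\alpha\to h$ in $\Hom^\R(\fP,\fQ)$ follows because every $\fV_{F,\fI}$ involves only finitely many test points. The crucial step is thus to verify that $h$ commutes with contraactions: for every $\xi=\sum_p r_p p\in\R[[\fP]]$ we must have $h(\pi_\fP(\xi))=\pi_\fQ(\sum_p r_p h(p))$. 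By separatedness of $\fQ$, it suffices to check this modulo $\fI\tim\fQ$ for each open right ideal $\fI\subset\R$. Fix $\fI$ and set $F=\{p\in\fP:r_p\notin\fI\}$, a finite set by zero-convergence. Since $\R$ is a topological ring, left multiplication by each $r_p$ with $p\in F$ is continuous, so I can select a smaller open right ideal $\fI'\subset\fI$ with $r_p\fI'\subset\fI$ for all $p\in F$; contraassociativity then yields the key estimate $r_p\cdot(\fI'\tim\fQ)\subset\fI\tim\fQ$. Now pick $\alpha$ large enough that $f_\alpha(p)\equiv h(p)\pmod{\fI'\tim\fQ}$ for all $p\in F$ and $f_\alpha(\pi_\fP(\xi))\equiv h(\pi_\fP(\xi))\pmod{\fI\tim\fQ}$ simultaneously (possible since $F\cup\{\pi_\fP(\xi)\}$ is finite). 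Using that $f_\alpha$ is an $\R$\+contramodule morphism, $f_\alpha(\pi_\fP(\xi))=\pi_\fQ(\sum_p r_p f_\alpha(p))$; splitting this sum at $F$, the terms with $p\notin F$ have coefficients in $\fI$ and vanish modulo $\fI\tim\fQ$, while in the finite sum over $p\in F$ each $f_\alpha(p)$ may be replaced by $h(p)$ modulo $\fI\tim\fQ$ by the key estimate. Chaining the congruences yields the required equality modulo $\fI\tim\fQ$.

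The main obstacle is that $\fI\tim\fQ$ is not generally stable under the underlying $\R$\+module action on $\fQ$, so one cannot directly substitute $f_\alpha(p)\mapsto h(p)$ inside the finite linear combination $\sum_{p\in F}r_p f_\alpha(p)$. Passing to the smaller ideal $\fI'\subset\fI$ sidesteps this difficulty, but crucially depends on continuity of left multiplication by individual elements of $\R$ --- i.e., on the joint continuity of multiplication built into the definition of a topological ring.
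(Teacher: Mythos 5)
Your proof is correct and follows essentially the same route as the paper's: define the limit morphism pointwise using completeness of $\fQ$, then verify the contramodule morphism identity modulo $\fI\tim\fQ$ by splitting the formal sum at the finite set of coefficients outside $\fI$ and shrinking to an open right ideal $\fI'$ with $r_p\fI'\subset\fI$, which is exactly the paper's choice of the ideal $\fJ$. The only cosmetic difference is that you phrase completeness via Cauchy nets and approximate $h$ by actual members $f_\alpha$ of the net, whereas the paper works with a compatible family of cosets in $\varprojlim_{F,\fI}\Hom^\R(\fP,\fQ)/\fV_{F,\fI}$ and picks a representative $g$ of the coset $h_{F,\fJ}$; these formulations are equivalent for linear topologies.
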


\begin{proof}
 This a slightly more general version of~\cite[Corollary~7.7]{PS1}.
 To check that $\Hom^\R(\fP,\fQ)$ is separated, suppose given
a nonzero $\R$\+con\-tra\-mod\-ule morphism $f\:\fP\rarrow\fQ$.
 Let $p\in\fP$ be an element such that $q=f(p)\ne0$ in $\fQ$, and
let $\fI\subset\R$ be an open right ideal such that
$q\notin\fI\tim\fQ$.
 Consider the singleton subset $F=\{p\}\subset\fP$.
 Then one has $f\notin\fV_{F,\fI}\subset\Hom^\R(\fP,\fQ)$.

 To check that $\Hom^\R(\fP,\fQ)$ is complete, notice that
the completion of the topological abelian group
$\Hom^\R(\fP,\fQ)$ can be computed as the projective limit
$\varprojlim_{F,\fI}\Hom^\R(\fP,\fQ)/\fV_{F,\fI}$ taken over
the directed poset of all pairs $(F,\fI)$ with the partial order
$(F',\fI')\le(F'',\fI'')$ if $F'\subset F''$ and $\fI'\supset\fI''$.
 Suppose given a compatible family of elements in the quotient groups
$h_{F,\fI}\in\Hom^\R(\fP,\fQ)/\fV_{F,\fI}$ defined for all finite
subsets $F\subset\fP$ and all open right ideals $\fI\subset\R$.

 Given an element $p\in\fP$, put $F=\{p\}$, and consider the natural
evaluation maps $\ev_{p,\fI}\:\Hom^\R(\fP,\fQ)/\fV_{F,\fI}\rarrow
\fQ/(\fI\tim\fQ)$ taking a coset $h+\fV_{F,\fI}$ to the coset
$h(p)+\fI\tim\fQ$.
 Clearly, the abelian group map $\ev_{p,\fI}$ is well-defined by
this rule.
 Now the compatible collection of cosets $\ev_{p,\fI}(h_{F,\fI})
\in\fQ/(\fI\tim\fQ)$ defines an element of the projective limit
$\varprojlim_{\fI\subset\R}\fQ/(\fI\tim\fQ)$, which by assumption
corresponds to a unique element $q\in\fQ$.
 Set $f(p)=q$.

 The key step is to check that $f\:\fP\rarrow\fQ$ is
an $\R$\+contramodule morphism.
 Given a zero-convergent formal linear combination
$t=\sum_{p\in\fP}r_pp\in\R[[\fP]]$, we need to show that the equation
$$
 f(\pi_\fP(t))=\pi_\fQ\bigl(\R[[f]](t)\bigr)
$$
holds in~$\fQ$.
 Here $\R[[f]]\:\R[[\fP]]\rarrow\R[[\fQ]]$ is the map defined in
Section~\ref{contramodules-preliminaries-secn}.

 For this purpose, it suffices to check that the desired equation
holds modulo the subgroup $\fI\tim\fQ\subset\fQ$ for every open right
ideal $\fI\subset\R$.
 Now there is a finite subset $G\subset\fP$ such that $r_p\in\fI$
for all $p\in\fP\setminus G$.
 Put $p_t=\pi_\fP(t)\in\fP$ and $F=G\cup\{p_t\}\subset\fP$.
 Furthermore, there exists an open right ideal $\fJ\subset\R$
such that $\fJ\subset\fI$ and $r_p\fJ\subset\fI$ for all $p\in G$.
 Consider the coset $h_{F,\fJ}\in\Hom^\R(\fP,\fQ)/\fV_{F,\fJ}$,
and let $g\in\Hom^\R(\fP,\fQ)$ be one of its representatives.
 By the assumption of compatibility imposed on our collection of
cosets, we have $f(p)\equiv g(p)$ modulo $\fJ\tim\fQ$ for all $p\in F$.

 Put $t'=\sum_{p\in G}r_pp$ and $t''=\sum_{p\notin G}r_pp$; so
$t=t'+t''$.
 It follows that $\R[[f]](t)=\R[[f]](t')+\R[[f]](t'')$ and
$\R[[g]](t)=\R[[g]](t')+\R[[g]](t'')$, because $\R[[f]]$ and $\R[[g]]$
are abelian group homomorphisms.
 The contraaction map $\pi_\fQ\:\R[[\fQ]]\rarrow\fQ$ is a contramodule
morphism (from the free $\R$\+contramodule $\R[[\fQ]]$); so it is
an abelian group map, too.
 Now we can compute that
\begin{multline*}
 f(\pi_\fP(t)) \equiv g(\pi_\fP(t))=\pi_\fQ\bigl(\R[[g]](t)\bigr) =
 \pi_\fQ\bigl(\R[[g]](t'+t'')\bigr) \\
 = \pi_\fQ\bigl(\R[[g]](t')\bigr)+\pi_\fQ\bigl(\R[[g]](t'')\bigr)
 \equiv \pi_\fQ\bigl(\R[[g]](t')\bigr) \\ \equiv
 \pi_\fQ\bigl(\R[[f]](t')\bigr) \equiv
 \pi_\fQ\bigl(\R[[f]](t')\bigr)+\pi_\fQ\bigl(\R[[f]](t'')\bigr) \\
 = \pi_\fQ\bigl(\R[[f]](t'+t'')\bigr)=\pi_\fQ\bigl(\R[[f]](t)\bigr)
\end{multline*}
modulo $\fI\tim\fQ$.
 Indeed, we have $\pi_\fP(t)\in F$, \  $g\in\Hom^\R(\fP,\fQ)$, \
$\R[[g]](t'')\in\fI[[\fQ]]$, \ $\R[[f]](t')-\R[[g]](t')\in
\sum_{p\in G}r_p(\fJ\tim\fQ)\subset\R[[\fQ]]$, and
$\R[[f]](t'')\in\fI[[\fQ]]$.

 Finally, it follows from the constructions involved that
$\Hom^\R(\fP,\fQ)/\fV_{F,\fI}\subset\Hom_\Sets(F,\>\fQ/(\fI\tim\fQ))
\simeq\prod_{p\in F}\fQ/(\fI\tim\fQ)$.
 Therefore, we have $h_{F,\fI}=f+\fV_{F,\fI}\in
\Hom^\R(\fP,\fQ)/\fV_{F,\fI}$ for all finite subsets $F\subset\fP$
and open right ideals $\fI\subset\R$.
 In other words, the image of the element $f\in\Hom^\R(\fP,\fQ)$
under the completion map $\Hom^\R(\fP,\fQ)\rarrow
\varprojlim_{F,\fI}(\Hom^\R(\fP,\fQ)/\fV_{F,\fI})$ is equal to our
original element $(h_{F,\fI})_{F,\fI}\in
\varprojlim_{F,\fI}(\Hom^\R(\fP,\fQ)/\fV_{F,\fI})$, as desired.
\end{proof}

\begin{rem}
There is also another, less explicit, but more elegant way to prove Lemma~\ref{contramodule-hom-group-complete-and-separated}. Namely, $\Hom^\R(\fP,\fQ)$ is a subset of $\Hom_{\Sets}(\fP,\fQ)=\fQ^\fP$ and the latter is a topological abelian groups with the product topology (when viewed as a direct product of copies of $\fQ$). If $\fQ$ is complete and separated, so is $\Hom_{\Sets}(\fP,\fQ)$. It remains to observe that $\Hom^\R(\fP,\fQ)$ is a closed subgroup with respect to this topology. We leave working out the details to the reader here.
\end{rem}

 Let $\R$ be a complete, separated right linear topological ring.
 Then the free right $\R$\+module $\R$ is a complete, separated
right linear topological right $\R$\+module.
 So $\R$ itself is an object of the category $\Modrcs\R$.
 When the topological ring $\R$ has a countable base of neighborhoods
of zero, the free right $\R$\+module $\R$ belongs to the full
subcategory $\Modrcsom\R\subset\Modrcs\R$.

 Accordingly, for any complete, separated right linear topological
ring $\R$, we can consider the full subcategory $\Prod(\R)\subset
\Modrcs\R$ (in the notation introduced in the paragraph preceding
Proposition~\ref{projectives-as-summands-of-products}).
 When $\R$ has a countable base of neighborhoods of zero, we also
have a full subcategory $\Prod_\omega(\R)\subset\Modrcsom\R$.
 Notice that the notation $\Prod_\omega(\R)$ is unambiguous here
and $\Prod_\omega(\R)\subset\Prod(\R)$, since the full subcategory
$\Modrcsom\R$ is closed under countable products in $\Modrcs\R$.

 Given an abelian or exact category $\sE$, we denote by $\sE_\proj
\subset\sE$ the full subcategory of projective objects in~$\sE$.
 In particular, $\R\Contra_\proj$ denotes the full subcategory of
projective contramodules in $\R\Contra$.

 For any complete, separated topological right $\R$\+module $\rM$,
the set/abelian group/left $\R$\+module $\Hom_\R^\cont(\rM,\R)$ of
continuous right $\R$\+module homomorphisms $\rM\rarrow\R$ is
endowed with a left $\R$\+contramodule structure as explained
in~\cite[the paragraph preceding Theorem~3.1]{PSsp}.

\begin{thm} \label{pontryagin-duality-theorem}
 Let\/ $\R$ be a complete, separated right linear topological ring.
 Then there is a natural anti-equivalence between the additive
category of projective left\/ $\R$\+contramodules\/ $\R\Contra_\proj$
and the full subcategory in\/ $\Modrcs\R$ formed by the direct
summands of infinite products of copies of the topological right\/
$\R$\+module\/ $\R$,
$$
 \R\Contra_\proj\simeq(\Prod\R)^\sop.
$$
\end{thm}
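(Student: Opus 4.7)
The plan is to construct a pair of contravariant functors $\Delta\colon\R\Contra\rarrow\Modrcs\R$ and $\Gamma\colon\Modrcs\R\rarrow\R\Contra$, both built from Hom into $\R$, and to verify that the two biduality unit maps are isomorphisms on the subcategories $\R\Contra_\proj$ and $\Prod(\R)$. I put $\Delta(\fP)=\Hom^\R(\fP,\R)$ equipped with the topology from Lemma~\ref{contramodule-hom-group-complete-and-separated} (with neighborhoods of zero $\fV_{F,\fI}$) and the right $\R$-action $(fr)(p)=f(p)r$ inherited from the right regular action of $\R$ on itself; the sets $\fV_{F,\fI}$ are visibly open right $\R$-submodules, so $\Delta(\fP)$ lies in $\Modrcs\R$, and Lemma~\ref{contramodule-hom-group-complete-and-separated} supplies completeness and separatedness. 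I put $\Gamma(\rM)=\Hom_\R^\cont(\rM,\R)$ with the contramodule structure described in the paragraph preceding the theorem (cf.\ \cite[Section~3]{PSsp}).

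The core of the proof consists in the identifications $\Delta(\R[[X]])\cong\prod_X\R$ in $\Modrcs\R$ and $\Gamma(\prod_X\R)\cong\R[[X]]$ in $\R\Contra$, where $\prod_X\R$ denotes the product in $\Modrcs\R$ (equivalently, the set $\R^X$ with the product topology, by the discussion of products in Section~\ref{topological-modules-secn}). For the first, the universal property of the free contramodule gives a bijection $\Hom^\R(\R[[X]],\R)\leftrightarrow\R^X$ sending $\phi$ to $(\phi(x))_{x\in X}$; to match the $\fV_{F,\fI}$-topology with the product topology one must, for a general finite $F=\{\xi_1,\dots,\xi_n\}\subset\R[[X]]$ with $\xi_j=\sum_x r_x^{(j)}x$, bound the infinite tails $\sum_{x\notin F_0}r_x^{(j)}\phi(x)$ using that $\fI$ is a right ideal and the family $(r_x^{(j)})$ is zero-convergent, and bound the finite heads $\sum_{x\in F_0}r_x^{(j)}\phi(x)$ using continuity of left multiplication by $r_x^{(j)}$ on $\R$ to choose an open right ideal $\fU_x\subset\R$ with $r_x^{(j)}\fU_x\subset\fI$. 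For the second, a continuous $\R$-linear map $\phi\colon\prod_X\R\rarrow\R$ yields a family $s_x=\phi(e_x)\in\R$ which must be zero-convergent (continuity of $\phi$ at $0$ and the fact that, for any finite subset, elements supported outside of it lie in any prescribed basic open neighborhood of $0$ in $\prod_X\R$ force $(s_x)$ to enter any open right ideal eventually); approximating an arbitrary $(r_x)$ by its finite truncations and using continuity then gives $\phi((r_x))=\sum_x s_x r_x$, recovering $\phi$ from $(s_x)\in\R[[X]]$. A direct check shows that both identifications intertwine the contramodule contraactions and the topological module structures.

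Granting these two computations, the biduality map $\fP\rarrow\Gamma\Delta(\fP)$ at $\fP=\R[[X]]$ sends $p=\sum_x r_x x$ to the functional $(s_x)\mapsto\sum_x r_x s_x$ on $\R^X$, whose underlying zero-convergent family under the identification $\Gamma(\prod_X\R)\cong\R[[X]]$ is $(r_x)$ again; so it becomes the identity on $\R[[X]]$. The other biduality map $\rM\rarrow\Delta\Gamma(\rM)$ at $\rM=\prod_X\R$ is the identity on $\prod_X\R$ by the dual computation. Since both functors are additive and $\Delta$ turns coproducts in $\R\Contra$ into products in $\Modrcs\R$ (as a contravariant Hom functor), $\Delta$ restricts to a functor $\R\Contra_\proj\rarrow\Prod(\R)^\sop$ and $\Gamma$ restricts to a functor $\Prod(\R)^\sop\rarrow\R\Contra_\proj$; the biduality isomorphisms then extend from free contramodules and products of copies of $\R$ to all their direct summands, yielding the asserted anti-equivalence.

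The main obstacle is the pair of topology/convergence computations in the second paragraph. Both arguments use crucially that $\R$ is a topological ring (so both left and right multiplications are continuous), together with the right-ideal identity $\fI\R\subset\fI$; without the continuity of left multiplication, the $\fV_{F,\fI}$-topology on $\Hom^\R(\R[[X]],\R)$ could a~priori be strictly finer than the product topology on $\R^X$, and without the right-ideal property, the infinite sums $\sum_x s_x r_x$ defining the elements of $\Gamma(\prod_X\R)$ would not be guaranteed to converge in $\R$.
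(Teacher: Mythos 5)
Your proposal is correct and takes essentially the same route as the paper, which simply cites \cite[Theorem~3.1]{PSsp} and records the same pair of mutually inverse contravariant functors $\fP\mapsto\Hom^\R(\fP,\R)$ and $\rP\mapsto\Hom_\R^\cont(\rP,\R)$; your argument just fills in the verification that the paper delegates to that reference. The two computations you isolate --- identifying the $\fV_{F,\fI}$\+topology on $\Hom^\R(\R[[X]],\R)$ with the product topology on $\prod_{x\in X}\R$ via the right-ideal property and continuity of left multiplications, and recovering a continuous functional on $\prod_{x\in X}\R$ from its zero-convergent family of values on the coordinate elements --- are precisely the content of the cited result, and the extension to direct summands by naturality of the biduality maps is the standard conclusion.
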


\begin{proof}
 This is~\cite[Theorem~3.1]{PSsp}.
 The functor $\R\Contra_\proj\rarrow(\Prod\R)^\sop$ assigns to
a projective left $\R$\+contramodule $\fP$ the topological abelian
group $\Hom^\R(\fP,\R)$ from the paragraph preceding
Lemma~\ref{contramodule-hom-group-complete-and-separated}.
 The right action of $\R$ in itself induces the right $\R$\+module
structure on $\Hom^\R(\fP,\R)$.
 The functor $(\Prod\R)^\sop\rarrow\R\Contra_\proj$ assigns to
a topological right $\R$\+module $\rP$ belonging to $\Prod(\R)$
the left $\R$\+con\-tra\-mod\-ule $\Hom_\R^\cont(\rP,\R)$.
\end{proof}

 Let $\R$ be a complete, separated topological ring with a countable
base of neighborhoods of zero.
 A left $\R$\+contramodule $\fP$ is said to be \emph{countably
generated} if it is a quotient contramodule of a free left
$\R$\+contramodule $\R[[X]]$ spanned by a countable set~$X$.
 Let us denote by $\R\Contra_\proj^\omega\subset\R\Contra_\proj$
the full subcategory of countably generated projective left
$\R$\+contramodules.

\begin{cor} \label{countable-pontryagin-duality}
 Let\/ $\R$ be a complete, separated topological ring with a countable
base of neighborhoods of zero.
 Then the anti-equivalence of categories from
Theorem~\ref{pontryagin-duality-theorem} restricts to a natural
anti-equivalence
$$
 \R\Contra_\proj^\omega\simeq(\Prod_\omega\R)^\sop
$$
between the additive category of countably generated projective
left\/ $\R$\+contramodules and the full subcategory in\/
$\Modrcsom\R$ formed by the direct summands of countable products
of copies of the topological right\/ $\R$\+module~$\R$.
\end{cor}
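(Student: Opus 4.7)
The plan is to read off the corollary directly from the anti-equivalence $\R\Contra_\proj\simeq(\Prod\R)^\sop$ of Theorem~\ref{pontryagin-duality-theorem} by computing the Pontryagin dual of a free contramodule. Concretely, I would first establish a natural isomorphism
\[
 \Hom^\R(\R[[X]],\R)\simeq\prod_{x\in X}\R
\]
in $\Modrcs\R$ for an arbitrary set $X$, where the right-hand side is the product computed in $\Modrcs\R$ (equivalently, $\R^X$ endowed with the product topology). The underlying bijection of sets is immediate from the free-forgetful adjunction $\Hom^\R(\R[[X]],\fQ)\simeq\Hom_\Sets(X,\fQ)$ recalled in Section~\ref{contramodules-preliminaries-secn}, applied with $\fQ=\R$, and the right $\R$-module structures visibly coincide.

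The topological content to verify is that the neighborhoods $\fV_{F,\fI}$ of zero in $\Hom^\R(\R[[X]],\R)$, for finite $F\subset\R[[X]]$ and open right ideals $\fI\subset\R$, generate the same filter as the basic neighborhoods of the product topology, which correspond to the special case $F\subset X$. One inclusion is immediate. For the other, given $F=\{r^{(1)},\ldots,r^{(k)}\}$ with $r^{(j)}=\sum_{x\in X}r^{(j)}_x x$, I would put $F'=\bigcup_{j}\{x\in X:r^{(j)}_x\notin\fI\}$, which is finite by zero-convergence of the coefficients, and then choose an open right ideal $\fI'\subset\fI$ such that $r^{(j)}_x\fI'\subset\fI$ for all $j$ and all $x\in F'$. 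Such an $\fI'$ exists because for each fixed $s\in\R$ the left-multiplication map $a\mapsto sa$ is continuous and right $\R$-linear, so $\{a\in\R:sa\in\fI\}$ is an open right ideal, and one can intersect finitely many of these with $\fI$. Splitting any element of $\R[[X]]$ into its $F'$-supported part and its tail, and using that $\fI$ is closed in $\R$ for the convergence of the tail sum, one obtains $\fV_{F',\fI'}\subset\fV_{F,\fI}$. I expect this topological identification to be the main obstacle; the rest of the argument is formal.

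With this isomorphism in hand, for countable $X$ the product $\prod_{x\in X}\R$ lies in $\Modrcsom\R$ by Lemma~\ref{products-in-linear-topol-modules-and-pro-objects}(c) and the discussion following it, and it is by definition an object of $\Prod_\omega(\R)$. Since every countably generated projective left $\R$-contramodule is a direct summand of some $\R[[X]]$ with countable $X$, the functor $\Hom^\R(-,\R)$ of Theorem~\ref{pontryagin-duality-theorem} sends $\R\Contra_\proj^\omega$ into $\Prod_\omega(\R)$. Applying the same reasoning to the quasi-inverse functor $\Hom_\R^\cont(-,\R)$: an arbitrary object of $\Prod_\omega(\R)$ is a direct summand of $\prod_{x\in X}\R$ for some countable $X$, and is therefore sent to a direct summand of $\Hom_\R^\cont(\prod_{x\in X}\R,\R)\simeq\R[[X]]$, which lies in $\R\Contra_\proj^\omega$. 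This yields the claimed restricted anti-equivalence.
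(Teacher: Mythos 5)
Your proposal is correct, and it arrives at the same key identification as the paper, namely $\Hom^\R(\R[[X]],\R)\simeq\prod_{x\in X}\R$ with the product topology, but by a more hands-on route. The paper's proof is purely formal: it invokes Theorem~\ref{pontryagin-duality-theorem}, the fact that the anti-equivalence sends $\R=\R[[\{*\}]]$ to $\R$, and the fact that any equivalence preserves coproducts; combined with the formula $\coprod_{x\in X}\R[[\{*\}]]=\R[[X]]$ and the observation that coproducts in $(\Prod\R)^\sop$ are the products in $\Modrcs\R$ described in Section~\ref{topological-modules-secn}, this yields the topological identification with no computation (the paper also offers the citation to the proof of \cite[Theorem~3.1]{PSsp}, which is closer in spirit to what you do). You instead verify directly that the $\fV_{F,\fI}$-topology on $\Hom^\R(\R[[X]],\R)$ agrees with the product topology, and your verification is sound: the reduction of a general finite $F\subset\R[[X]]$ to a finite subset $F'\subset X$ uses exactly the right ingredients (zero-convergence of coefficients, continuity of left multiplication by a fixed element to produce $\fI'$, and closedness of the open right ideal $\fI$ to control the tail sum). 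The formal argument buys brevity and avoids touching the topology at all; your argument buys a self-contained and checkable description of the dual of a free contramodule, at the cost of redoing part of what \cite{PSsp} already establishes. Either way the passage to direct summands and the quasi-inverse is routine, as you say.
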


\begin{proof}
 This can be seen from~\cite[proof of Theorem~3.1]{PSsp}, or deduced
from Theorem~\ref{pontryagin-duality-theorem} using the facts that
the category equivalence from the latter theorem takes the object $\R=
\R[[\{{*}\}]]\in\R\Contra_\proj$ to the object $\R\in\Prod(\R)^\sop$,
and that any category equivalence preserves coproducts.
\end{proof}

 Let $\R$ be a complete, separated right linear topological ring.
 The functor of contratensor product
$$
 \ocn_\R\:\Discr\R\times\R\Contra\lrarrow\Ab
$$
was constructed in Section~\ref{contramodules-preliminaries-secn}.
 Passing to the pro-objects, we obtain the functor
$$
 \ocn_\R^\pro\:\Pro(\Discr\R)\times\R\Contra\lrarrow\Pro(\Ab)
$$
defined by the rule
$$
 (\plim_{\gamma\in\Gamma}\N_\gamma)\ocn_\R^\pro\fP
 = \plim_{\gamma\in\Gamma}(\N_\gamma\ocn_\R\fP)
$$
for all pro-objects $\bcN=\plim_{\gamma\in\Gamma}\N_\gamma\in
\Pro(\Discr\R)$ and all contramodules $\fP\in\R\Contra$.
 The functor ${-}\ocn_\R\fP$ takes epimorphisms in $\Discr\R$ to
epimorphisms in $\Ab$, so the functor~$\ocn_\R^\pro$ restricts
to a functor
$$
 \ocn_\R^\pro\:\SPro(\Discr\R)\times\R\Contra\lrarrow\SPro(\Ab).
$$

 Restricting the functor $\ocn_\R^\pro$ further to the full subcategory
$\Modrcs\R\subset\Pro(\Discr\R)$ (as per
Proposition~\ref{spro-discr-described-as-topological-modules}(a))
and postcomposing it with the functor $\varprojlim\:\Pro(\Ab)
\rarrow\Ab$, we obtain the functor of \emph{pro-contratensor product}
{\hbadness=1900
$$
 \proocn_\R\:\Modrcs\R\times\R\Contra\rarrow\Ab
$$
given} by the formula
$$
 \rN\proocn_\R\fP=
 \varprojlim\nolimits_{\rU\subset\rN}^\Ab(\rN/\rU\ocn_\R\fP).
$$
 Here the projective limit in the category of abelian groups $\Ab$
is taken over all the open $\R$\+submodules $\rU\subset\rN$.

 In particular, the functor $\R\proocn_\R{-}\,\:\R\Contra\rarrow\Ab$
takes any left $\R$\+con\-tra\-mod\-ule $\fP$ to the abelian group
$$
 \R\proocn_\R\fP=\varprojlim\nolimits_{\fI\subset\R}
 (\fP/\fI\tim\fP)=\lambda_{\R,\fI}(\fP).
$$

\begin{lem} \label{proocn-preserves-products}
 For any left\/ $\R$\+contramodule\/ $\fP$, the functor\/
${-}\proocn_\R\fP\:\Modrcs\R\rarrow\Ab$ preserves infinite products.
\end{lem}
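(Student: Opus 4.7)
The plan is to reduce the statement to a straightforward Fubini-type commutation of limits in $\Ab$, by combining the explicit description of infinite products in $\Modrcs\R$ with the fact that ${-}\ocn_\R\fP\:\Discr\R\rarrow\Ab$ preserves colimits.

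First, I fix a family $(\rN_\alpha)_{\alpha\in A}$ in $\Modrcs\R$ and let $\rM=\prod_\alpha\rN_\alpha$ denote its product. As recorded in the paragraphs preceding Lemma~\ref{products-in-linear-topol-modules-and-pro-objects}, the underlying $\R$\+module of $\rM$ is the ordinary product, and the submodules
$$
 \rU_{S,(\rU_\alpha)}\;=\;\prod\nolimits_{\alpha\in S}\rU_\alpha\,\times\,\prod\nolimits_{\alpha\notin S}\rN_\alpha,
$$
indexed by pairs $(S,(\rU_\alpha)_{\alpha\in S})$ with $S\subset A$ finite and $\rU_\alpha\subset\rN_\alpha$ open, form a cofinal subfamily of the downwards directed poset of all open $\R$\+submodules of $\rM$. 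Since cofinal subsystems give the same projective limit, rewriting the defining limit for $\rM\proocn_\R\fP$ over this subfamily yields
$$
 \rM\proocn_\R\fP\;=\;\varprojlim\nolimits^{\Ab}_{(S,(\rU_\alpha))}\bigl(\rM/\rU_{S,(\rU_\alpha)}\bigr)\ocn_\R\fP.
$$

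Next, for each pair $(S,(\rU_\alpha))$ the quotient $\rM/\rU_{S,(\rU_\alpha)}$ is the finite product $\prod_{\alpha\in S}\rN_\alpha/\rU_\alpha$ in $\Discr\R$, which coincides with the finite coproduct $\bigoplus_{\alpha\in S}\rN_\alpha/\rU_\alpha$ in the abelian category $\Discr\R$. Since the functor ${-}\ocn_\R\fP$ preserves colimits in its first argument, I obtain
$$
 \bigl(\rM/\rU_{S,(\rU_\alpha)}\bigr)\ocn_\R\fP\;=\;\prod\nolimits^{\Ab}_{\alpha\in S}\bigl(\rN_\alpha/\rU_\alpha\ocn_\R\fP\bigr).
$$

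Finally, it remains to identify
$$
 \varprojlim\nolimits^{\Ab}_{(S,(\rU_\alpha))}\prod\nolimits^{\Ab}_{\alpha\in S}\bigl(\rN_\alpha/\rU_\alpha\ocn_\R\fP\bigr)\;\cong\;\prod\nolimits^{\Ab}_{\alpha\in A}\varprojlim\nolimits^{\Ab}_{\rU_\alpha\subset\rN_\alpha}\bigl(\rN_\alpha/\rU_\alpha\ocn_\R\fP\bigr),
$$
the right-hand side being $\prod_\alpha(\rN_\alpha\proocn_\R\fP)$ by definition. Both sides parametrize the same data: a family of elements $x_{\alpha,\rU_\alpha}\in\rN_\alpha/\rU_\alpha\ocn_\R\fP$ indexed by $\alpha\in A$ and open $\rU_\alpha\subset\rN_\alpha$, compatible with the quotient maps in the variable $\rU_\alpha$ for each fixed~$\alpha$. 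The comparison map is constructed via the singletons $S=\{\alpha\}$ in one direction and by assembling tuples over finite $S$ in the other, and its bijectivity is a routine unpacking of the respective universal properties. I do not anticipate any genuine obstacle here; the only point requiring care is that $S$ must be finite, so that finite products in $\Discr\R$ coincide with finite coproducts and are preserved by ${-}\ocn_\R\fP$.
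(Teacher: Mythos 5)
Your proof is correct and is essentially the paper's argument unwound into an explicit computation: the paper factors ${-}\proocn_\R\fP$ as $\Modrcs\R\rarrow\Pro(\Discr\R)\rarrow\Pro(\Ab)\rarrow\Ab$ and cites that each of the three factors preserves infinite products, which rests on exactly the same ingredients you use by hand (the finite-subproduct base of neighborhoods of zero in a product, preservation of finite products by ${-}\ocn_\R\fP$, and the Fubini-type interchange of limits). No gaps.
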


\begin{proof}
 The functor ${-}\ocn_\R^\pro\fP\:\Pro(\Discr\R)\rarrow\Pro(\Ab)$
preserves infinite products for the reason explained in the proof of
Lemma~\ref{products-in-linear-topol-modules-and-pro-objects}(a).
 Furthermore, the inclusion functor $\Modrcs\R\rarrow\Pro(\Discr\R)$
preserves products by
Lemma~\ref{products-in-linear-topol-modules-and-pro-objects}(c),
while the functor $\varprojlim\:\Pro(\Ab)\rarrow\Ab$ preserves
products since it is a right adjoint functor.
\end{proof}

\begin{lem} \label{proocn-with-flat-exact-for-countable}
 Let\/ $\fF$ be a \emph{flat} left\/ $\R$\+contramodule.
 Then the functor of pro-contratensor product\/ ${-}\proocn_\R\fF$
restricted to the full subcategory\/ $\Modrcsom\R\subset\Modrcs\R$
is an exact functor\/ ${-}\proocn_\R\fF\:\Modrcsom\R\rarrow\Ab$ from
the exact category\/ $\Modrcsom\R$ to the abelian category\/~$\Ab$.
 In other words, the functor\/ ${-}\proocn_\R\fF$ takes admissible
short exact sequences in\/ $\Modrcsom\R$ to short exact sequences
in~$\Ab$.
\end{lem}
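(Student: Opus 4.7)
The plan is to reduce the statement to a short exact sequence of $\omega^\sop$-indexed towers of discrete right $\R$-modules with surjective transition maps, apply the exact functor ${-}\ocn_\R\fF$ termwise, and finally pass to the countable inverse limit using Mittag-Leffler.

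First, let $0\to\rL\to\rM\to\rN\to0$ be an admissible short exact sequence in $\Modrcsom\R$. Via the equivalence $\Modrcsom\R\simeq\SPro_\omega(\Discr\R)$ of Proposition~\ref{spro-discr-described-as-topological-modules}(b), this is an admissible short exact sequence in the exact category $\SPro_\omega(\Discr\R)$. By Proposition~\ref{strict-pro-objects-closed-under-extensions}(b) applied in the abelian category $\sA=\Discr\R$, it can be represented by a short exact sequence
\[
 0\lrarrow(L_n)_{n\in\omega}\lrarrow(M_n)_{n\in\omega}
 \lrarrow(N_n)_{n\in\omega}\lrarrow0
\]
in the abelian category $(\Discr\R)^{\omega^\sop}$ whose three terms are $\omega^\sop$-indexed diagrams of epimorphisms in $\Discr\R$. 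In particular, the transition morphisms $L_{n+1}\to L_n$, \ $M_{n+1}\to M_n$, and $N_{n+1}\to N_n$ are surjections of discrete right $\R$-modules.

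Next, I would apply the functor ${-}\ocn_\R\fF\:\Discr\R\rarrow\Ab$, which is exact by the definition of flatness of~$\fF$. The result is a short exact sequence
\[
 0\lrarrow(L_n\ocn_\R\fF)_{n\in\omega}\lrarrow(M_n\ocn_\R\fF)_{n\in\omega}
 \lrarrow(N_n\ocn_\R\fF)_{n\in\omega}\lrarrow0
\]
of $\omega^\sop$-indexed diagrams in $\Ab$. Right exactness of ${-}\ocn_\R\fF$ (which holds because this functor has a right adjoint, as recorded in Section~\ref{contramodules-preliminaries-secn}) ensures that all the transition morphisms in the three diagrams above remain surjective.

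Finally, I would identify the pro-contratensor products with the countable inverse limits of these three towers. Indeed, for $\rN\in\Modrcsom\R$ with countable base $(\rU_n)_{n\in\omega}$ of open $\R$-submodules given by the kernels of the surjections $\rN\to N_n$, the family $(\rU_n)$ is cofinal in the directed poset of all open $\R$-submodules of $\rN$, so $\rN\proocn_\R\fF=\varprojlim^\Ab_n(N_n\ocn_\R\fF)$; and similarly for $\rL$ and~$\rM$. Since all transition maps in the system $(L_n\ocn_\R\fF)_{n\in\omega}$ are surjective, the Mittag-Leffler condition holds and $\varprojlim^1_n(L_n\ocn_\R\fF)=0$. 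The standard $\varprojlim/\varprojlim^1$ exact sequence for a short exact sequence of $\omega^\sop$-indexed towers of abelian groups then yields the desired short exact sequence $0\to\rL\proocn_\R\fF\to\rM\proocn_\R\fF\to\rN\proocn_\R\fF\to0$ in~$\Ab$.

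The only delicate point is the correct identification of the three pro-contratensor products with $\varprojlim$ along one and the same coherent system of surjections of towers (so that the resulting $\varprojlim$'s assemble into a short exact sequence); this is handled by starting from the single short exact sequence of towers produced by Proposition~\ref{strict-pro-objects-closed-under-extensions}(b) and then observing that each of its three termwise inverse limits in $\Modr\R$, equipped with the inverse limit topology, recovers $\rL$, $\rM$, $\rN$ respectively. The countability of the index set is essential here: it is what makes $\varprojlim^1$ vanish for surjective towers and, together with Lemma~\ref{strict-omega-pro-unambigous}, ensures that strict pro-objects coincide with limit-epimorphic ones.
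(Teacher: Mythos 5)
Your proof is correct and follows essentially the same route as the paper's: the paper likewise reduces to a termwise short exact sequence of countable towers of epimorphisms in $\Discr\R$ (via Proposition~\ref{strict-pro-objects-closed-under-extensions}(b)), applies the exact functor ${-}\ocn_\R\fF$ termwise, and then uses the exactness of $\varprojlim$ on $\SPro_\omega(\Ab)$, which is precisely the Mittag-Leffler/$\varprojlim^1$-vanishing argument you spell out. Your explicit treatment of the cofinality identification $\rN\proocn_\R\fF=\varprojlim_n(N_n\ocn_\R\fF)$ is a welcome unpacking of what the paper leaves implicit in the equivalence $\Modrcsom\R\simeq\SPro_\omega(\Discr\R)$.
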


\begin{proof}
 First of all, we recall that the (quasi-abelian) exact category
structure on $\Modrcsom\R$ is inherited from the abelian exact category
structure of $\Pro_\omega(\Discr\R)$ via the fully faithful functor
$\Modrcsom\R\simeq\SPro_\omega(\Discr\R)\rarrow\Pro_\omega(\Discr\R)$.

 Most generally, the functor ${-}\ocn_\R^\pro\fF\:\Pro(\Discr\R)\rarrow
\Pro(\Ab)$ is exact, as one can see from the description of short
exact sequences in $\Pro(\Discr\R)$ provided by
Corollary~\ref{pro-objects-abelian}(a).
 In particular, the functor ${-}\ocn_\R^\pro\fF\:\SPro(\Discr\R)
\rarrow\SPro(\Ab)$ takes admissible short exact sequences in
$\SPro(\Discr\R)$ to admissible short exact sequences in $\SPro(\Ab)$
(cf.\ Proposition~\ref{strict-pro-objects-closed-under-extensions}(a)).

 Similarly, in the context of countably indexed diagrams,
the functor ${-}\ocn_\R^\pro\nobreak\fF\:\allowbreak
\Pro_\omega(\Discr\R)\rarrow\Pro_\omega(\Ab)$ is exact, as one can see
from the description of short exact sequences in $\Pro_\omega(\Discr\R)$
provided by Corollary~\ref{pro-objects-abelian}(b).
 In particular, the functor ${-}\ocn_\R^\pro\fF\:\SPro_\omega(\Discr\R)
\rarrow\SPro_\omega(\Ab)$ takes admissible short exact sequences in
$\SPro_\omega(\Discr\R)$ to admissible short exact sequences in
$\SPro_\omega(\Ab)$.

 It remains to point out that the functor $\varprojlim\:\Pro(\Ab)
\rarrow\Ab$ is exact on the exact category $\SPro_\omega(\Ab)
\subset\Pro(\Ab)$.
 Indeed, it is clear from
Proposition~\ref{strict-pro-objects-closed-under-extensions}(b)
that the functor $\varprojlim\:\SPro_\omega(\Ab)\rarrow\Ab$ takes
admissible short exact sequences in $\SPro_\omega(\Ab)$ to short
exact sequences in~$\Ab$.
\end{proof}

\begin{ex}
 The assertion of Lemma~\ref{proocn-with-flat-exact-for-countable}
and its proof concern the preservation of admissible short exact
sequences in the quasi-abelian exact category
$\Modrcsom\R\simeq\SPro_\omega(\Discr\R)$ by the functors
${-}\proocn_\R\fF$ and ${-}\ocn_\R^\pro\fF$.
 The following counterexample shows, however, that the functors
${-}\ocn_\R^\pro\fF\:\SPro_\omega(\Discr\R)\rarrow\SPro_\omega(\Ab)$
and ${-}\proocn_\R\nobreak\fF\:\Modrcsom\R\rarrow\Ab$ do \emph{not}
preserve kernels of morphisms, for a flat $\R$\+con\-tra\-mod\-ule
$\fF$ in general.

 It suffices to consider the case of a discrete ring $\R=\boZ$,
the ring of integers.
 Then we have $\Discr\R=\Ab=\R\Contra$.
 Choose a prime number~$p$.
 Let $(E_n)_{n\in\omega}$ denote the $\omega^\sop$\+indexed diagram
of abelian groups $E_n=\boZ/p^n\boZ$, with the obvious surjective
triansition maps $E_{n+1}\rarrow E_n$.
 Let $(D_n)_{n\in\omega}$ denote the constant $\omega^\sop$\+indexed
diagram $D_n=\boZ$, with the transition maps $\id_\boZ\:D_{n+1}
\rarrow D_n$.
 Then there is an obvious termwise surjective morphism of diagrams
$(g_n)_{n\in\omega}\:(E_n)_{n\in\omega}\rarrow(D_n)_{n\in\omega}$.
 The kernel of the morphism $(g_n)_{n\in\omega}$ in the category
$\Ab^{\omega^\sop}$ is the diagram $(B_n)_{n\in\omega}$ with
$B_n=p^n\boZ$ and the identity inclusions $p^{n+1}\boZ\rarrow p^n\boZ$
as the transition maps.

 Put $\bD=\plim_{n\in\omega}D_n$, \ $\bE=\plim_{n\in\omega}E_n$,
$\bbg=\plim_{n\in\omega}g_n$, and $\bB=\plim_{n\in\omega}B_n$.
 Then $\bB$ is the kernel of the morphism~$\bbg$ in
the category $\Pro_\omega(\Ab)$.
 Following the proof of
Proposition~\ref{spro-omega-quasi-abelian-characterization},
the kernel of the morphism~$\bbg$ in $\SPro_\omega(\Ab)$ is computed as
the coreflection of the object $\bB$ to the full subcategory
$\SPro_\omega(\Ab)\subset\Pro_\omega(\Ab)$.
 One can immediately see that this coreflection $\bC$ is the zero
object, $\bC=\plim_{n\in\omega}C_n=0$, where $C_n=0$ for all
$n\in\omega$.
 So $\ker(\bbg)=0$ in $\SPro_\omega(\Ab)$.

 Equivalently, denote by $\rD=\varprojlim_{n\in\omega}D_n$ and
$\rE=\varprojlim_{n\in\omega}E_n$ the objects of the category
$\Modrcsom\R$ (i.~e., the topological abelian groups) corresponding
to the pro-objects $\bD$ and $\bE$ under the equivalence of
categories $\SPro_\omega(\Discr\R)\simeq\Modrcsom\R$ from
Proposition~\ref{spro-discr-described-as-topological-modules}(b).
 Then $\rD$ is a discrete abelian group $\rD=\boZ$, while
$\rE=\boZ_p$ is the topological abelian group of $p$\+adic integers.
 So the continuous morphism $g=\varprojlim_{n\in\omega}g_n\:
\rD\rarrow\rE$ is injective.
 Once again, it is clear from the discussion of limits in the category
$\Modrcsom\R$ in Section~\ref{topological-modules-secn} that
$\ker(g)=0$ in $\Modrcsom\R$.

 On the other hand, the contratensor product functor $\ocn_\R$ is
just the tensor product of abelian groups $\ot_\boZ\:\Ab\times\Ab
\rarrow\Ab$ in our case, and the flat $\R$\+contramodules are simply
the flat/torsion-free abelian groups.
 Consider the torsion-free abelian group of rational numbers $\fF=\boQ$.
 Then we have $\bE\ocn_\R^\pro\fF=0$ in $\SPro_\omega(\Ab)$,
since $\boZ/p^n\boZ\ot_\boZ\boQ=0$ for all $n\in\omega$.
 The pro-object $\bD\ocn_\R^\pro\fF$ is just the constant pro-object
$\boQ\in\Ab\subset\SPro_\omega(\Ab)$.
 So the morphism $\bbg\ocn_\R^\pro\fF$ vanishes, and its kernel in
$\SPro_\omega(\Ab)$ is $\boQ\in\Ab\subset\SPro_\omega(\Ab)$.
 Thus $\ker(\bbg)\ocn_\R^\pro\fF=0\ne\boQ=\ker(\bbg\ocn_\R^\pro\fF)$
in $\SPro_\omega(\Discr\R)$ and $\SPro_\omega(\Ab)$.

 Similarly, the pro-contratensor product functor is computed in
the situation at hand as (counterintuitively!)
$\rE\proocn_\R\fF=0\in\Ab$ and $\rD\proocn_\R\fF=\boQ\in\Ab$.
 So the morphism $g\proocn_\R\fF$ vanishes, and its kernel
in $\Ab$ is the abelian group~$\boQ$.
 Thus $\ker(g)\proocn_\R\fF=0\ne\boQ=\ker(g\proocn_\R\fF)$ in~$\Ab$.

 In other words, Lemma~\ref{proocn-with-flat-exact-for-countable}
and its proof tell us (in particular) that the functors
${-}\ocn_\R^\pro\fF\:\SPro_\omega(\Discr\R)\rarrow\SPro_\omega(\Ab)$
and ${-}\proocn_\R\nobreak\fF\:\Modrcsom\R\rarrow\Ab$ take admissible
monomorphisms to (admissible) monomorphisms.
 The counterexample above shows that the same functors do \emph{not}
take monomorphisms to monomorphisms.
\end{ex}

 The following proposition is the main result of this section.

\begin{prop} \label{contramodule-hom-and-pro-contratensor}
 Let\/ $\R$ be a complete, separated topological ring.
 Let\/ $\fP$ be a projective left\/ $\R$\+contramodule.
 Then, for any complete, separated left\/ $\R$\+contramodule\/ $\fQ$,
there is a natural isomorphism of abelian groups
$$
 \Hom^\R(\fP,\fQ)\simeq\Hom^\R(\fP,\R)\proocn_\R\fQ,
$$
where the complete, separated right linear topology on the right\/
$\R$\+module\/ $\Hom^\R(\fP,\R)$ was constructed in the paragraph
before Lemma~\ref{contramodule-hom-group-complete-and-separated}.
\hfuzz=3.5pt
\end{prop}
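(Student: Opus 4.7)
The strategy is to construct an explicit natural evaluation map
$$
 \Phi_\fP\:\Hom^\R(\fP,\R)\proocn_\R\fQ\lrarrow\fQ^\fP
$$
for an arbitrary $\fP\in\R\Contra$, show its image lies inside $\Hom^\R(\fP,\fQ)\subset\fQ^\fP$, and verify that the resulting natural transformation becomes an isomorphism when $\fP$ is projective. To define $\Phi_\fP$, I would use that for each element $p\in\fP$ the evaluation map $\ev_p\:\Hom^\R(\fP,\R)\rarrow\R$ is a continuous right $\R$\+module homomorphism, since its preimage of any open right ideal $\fI\subset\R$ contains the open submodule $\fV_{\{p\},\fI}$ introduced before Lemma~\ref{contramodule-hom-group-complete-and-separated}. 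Applying the functor ${-}\proocn_\R\fQ$ and composing with the identification $\R\proocn_\R\fQ=\varprojlim_\fI\fQ/(\fI\tim\fQ)\simeq\fQ$, available because $\fQ$ is complete and separated, produces the coordinate of $\Phi_\fP$ at~$p$; naturality in $\fP$ is built into the construction.

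\textbf{Computation in the free case.}
For $\fP=\R[[X]]$, the target is $\Hom^\R(\R[[X]],\fQ)=\Hom_\Sets(X,\fQ)=\fQ^X$ by the universal property of the free contramodule. For the source, Theorem~\ref{pontryagin-duality-theorem} identifies the topological right $\R$\+module $\Hom^\R(\R[[X]],\R)$ with the product $\prod_{x\in X}\R$ in $\Modrcs\R$ (since Pontryagin duality takes the coproduct decomposition $\R[[X]]=\coprod_{x\in X}^{\R\Contra}\R$ to a product), and Lemma~\ref{proocn-preserves-products} together with $\R\proocn_\R\fQ=\fQ$ yields
$$
 \Hom^\R(\R[[X]],\R)\proocn_\R\fQ
 \;\simeq\;\prod\nolimits_{x\in X}(\R\proocn_\R\fQ)
 \;=\;\fQ^X.
$$
Tracing through $\Phi_{\R[[X]]}$ via the coordinate evaluations $\ev_x$, \,$x\in X$, shows that under these identifications $\Phi_{\R[[X]]}$ is the identity of $\fQ^X$; in particular, its image lands in $\Hom^\R(\R[[X]],\fQ)$ and $\Phi_{\R[[X]]}$ is an isomorphism onto it.

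\textbf{The main obstacle and extension to projectives.}
The hard step is to check that, for a general $\fP$ and any $\xi$, the set map $\Phi_\fP(\xi)\:\fP\rarrow\fQ$ is a morphism of left $\R$\+contramodules, i.e., $\Phi_\fP(\xi)(\pi_\fP(t))=\pi_\fQ(\R[[\Phi_\fP(\xi)]](t))$ for every $t\in\R[[\fP]]$. I would verify this modulo every $\fI\tim\fQ\subset\fQ$: given such $\fI$ and a zero-convergent combination $t=\sum_p r_p\,p$, choose a smaller open right ideal $\fJ\subset\fI$ (with $r_p\fJ\subset\fI$ for $p$ in a suitable finite set $G\supset\{p:r_p\notin\fI\}$), split $t=t'+t''$ with $t'=\sum_{p\in G}r_p\,p$ and $t''\in\fJ[[\fP]]$, and take a finite $F\supset G\cup\{\pi_\fP(t)\}$; modulo the open subgroup $\fV_{F,\fJ}\ocn_\R\fQ$ the element $\xi$ is represented by a genuine finite sum $\sum_j\overline{g_j}\otimes q_j$ with $g_j\in\Hom^\R(\fP,\R)$, so the required congruence reduces to contraassociativity applied to the individual~$g_j$ together with $\pi_\fQ(\fI[[\fQ]])\subset\fI\tim\fQ$, exactly as in the proof of Lemma~\ref{contramodule-hom-group-complete-and-separated}. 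Once this linearity is established, $\Phi$ is a natural transformation of additive functors on $\R\Contra_\proj^\sop$; since any projective $\fP$ is a direct summand of some $\R[[X]]$ and additive functors respect such summands, $\Phi_\fP$ is a direct summand of the isomorphism $\Phi_{\R[[X]]}$ and is therefore itself an isomorphism.
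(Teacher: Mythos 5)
Your proposal is correct and follows essentially the same route as the paper's proof: the map is built from the evaluation morphisms $\Hom^\R(\fP,\R)/\fV_{\{p\},\fI}\rarrow\R/\fI$ and the completeness of $\fQ$, the crucial verification that the resulting set map $\fP\rarrow\fQ$ is a contramodule morphism is done by the same congruence argument modulo $\fI\tim\fQ$ (splitting $t=t'+t''$, passing to a finite-sum-of-decomposable-tensors representative over $(F,\fJ)$, and reusing the computation from Lemma~\ref{contramodule-hom-group-complete-and-separated}), and the isomorphism claim is reduced to the free case via Lemma~\ref{proocn-preserves-products} and the compatibility of both sides with coproducts in~$\fP$. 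The only cosmetic differences are that you invoke Theorem~\ref{pontryagin-duality-theorem} explicitly to identify $\Hom^\R(\R[[X]],\R)$ with a product of copies of $\R$ and spell out the direct-summand step for projectives, both of which the paper leaves implicit.
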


\begin{proof}
 Let us first construct a natural map of abelian groups
$$
 \psi=\psi_{\fP,\fQ}\:\Hom^\R(\fP,\R)\proocn_\R\fQ
 \lrarrow\Hom^\R(\fP,\fQ)
$$
for any left $\R$\+contramodule $\fP$ and complete, separated
left $\R$\+contramodule~$\fQ$.
 The left-hand side is the projective limit of abelian groups
$$
 \Hom^\R(\fP,\R)\proocn_\R\fQ=
 \varprojlim\nolimits_{F,\fI}
 ((\Hom^\R(\fP,\R)/\fV_{F,\fI})\ocn_\R\fQ)
$$
taken over all finite subsets $F\subset\fP$ and all open right
ideals $\fI\subset\R$.
 The quotient $\Hom^\R(\fP,\R)/\fV_{F,\fI}$ is a discrete right
$\R$\+module, because it is a submodule of the finite direct sum
$(\R/\fI)^F$ of $F$ copies of~$\R/\fI$.

 Suppose given an element $l\in\Hom^\R(\fP,\R)\proocn_\R\fQ$.
 The projective limit element $l$~is a compatible family of elements
$l_{F,\fI}\in(\Hom^\R(\fP,\R)/\fV_{F,\fI})\ocn_\R\fQ$.
 For every element $p\in\fP$, we need to construct an element
$\psi(l)(p)\in\fQ$.
 By assumption, we have $\fQ=\varprojlim_\fI\fQ/(\fI\tim\fQ)$;
so it suffices to construct a compatible family of cosets in
$\fQ/(\fI\tim\fQ)$ for all open right ideals $\fI\subset\R$.

 Put $F=\{p\}$.
 Then we have a natural injective evaluation morphism of discrete
right $\R$\+modules $\ev_{p,\fI}\:\Hom^\R(\fP,\R)/\fV_{F,\fI}
\rarrow(\R/\fI)^F=\R/\fI$.
 Consider the induced map of the contratensor products
$\ev_{p,\fI}\ocn_\R\fQ\:(\Hom^\R(\fP,\R)/\fV_{F,\fI})\ocn_\R\fQ
\rarrow(\R/\fI)\ocn_\R\fQ\simeq\fQ/(\fI\tim\fQ)$.
 Applying the map $\ev_{p,\fI}\ocn_\R\fQ$ to the element~$l_{F,\fI}$,
we obtain the desired element
$$
 \psi(l)(p)+\fI\tim\fQ=(\ev_{p,\fI}\ocn_\R\fQ)(l_{F,\fI})
 \in\fQ/(\fI\tim\fQ).
$$
 The compatibility with respect to the transition maps in
the diagram $(\fQ/(\fI\tim\fQ))_{\fI\subset\R}$ is obvious; so
we have constructed an element $\psi(l)(p)\in\fQ$.

 The key step is to prove that $f=\psi(l)\:\fP\rarrow\fQ$ is
a left $\R$\+contramodule morphism. 
 Given a zero-convergent formal linear combination
$t=\sum_{p\in\fP}r_pp\in\R[[\fP]]$, we need to show that the equation
$$
 f(\pi_\fP(t))=\pi_\fQ\bigl(\R[[f]](t)\bigr)
$$
holds in~$\fQ$.
 It suffices to check that the desired equation holds modulo
$\fI\tim\fQ$ for every open right ideal $\fI\subset\R$.

 Arguing somewhat similarly to the proof of
Lemma~\ref{contramodule-hom-group-complete-and-separated}, we choose
a finite subset $G\subset\fP$ such that $r_p\in\fI$ for all
$p\in\fP\setminus G$, and an open right ideal $\fJ\subset\R$
such that $\fJ\subset\fI$ and $r_p\fJ\subset\fI$ for all $p\in G$.
 Put $p_t=\pi_\fP(t)\in\fP$ and $F=G\cup\{p_t\}\subset\fP$.

 Then the element $l_{F,\fJ}\in(\Hom^\R(\fP,\R)/\fV_{F,\fJ})\ocn_\R\fQ$
can be presented as a sum of decomposable tensors
$$
 l_{F,\fJ}=\sum\nolimits_{i=1}^n(h_i+\fV_{F,\fJ})\ot q_i
$$
where $h_i\in\Hom^\R(\fP,\R)$ and $q_i\in\fQ$.
 Denote by $g\:\fP\rarrow\fQ$ the composition of left
$\R$\+contramodule morphisms
$$
 \fP\xrightarrow{(h_i)_{i=1}^n}\R^n
 \xrightarrow{(q_i)_{i=1}^n}\fQ.
$$
 It follows from the constructions that for all $p\in F$ one has
$f(p)-g(p)\in\fJ\tim\fQ\subset\fQ$.
 The rest of the computation proving that $f$~is an $\R$\+contramodule
morphism proceeds exactly as in the proof of
Lemma~\ref{contramodule-hom-group-complete-and-separated}.

 In order to prove that $\psi_{\fP,\fQ}$ is an isomorphism when $\fP$ is
projective, fix a separated and complete left $\R$\+contramodule $\fQ$,
and let a projective left $\R$\+contramodule $\fP$ vary.
 The full subcategory $\R\Contra_\proj$ is closed under coproducts
in $\R\Contra$, and the free $\R$\+contramodule $\R[[X]]$ is
the coproduct of $X$ copies of the free $\R$\+contramodule~$\R$.
 The functor $\fP\longmapsto\Hom^\R(\fP,\fQ)$ takes coproducts
in $\R\Contra$ (in particular, in $\R\Contra_\proj$) to products
of abelian groups.

 The functor $\fP\longmapsto\Hom^\R(\fP,\R)$ takes coproducts
in $\R\Contra$ to products in $\Modrcs\R$ (which were described in
Section~\ref{topological-modules-secn}).
 The functor ${-}\proocn_\R\fQ$ takes products in $\Modrcs\R$ to
products in $\Ab$ by Lemma~\ref{proocn-preserves-products}.
 So both the functors in the left-hand side and in the right-hand side
of the morphism~$\psi$ take coproducts (in the first argument~$\fP$)
to products of abelian groups.

 These considerations reduce the question to the case of the free
left $\R$\+contramodule with one generator $\fP=\R=\R[[\{{*}\}]]$,
when one has
\[
 \Hom^\R(\R,\fQ)\simeq\fQ\simeq
 \varprojlim\nolimits_{\fI\subset\R}(\fQ/\fI\tim\fQ) \simeq
 \R\proocn_\R\fQ\simeq\Hom^\R(\R,\R)\proocn_\R\fQ.
 \qedhere
\]
\end{proof}

\Section{Pro-Coherent Right Linear Topological Modules}
\label{pro-coherent-modules-secn}

 In this paper, we are interested in \emph{topologically coherent}
topological rings~$\R$.
 This concept goes back to Roos' paper~\cite{Roo}; a relevant recent
reference is~\cite[Section~13]{PS3}.
 Let us briefly spell out the definitions.

 We refer to~\cite[Definition~1.1]{AR} for the definition of
a \emph{finitely presentable object} in a category~$\sC$.
 The definition of a \emph{locally finitely presentable} category
can be found in~\cite[Definition~1.9 and Theorem~1.11]{AR}.
 Every locally finitely presentable abelian category is
Grothendieck by~\cite[Proposition~1.59]{AR}.

 An object of a category $\sC$ is said to be \emph{finitely generated}
if it satisfies the condition of~\cite[Definition~1.67]{AR} with
$\lambda=\aleph_0$, i.~e., if the functor $\Hom_\sC(S,{-})\:\sC
\rarrow\Sets$ preserves the colimits of directed diagrams of
monomorphisms in~$\sC$.
 A discussion of finitely generated and finitely presentable objects
in Grothendieck categories can be found in~\cite[Section~V.3]{Sten}.
 An abelian category $\sA$ with directed colimits is said to be
\emph{locally finitely generated} if it has a set of finitely generated
generators, or equivalently, every object of $\sA$ is the union of its
finitely generated subobjects (cf.~\cite[Theorem~1.70]{AR}).
 Every locally finitely generated abelian category is
Grothendieck~\cite[Corollary~9.6]{PS1}.

 Let $\sA$ be a locally finitely generated abelian category.
 A finitely generated object $S\in\sA$ is called \emph{coherent} if
every finitely generated subobject of $S$ is finitely presentable,
or equivalently, the kernel of any morphism into $S$ from a finitely
generated object is finitely generated.
 Any coherent object in $\sA$ is finitely presentable.
 The category $\sA$ is said to be \emph{locally coherent} if it has
a generating set consisting of coherent objects, or equivalently,
the kernel of any (epi)morphism from a finitely presentable object to
a finitely presentable object in $\sA$ is finitely presentable.

 Any locally coherent abelian category is locally finitely presentable.
 In a locally coherent abelian category $\sA$, the classes of coherent
and finitely presentable objects coincide, and the full subcategory of
finitely presentable objects $\sA_\fp\subset\sA$ is closed under
kernels, cokernels, and extensions in~$\sA$.
 So the category $\sA_\fp$ is abelian if $\sA$ is locally coherent,
and the inclusion functor $\sA_\fp\rarrow\sA$ is exact.
 We refer to~\cite[Section~2]{Roo}, \cite[Section~13]{PS3},
or~\cite[Section~8.2]{PS5} for further details.

 A (complete, separated) right linear topological ring $\R$ is said to
be \emph{topogically right coherent} if the Grothendieck abelian
category $\Discr\R$ is locally coherent~\cite[Section~4]{Roo},
\cite[Section~13]{PS3}.
 If this is the case, we will denote by $\coh\R=(\Discr\R)_\fp\subset
\Discr\R$ the full subcategory of finitely presentable/coherent
objects in $\Discr\R$.
 So the category $\coh\R$ is abelian.
 The following Proposition~\ref{topological-coherence-criterion}
provides a criterion; and then there is a counterexample in
Example~\ref{not-fin-pres-as-abstract-module-counterex}.

\begin{prop} \label{topological-coherence-criterion}
 A (complete, separated) right linear topological ring\/ $\R$ is
topologically right coherent if and only if it admits a base of
neighborhoods of zero $B$ consisting of open right ideals\/
$\fI\subset\R$ and satisfying the following condition.
 For any two open right ideals\/ $\fI$, $\fJ\in B$, any integer $n\ge0$,
and any right\/ $\R$\+module morphism $f\:(\R/\fI)^n\rarrow\R/\fJ$,
the kernel of~$f$ is a finitely generated right\/ $\R$\+module.
 If this is the case, then the discrete right\/ $\R$\+module\/
$\R/\fI$ is coherent for every\/ $\fI\in\R$.
\end{prop}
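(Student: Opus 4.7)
The plan is to prove both implications by leveraging two standard facts about a locally coherent Grothendieck category: finitely generated subobjects of coherent objects are coherent, and kernels of morphisms between coherent objects are coherent.

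For the forward direction, I will assume $\Discr\R$ is locally coherent and define $B$ to be the set of all open right ideals $\fI\subset\R$ for which $\R/\fI$ is coherent in $\Discr\R$. To verify that $B$ is a base of neighborhoods of zero, take any open right ideal $\fK\subset\R$. Since $\Discr\R$ has a generating set of coherent objects and $\R/\fK$ is finitely generated, there is an epimorphism $C=\bigoplus_{i=1}^n C_i\twoheadrightarrow\R/\fK$ from a finite coproduct of coherents, and $C$ itself is coherent. Lifting $1+\fK$ to an element $c\in C$, the cyclic submodule $c\R\subset C$ is a finitely generated subobject of $C$, hence coherent, and it is isomorphic to $\R/\fJ$ for $\fJ=\mathrm{Ann}_\R(c)$, an open right ideal contained in $\fK$; thus $\fJ\in B$. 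The kernel condition follows immediately: for $\fI,\fJ\in B$, the object $(\R/\fI)^n$ is coherent as a finite coproduct of coherents, so the kernel of any morphism $(\R/\fI)^n\rarrow\R/\fJ$ is coherent and in particular finitely generated.

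For the reverse direction, assume a base $B$ satisfying the kernel condition exists. I claim each $\R/\fI$ with $\fI\in B$ is coherent in $\Discr\R$. Given a finitely generated $M\in\Discr\R$ and a morphism $g\:M\rarrow\R/\fI$, I choose a surjection $\bigoplus_{j=1}^k\R/\fI_j\twoheadrightarrow M$ with $\fI_j\in B$; using that $B$ is a base for the zero topology, pick $\fI'\in B$ with $\fI'\subset\bigcap_j\fI_j$, yielding a further epimorphism $(\R/\fI')^k\twoheadrightarrow\bigoplus_j\R/\fI_j\twoheadrightarrow M$. The hypothesis ensures that the composition $(\R/\fI')^k\rarrow\R/\fI$ has a finitely generated kernel, and the snake lemma then identifies $\ker(g)$ as a quotient of that kernel, so $\ker(g)$ is also finitely generated. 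Applying this reduction to an epimorphism onto $\R/\fI$ itself shows $\R/\fI$ is finitely presentable, and applying it to an epimorphism onto any finitely generated subobject $S\subset\R/\fI$ shows every such $S$ is finitely presentable; together these yield that $\R/\fI$ is coherent. Finally, $\{\R/\fI:\fI\in B\}$ is a generating set of $\Discr\R$ because every open right ideal $\fK$ contains some $\fI\in B$, giving a surjection $\R/\fI\twoheadrightarrow\R/\fK$; thus $\Discr\R$ admits a generating set of coherent objects, proving it is locally coherent and simultaneously establishing the final assertion of the proposition.

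The main technical point is the reduction step in the reverse direction: the hypothesis controls kernels only for morphisms out of $(\R/\fI')^n$ with $\fI'\in B$, yet coherence requires controlling kernels of morphisms out of arbitrary finitely generated discrete right $\R$\+modules. The reduction via finite intersections is legitimate because $B$ is a base for the topology (so the intersection of finitely many members of $B$ contains a member of $B$), and because finite generation is preserved under quotients, which is what allows the snake-lemma step to transport finite generation of the kernel of $(\R/\fI')^k\rarrow\R/\fI$ down to finite generation of $\ker(g)$.
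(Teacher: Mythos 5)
The paper gives no internal argument for this proposition (it only cites Roos and \cite[Lemma~13.1]{PS3}), so your proof must stand on its own. Your forward direction is complete and correct: producing $\fJ=\mathrm{Ann}_\R(c)\subseteq\fK$ from a coherent cover of $\R/\fK$, and then using closure of coherent objects under finite coproducts and kernels, is exactly the standard argument. The reduction step of your reverse direction is also correct: refining a finite generating family of $M$ to a single $\fI'\in B$ below the intersection of the annihilators, and pushing the finitely generated kernel of $(\R/\fI')^k\rarrow\R/\fI$ down onto $\ker(g)$, does show that the kernel of \emph{any} morphism from a finitely generated discrete module into $\R/\fI$ is finitely generated.

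The gap is in the sentence ``applying this reduction to an epimorphism onto $\R/\fI$ itself shows $\R/\fI$ is finitely presentable.'' At that point you have an epimorphism $(\R/\fI')^k\rarrow S$ from a finitely \emph{generated} object with finitely generated kernel, and that does not imply $S$ is finitely presentable: the identity map of any finitely generated, non-finitely-presentable object is an epimorphism with zero kernel. The inference would require the source $(\R/\fI')^k$ to be finitely \emph{presentable}, which is precisely what is being proved, so as written the step is circular. Two repairs are available. The cheap one: your reduction verifies verbatim the second characterization of coherence recalled in Section~\ref{pro-coherent-modules-secn} (``the kernel of any morphism into $S$ from a finitely generated object is finitely generated''), so you may simply stop there. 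The self-contained one: check finite presentability of $\R/\fI$ directly against the directed-colimit definition --- a morphism $\R/\fI\rarrow\varinjlim_\alpha N_\alpha$ is an element $n$ with $n\fI=0$; lift $n$ to some $N_\alpha$, observe that $n_\alpha\fI$ is a quotient of $\fI/\fI''$ for a suitable $\fI''\in B$ and hence finitely generated by your kernel hypothesis, and conclude that it already vanishes at a finite stage. With either repair your strategy goes through; only the justification of that one inference needs replacing.
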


\begin{proof}
 This is~\cite[Remark~2 in Section~4]{Roo} or~\cite[Lemma~13.1]{PS3}.
\end{proof}

\begin{ex} \label{not-fin-pres-as-abstract-module-counterex}
 For any right linear topological ring $\R$, it is clear that
a discrete right $\R$\+module is finitely generated as an object
of $\Discr\R$ if and only if it is finitely generated as an object
of $\Modr\R$.
 However, the following simple counterexample shows that, for
a topologically right coherent topological ring $\R$, a finitely
presentable/coherent object of $\Discr\R$ \emph{need not} be
finitely presentable in $\Modr\R$ (i.~e., finitely presented
as an abstract $\R$\+module).

 Let $k$~be a field.
 Consider the sequence of polynomial rings $R_n=k[x_1,\dotsc,x_n]$,
\,$n\ge1$, and surjective transition maps $R_{n+1}\rarrow R_n$ that
are $k$\+algebra homomorphisms taking~$x_m$ to~$x_m$ for $1\le m\le n$
and $x_{n+1}$ to~$0$.
 Then the ring $\R=\varprojlim_{n\ge1}R_n$, endowed with the topology
of projective limit of the discrete rings $R_n$, has a base of
neighborhoods of zero consisting of the kernel ideals
$\fI_n=\ker(\R\to R_n)$ of the natural projections $\R\rarrow R_n$.
 The set $B$ of all the ideals $\fI_n\subset\R$, \,$n\ge1$
(as well as the set of all open right ideals in~$\R$) satisfies
the condition of Proposition~\ref{topological-coherence-criterion},
so the topological ring $\R$ is topologically coherent.
 By the same proposition, the discrete $\R$\+modules $R_n$ are
coherent as objects of $\Discr\R$.
 But they are not finitely presented as abstract $\R$\+modules,
since the ideals $\fI_n\subset\R$ are not finitely generated.
 Indeed, for any fixed $n\ge1$, one cannot find an integer~$m$
such that the kernel ideal $\fI_n/\fI_N$ of the transition map
$R_N\rarrow R_n$ would be generated by $m$~elements uniformly
for all integers $N>n$.
\end{ex}

 Given a topologically right coherent topological ring $\R$, we put
$$
 \Cohpro\R=\SPro_\omega(\coh\R).
$$
 So, by Proposition~\ref{strict-pro-objects-closed-under-extensions}(b)
and Lemma~\ref{pro-objects-in-full-subcategory-extension-closed}(b),
$\Cohpro\R$ is a full subcategory closed under extensions in
the exact category $\SPro_\omega(\Discr\R)$, which is a full subcategory
closed under extensions in the abelian category $\Pro_\omega(\Discr\R)$.
 Similarly, $\Cohpro\R$ is a full subcategory closed under extensions
in the abelian category $\Pro_\omega(\coh\R)$.

 The exact category structure on $\SPro_\omega(\Discr\R)$ is inherited
from the abelian exact structure of $\Pro_\omega(\Discr\R)$.
 The exact category $\SPro_\omega(\Discr\R)$ is quasi-abelian by
Proposition~\ref{spro-omega-quasi-abelian-characterization}\,%
(1)\,$\Rightarrow$\,(3).
 The exact category structure on $\Cohpro\R$ is inherited from
the abelian exact structure of $\Pro_\omega(\coh\R)$, and also from
the quasi-abelian exact structure of
$\SPro_\omega(\Discr\R)\simeq\Modrcsom\R$.

\begin{rem}
 Let us \emph{warn} the reader that the exact category $\Cohpro\R=
\SPro_\omega(\coh\R)$ is \emph{not} quasi-abelian for a topologically
right coherent topological ring $\R$ in general.
 In fact, the additive category $\Cohpro\R$ need not even have kernels.
 For a counterexample, consider the case of a discrete coherent
ring $\R=R$.

 Specifically, let $k$~be a field and $R=k\langle x,y\rangle$ be
the free associative $k$\+algebra with two generators $x$ and~$y$
(then the ring $R$ is both left and right coherent).
 The category $\coh\R=\coh R$ is simply the category of coherent right
$R$\+modules in this case.
 In view of
Proposition~\ref{spro-omega-quasi-abelian-characterization}\,%
(5)\,$\Rightarrow$\,(7), in order to show that $\Cohpro R$ does not
have kernels, it suffices to demonstrate an example of a descending
chain of finitely presented $R$\+modules whose intersection does
not exist in $\coh R$.

 Indeed, let us construct an $\omega$\+indexed descending chain of
finitely generated right ideals in $R$ with an infinitely generated
intersection.
 Put $I_0=R$ and
$$
 I_1=yR+xyR\subset R.
$$
 In order to construct the right ideal $I_2\subset I_1$, keep
the summand ``$yR$'' as in~$I_1$, but replace the factor ``$R$''
in ``$xyR$'' by $yR+xyR$, so
$$
 I_2=yR+xyyR+xyxyR.
$$
 To construct the right ideal $I_3\subset I_2$, keep the summands
``$yR$'' and ``$xyyR$'' as in $I_2$, but replace the factor ``$R$''
in ``$xyxyR$'' by $yR+xyR$, so
$$
 I_3=yR+xyyR+xyxyyR+xyxyxyR,
$$
etc.
 So we have
$$
 I_n=\left(\sum\nolimits_{m=0}^{n-1} (xy)^m yR\right)+(xy)^n R,
 \qquad n\ge0.
$$
 Then the right ideal
$$
 J=\bigcap\nolimits_{n\in\omega}I_n=
 \sum\nolimits_{m\in\omega}(xy)^m yR \,\subset\, R
$$
is not finitely generated.
\end{rem}

\begin{cor} \label{Cohpro-R-described-as-topological-modules}
 Let\/ $\R$ be topologically right coherent topological ring.
 Then the additive category\/ $\Cohpro\R$ is equivalent to the full
subcategory in\/ $\Modrcsom\R$ consisting of all the complete,
separated right linear topological right\/ $\R$\+modules $\rN$
that admit a countable base of neighborhoods of zero
$\rN=\rU_0\supset\rU_1\supset\rU_2\supset\dotsb$ such that
the discrete right\/ $\R$\+module $\rN/\rU_n$ is finitely
presentable/coherent for every $n\in\omega$.
\end{cor}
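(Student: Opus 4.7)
The strategy is to transport the defining equivalence\/ $\Modrcsom\R\simeq\SPro_\omega(\Discr\R)$ of Proposition~\ref{spro-discr-described-as-topological-modules}(b) to the subcategory\/ $\Cohpro\R=\SPro_\omega(\coh\R)$ on the one side, and then read off which topological modules correspond to it on the other side. By Lemma~\ref{pro-objects-full-subcategory}, the inclusion\/ $\coh\R\subset\Discr\R$ induces a fully faithful functor\/ $\SPro_\omega(\coh\R)\rarrow\SPro_\omega(\Discr\R)$. Composing with the equivalence of Proposition~\ref{spro-discr-described-as-topological-modules}(b), one obtains a fully faithful functor\/ $\Cohpro\R\rarrow\Modrcsom\R$, and it only remains to identify its essential image.

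First I would handle the direction from pro-objects to topological modules. Given\/ $\bcC\in\SPro_\omega(\coh\R)$, Lemma~\ref{strict-omega-pro-unambigous} (together with the definition of\/ $\SPro_\omega$) allows one to represent\/ $\bcC$ by a countable $\omega^\sop$\+indexed diagram of epimorphisms $\dotsb\to C_2\to C_1\to C_0$ with $C_n\in\coh\R$. Unwinding the proof of Proposition~\ref{spro-discr-described-as-topological-modules}(b), the associated object of\/ $\Modrcsom\R$ is\/ $\rN=\varprojlim_{n\in\omega}C_n$, equipped with the topology in which the kernels\/ $\rU_n=\ker(\rN\to C_n)$ form a base of neighborhoods of zero. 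The surjectivity of the transition maps gives\/ $\rN/\rU_n\simeq C_n$, which is finitely presentable/coherent, so\/ $\rN$ lies in the described subcategory.

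Conversely, let\/ $\rN\in\Modrcsom\R$ admit a countable descending base\/ $\rN=\rU_0\supset\rU_1\supset\dotsb$ of open submodules with each\/ $\rN/\rU_n$ coherent. The pro-object associated to\/ $\rN$ by Proposition~\ref{spro-discr-described-as-topological-modules}(b) is indexed by all open submodules, but any cofinal subdiagram represents the same pro-object, so it is isomorphic to\/ $\plim_{n\in\omega}\rN/\rU_n$. Since the transition maps\/ $\rN/\rU_{n+1}\twoheadrightarrow\rN/\rU_n$ are epimorphisms in\/ $\coh\R$, this exhibits\/ $\rN$ (up to isomorphism) as coming from an object of\/ $\SPro_\omega(\coh\R)=\Cohpro\R$.

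I do not foresee a serious obstacle: all the real work has already been done in Proposition~\ref{spro-discr-described-as-topological-modules}(b) and Lemma~\ref{strict-omega-pro-unambigous}. The only point requiring some care is the observation that the quotients\/ $\rN/\rU_n$ appearing in the topological side need not exhaust all open quotients of\/ $\rN$; this is harmless because only the existence of \emph{some} countable base with coherent quotients is asserted, and cofinality of that base inside the full system of open submodules guarantees that the resulting pro-object is unambiguously an element of\/ $\SPro_\omega(\coh\R)$.
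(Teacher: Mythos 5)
Your proposal is correct and follows essentially the same route as the paper: the paper's own proof consists precisely of invoking Lemma~\ref{pro-objects-full-subcategory} to see that $\Cohpro\R=\SPro_\omega(\coh\R)$ is a full subcategory of $\SPro_\omega(\Discr\R)$ and then applying the equivalence $\SPro_\omega(\Discr\R)\simeq\Modrcsom\R$ of Proposition~\ref{spro-discr-described-as-topological-modules}(b). Your additional unwinding of the essential image in both directions (via Lemma~\ref{strict-omega-pro-unambigous} and cofinality of the chosen countable base) is a correct elaboration of what the paper leaves implicit.
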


\begin{proof}
 By Lemma~\ref{pro-objects-full-subcategory}, the additive category
$\Cohpro\R=\SPro_\omega(\coh\R)$ is a full subcategory in
$\SPro_\omega(\Discr\R)$.
 It remains to use the equivalence of additive categories
$\SPro_\omega(\Discr\R)\simeq\Modrcsom\R$ provided by
Proposition~\ref{spro-discr-described-as-topological-modules}(b).
\end{proof}

 Let $\R$ be a topologically right coherent topological ring.
 A complete, separated right linear topological right $\R$\+module
is said to be \emph{pro-coherent} if it satisfies the condition of
Corollary~\ref{Cohpro-R-described-as-topological-modules}.

\begin{cor} \label{topological-free-module-pro-coherent}
\textup{(a)} Let\/ $\R$ be a complete, separated topologically
right coherent topological ring.
 Then the object\/ $\R\in\Modrcs\R$, viewed as an object of
the ambient exact category\/ $\SPro(\Discr\R)\supset\Modrcs\R$ as per
Proposition~\ref{spro-discr-described-as-topological-modules}(a),
belongs to the full subcategory\/ $\SPro(\coh\R)\subset\SPro(\Discr\R)$.
\par
\textup{(b)} Let\/ $\R$ be a complete, separated topologically
right coherent topological ring with a countable base of neighborhoods
of zero.
 Then the object\/ $\R\in\Modrcsom\R$ belongs to the full subcategory\/
$\Cohpro\R\subset\Modrcsom\R$ (with\/ $\Cohpro\R$ embedded into\/
$\Modrcsom\R$ as per
Corollary~\ref{Cohpro-R-described-as-topological-modules}).
\end{cor}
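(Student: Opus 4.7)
The plan is to represent $\R$ explicitly as a strict pro-object in the abelian category $\coh\R$ by combining the description of topological modules as strict pro-objects from Proposition~\ref{spro-discr-described-as-topological-modules} with the topological right coherence criterion from Proposition~\ref{topological-coherence-criterion}.

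For part~(a), I first recall that under the fully faithful functor of Proposition~\ref{spro-discr-described-as-topological-modules}(a), the topological right $\R$\+module $\R$ corresponds to the strict pro-object $\plim_{\fU\subset\R}\R/\fU$, where $\fU$ ranges over all open right $\R$\+submodules of $\R$, which here coincide with the open right ideals of~$\R$. By the assumption of topological right coherence together with Proposition~\ref{topological-coherence-criterion}, there exists a base $B$ of neighborhoods of zero consisting of open right ideals $\fI\subset\R$ such that $\R/\fI\in\coh\R$ for every $\fI\in B$. Since $B$ is cofinal in the directed poset of all open right ideals (ordered by reverse inclusion), the natural comparison
$$
 \R\simeq\plim_{\fU\subset\R}\R/\fU\simeq\plim_{\fI\in B}\R/\fI
$$
is an isomorphism in $\Pro(\Discr\R)$. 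The transition morphisms $\R/\fJ\rarrow\R/\fI$ for $\fJ\subset\fI$ in $B$ are the natural surjective projections, so the right-hand side represents a strict pro-object with all components in $\coh\R$. By Lemma~\ref{pro-objects-full-subcategory}, $\SPro(\coh\R)$ is a full subcategory of $\SPro(\Discr\R)$, and hence $\R$ belongs to $\SPro(\coh\R)$.

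For part~(b), the same idea applies, but I need to arrange the indexing set to be countable. I fix any countable base $\fJ_0\supset\fJ_1\supset\dotsb$ of neighborhoods of zero in $\R$ consisting of open right ideals, together with a base $B$ of open right ideals provided by Proposition~\ref{topological-coherence-criterion}. For each $n\in\omega$ I choose $\fI'_n\in B$ with $\fI'_n\subset\fJ_n$, and define $\fI_n=\bigcap_{k=0}^n\fI'_k$. Each $\fI_n$ is an open right ideal of $\R$, and the diagonal map embeds $\R/\fI_n$ into the finite direct sum $\bigoplus_{k=0}^n\R/\fI'_k$, which lies in $\coh\R$. Since $\R/\fI_n$ is finitely generated (by the image of~$1$) and the full subcategory $\coh\R\subset\Discr\R$ is closed under finite direct sums and finitely generated subobjects, we conclude that $\R/\fI_n\in\coh\R$ for every~$n$. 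The descending chain $\fI_0\supset\fI_1\supset\dotsb$ is cofinal in the directed set of all open right ideals, so under the equivalence $\Modrcsom\R\simeq\SPro_\omega(\Discr\R)$ of Proposition~\ref{spro-discr-described-as-topological-modules}(b) the object $\R$ is represented by the countable strict pro-object $\plim_{n\in\omega}\R/\fI_n$, which lies in $\SPro_\omega(\coh\R)=\Cohpro\R$.

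I do not expect any real obstacles here: the argument consists of an unwinding of definitions, an appeal to the coherence criterion of Proposition~\ref{topological-coherence-criterion}, and the standard cofinality manipulation for pro-objects. The only mildly delicate point is the verification that each $\R/\fI_n$ is coherent, which rests on the standard fact that in a locally coherent Grothendieck category coherent objects are closed under finite direct sums and under finitely generated subobjects.
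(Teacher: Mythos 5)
Your argument is correct and is precisely the intended unwinding of the paper's one-line proof (``Follows from Proposition~\ref{topological-coherence-criterion}''): under the identification of Proposition~\ref{spro-discr-described-as-topological-modules}, the object $\R$ is the strict pro-object $\plim\,\R/\fI$ taken over a cofinal (in part~(b), countable and cofinal) family of open right ideals with coherent quotients, and cofinal subdiagrams represent the same pro-object. The only cosmetic remark is that in part~(b) you could choose the chain $\fI_0\supset\fI_1\supset\dotsb$ directly inside the base $B$ (inductively pick $\fI_{n+1}\in B$ contained in the open right ideal $\fJ_{n+1}\cap\fI_n$), which makes the coherence of each $\R/\fI_n$ immediate from Proposition~\ref{topological-coherence-criterion} and renders your diagonal-embedding step unnecessary --- though that step, resting on closure of coherent objects under finite direct sums and finitely generated subobjects in a locally coherent category, is also perfectly valid.
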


\begin{proof}
 Follows from Proposition~\ref{topological-coherence-criterion}.
\end{proof}

 In the rest of this paper, we will be interested in complete,
separated right linear topological rings $\R$ having the two
properties simultaneously:
\begin{itemize}
\item $\R$ has a countable base of neighborhoods of zero;
\item $\R$ is topologically right coherent.
\end{itemize}

\begin{prop} \label{projectives-in-cohpro}
 Let\/ $\R$ be a complete, separated topologically right coherent
topological ring with a countable base of neighborhoods of zero.
 Then the exact category\/ $\Cohpro\R$ has enough projective objects.
 The full subcategory of projective objects in\/ $\Cohpro\R$ coincides
with the full subcategory\/ $\Prod_\omega(\R)\subset\Cohpro\R$
consisting of all the direct summands of countable products of
the object\/ $\R\in\Cohpro\R$.
\end{prop}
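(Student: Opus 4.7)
The plan is to apply Proposition~\ref{projectives-as-summands-of-products} with $\sA=\coh\R$ and the class $\sP=\{\R^n:n\in\omega\}\subset\Cohpro\R$ of finite products of copies of $\R$. Here $\R\in\Cohpro\R$ by Corollary~\ref{topological-free-module-pro-coherent}(b), and $\Cohpro\R$ is closed under finite products (by the obvious analogue of Corollary~\ref{products-preserve-strict-pro-objects} applied to finite products, using that $\coh\R$ is closed under finite direct sums in $\Discr\R$), so each $\R^n$ is an object of $\Cohpro\R$. Note also that $\Prod_\omega(\{\R^n:n\in\omega\})=\Prod_\omega(\R)$, since every countable product of the $\R^n$'s is itself a countable product of copies of~$\R$.

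The first key step is showing that $\R$ is projective in $\Cohpro\R$, via Lemma~\ref{projectivity-criterion-in-spro-omega}. Given an epimorphism $g\colon B\rarrow C$ in $\coh\R$ and a morphism $f\colon\R\rarrow C$ in $\Cohpro\R$, I need to lift $f$ to $\R\rarrow B$. Writing $\R=\plim_{\fI\subset\R}\R/\fI$ with $\fI$ ranging over a countable base of open right ideals, the formula~\eqref{hom-in-pro-objects-formula} gives
\[
 \Hom_{\Cohpro\R}(\R,C)\simeq
 \varinjlim\nolimits_{\fI}\Hom_{\coh\R}(\R/\fI,C)\simeq C,
\]
because every element of the discrete right $\R$\+module $C$ is annihilated by some open right ideal; and similarly for~$B$. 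Thus $f$ is determined by an element $c\in C$, which lifts to any $b\in B$ with $g(b)=c$, yielding the desired morphism. Lemma~\ref{projectives-closed-under-countable-products} then implies that each $\R^n\in\Prod_\omega(\R)$ is projective in $\Cohpro\R$ as well.

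The second key step is to construct, for every $A\in\coh\R$, an admissible epimorphism $\R^n\rarrow A$ in $\Cohpro\R$ for some~$n$. Since $A$ is finitely generated in $\Discr\R$, pick generators $a_1,\dotsc,a_n\in A$; each $a_i$ is annihilated by some open right ideal, and intersecting gives an open right ideal $\fJ$ together with a surjection $(\R/\fJ)^n\twoheadrightarrow A$ in $\coh\R$ (with $\R/\fJ\in\coh\R$ by the final clause of Proposition~\ref{topological-coherence-criterion}). Composing with the canonical projection $\R^n\rarrow(\R/\fJ)^n$ yields a candidate morphism $\R^n\rarrow A$ in $\Cohpro\R$. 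To see it is admissibly epic, I represent it on a countable cofinal system of open right ideals $\fI\subset\fJ$ as a termwise surjective morphism of $\omega^\sop$\+indexed diagrams of epimorphisms $((\R/\fI)^n)_\fI\rarrow(A)_\fI$ (the latter constant on~$A$); the termwise kernels $K_\fI=\ker((\R/\fI)^n\to A)$ again form a diagram of epimorphisms, since any element of $K_\fI\subset(\R/\fI)^n$ lifts along the surjection $(\R/\fI')^n\twoheadrightarrow(\R/\fI)^n$ to an element of $(\R/\fI')^n$ mapping to zero in~$A$. Proposition~\ref{strict-pro-objects-closed-under-extensions}(b) then identifies the resulting short sequence with an admissible short exact sequence in $\Cohpro\R$.

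Both hypotheses of Proposition~\ref{projectives-as-summands-of-products} now being verified, the conclusion follows: the projective objects in $\Cohpro\R$ are precisely the objects of $\Prod_\omega(\{\R^n\})=\Prod_\omega(\R)$, and $\Cohpro\R$ has enough projectives. The main technical care lies in the second step, where one must verify that the natural surjection $\R^n\rarrow A$ built from generators is actually an \emph{admissible} epimorphism in the quasi-abelian exact structure on $\Cohpro\R$; strictness of the kernel diagram $(K_\fI)$ is exactly what distinguishes this from a mere epimorphism in the ambient abelian category $\Pro_\omega(\coh\R)$.
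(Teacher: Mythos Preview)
Your proof is correct and follows the same approach as the paper: both apply Proposition~\ref{projectives-as-summands-of-products} with $\sP=\{\R^n:n\in\omega\}$, verify projectivity of~$\R$ via Lemma~\ref{projectivity-criterion-in-spro-omega} by identifying $\Hom_{\Cohpro\R}(\R,{-})$ on $\coh\R$ with the forgetful functor, and construct an admissible epimorphism $\R^n\rarrow A$ through a finite power of a cyclic coherent quotient $(\R/\fJ)^n$; the paper checks admissibility by factoring $\R^n\rarrow(\R/\fJ)^n\rarrow A$ as a composite of two admissible epimorphisms, while you argue directly that the kernel diagram is one of epimorphisms, which is equally valid.

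One small slip: the final clause of Proposition~\ref{topological-coherence-criterion} only guarantees $\R/\fI\in\coh\R$ for $\fI$ in the distinguished base~$B$, not for an arbitrary open right ideal obtained as an intersection of annihilators, so you should instead choose $\fJ\in B$ contained in all the annihilators of the~$a_i$ (possible since $B$ is a neighborhood base of zero).
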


\begin{proof}
 We have $\R\in\Cohpro\R$ by
Corollary~\ref{topological-free-module-pro-coherent}(b).
 Countable products exist in the additive category $\Cohpro\R$ by
Corollaries~\ref{products-of-pro-objects}(b)
and~\ref{products-preserve-strict-pro-objects}, and the class of
projective objects in $\Cohpro\R$ is closed under countable products
by Lemma~\ref{projectives-closed-under-countable-products}.
 Moreover, the countable products in $\Cohpro\R$ agree with the ones
in $\SPro_\omega(\Discr\R)$, as explained in the proof of
Lemma~\ref{products-in-linear-topol-modules-and-pro-objects}(a).
 So our notation is unambigous.
 
 To check that $\R$ is a projective object of $\Cohpro\R$, one can
use Lemma~\ref{projectivity-criterion-in-spro-omega}.
 It is easy to compute that the functor $\Hom_\R^\cont(\R,{-})=
\Hom_{\Modrcs\R}(\R,{-})$, restricted to the full subcategory
$\Discr\R\subset\Modrcs\R$, is isomorphic to the forgetful functor
$\Discr\R\rarrow\Ab$.
 This functor is exact on $\Discr\R$; hence it is also exact on
$\coh\R$.
 In fact, the whole functor $\Hom_{\Modrcs\R}(\R,{-})\:\Modrcs\R
\rarrow\Ab$ is isomorphic to the forgetful functor $\Modrcs\R
\rarrow\Ab$; hence its restriction to $\Modrcsom\R$ takes admissible
short exact sequences to short exact sequences.

 Denote by $\sP\subset\Cohpro\R$ the set of all finite direct sums
of copies of the object~$\R$.
 Let $\N\in\coh\R$ be a coherent discrete right $\R$\+module.
 Since $\coh\R$ is a full subcategory in $\Cohpro\R$, we can (and will)
view $\N$ as an object of $\Cohpro\R$.
 This entails viewing a discrete right $\R$\+module as a complete,
separated topological right $\R$\+module with a right linear topology,
or viewing an object of $\coh\R$ as an object of
$\SPro_\omega(\coh\R)=\Cohpro\R$.
 According to Proposition~\ref{projectives-as-summands-of-products},
in order to prove the desired assertion we only need to show that
there exists an object $\rP\in\sP$ together with an admissible
epimorphism $\rP\rarrow\N$ in $\Cohpro\R$.

 For any $\omega^\sop$\+indexed diagram of epimorphisms
$(P_n)_{n\in\omega}$ in an abelian category $\sA$, the projection
morphism $\plim_{n\in\omega}P_n\rarrow P_m$ is an admissible
epimorphism in $\SPro_\omega(\sA)$ for every $m\in\omega$.
 All epimorphisms in $\sA\subset\SPro_\omega(\sA)$ are also
admissible epimorphisms in $\SPro_\omega(\sA)$.
 It remains to choose an open right ideal $\fI\subset\R$ such
that the discrete right $\R$\+module $\R/\fI$ is coherent and
the discrete right $\R$\+module $\N$ is a quotient module of
$(\R/\fI)^k$ for some integer $k\ge0$.
 This is possible by Proposition~\ref{topological-coherence-criterion}.
 Then we have $\R^k\in\sP$, and both the morphisms
$\R^k\rarrow(\R/\fI)^k$ and $(\R/\fI)^k\rarrow\N$ are admissible
epimorphisms in $\Cohpro\R$.
\end{proof}

\Section{Construction of the Functor~$\Xi$}
\label{construction-of-Xi-secn}

 Given an additive category $\sE$, we denote by $\Hot(\sE)$
the cochain homotopy category of (unbounded) complexes in~$\sE$.
 The notation $\Hot^\st(\sE)\subset\Hot(\sE)$, where $\st=+$, $-$,
or~$\bb$, stands for the full subcategories of bounded below, bounded
above, or bounded complexes in $\Hot(\sE)$, as usual.
 So $\Hot(\sE)$ is a triangulated category and $\Hot^\st(\sE)$ are
its full triangulated subcategories.

% The category of (unbounded) complexes in $\sE$ with closed morphisms
%between the complexes is denoted by $\Com(\sE)$, and its full
%subcategories of respectively bounded complexes are $\Com^\st(\sE)
%\subset\Com(\sE)$.
% When the category $\sE$ is abelian, so are the categories
%$\Com^\st(\sE)$ for all the symbols $\st=\varnothing$, $+$, $-$,
%or~$\bb$.

 Let $\sE$ be an abelian or exact category.
 Then the notation $\sD^\st(\sE)$, where $\st=\varnothing$, $+$, $-$,
or~$\bb$, is used for the unbounded or bounded derived categories
of~$\sE$ (as in Corollary~\ref{fully-faithful-on-D-b-and-D-plus}).
 Let us also recall the notation $\sE_\proj\subset\sE$ for the full
subcategory of projective objects in~$\sE$.

 Let $\sB$ be an abelian category with enough projective objects.
 A complex $B^\bu$ is $\sB$ is said to be \emph{contraacyclic}
(\emph{in the sense of Becker}~\cite[Propositions~1.3.6(1)
and~1.3.8(1)]{Bec}) if, for every complex of projective objects
$P^\bu$ in $\sB$, every morphism of complexes $P^\bu\rarrow B^\bu$
is homotopic to zero.
 The thick subcategory of contraacyclic complexes is denoted by
$\Ac^\bctr(\sB)\subset\Hot(\sB)$.
 The related Verdier quotient category
$$
 \sD^\bctr(\sB)=\Hot(\sB)/\Ac^\bctr(\sB)
$$
is called the (\emph{Becker}) \emph{contraderived category} of~$\sB$.

\begin{thm} \label{becker-contraderived-theorem}
 Let\/ $\sB$ be a locally presentable abelian category with enough
projective objects.
 Then the composition of the fully faithful inclusion functor\/
$\Hot(\sB_\proj)\rarrow\Hot(\sB)$ with the triangulated Verdier
quotient functor\/ $\Hot(\sB)\rarrow\sD^\bctr(\sB)$ is a triangulated
equivalence
$$
 \Hot(\sB_\proj)\simeq\sD^\bctr(\sB).
$$
\end{thm}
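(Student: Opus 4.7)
The plan is to establish the stated equivalence by checking separately that the composed functor is fully faithful and essentially surjective. For full faithfulness, the decisive observation is that, by the very definition of Becker contraacyclicity, for any complex of projectives $P^\bu$ and any contraacyclic complex $Z^\bu$ in $\sB$, every chain map $P^\bu\rarrow Z^\bu$ is null-homotopic, so that $\Hom_{\Hot(\sB)}(P^\bu,Z^\bu)=0$. In other words, $\Hot(\sB_\proj)$ is left orthogonal in $\Hot(\sB)$ to the thick subcategory $\Ac^\bctr(\sB)$ by which we localize. A standard Verdier-quotient argument then applies: given a roof $P^\bu\larrow Q^\bu\rarrow B^\bu$ representing a morphism in $\sD^\bctr(\sB)$, the cone of $Q^\bu\rarrow P^\bu$ lies in $\Ac^\bctr(\sB)$, and applying $\Hom_{\Hot(\sB)}(P^\bu,-)$ to the corresponding distinguished triangle shows, by left orthogonality, that $\id_{P^\bu}$ lifts along $Q^\bu\rarrow P^\bu$. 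Hence the roof reduces to an honest morphism $P^\bu\rarrow B^\bu$ in $\Hot(\sB)$, yielding
\[
 \Hom_{\Hot(\sB)}(P^\bu,B^\bu)\simeq\Hom_{\sD^\bctr(\sB)}(P^\bu,B^\bu)
\]
for all $P^\bu\in\Hot(\sB_\proj)$ and $B^\bu\in\Hot(\sB)$.

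Essential surjectivity amounts to producing, for every complex $B^\bu$ in $\sB$, a complex of projectives $P^\bu$ together with a morphism $P^\bu\rarrow B^\bu$ whose cone is contraacyclic. The strategy I would follow is to construct the Becker-style abelian model structure on the category of unbounded complexes in $\sB$, in which the cofibrant objects are (up to retracts) complexes of projectives and the trivially cofibrant objects are the contraacyclic complexes. Local presentability of $\sB$, together with the existence of enough projectives, supplies a set of generating (trivial) cofibrations; the small object argument then yields functorial cofibrant replacements, providing the required morphism $P^\bu\rarrow B^\bu$ with contraacyclic cone. This is exactly the content of \cite[Corollary~7.4]{PS4}, which is formulated for precisely this level of generality.

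The principal technical obstacle is the construction of cofibrant replacements for unbounded complexes in a sufficiently general abelian category. Without local presentability there is no guarantee that the small object argument terminates against a set of generators, and the equivalence between the homotopy category of projectives and the Becker contraderived category can fail in pathological situations. Once the model-categorical input is in place, combining it with the orthogonality argument above immediately produces the asserted triangulated equivalence $\Hot(\sB_\proj)\simeq\sD^\bctr(\sB)$.
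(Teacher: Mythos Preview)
Your proposal is correct and aligns with the paper's own treatment: the paper's proof consists solely of the citation ``This is~\cite[Corollary~7.4]{PS4}'', which is exactly the reference you invoke for the essential surjectivity step. Your sketch of the full-faithfulness argument (left orthogonality of $\Hot(\sB_\proj)$ to $\Ac^\bctr(\sB)$ by the very definition of Becker contraacyclicity) and of the model-categorical construction behind the cofibrant replacement is an accurate expansion of what that citation contains, so there is no substantive divergence from the paper's approach.
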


\begin{proof}
 This is~\cite[Corollary~7.4]{PS4}.
 For a generalization, see~\cite[Corollary~6.14]{PS5}.
\end{proof}

 Let $\R$ be a complete, separated right linear topological ring with
a countable base of neighborhoods of zero.
 Assume that $\R$ is topologically right coherent.
 The aim of this section is to construct a fully faithful contravariant
triangulated functor
$$
 \Xi\:\sD^\bb(\coh\R)^\sop\lrarrow\sD^\bctr(\R\Contra)
$$
and study its basic properties.

 We begin with the triangulated functor
\begin{equation} \label{d-b-coh-into-d-b-cohpro}
 \sD^\bb(\coh\R)\lrarrow\sD^\bb(\Cohpro\R)
\end{equation}
induced by the inclusion of abelian/exact categories
$\coh\R\rarrow\Cohpro\R$.
 By Corollary~\ref{fully-faithful-on-D-b-and-D-plus}(b),
the triangulated functor~\eqref{d-b-coh-into-d-b-cohpro}
is fully faithful.
 Furthermore, we have the obvious fully faithful triangulated functor
$$
 \sD^\bb(\Cohpro\R)\lrarrow\sD^-(\Cohpro\R).
$$

 Proposition~\ref{projectives-in-cohpro} implies a triangulated
equivalence
$$
 \Hot^-(\Prod_\omega(\R))\simeq\sD^-(\Cohpro\R).
$$
 The anti-equivalence of additive categories from
Corollary~\ref{countable-pontryagin-duality} induces a triangulated
anti-equivalence
$$
 \Hot^-(\Prod_\omega(\R))^\sop\simeq\Hot^+(\R\Contra_\proj^\omega).
$$
 The fully faithful inclusion of additive categories
$\R\Contra_\proj^\omega\rarrow\R\Contra_\proj$ induces a fully
faithful triangulated functor
$$
 \Hot^+(\R\Contra_\proj^\omega)\lrarrow\Hot(\R\Contra_\proj).
$$
 Finally, we have the triangulated equivalence of
Theorem~\ref{becker-contraderived-theorem},
$$
 \Hot(\R\Contra_\proj)\simeq\sD^\bctr(\R\Contra).
$$

 Composing all the fully faithful triangulated functors above,
\begin{multline*}
 \sD^\bb(\coh\R)^\sop\lrarrow\sD^\bb(\Cohpro\R)^\sop
 \lrarrow\sD^-(\Cohpro\R)^\sop \\ \simeq\Hot^-(\Prod_\omega(\R))^\sop
 \simeq\Hot^+(\R\Contra_\proj^\omega) \\ \lrarrow
 \Hot(\R\Contra_\proj)\simeq\sD^\bctr(\R\Contra),
\end{multline*}
we obtain a fully faithful contravariant triangulated functor
$$
 \Xi\:\sD^\bb(\coh\R)^\sop\lrarrow\sD^\bctr(\R\Contra).
$$

 Let $\R\Contra_\flat\subset\R\Contra$ denote the full subcategory
of flat left $\R$\+con\-tra\-mod\-ules.
 The following theorem is the main result of this section.

\begin{thm} \label{Hom-from-Xi-computed}
 Let\/ $\R$ be a complete, separated, right linear, topologically right
coherent topological ring with a countable base of neighborhoods
of zero.
 Then, for any bounded complex of coherent discrete right\/
$\R$\+modules\/ $\N^\bu\in\sD^\bb(\coh\R)$ and any (unbounded) complex
of flat left\/ $\R$\+contramodules\/ $\fF^\bu\in\Hot(\R\Contra_\flat)$,
there is a natural isomorphism of abelian groups
$$
 \Hom_{\sD^\bctr(\R\Contra)}(\Xi(\N^\bu),\fF^\bu)
 \simeq H^0(\N^\bu\ocn_\R\fF^\bu).
$$
\end{thm}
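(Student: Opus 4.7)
The plan is to compute both sides using the construction of $\Xi$ together with Proposition~\ref{contramodule-hom-and-pro-contratensor}, reducing everything to total complexes of pro-contratensor products.

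First, by the construction in Section~\ref{construction-of-Xi-secn}, $\Xi(\N^\bu)$ is represented by a bounded-below complex $\fP^\bu$ of countably generated projective $\R$\+contramodules, with $\fP^n = \Hom_\R^\cont(\rP^{-n},\R)$ for some deleted projective resolution $\rP^\bu \in \Hot^-(\Prod_\omega(\R))$ of $\N^\bu$ in $\Cohpro\R$ (via Proposition~\ref{projectives-in-cohpro} and Corollary~\ref{countable-pontryagin-duality}). Since $\fP^\bu$ consists of projectives, Theorem~\ref{becker-contraderived-theorem} gives
\[
\Hom_{\sD^\bctr(\R\Contra)}\bigl(\Xi(\N^\bu),\fF^\bu\bigr)
 = H^0\bigl(\Hom^\R(\fP^\bu,\fF^\bu)\bigr),
\]
where the total Hom complex is formed with products (since $\fP^\bu$ is bounded below and $\fF^\bu$ is unbounded).

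Next, I would apply Proposition~\ref{contramodule-hom-and-pro-contratensor} in each bidegree to obtain natural isomorphisms $\Hom^\R(\fP^n,\fF^k) \simeq \rP^{-n} \proocn_\R \fF^k$, yielding a natural isomorphism of total complexes $\Hom^\R(\fP^\bu,\fF^\bu) \simeq \Tot(\rP^\bu \proocn_\R \fF^\bu)$ (again with products, as $\rP^\bu$ is bounded above). The hypothesis of Proposition~\ref{contramodule-hom-and-pro-contratensor} is that each $\fF^k$ be complete and separated; the hard part of the proof is to justify this for flat $\R$\+contramodules in our setting, presumably via a Mittag-Leffler-type argument on the countable projective limit $\fF^k \to \varprojlim_\fI \fF^k/(\fI\tim\fF^k)$ defining completeness.

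It then remains to compare $\Tot(\rP^\bu \proocn_\R \fF^\bu)$ with $\N^\bu \ocn_\R \fF^\bu$. Since each $\N^j \in \coh\R$ carries the discrete topology when viewed in $\Modrcsom\R$, one has $\N^j \proocn_\R \fF^k = \N^j \ocn_\R \fF^k$. The mapping cone $C^\bu = \mathrm{Cone}(\rP^\bu \to \N^\bu)$ is acyclic in $\Cohpro\R \subset \Modrcsom\R$, and by Lemma~\ref{proocn-with-flat-exact-for-countable} the functor ${-}\proocn_\R\fF^k$ is exact on $\Modrcsom\R$ for each flat $\fF^k$. Hence each column (fixed $k$) of the bicomplex $C^\bu \proocn_\R \fF^\bu$ is acyclic, and the bicomplex is bounded above in its first index. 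The column filtration of the product total complex is complete and Hausdorff, so a standard spectral-sequence argument shows that the product total complex is acyclic. Equivalently, $\Tot(\rP^\bu \proocn_\R \fF^\bu) \to \Tot(\N^\bu \ocn_\R \fF^\bu)$ is a quasi-isomorphism, and since $\N^\bu$ is bounded, the right-hand total complex is a finite direct sum at each degree. Taking $H^0$ yields the stated isomorphism.
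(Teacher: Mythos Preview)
Your proposal is correct and follows essentially the same route as the paper's proof: resolve $\N^\bu$ by $\rP^\bu$ in $\Cohpro\R$, apply Pontryagin duality and Proposition~\ref{contramodule-hom-and-pro-contratensor} to rewrite the Hom complex as a product-totalized pro-contratensor bicomplex, then use exactness of ${-}\proocn_\R\fF^k$ on the acyclic cone (Lemma~\ref{proocn-with-flat-exact-for-countable}) and a totalization argument. Two minor remarks: what you flag as ``the hard part'' (completeness and separatedness of flat $\R$\+contramodules) is not argued ad hoc in the paper but simply quoted as Lemma~\ref{flat-contramodules-complete-separated} from~\cite[Lemma~6.3(b) and Corollary~6.15]{PR}; and your spectral-sequence/filtration step is packaged in the paper as the elementary Lemma~\ref{product-totalization-acyclic}.
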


 Before proving the theorem, we need to state a couple of lemmas.

\begin{lem} \label{flat-contramodules-complete-separated}
 Let\/ $\R$ be a complete, separated right linear topological ring
with a countable base of neighborhoods of zero.
 Then \par
\textup{(a)} all left\/ $\R$\+contramodules are complete (but not
necessarily separated); \par
\textup{(b)} all flat left\/ $\R$\+contramodules are separated.
\end{lem}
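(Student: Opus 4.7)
The plan is to handle (a) using the countable cofinal chain of open right ideals together with the contramodule infinite summation, and then derive (b) from (a) via a free presentation and the flatness hypothesis.

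For (a), fix a countable base $\R=\fI_0\supseteq\fI_1\supseteq\dotsb$ of open right ideals, let $\fP\in\R\Contra$, and let $(\bar p_n\in\fP/(\fI_n\tim\fP))_{n\in\omega}$ be a compatible family. Choose successive lifts $p_n\in\fP$ and write each difference $p_{n+1}-p_n=\pi_\fP(t_n)$ with $t_n=\sum_{q\in\fP}r_{n,q}q\in\fI_n[[\fP]]$. The key step is to combine these into a single infinite sum $T=\sum_{n\in\omega}t_n$, convergent in $\R[[\fP]]$ because $t_n\in\fI_n[[\fP]]$ with the $\fI_n$ shrinking to zero. Equivalently, the family $(r_{n,q})_{(n,q)\in\omega\times\fP}$ is zero-convergent in $\R$: given an open right ideal $\fJ\subseteq\R$, choose $N$ with $\fI_N\subseteq\fJ$; all coefficients with $n\ge N$ automatically lie in $\fJ$, and each of the finitely many remaining $n<N$ contributes only finitely many exceptional $q$. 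Setting $p=p_0+\pi_\fP(T)$ and decomposing $T=T_{<n}+T_{\ge n}$ with $T_{<n}=\sum_{m<n}t_m$ a finite sum and $T_{\ge n}\in\fI_n[[\fP]]$, additivity of $\pi_\fP$ gives $\pi_\fP(T_{<n})=p_n-p_0$ by telescoping, while $\pi_\fP(T_{\ge n})\in\fI_n\tim\fP$; hence $p\equiv p_n\pmod{\fI_n\tim\fP}$, proving completeness of $\fP$.

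For (b), let $\fF$ be flat and choose a surjection $\pi\:\fP=\R[[X]]\twoheadrightarrow\fF$ from a free contramodule, with kernel $\fK\subset\fP$. The crucial input is that the short exact sequence $0\to\fK\to\fP\to\fF\to0$ remains short exact after applying $(\R/\fI_n)\ocn_\R(-)$:
\begin{equation*}
 0\lrarrow\fK/(\fI_n\tim\fK)\lrarrow\fP/(\fI_n\tim\fP)\lrarrow\fF/(\fI_n\tim\fF)\lrarrow0.
\end{equation*}
Right-exactness is automatic, and the left-exactness follows because flatness of $\fF$ in the first argument of $\ocn_\R$ forces the vanishing of the relevant first derived functor in the second argument via the standard balancing of the derived contratensor product. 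These short exact sequences form an $\omega^{\sop}$\+indexed inverse system with surjective transition maps, so Mittag--Leffler gives exactness of the inverse limit; since the projective contramodule $\fP$ is complete and separated (see the paragraph before Lemma~\ref{contramodule-hom-group-complete-and-separated}), one obtains a short exact sequence $0\to\fK^c\to\fP\to\fF^c\to0$, where $\fK^c=\varprojlim_n\fK/(\fI_n\tim\fK)$ and $\fF^c=\varprojlim_n\fF/(\fI_n\tim\fF)$.

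To conclude, suppose $x\in\fF$ satisfies $\lambda_{\R,\fF}(x)=0$, i.e., $x\in\bigcap_n\fI_n\tim\fF$. Lifting $x$ to $y\in\fP$, its image in $\fF^c$ vanishes, so by left-exactness $y$ lies in the subgroup $\fK^c\subseteq\fP$. Part~(a) applied to the contramodule $\fK$ tells us the completion map $\fK\rarrow\fK^c$ is surjective, so $y$ is the image of some $k\in\fK$; since the composition $\fK\rarrow\fK^c\rarrow\fP$ is just the inclusion, we conclude $y=k\in\fK$, and therefore $x=\pi(y)=\pi(k)=0$, proving $\fF$ separated. The main obstacle is the left-exactness of the displayed sequence, which encodes the vanishing of the first derived functor of $(\R/\fI_n)\ocn_\R(-)$ at $\fF$ (equivalently, of $\mathrm{Ctrtor}_1^\R(\R/\fI_n,\fF)$) and requires invoking the balancing of derived contratensor products against the flatness hypothesis; the remaining steps are a routine Mittag--Leffler limit and a two-line diagram chase.
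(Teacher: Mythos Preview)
Your argument for part~(a) is correct and is essentially the standard one: lift a compatible family, express the successive differences as contraactions of elements of $\fI_n[[\fP]]$, combine them into a single zero-convergent element of $\R[[\fP]]$, and telescope. This is what lies behind~\cite[Lemma~6.3(b)]{PR}, which the paper simply cites.

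Your argument for part~(b) has a real gap at exactly the step you flag as the main obstacle. You assert that exactness of $({-})\ocn_\R\fF$ on $\Discr\R$ forces injectivity of $\fK/(\fI_n\tim\fK)\rarrow\fP/(\fI_n\tim\fP)$ ``via the standard balancing of the derived contratensor product.'' But no such balancing is available here: the left derived functors of $N\ocn_\R({-})$ are computed by projective resolutions in $\R\Contra$, whereas balancing would require enough projectives in $\Discr\R$, or at least enough discrete right $\R$\+modules $M$ with $M\ocn_\R({-})$ exact on $\R\Contra$. Neither exists in general---$\Discr\R$ is a Grothendieck category that typically has no nonzero projectives, the cyclic generators $\R/\fI$ certainly do not yield exact functors $(\R/\fI)\ocn_\R({-})$, and injective discrete modules are not known to be acyclic for the contratensor product either. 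So one cannot conclude from exactness of $({-})\ocn_\R\fF$ that the higher left derived functors of $N\ocn_\R({-})$ vanish at~$\fF$.

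In fact the left-exactness you want is precisely the ``only if'' half of Lemma~\ref{balancing-criterion-of-flatness}(a), and the paper's proof of that direction explicitly invokes~\cite[Corollary~6.15]{PR}, i.e., the present part~(b). So in the logic of~\cite{PR} the implication runs the other way: separatedness of flat contramodules is established first by an independent argument, and only afterwards does one obtain the short exact sequences $0\rarrow\fK/(\fI_n\tim\fK)\rarrow\fP/(\fI_n\tim\fP)\rarrow\fF/(\fI_n\tim\fF)\rarrow0$. Your Mittag--Leffler and diagram-chase steps after that point are fine, but as written the route to them is circular.
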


\begin{proof}
 Part~(a) is~\cite[Lemma~6.3(b)]{PR}.
 Part~(b) is~\cite[Corollary~6.15]{PR}.
 For a discussion of counterexamples of nonseparated contramodules,
see~\cite[Section~1.5]{Prev}.
\end{proof}

 For any bicomplex of abelian groups $C^{\bu,\bu}$, let us denote
by $\Tot^\sqcap(C^{\bu,\bu})$ the direct product totalization of
the complex $C^{\bu,\bu}$.
 So the components of the complex $\Tot^\sqcap(C^{\bu,\bu})$ are
$\Tot^\sqcap(C^{\bu,\bu})^m=\prod_{p+q=m}C^{p,q}$.

\begin{lem} \label{product-totalization-acyclic}
 Let $C^{\bu,\bu}$ be a bicomplex of abelian groups such that
$C^{n,\bu}=0$ for $n>0$ and, for every $i\in\boZ$, the complex of
abelian groups $C^{\bu,i}$,
$$
 \dotsb\lrarrow C^{-2,i}\lrarrow C^{-1,i}\lrarrow C^{0,i}\lrarrow0,
$$
is acyclic.
 Then the complex of abelian groups\/ $\Tot^\sqcap(C^{\bu,\bu})$
is acyclic.
\end{lem}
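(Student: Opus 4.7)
The plan is to prove acyclicity directly, by showing every cocycle in $T^\bu = \Tot^\sqcap(C^{\bu,\bu})$ is a coboundary via a descending induction on the column index, exploiting the fact that the bicomplex is bounded above in the horizontal direction (so the induction has a starting point).

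Fix a total degree $m$ and a cocycle $x = (x_p)_{p\le 0} \in T^m = \prod_{p\le 0} C^{p,m-p}$. Writing out $dx = 0$ in each bidegree, the nontrivial conditions read $d_h(x_{p-1}) \pm d_v(x_p) = 0$ in $C^{p,m-p+1}$ for every $p \le 0$, while the component in column $p=1$ of total degree $m+1$ is automatic because $C^{1,\bu} = 0$. I seek $y = (y_p)_{p\le 0} \in T^{m-1}$ with $y_p \in C^{p,m-1-p}$ satisfying $x_p = d_h(y_{p-1}) \pm d_v(y_p)$ for every $p \le 0$.

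I construct the $y_p$ by descending induction on $p$, starting from $y_0 = 0$. For the base case $p = 0$, the row $C^{\bu,m}$ is acyclic and ends with $C^{-1,m} \xrightarrow{d_h} C^{0,m} \to 0$, so $d_h\:C^{-1,m}\rarrow C^{0,m}$ is surjective, and I pick $y_{-1}$ with $d_h(y_{-1}) = x_0$. For the inductive step, suppose $y_0,\dotsc,y_{p-1}$ have been chosen so that the required equations hold for all $q \ge p$. I need to find $y_{p-2}$ so that $x_{p-1} \mp d_v(y_{p-1}) = d_h(y_{p-2})$. Using the inductive identity $d_h(y_{p-1}) = x_p \mp d_v(y_p)$ together with the anti-commutation $d_h d_v = -d_v d_h$, one computes
\[
d_h\bigl(x_{p-1} \mp d_v(y_{p-1})\bigr) = d_h(x_{p-1}) \pm d_v(d_h(y_{p-1})) = d_h(x_{p-1}) \pm d_v(x_p),
\]
which vanishes by the cocycle condition on~$x$. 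Hence $x_{p-1} \mp d_v(y_{p-1})$ lies in $\ker(d_h)$ in column $p-1$, and by acyclicity of the row $C^{\bu,m-p+1}$ it lies in $\im(d_h)$, yielding the desired $y_{p-2}$.

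Iterating over all $p \le 0$ produces the whole sequence $(y_p)_{p\le 0}$, giving an element $y \in T^{m-1}$ with $dy = x$. The argument is purely formal; the only ``obstacle'' is bookkeeping the Koszul signs in the anticommutation $d_h d_v + d_v d_h = 0$, which is entirely routine. The essential input is that boundedness of the bicomplex on the right ($C^{n,\bu} = 0$ for $n > 0$) provides the anchor $y_0 = 0$ for the descending induction, while rowwise acyclicity supplies the lifts at each step.
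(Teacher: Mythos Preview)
Your proof is correct; this is the standard staircase argument for acyclicity of the product totalization of a second-quadrant bicomplex with exact rows. The paper itself does not give a proof but simply cites~\cite[Lemma~6.2(b)]{PS7}, so your write-up in fact supplies more than the paper does, and the underlying argument is the same one that any direct proof of this lemma would use.
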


\begin{proof}
 This is~\cite[Lemma~6.2(b)]{PS7}.
\end{proof}

\begin{proof}[Proof of Theorem~\ref{Hom-from-Xi-computed}]
 Let $\rP^\bu$ be a bounded above complex of projective objects in
$\Cohpro\R$ endowed with a quasi-isomorphism $\rP^\bu\rarrow\N^\bu$
of complexes in the exact category $\Cohpro\R$.
 So the terms of the complex $\rP^\bu$ belong to $\Prod_\omega(\R)$.
 Let $\fP^\bu=\Hom_\R^\cont(\rP^\bu,\R)$ be the corresponding
complex in the additive category $\R\Contra_\proj^\omega\simeq
(\Prod_\omega\R)^\sop$.
 Then we have $\Xi(\N^\bu)=\fP^\bu\in\sD^\bctr(\R\Contra)$.

 Notice that for any complexes $\fP^\bu\in\Hot(\R\Contra_\proj)$
and $\fQ^\bu\in\Hot(\R\Contra)$ the Verdier quotient functor
$\Hot(\R\Contra)\rarrow\sD^\bctr(\R\Contra)$ induces an isomorphism
of abelian groups
$$
 \Hom_{\Hot(\R\Contra)}(\fP^\bu,\fQ^\bu)\simeq
 \Hom_{\sD^\bctr(\R\Contra)}(\fP^\bu,\fQ^\bu).
$$
 In other words, whenever $\fP^\bu$ is a complex of projective
$\R$\+contramodules, the $\Hom$ group in the contraderived category
can be computed as the degree~$0$ cohomology group of the complex
of $\Hom$ between the two complexes of $\R$\+contramodules
$$
 \Hom_{\sD^\bctr(\R\Contra)}(\fP^\bu,\fQ^\bu)\simeq
 H^0\Hom^\R(\fP^\bu,\fQ^\bu).
$$

 For any complex of complete, separated right linear topological
right $\R$\+modules $\rN^\bu$ and any complex of left
$\R$\+contramodules $\fQ^\bu$, let us denote by
$$
 \rN^\bu\proocn_\R^\sqcap\fQ^\bu=
 \Tot^\sqcap(\rN^n\proocn_\R\fQ^i)_{i,n\in\boZ}
$$
the direct product totalization of the bicomplex of pro-contratensor
products $\rN^n\proocn_\R\fQ^i$.
 Assume that the terms of the complex $\fQ^\bu$ are complete, separated
left $\R$\+contramodules.
 According to the proof of Theorem~\ref{pontryagin-duality-theorem},
in the situation at hand we have $\rP^\bu\simeq\Hom^\R(\fP^\bu,\R)$.
 Therefore, Proposition~\ref{contramodule-hom-and-pro-contratensor}
implies a natural isomorphism of complexes of abelian groups
\begin{equation} \label{Hom-R-complex-as-totalized-pro-contratensor}
 \Hom^\R(\fP^\bu,\fQ^\bu)\simeq\rP^\bu\proocn_\R^\sqcap\fQ^\bu.
\end{equation}
 In particular, in view of
Lemma~\ref{flat-contramodules-complete-separated},
the formula~\eqref{Hom-R-complex-as-totalized-pro-contratensor} is
applicable to our complex of flat contramodules $\fQ^\bu=\fF^\bu$.

 Denote by $\rM^\bu$ the cone of the quasi-isomorphism of complexes
$\rP^\bu\rarrow\N^\bu$.
 So $\rM^\bu$ is a bounded above, acyclic complex in the exact
category $\Cohpro\R$.
 Then $\rM^\bu$ is also an acyclic complex in the exact category
$\Modrcsom\R$ (as per the discussion in
Sections~\ref{topological-modules-secn}
and~\ref{pro-coherent-modules-secn}).
 By Lemma~\ref{proocn-with-flat-exact-for-countable}, for every
degree $i\in\boZ$, the complex of abelian groups
$\rM^\bu\proocn_\R\fF^i$ is acyclic.
 According to Lemma~\ref{product-totalization-acyclic}, it follows
that the complex of abelian groups $\rM^\bu\proocn_\R^\sqcap\fF^\bu$
is acyclic as well.

 Thus the quasi-isomorphism $\rP^\bu\rarrow\N^\bu$ in $\Cohpro\R$
induces a quasi-isomorphism of complexes of abelian groups
$$
 \rP^\bu\proocn_\R^\sqcap\fF^\bu\overset\simeq\lrarrow
 \N^\bu\proocn_\R^\sqcap\fF^\bu\,=\,\N^\bu\ocn_\R\fF^\bu,
$$
and we are done.
\end{proof}

 Now we recall the basics of the theory of compactly generated
triangulated categories~\cite{Neem0}.
 Let $\sT$ be a triangulated category with coproducts.
 An object $S\in\sT$ is said to be \emph{compact} if the functor
$\Hom_\sT(S,{-})\:\sT\rarrow\Ab$ preserves coproducts.

 A set of compact objects $\sS\subset\sT$ is said to \emph{generate}
the triangulated category $\sT$ if $\Hom_\sT(S,X[n])=0$ for a given
object $X\in\sT$, all $S\in\sS$, and all $n\in\boZ$ implies $X=0$.
 Equivalently, a set of compact objects $\sS$ generates $\sT$ if and
only if the minimal full triangulated subcategory of $\sT$ containing
$\sS$ and closed under coproducts coincides with the whole of~$\sT$
\,\cite[Theorem~2.1(2) or~4.1]{Neem0}.
 If this is the case, then the full subcategory of compact objects
in $\sT$ coincides with the minimal thick subcategory of $\sT$
containing~$\sS$ \,\cite[Theorem~2.1(3)]{Neem0}.

\begin{cor} \label{image-of-Xi-consists-of-compacts}
 All the objects in the image of the fully faithful triangulated
functor
$$
 \Xi\:\sD^\bb(\coh\R)^\sop\lrarrow\sD^\bctr(\R\Contra)
$$
are compact in the triangulated category\/ $\sD^\bctr(\R\Contra)$.
\end{cor}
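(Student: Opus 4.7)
The plan is to verify the definition of compactness directly: for any family $(\fQ_\alpha^\bu)_{\alpha \in A}$ in $\sD^\bctr(\R\Contra)$, show that the natural map
\[
 \bigoplus_\alpha \Hom_{\sD^\bctr(\R\Contra)}(\Xi(\N^\bu),\fQ_\alpha^\bu)
 \lrarrow
 \Hom_{\sD^\bctr(\R\Contra)}\bigl(\Xi(\N^\bu),\coprod_\alpha \fQ_\alpha^\bu\bigr)
\]
is an isomorphism for every $\N^\bu\in\sD^\bb(\coh\R)$.

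First I would realize the coproduct explicitly. By Theorem~\ref{becker-contraderived-theorem}, each $\fQ_\alpha^\bu$ is represented by a complex $\fP_\alpha^\bu$ of projective $\R$\+contramodules. Since a direct summand of a coproduct of free contramodules is again a direct summand of a free contramodule (the coproduct of $\R[[X_\alpha]]$ being $\R[[\coprod X_\alpha]]$), the class $\R\Contra_\proj$ is closed under coproducts in $\R\Contra$. Hence the termwise coproduct $\fP^\bu = \coprod_\alpha \fP_\alpha^\bu$ is a complex of projective contramodules, and under $\Hot(\R\Contra_\proj)\simeq\sD^\bctr(\R\Contra)$ it represents $\coprod_\alpha \fQ_\alpha^\bu$.

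Next, since projective contramodules are flat, Theorem~\ref{Hom-from-Xi-computed} applies to both $\fP^\bu$ and each $\fP_\alpha^\bu$, giving
\[
 \Hom_{\sD^\bctr(\R\Contra)}(\Xi(\N^\bu),\fP^\bu)\simeq
 H^0(\N^\bu\ocn_\R\fP^\bu),\qquad
 \Hom_{\sD^\bctr(\R\Contra)}(\Xi(\N^\bu),\fP_\alpha^\bu)\simeq
 H^0(\N^\bu\ocn_\R\fP_\alpha^\bu).
\]
The key step is to identify $\N^\bu\ocn_\R\fP^\bu$ with $\bigoplus_\alpha(\N^\bu\ocn_\R\fP_\alpha^\bu)$ as complexes of abelian groups. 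In each bidegree, $\N^n\ocn_\R\fP^i\simeq\bigoplus_\alpha(\N^n\ocn_\R\fP_\alpha^i)$, since the functor ${-}\ocn_\R{-}$ preserves all colimits in either argument. Because $\N^\bu$ is bounded, each component of the totalization is a \emph{finite} direct sum of such terms (indexed by $\{(n,i)\mid n+i=m,\ \N^n\ne 0\}$), and finite direct sums commute with arbitrary coproducts of abelian groups. This yields the desired identification of complexes.

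Finally, $H^0$ commutes with coproducts in $\Ab$, so combining the three isomorphisms above gives
\[
 \Hom_{\sD^\bctr(\R\Contra)}\bigl(\Xi(\N^\bu),\coprod_\alpha \fQ_\alpha^\bu\bigr)
 \simeq \bigoplus_\alpha H^0(\N^\bu\ocn_\R\fP_\alpha^\bu)
 \simeq \bigoplus_\alpha \Hom_{\sD^\bctr(\R\Contra)}(\Xi(\N^\bu),\fQ_\alpha^\bu),
\]
and one checks that the resulting isomorphism is induced by the canonical map. The only real subtlety I anticipate is confirming that coproducts in $\sD^\bctr(\R\Contra)$ are indeed computed as termwise coproducts of projective resolutions (which follows from the equivalence $\Hot(\R\Contra_\proj)\simeq\sD^\bctr(\R\Contra)$ together with closure of $\R\Contra_\proj$ under coproducts), and that the boundedness of $\N^\bu$ is genuinely used to reduce the totalization to finite direct sums, so that coproducts pass through.
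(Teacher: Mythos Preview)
Your proposal is correct and follows essentially the same approach as the paper: both realize coproducts in $\sD^\bctr(\R\Contra)$ as termwise coproducts of complexes of projectives via Theorem~\ref{becker-contraderived-theorem}, invoke Theorem~\ref{Hom-from-Xi-computed} to reduce to the contratensor product, and then use that $\N\ocn_\R{-}$ preserves coproducts together with the boundedness of $\N^\bu$ to conclude. Your write-up is slightly more explicit about why boundedness makes the totalization commute with coproducts, but the argument is the same.
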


\begin{proof}
 First of all, for the assertion of the corollary to make sense,
we need to show that all coproducts exist in the triangulated
category $\sD^\bctr(\R\Contra)$.
 This follows from Theorem~\ref{becker-contraderived-theorem}.
 Indeed, $\R\Contra_\proj$ is an additive category with coproducts,
hence $\Hot(\R\Contra_\proj)$ is a triangulated category with
coproducts.
 So the coproducts in $\sD^\bctr(\R\Contra)$ can be computed as
the termwise coproducts of complexes of projective contramodules
(cf.~\cite[the paragraph before Corollary~7.7]{PS4}).

 Now the desired assertion follows from
Theorem~\ref{Hom-from-Xi-computed}.
 It suffices to point out that, for any bounded complex $\N^\bu$
in $\coh\R$ (or in $\Discr\R$), the functor $\fP^\bu\longmapsto
\N^\bu\ocn_\R\fP^\bu$ takes termwise coproducts of complexes
of $\R$\+contramodules $\fP^\bu\in\Hot(\R\Contra)$ to termwise
coproducts of complexes of abelian groups.
 This follows from the fact that, for any $\N\in\Discr\R$,
the functor $\N\ocn_\R{-}\,\:\R\Contra\rarrow\Ab$ preserves coproducts
(and in fact, all colimits), as explained in
Section~\ref{contramodules-preliminaries-secn}.
\end{proof}

\Section{Flat and Projective Periodicity for Contramodules}

 In this section we discuss a contramodule generalization of the flat
and projective periodicity theorem of
Benson--Goodearl~\cite[Theorem~2.5]{BG} and its stronger version
due to Neeman~\cite[Theorem~8.6\,(i)\,$\Leftrightarrow$\,(3)
and Remark~2.15]{Neem}.
 This discussion was started in the preceding papers by the first-named
author~\cite[Proposition~12.1]{Pflcc}, \cite[Theorems~5.1
and~6.1]{Pbc}; we continue it in the present paper.

 The results of this section concerning the flat and projective
periodicity are partly conditional upon compact generation of
the contraderived category by the image of the functor $\Xi$ constructed
in the previous Section~\ref{construction-of-Xi-secn}.
 The compact generation will be established in the final
Section~\ref{compact-generation-secn}; this will make the results
of this section unconditional.
 In the meantime, the results of~\cite{Pflcc,Pbc} on flat and
projective periodicity can be used to prove the compact generation
easily under more restrictive assumptions.

 We start with a sequence of lemmas going back to
the paper~\cite{PR}.
 A more general discussion can be found in
the papers~\cite{Pproperf,PPT}.

\begin{lem} \label{flat-contramodules-closure-properties}
 Let\/ $\R$ be a complete, separated right linear topological ring
with a countable base of neighborhoods of zero.
 Then the full subcategory of flat left\/ $\R$\+contramodules\/
$\R\Contra_\flat$ is closed under extensions and kernels of
epimorphisms in\/ $\R\Contra$.
\end{lem}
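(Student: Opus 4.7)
My plan is to reduce both closure properties to $\Ext$-vanishing statements in $\R\Contra$ via the Pontryagin adjunction from Section~\ref{contramodules-preliminaries-secn}. For each $\N\in\Discr\R$ set $\fJ_\N:=\Hom_\boZ(\N,\boQ/\boZ)\in\R\Contra$; the adjunction reads
\[
 \Hom^\R(\fG,\fJ_\N)\simeq\Hom_\boZ(\N\ocn_\R\fG,\,\boQ/\boZ),
\]
and $\boQ/\boZ$ is an injective cogenerator in $\Ab$ while $\N\mapsto\fJ_\N$ is an exact contravariant functor from $\Discr\R$ to $\R\Contra$.  The key reformulation I will establish is
\[
 \fG\text{ is flat}\iff\Ext^i_{\R\Contra}(\fG,\fJ_\N)=0\text{ for all }i\ge 1\text{ and all }\N\in\Discr\R.\qquad(\star)
\]
Granted~$(\star)$, both closure properties are immediate from dimension-shift in the $\Ext$ long exact sequence.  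For extension closure, given $0\to\fF'\to\fF\to\fF''\to 0$ with $\fF'$ and $\fF''$ flat, in the sequence $\Ext^i(\fF'',\fJ_\N)\to\Ext^i(\fF,\fJ_\N)\to\Ext^i(\fF',\fJ_\N)$ both outer terms vanish by~$(\star)$ for every $i\ge 1$.  For kernel closure, given $0\to\fF'\to\fF\to\fF''\to 0$ with $\fF$ and $\fF''$ flat, the sequence $\Ext^i(\fF,\fJ_\N)\to\Ext^i(\fF',\fJ_\N)\to\Ext^{i+1}(\fF'',\fJ_\N)$ has vanishing outer terms by~$(\star)$, giving $\fF'$ flat.

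The easy half of $(\star)$ is the converse, which needs only $i=1$: if $\Ext^1(\fG,\fJ_\N)=0$ for all $\N$, then applying $\Hom^\R(\fG,-)$ to the short exact sequences $0\to\fJ_{\N''}\to\fJ_\N\to\fJ_{\N'}\to 0$ arising from short exact sequences in $\Discr\R$ and Pontryagin-dualizing the result gives back the exactness of $\N\mapsto\N\ocn_\R\fG$.

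The hard part, and the main obstacle, will be the forward direction of~$(\star)$ — in particular the higher $\Ext$-vanishing for flat $\fG$.  Since $\boQ/\boZ$ is $\boZ$-injective, Pontryagin duality commutes with taking homology, so $(\star)$ is equivalent to the vanishing of the higher left derived contratensor products $H_i(\N\ocn_\R\fP_\bu)$ for $i\ge 1$ along any projective resolution $\fP_\bu\to\fG$ of a flat contramodule.  This is a balance-of-Tor assertion for the contratensor product: the derived functor in the contramodule variable vanishes as soon as the functor $\N\mapsto\N\ocn_\R\fG$ is already exact.  For ordinary modules this is classical; for contramodules it is a substantial fact whose proof crucially uses the assumption that $\R$ has a countable base of neighborhoods of zero, which keeps all relevant projective resolutions and pro-systems inside the $\omega$-indexed realm and makes Mittag--Leffler style reasoning available.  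I plan to import~$(\star)$ from the cited references \cite{PR} (see also \cite{Pproperf,PPT}), where the Tor-balance is established precisely by such an $\omega$-indexed Mittag--Leffler argument tailored to topological rings with countable topology base; the lemma then follows as sketched above.
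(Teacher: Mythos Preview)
Your argument is correct.  The paper itself gives no argument at all and simply cites \cite[Corollaries~6.8, 6.13, 6.15]{PR} and \cite[Lemma~13.1(i,iii)]{PPT}.  Your route through the Ext-characterization~$(\star)$ and the Pontryagin adjunction is a legitimate and clarifying reformulation: the converse direction of~$(\star)$ you prove cleanly, while the forward direction (equivalently, Tor-balance for the contratensor product) you correctly identify as the substantive step requiring the countable-base hypothesis, and import from the same references.  So both proofs ultimately rest on identical cited facts, yours with more explanatory scaffolding.

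One mild simplification worth noting: for closure under kernels of epimorphisms you do not actually need~$(\star)$ in all degrees $i\ge1$.  Once you have from~\cite{PR} that $\N\ocn_\R{-}$ preserves any short exact sequence of contramodules whose quotient term is flat (this is the ``only if'' part of Lemma~\ref{balancing-criterion-of-flatness}(a) below, itself a citation from~\cite{PR}), a $3\times3$ diagram with an arbitrary short exact sequence in $\Discr\R$ shows directly that the kernel is flat.  This avoids invoking $\Ext^2$ (equivalently, higher derived contratensor products) and is closer to how the cited references actually argue.
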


\begin{proof}
 This is~\cite[Corollaries~6.8 and~6.13]{PR} together
with~\cite[Corollary~6.15]{PR}.
 Some more general assertions applicable to any complete, separated
right linear topological ring $\R$ can be found
in~\cite[Lemma~13.1(i,iii)]{PPT}.
\end{proof}

\begin{lem} \label{balancing-criterion-of-flatness}
 Let\/ $\R$ be a complete, separated right linear topological ring
with a countable base of neighborhoods of zero.
 Let\/ $0\rarrow\fH\rarrow\fG\rarrow\fF\rarrow0$ be a short exact
sequence of left\/ $\R$\+contramodules with a flat left\/
$\R$\+contramodule\/~$\fG$.
 In this setting: \par
\textup{(a)} The left\/ $\R$\+contramodule\/ $\fF$ is flat if and only
if, for every discrete right\/ $\R$\+module\/ $\N$, the induced map
of abelian groups\/ $\N\ocn_\R\fH\rarrow\N\ocn_\R\fG$ is injective.
 If this is the case, then the\/ $\R$\+contramodule\/ $\fH$ is
flat as well. \par
\textup{(b)} When the topological ring\/ $\R$ is topologically right
coherent, it suffices to consider coherent discrete right\/
$\R$\+modules\/ $\N$ in the criterion of part~\textup{(a)}.
\end{lem}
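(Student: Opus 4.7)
The plan is to deduce part (a) from the long exact sequence of the left derived functors of the contratensor product in the contramodule variable, combined with a snake-lemma diagram chase for the converse direction; part (b) will then be a standard filtered colimit reduction.

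Since the abelian category $\R\Contra$ has enough projective objects, the right exact functor $\N\ocn_\R{-}\colon\R\Contra\to\Ab$ admits left derived functors $L_n(\N\ocn_\R{-})$ for every $\N\in\Discr\R$. The pivotal technical input is the vanishing $L_n(\N\ocn_\R{-})(\fP)=0$ for all $n\geq 1$, all $\N\in\Discr\R$, and every flat left $\R$\+contramodule $\fP$. Granting this, the long exact sequence associated with $0\to\fH\to\fG\to\fF\to 0$ reads
\[
 L_1(\N\ocn_\R{-})(\fG)\to L_1(\N\ocn_\R{-})(\fF)\to\N\ocn_\R\fH\to\N\ocn_\R\fG\to\N\ocn_\R\fF\to 0,
\]
so flatness of both $\fG$ and $\fF$ kills the two leftmost terms and yields the desired injectivity. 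The conclusion that $\fH$ is flat is a direct application of Lemma~\ref{flat-contramodules-closure-properties}. Conversely, assume the map $\N\ocn_\R\fH\to\N\ocn_\R\fG$ is injective for every $\N$, and take an arbitrary injection $\N_1\hookrightarrow\N_2$ in $\Discr\R$ with cokernel $\N_3$. Applying the contratensor product in both arguments produces a commutative $3\times 3$ grid with entries $\N_i\ocn_\R\fC_j$ for $\fC_j\in\{\fH,\fG,\fF\}$. By hypothesis all three rows are short exact, while by flatness of $\fG$ the middle column is short exact. The snake lemma applied to the top two rows (with vertical maps $\alpha,\beta,\gamma$ induced by $\N_1\to\N_2$ in the $\fH$-, $\fG$-, $\fF$-columns) yields
\[
 0\to\ker\alpha\to\ker\beta\to\ker\gamma\to\N_3\ocn_\R\fH\to\N_3\ocn_\R\fG\to\N_3\ocn_\R\fF\to 0.
\]
Since $\ker\beta=0$, the kernel $\ker\gamma$ of $\N_1\ocn_\R\fF\to\N_2\ocn_\R\fF$ embeds into $\N_3\ocn_\R\fH$, and its image lies in the kernel of $\N_3\ocn_\R\fH\to\N_3\ocn_\R\fG$, which is zero by hypothesis applied to $\N=\N_3$. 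Thus $\ker\gamma=0$, establishing flatness of $\fF$.

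The main obstacle is the key vanishing $L_1(\N\ocn_\R{-})(\fP)=0$ for flat $\fP$. I would attack it by dimension shifting: take a free resolution $\dotsb\to\R[[X_1]]\to\R[[X_0]]\to\fP\to 0$ and denote the $n$\+th syzygy by $\fZ_n$. Iterated application of Lemma~\ref{flat-contramodules-closure-properties} shows each $\fZ_n$ is flat, and the vanishing amounts to the injectivity of $\N\ocn_\R\fZ_n\to\N\ocn_\R\R[[X_n]]=\N^{(X_n)}$ for every $n$. Since $\ocn_\R$ commutes with filtered colimits in the first argument and every discrete right $\R$\+module is a filtered colimit of finitely generated submodules, which in turn admit surjections from finite direct sums $(\R/\fI)^m$ with $\fI\subseteq\R$ an open right ideal, a five-lemma argument reduces the question to the case $\N=\R/\fI$. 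In this case the required injectivity takes the form of the purity relation $\fZ_n\cap(\fI\tim\R[[X_n]])=\fI\tim\fZ_n$ inside $\R[[X_n]]$; this purity is the essential homological content, ultimately resting on the flat contramodule theory of \cite[Sections~6\+-7]{PR}.

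For part (b), topological right coherence means $\Discr\R$ is locally coherent, so every discrete right $\R$\+module is a filtered colimit of coherent discrete right $\R$\+modules. Since $\ocn_\R$ preserves colimits in its first argument and filtered colimits are exact in $\Ab$, the injectivity of $\N\ocn_\R\fH\to\N\ocn_\R\fG$ tested on coherent $\N$ propagates automatically to all $\N\in\Discr\R$, reducing the criterion of part (a) to a check on coherent discrete right modules.
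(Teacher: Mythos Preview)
Your argument is correct and largely parallel to the paper's, which simply cites~\cite[Lemmas~6.7, 6.10, and Corollary~6.15]{PR} for the ``only if'' direction, \cite[Lemma~13.5]{PPT} (or the discussion in~\cite{Pproperf}) for the ``if'' direction, and gives exactly your filtered-colimit argument for part~(b).  The main difference is that you spell out the structure: you package the ``only if'' direction via the long exact sequence of $L_n(\N\ocn_\R{-})$ and the acyclicity of flat contramodules, and you give an explicit snake-lemma proof of the ``if'' direction rather than a bare citation.  Your snake-lemma argument is clean and self-contained, and is essentially the standard module-theoretic proof transported to this setting; this is a genuine expository gain over the paper.

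One caveat: the step where you ``reduce to $\N=\R/\fI$ by a five-lemma argument'' is not justified as stated.  Passing from an arbitrary finitely generated $\N$ to a covering $(\R/\fI)^m$ gives you control of the middle column of the obvious diagram, but the snake/five-lemma does not let you conclude injectivity in the quotient column without controlling the kernel column as well.  You correctly sense that this is the hard step---it is precisely the balancing statement that flat contramodules are $\N\ocn_\R{-}$\+acyclic---and you rightly defer to~\cite{PR} for it, just as the paper does.  So the gap in your sketch is cosmetic rather than fatal: simply drop the five-lemma claim and cite the acyclicity directly.
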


\begin{proof}
 Part~(a): the ``only if'' assertion is~\cite[Lemma~6.7 or~6.10]{PR}
together with~\cite[Corollary~6.15]{PR}.
 The ``if'' assertion is a general result applicable to any complete,
separated right linear topological  ring~$\R$; see~\cite[two paragraps
preceding Lemma~3.1]{Pproperf} or~\cite[Lemma~13.5]{PPT}.
 The ``if this is the case'' clause is a part of
Lemma~\ref{flat-contramodules-closure-properties};
see also~\cite[Lemma~13.5(ii)]{PPT} for a generalization.

 Part~(b) follows from part~(a), because any object of $\Discr\R$ is
a directed colimit of objects from $\coh\R$ \,\cite[Definition~1.9
and Theorem~1.11]{AR}, while the functor ${-}\ocn_\R\nobreak\fP\:
\allowbreak\Discr\R\rarrow\Ab$ preserves colimits for any left
$\R$\+contramodule~$\fP$ (as explained in
Section~\ref{contramodules-preliminaries-secn}) and the directed
colimit functors in $\Ab$ are exact.
\end{proof}

\begin{lem} \label{contramodule-nakayama}
 Let\/ $\R$ be a complete, separated right linear topological ring
with a countable base of neighborhoods of zero and\/ $\fP\ne0$ be
a left\/ $\R$\+contramodule.
 Then there exists a discrete right\/ $\R$\+module\/ $\N$ such that\/
$\N\ocn_\R\fP\ne0$.
 If the topological ring\/ $\R$ is topologically right coherent, then
one can choose\/ $\N$ to be a coherent discrete right\/ $\R$\+module.
\end{lem}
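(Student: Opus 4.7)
The plan is to reduce the problem to the contramodule analogue of Nakayama's lemma. The natural isomorphism $(\R/\fI)\ocn_\R\fP \simeq \fP/(\fI\tim\fP)$ recorded at the end of Section~\ref{contramodules-preliminaries-secn} shows that the existence of a discrete right $\R$\+module $\N$ with $\N\ocn_\R\fP\ne0$ is equivalent to the existence of an open right ideal $\fI\subset\R$ with $\fI\tim\fP\ne\fP$. Moreover, under the topological right coherence assumption, Proposition~\ref{topological-coherence-criterion} ensures that $\R/\fI\in\coh\R$ for every open right ideal $\fI\subset\R$; so the module $\N=\R/\fI$ produced by this reduction automatically lies in $\coh\R$, taking care of the second assertion of the lemma.

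It therefore suffices to establish the following contramodule version of Nakayama's lemma in our setting: if $\R$ has a countable base of open right ideals $\R=\fI_0\supset\fI_1\supset\fI_2\supset\dotsb$ and $\fP$ is a left\/ $\R$\+contramodule with $\fI_n\tim\fP=\fP$ for every $n\in\omega$, then $\fP=0$. This is a well-known telescoping argument (see e.g.\ the discussion in~\cite{Pcoun} and in~\cite[Section~1.5]{Prev}). Given an element $p\in\fP$, use $\fI_1\tim\fP=\fP$ to write $p=\pi_\fP(t_1)$ with $t_1=\sum_i r_i^{(1)}q_i^{(1)}\in\fI_1[[\fP]]$; then use $\fI_2\tim\fP=\fP$ to expand each $q_i^{(1)}=\pi_\fP(t_2^{(i)})$ with $t_2^{(i)}\in\fI_2[[\fP]]$; by contraassociativity and the ``opening of parentheses'' structure of the monad $\R[[{-}]]$ recalled in Section~\ref{contramodules-preliminaries-secn}, this yields a new representation $p=\pi_\fP(t_2)$ whose coefficients lie in $\fI_1\fI_2\subset\fI_2$. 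Iterating and taking a suitable diagonal, one assembles a single zero-convergent formal linear combination over $\R$ whose coefficients are eventually in every $\fI_n$, hence zero; so $p=\pi_\fP(0)=0$, and $\fP=0$.

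The main obstacle is to make the iterative opening-of-parentheses construction precise: one must verify that, at each stage, the new zero-convergent families can be chosen so that only countably many new terms are introduced, and that after $\omega$ many steps the amalgamated family is genuinely zero-convergent in the topology of $\R$. The countability of the base of neighborhoods of zero is essential here for both the indexing control and the convergence. All of this is standard once the monad formalism of Section~\ref{contramodules-preliminaries-secn} is in place; no new conceptual input beyond the cited references is required.
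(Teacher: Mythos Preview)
Your overall strategy is sound and aligns with the paper's: the first assertion is the contramodule Nakayama lemma (which the paper simply cites as~\cite[Lemma~6.14]{PR} and~\cite[Lemmas~2.1 and~3.22]{Prev}), and the second assertion is a reduction to coherent discrete modules. Two inaccuracies are worth flagging, however.

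First, your sketch of the Nakayama telescoping contains a genuine slip: you assert that after opening parentheses the coefficients lie in $\fI_1\fI_2\subset\fI_2$, but for \emph{right} ideals one only has $\fI_1\fI_2\subset\fI_1$; the inclusion $\fI_1\fI_2\subset\fI_2$ would require $\fI_2$ to be a left ideal. The correct argument in the right linear setting is more delicate: at each stage, for each of the finitely many ``large'' coefficients $r$ one uses continuity of left multiplication to find an open right ideal $\fJ$ with $r\fJ\subset\fI_n$, and then expands using that deeper ideal. Since you also cite the literature, this is repairable rather than fatal, but the sketch as written does not work outside the two-sided case.

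Second, Proposition~\ref{topological-coherence-criterion} does not say that $\R/\fI\in\coh\R$ for \emph{every} open right ideal~$\fI$; it only gives this for the ideals in a suitable base~$B$. This does not break your argument---one can extract a countable descending chain from $B$ and run Nakayama along it, so the $\N=\R/\fI_n$ produced is already coherent---but the claim as stated is stronger than what is available. The paper handles the second assertion differently: having found some discrete $\N$ with $\N\ocn_\R\fP\ne0$, it writes $\N$ as a directed colimit of coherent objects and uses that ${-}\ocn_\R\fP$ preserves colimits (exactly as in the proof of Lemma~\ref{balancing-criterion-of-flatness}(b)). Your route is slightly more direct once the base is chosen with care; the paper's route is agnostic to how the witnessing $\N$ was obtained.
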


\begin{proof}
 The first assertion is one of the versions of contramodule Nakayama
lemma, see~\cite[Lemma~6.14]{PR}.
 A discussion can be found in~\cite[Lemmas~2.1 and~3.22]{Prev}.
 The second assertion follows similarly to the proof of
Lemma~\ref{balancing-criterion-of-flatness}(b).
\end{proof}

\begin{prop} \label{contratensor-criterion-of-pure-acyclicity}
 Let\/ $\R$ be a complete, separated right linear topological ring
with a countable base of neighborhoods of zero, and let\/ $\fF^\bu$
be a complex of flat left\/ $\R$\+contramodules.
 In this setting: \par
\textup{(a)} The complex\/ $\fF^\bu$ is acyclic with flat contramodules
of cocycles if and only if, for every discrete right\/ $\R$\+module\/
$\N$, the complex of abelian groups\/ $\N\ocn_\R\fF^\bu$ is acyclic.
\par
\textup{(b)} When the topological ring\/ $\R$ is topologically right
coherent, it suffices to consider coherent discrete right\/
$\R$\+modules\/ $\N$ in the criterion of part~\textup{(a)}.
\end{prop}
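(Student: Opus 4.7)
The plan is to establish part (a) directly from the preceding lemmas of this section, and then reduce part (b) to part (a) by a directed colimit argument. For the ``only if'' direction of (a), suppose $\fF^\bu$ is acyclic with flat cocycles $\fZ^n=\ker d^n$. Each short exact sequence $0\to\fZ^n\to\fF^n\to\fZ^{n+1}\to0$ then has all three terms flat; the ``only if'' direction of Lemma~\ref{balancing-criterion-of-flatness}(a) guarantees that $\N\ocn_\R-$ keeps the sequence exact for every $\N\in\Discr\R$. Splicing these short exact sequences back together exhibits $\N\ocn_\R\fF^\bu$ as acyclic with cocycle objects $\N\ocn_\R\fZ^n$.

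The ``if'' direction of (a) is the main content. Assume $\N\ocn_\R\fF^\bu$ is acyclic for all $\N\in\Discr\R$. Set $\fZ^n=\ker d^n$ and $\fB^{n+1}=\im d^n$, and factor $d^n$ as $\fF^n\twoheadrightarrow\fB^{n+1}\hookrightarrow\fF^{n+1}$. The crux is to prove that the induced map $\N\ocn_\R\fB^{n+1}\to\N\ocn_\R\fF^{n+1}$ is injective for every~$\N$. By right exactness of $\N\ocn_\R-$, the kernel of the surjection $\N\ocn_\R\fF^n\twoheadrightarrow\N\ocn_\R\fB^{n+1}$ equals the image of $\N\ocn_\R\fZ^n\to\N\ocn_\R\fF^n$, which lies inside $\ker(\N\ocn_\R d^n)$. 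Conversely, acyclicity of $\N\ocn_\R\fF^\bu$ gives $\ker(\N\ocn_\R d^n)=\im(\N\ocn_\R d^{n-1})$, and since $d^{n-1}$ factors through $\fZ^n\hookrightarrow\fF^n$, this image lies in $\im(\N\ocn_\R\fZ^n\to\N\ocn_\R\fF^n)$. Tracing the inclusions, all three subgroups coincide; applying the surjection $\N\ocn_\R\fF^n\twoheadrightarrow\N\ocn_\R\fB^{n+1}$ to its kernel, described in these two ways, yields the claimed injectivity.

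The remaining deductions are then straightforward. Apply Lemma~\ref{balancing-criterion-of-flatness}(a) (together with its ``if this is the case'' clause) to the short exact sequence $0\to\fB^{n+1}\to\fF^{n+1}\to\fF^{n+1}/\fB^{n+1}\to0$ to conclude that both $\fB^{n+1}$ and $\fF^{n+1}/\fB^{n+1}$ are flat. Then Lemma~\ref{flat-contramodules-closure-properties} applied to $0\to\fZ^n\to\fF^n\to\fB^{n+1}\to0$ gives flatness of $\fZ^n$. Using Lemma~\ref{balancing-criterion-of-flatness}(a) once more on this last sequence, the map $\N\ocn_\R\fZ^n\to\N\ocn_\R\fF^n$ is injective. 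Combined with the identifications from the key step above, this forces the natural map $\N\ocn_\R\fB^n\to\N\ocn_\R\fZ^n$ to be surjective for every $\N$, so the right exact sequence $\N\ocn_\R\fB^n\to\N\ocn_\R\fZ^n\to\N\ocn_\R H^n(\fF^\bu)\to0$ yields $\N\ocn_\R H^n(\fF^\bu)=0$ for all~$\N$. The contramodule Nakayama lemma (Lemma~\ref{contramodule-nakayama}) then gives $H^n(\fF^\bu)=0$.

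For part~(b), every $\N\in\Discr\R$ is a directed colimit of coherent discrete right $\R$-modules; since $\ocn_\R$ preserves colimits in its first argument and directed colimits of acyclic complexes in $\Ab$ are acyclic, the restricted hypothesis on coherent $\N'\in\coh\R$ implies the full hypothesis of part~(a). The hard part is unquestionably the injectivity step in the ``if'' direction of~(a): one must wring information about the specific map $\N\ocn_\R\fB^{n+1}\to\N\ocn_\R\fF^{n+1}$ out of acyclicity using only the right exactness of the contratensor product, without the luxury of a long exact sequence.
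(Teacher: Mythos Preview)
Your proof is correct and follows essentially the same route as the paper's: the key injectivity of $\N\ocn_\R\fB^{n+1}\to\N\ocn_\R\fF^{n+1}$, the applications of Lemmas~\ref{balancing-criterion-of-flatness} and~\ref{flat-contramodules-closure-properties}, and the final appeal to Nakayama all match. The only cosmetic difference is that the paper first shows $\fH^n$ is flat (via the short exact sequence $0\to\fH^n\to\fF^n/\fB^n\to\fB^{n+1}\to0$) before concluding $\N\ocn_\R\fH^n\simeq H^n(\N\ocn_\R\fF^\bu)=0$, whereas you reach $\N\ocn_\R\fH^n=0$ directly from the surjectivity of $\N\ocn_\R\fB^n\to\N\ocn_\R\fZ^n$.
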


\begin{proof}
 The proofs of parts~(a) and~(b) are similar and based on the respective
parts of Lemmas~\ref{balancing-criterion-of-flatness}
and~\ref{contramodule-nakayama}.
 In order to prove the ``only if'' assertions, recall that the functor
$\N\ocn_\R{-}\,\:\R\Contra\rarrow\Ab$ is always right exact (as per
Section~\ref{contramodules-preliminaries-secn}).
 In view of Lemma~\ref{balancing-criterion-of-flatness}, it follows
that this functor takes short exact sequences of flat
$\R$\+contramodules to short exact sequences of abelian groups.

 To prove the ``if'', denote the contramodules of coboundaries and
cocycles of the complex $\fF^\bu$ by $\fB^i\subset\fZ^i\subset\fF^i$.
 Then there are short exact sequences of $\R$\+contramodules $0\rarrow
\fZ^i\rarrow\fF^i\rarrow\fB^{i+1}\rarrow0$ and $0\rarrow\fB^i\rarrow
\fF^i\rarrow\fF^i/\fB^i\rarrow0$ for all $i\in\boZ$.

 Given a (coherent) discrete right $\R$\+module $\N$, any element~$c$
in the kernel of the map $\N\ocn_\R\fB^{i+1}\rarrow\N\ocn_\R\fF^{i+1}$
can be lifted to an element~$c'$ in the kernel of the map
$\N\ocn_\R\fF^i\rarrow\N\ocn_\R\fF^{i+1}$ (since the map
$\N\ocn_\R\fF^i\rarrow\N\ocn_\R\fB^{i+1}$ is surjective).
 If $c'$~belongs to the image of the map $\N\ocn_\R\fF^{i-1}\rarrow
\N\ocn_\R\fF^i$, then $c=0$ (since the composition $\fF^{i-1}\rarrow
\fF^i\rarrow\fB^{i+1}$ vanishes).
 Therefore, acyclicity of the complex $\N\ocn_\R\fF^\bu$ implies
injectivity of the map $\N\ocn_\R\fB^{i+1}\rarrow\N\ocn_\R\fF^{i+1}$.

 Applying Lemma~\ref{balancing-criterion-of-flatness} to the short
exact sequence $0\rarrow\fB^{i+1}\rarrow\fF^{i+1}\rarrow
\fF^{i+1}/\fB^{i+1}\rarrow0$, we can conclude that
the $\R$\+contramodules $\fF^{i+1}/\fB^{i+1}$ and $\fB^{i+1}$ are flat.
 Applying Lemma~\ref{flat-contramodules-closure-properties} to the short
exact sequence $0\rarrow\fZ^i\rarrow\fF^i\rarrow\fB^{i+1}\rarrow0$,
we see that the $\R$\+contramodule $\fZ^i$ is flat.

 Finally, denote by $\fH^i=\fZ^i/\fB^i$ the contramodules of
cohomology of the complex $\fF^\bu$.
 Applying Lemma~\ref{flat-contramodules-closure-properties} to the short
exact sequence $0\rarrow\fH^i\rarrow\fF^i/\fB^i\rarrow\fB^{i+1}
\rarrow0$, we see that the $\R$\+contramodule $\fH^i$ is flat.

 Now the fact that the functor $\N\ocn_\R{-}$ takes short exact
sequences of flat contramodules to short exact sequences of abelian
groups (as mentioned above) implies a natural isomorphism of
abelian groups $\N\ocn_\R\fH^i\simeq H^i(\N\ocn_\R\fF^\bu)$
for every $i\in\boZ$.
 Using acyclicity of the complex $\N\ocn_\R\fF^\bu$ again, we
conclude that $\N\ocn_\R\fH^i=0$.
 It remains to refer to Lemma~\ref{contramodule-nakayama} in order to
deduce the desired vanishing assertion $\fH^i=0$.
\end{proof}

\begin{cor} \label{Hom-from-Xi-to-complex-of-flats-vanishing-criterion}
 Let\/ $\R$ be a complete, separated topologically right coherent
topological ring with a countable base of neighborhoods of zero, and
let\/ $\fF^\bu$ be a complex of flat left\/ $\R$\+contramodules.
 Then the following two conditions are equivalent:
\begin{enumerate}
\item For any bounded complex of coherent discrete right\/
$\R$\+modules $\N^\bu\in\sD^\bb(\coh\R)$, one has\/
$\Hom_{\sD^\bctr(\R\Contra)}(\Xi(\N^\bu),\fF^\bu)=0$. \hbadness=1200
\item The complex of\/ $\R$\+contramodules\/ $\fF^\bu$ is acyclic with
flat\/ $\R$\+contramodules of cocycles.
\end{enumerate}
\end{cor}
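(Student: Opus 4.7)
The plan is to reduce both implications to Theorem~\ref{Hom-from-Xi-computed} together with Proposition~\ref{contratensor-criterion-of-pure-acyclicity}(b). The starting observation is that Theorem~\ref{Hom-from-Xi-computed} provides a natural isomorphism $\Hom_{\sD^\bctr(\R\Contra)}(\Xi(\N^\bu),\fF^\bu)\simeq H^0(\N^\bu\ocn_\R\fF^\bu)$ for every bounded $\N^\bu\in\sD^\bb(\coh\R)$. Since the class of such $\N^\bu$ is closed under cohomological shifts, condition~(1) is equivalent to the acyclicity of the total complex $\N^\bu\ocn_\R\fF^\bu$ for every bounded complex $\N^\bu$ of coherent discrete right $\R$-modules.

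For the implication (1)$\,\Rightarrow\,$(2), I would specialize the above to $\N^\bu=\N$ concentrated in a single degree, where $\N\in\coh\R$ is arbitrary. This yields acyclicity of $\N\ocn_\R\fF^\bu$ for every coherent discrete right $\R$-module~$\N$. Applying the ``if'' direction of Proposition~\ref{contratensor-criterion-of-pure-acyclicity}(b) then delivers~(2).

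For the implication (2)$\,\Rightarrow\,$(1), I would first invoke the ``only if'' direction of Proposition~\ref{contratensor-criterion-of-pure-acyclicity}(b) to deduce that $\N\ocn_\R\fF^\bu$ is acyclic for every $\N\in\coh\R$. To upgrade this from single modules to arbitrary bounded complexes, I would consider the bicomplex $\tilde C^{p,q}=\N^q\ocn_\R\fF^p$, with the roles of the two indices swapped relative to the obvious one. After a cohomological shift of $\N^\bu$, this bicomplex is concentrated in degrees $p\le 0$; and for each fixed $i$ the complex $\tilde C^{\bu,i}=\N^i\ocn_\R\fF^\bu$ is acyclic by the preceding step. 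Hence Lemma~\ref{product-totalization-acyclic} applies, and its conclusion---acyclicity of the product totalization, which coincides with the ordinary totalization since $\N^\bu$ has finite support---yields~(1) via Theorem~\ref{Hom-from-Xi-computed}.

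The only delicate point is the bookkeeping in (2)$\,\Rightarrow\,$(1): one must swap the two indices of the bicomplex so that the boundedness of $\N^\bu$ aligns with the half-plane hypothesis of Lemma~\ref{product-totalization-acyclic}, and so that the acyclicity already established in the $\fF^\bu$ direction becomes the required acyclicity of the rows. An alternative route is a short induction on the length of $\N^\bu$ using the stupid truncation, exploiting that for each flat $\fF^j$ the functor ${-}\ocn_\R\fF^j\colon\Discr\R\to\Ab$ is exact; this bypasses Lemma~\ref{product-totalization-acyclic} but is slightly longer. No genuinely hard step is involved.
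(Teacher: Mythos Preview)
Your overall strategy matches the paper's proof: use Theorem~\ref{Hom-from-Xi-computed} to convert condition~(1) into acyclicity of $\N^\bu\ocn_\R\fF^\bu$ for all bounded $\N^\bu$, observe that this reduces to one-term complexes $\N^\bu=\N\in\coh\R$, and then invoke Proposition~\ref{contratensor-criterion-of-pure-acyclicity}(b). The paper dispatches the reduction to one-term complexes with a single ``Clearly, it suffices\ldots'', which is exactly your alternative route (induction on the length of $\N^\bu$ via stupid truncations).

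However, your primary argument for $(2)\Rightarrow(1)$ via Lemma~\ref{product-totalization-acyclic} does not go through. That lemma requires the half-plane vanishing ($C^{n,\bu}=0$ for $n>0$) and the row-acyclicity ($C^{\bu,i}$ acyclic) to refer to the \emph{same} index, the first one. In your situation the boundedness comes from $\N^\bu$, while the acyclicity you have is of $\N^i\ocn_\R\fF^\bu$, i.e., in the $\fF^\bu$-direction; these live in \emph{different} indices. No relabeling fixes this: if the $\N$-index is first, then $C^{\bu,i}=\N^\bu\ocn_\R\fF^i$, which you have no reason to believe acyclic; if the $\fF$-index is first (your $\tilde C^{p,q}=\N^q\ocn_\R\fF^p$), then the half-plane hypothesis $\tilde C^{n,\bu}=0$ for $n>0$ would force $\fF^n=0$ for $n>0$, which is not assumed. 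Your own sentence ``after a cohomological shift of $\N^\bu$, this bicomplex is concentrated in degrees $p\le 0$'' already signals the slip, since shifting $\N^\bu$ affects $q$, not~$p$. The correct observation is simply that $\N^\bu$ is bounded on \emph{both} sides, so the total complex carries a \emph{finite} filtration with subquotients the shifts of $\N^q\ocn_\R\fF^\bu$; this is your alternative route, and it needs no flatness of $\fF^j$ (stupid truncations are termwise split).
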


\begin{proof}
 By Theorem~\ref{Hom-from-Xi-computed}, condition~(1) means that
the complex of abelian groups $\N^\bu\ocn_\R\fF^\bu$ is acyclic for any
bounded complex of coherent discrete right $\R$\+modules~$\N^\bu$.
 Clearly, it suffices to check this condition for one-term complexes
$\N^\bu=\N\in\coh\R$.
 Hence the corollary follows from
Proposition~\ref{contratensor-criterion-of-pure-acyclicity}(a\+-b).
\end{proof}

 Now we arrive to the following theorem, which is a contramodule
version of~\cite[Theorem~8.6\,(i)\,$\Rightarrow$\,(iii)]{Neem}
(cf.~\cite[Theorem~6.1]{Pflcc}).

\begin{thm} \label{flat-contraacyclic-are-pure-acyclic}
 Let\/ $\R$ be a complete, separated, right linear, topologically right
coherent topological ring with a countable base of neighborhoods of
zero.
 Then any contraacyclic complex of flat left\/ $\R$\+contramodules is
acyclic with flat\/ $\R$\+contramodules of cocycles.
\end{thm}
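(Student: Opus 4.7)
The proof will be immediate from the machinery already assembled, so the plan is mostly a matter of arranging the pieces. Let $\fF^\bu$ be a contraacyclic complex of flat left $\R$-contramodules. By the very definition of the Becker contraderived category, the object $\fF^\bu$ becomes zero in $\sD^\bctr(\R\Contra)$; equivalently (via Theorem~\ref{becker-contraderived-theorem}), any morphism from a complex of projective $\R$-contramodules into $\fF^\bu$ is homotopic to zero. In particular, for every bounded complex of coherent discrete right $\R$-modules $\N^\bu \in \sD^\bb(\coh\R)$, one has
\[
 \Hom_{\sD^\bctr(\R\Contra)}(\Xi(\N^\bu),\fF^\bu) \;=\; 0,
\]
since $\Xi(\N^\bu)$ is represented by a complex in $\R\Contra_\proj^\omega$ by the construction in Section~\ref{construction-of-Xi-secn}.

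With this vanishing in hand, the conclusion follows directly from Corollary~\ref{Hom-from-Xi-to-complex-of-flats-vanishing-criterion}: condition~(1) of that corollary is precisely the display above, and condition~(2) is precisely the statement we want, namely that $\fF^\bu$ is acyclic with flat contramodules of cocycles. This is exactly the ``full form of the flat and projective periodicity'' in one direction, and it is the direction that does \emph{not} require the compact generation of $\sD^\bctr(\R\Contra)$ by the image of $\Xi$ — only Theorem~\ref{Hom-from-Xi-computed} (which identifies the relevant Hom group with $H^0(\N^\bu\ocn_\R\fF^\bu)$) together with the contratensor criterion of pure acyclicity from Proposition~\ref{contratensor-criterion-of-pure-acyclicity}.

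There is no real obstacle here; the theorem is essentially an unpacking of the definition of contraacyclicity together with Corollary~\ref{Hom-from-Xi-to-complex-of-flats-vanishing-criterion}. The serious work went into setting up the functor $\Xi$, computing $\Hom$ out of its image via the pro-contratensor product, and establishing that flatness and acyclicity can be detected by the contratensor product against coherent discrete right $\R$-modules — all of which are already available at this point. The converse direction (that a complex of flat contramodules which is acyclic with flat cocycles is contraacyclic) is the harder half and is the one that genuinely depends on compact generation; it will be taken up in Section~\ref{compact-generation-secn}.
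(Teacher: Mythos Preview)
Your proof is correct and follows exactly the same approach as the paper: contraacyclic complexes vanish in $\sD^\bctr(\R\Contra)$, so condition~(1) of Corollary~\ref{Hom-from-Xi-to-complex-of-flats-vanishing-criterion} holds, and the implication (1)\,$\Rightarrow$\,(2) yields the conclusion. The paper's proof is just a terser version of yours, without the additional commentary unpacking which ingredients feed into the corollary and which direction requires compact generation.
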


\begin{proof}
 Contraacyclic complexes of left $\R$\+contramodules vanish in
$\sD^\bctr(\R\Contra)$, so the assertion follows immediately from
Corollary~\ref{Hom-from-Xi-to-complex-of-flats-vanishing-criterion}\,%
(1)\,$\Rightarrow$\,(2).
\end{proof}

 Property~(3) in the following proposition is a contramodule version
of the flat and projective periodicity theorem of
Benson--Goodearl~\cite[Theorem~2.5]{BG} and
Neeman~\cite[Remark~2.15]{Neem} (see also~\cite[Proposition~7.6]{CH}).
 Property~(4) in the same proposition is a contramodule version
of~\cite[Theorem~8.6\,(iii)\,$\Rightarrow$\,(i)]{Neem}
(cf.~\cite[Theorem~5.1]{Pflcc}).

 We will see below in Section~\ref{compact-generation-secn}
that the equivalent conditions of
Proposition~\ref{four-equivalent-conditions-prop} are always satisfied.
 We formulate it as an equivalence of four conditions here in order
to emphasize the logical connections between the properties involved.

\begin{prop} \label{four-equivalent-conditions-prop}
 Let\/ $\R$ be a complete, separated topologically right coherent
topological ring with a countable base of neighborhoods of zero.
 Then the following four conditions are equivalent:
\begin{enumerate}
\item the image of the functor\/ $\Xi\:\sD^\bb(\coh\R)^\sop\rarrow
\sD^\bctr(\R\Contra)$ is a set of compact generators for
the triangulated category\/ $\sD^\bctr(\R\Contra)$;
\item the contraderived category\/ $\sD^\bctr(\R\Contra)$ is compactly
generated, and the functor\/ $\Xi$ provides an anti-equivalence between
the full subcategory of compact objects in\/ $\sD^\bctr(\R\Contra)$ and
the bounded derived category\/ $\sD^\bb(\coh\R)$;
\item for any acyclic complex of projective left\/
$\R$\+contramodules\/ $\fP^\bu$ with flat\/ $\R$\+contramodules
of cocycles, the contramodules of cocycles are actually projective
(so the complex\/ $\fP^\bu$ is contractible);
\item for any complex of projective left\/ $\R$\+contramodules\/
$\fP^\bu$, and any acyclic complex of flat left\/ $\R$\+contramodules\/
$\fF^\bu$ with flat $\R$\+contramodules of cocycles, any morphism of
complexes\/ $\fP^\bu\rarrow\fF^\bu$ is homotopic to zero.
\end{enumerate}
\end{prop}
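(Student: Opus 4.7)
The plan is to establish (1)~$\Leftrightarrow$~(2) directly from standard compact generation theory, and then to close the cycle (1)~$\Rightarrow$~(4)~$\Rightarrow$~(3)~$\Rightarrow$~(1) using the machinery already assembled. The key inputs are Theorem~\ref{becker-contraderived-theorem}, Corollary~\ref{image-of-Xi-consists-of-compacts}, Corollary~\ref{Hom-from-Xi-to-complex-of-flats-vanishing-criterion}, and the compact generation criterion recalled just before Corollary~\ref{image-of-Xi-consists-of-compacts}.

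For (1)~$\Leftrightarrow$~(2), the implication (2)~$\Rightarrow$~(1) is immediate. Conversely, if~(1) holds then by Neeman's theorem~\cite[Theorem~2.1(3)]{Neem0} the compact objects in $\sD^\bctr(\R\Contra)$ form the smallest thick subcategory containing the essential image of~$\Xi$; since $\Xi$ is fully faithful, its source $\sD^\bb(\coh\R)^\sop$ is idempotent-complete, and the target admits countable coproducts so idempotents split, this essential image is already thick, whence~(2).

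The heart of the argument is the cycle. For (1)~$\Rightarrow$~(4): if $\fF^\bu$ is a complex of flat contramodules that is acyclic with flat cocycles, Corollary~\ref{Hom-from-Xi-to-complex-of-flats-vanishing-criterion}\,(2)\,$\Rightarrow$\,(1) yields $\Hom_{\sD^\bctr(\R\Contra)}(\Xi(\N^\bu),\fF^\bu)=0$ for all $\N^\bu\in\sD^\bb(\coh\R)$, so~(1) forces $\fF^\bu\simeq 0$ in $\sD^\bctr(\R\Contra)$; this is exactly the contraacyclicity statement~(4). For (4)~$\Rightarrow$~(3): given an acyclic complex $\fP^\bu$ of projectives with flat cocycles, condition~(4) applied to $\id_{\fP^\bu}$ (using that projectives are flat, as recalled in Section~\ref{contramodules-preliminaries-secn}) shows the identity is null-homotopic, so $\fP^\bu$ is contractible. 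For (3)~$\Rightarrow$~(1): let $\fQ^\bu$ be an object of $\sD^\bctr(\R\Contra)$ orthogonal to every shift of the image of~$\Xi$; by Theorem~\ref{becker-contraderived-theorem} one can represent $\fQ^\bu$ as a complex of projective contramodules, and Corollary~\ref{Hom-from-Xi-to-complex-of-flats-vanishing-criterion}\,(1)\,$\Rightarrow$\,(2) then identifies it as acyclic with flat cocycles; hypothesis~(3) makes it contractible, hence zero in $\sD^\bctr(\R\Contra)$, and the generation criterion delivers~(1).

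Since each link in the cycle is a one-step deduction from results already established, no serious obstacle remains in the proof of the proposition itself. The most delicate point is purely formal: verifying that the essential image of~$\Xi$ is closed under direct summands in $\sD^\bctr(\R\Contra)$, which reduces to idempotent-completeness of the bounded derived category of an abelian category. The genuine work --- actually \emph{establishing} that any one of these equivalent conditions holds --- has been deferred to Section~\ref{compact-generation-secn} and will rely on different techniques.
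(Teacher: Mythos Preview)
Your proof is correct and tracks the paper's argument closely: the same cycle structure, the same appeal to idempotent-completeness of $\sD^\bb(\coh\R)$ for (1)\,$\Rightarrow$\,(2), the same use of Corollary~\ref{Hom-from-Xi-to-complex-of-flats-vanishing-criterion} to mediate between the $\Xi$-orthogonality condition and the ``acyclic with flat cocycles'' condition, and the identical (4)\,$\Rightarrow$\,(3) via the identity morphism.

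The one genuine difference is in (1)\,$\Rightarrow$\,(4). The paper reduces the \emph{source} complex $\fP^\bu$: invoking~\cite[Theorem~2.1(2)]{Neem0}, it argues that every object of $\Hot(\R\Contra_\proj)$ is built from objects $\Xi(\N^\bu)$ by cones and coproducts, so it suffices to check $\Hom(\Xi(\N^\bu),\fF^\bu)=0$, which is Corollary~\ref{Hom-from-Xi-to-complex-of-flats-vanishing-criterion}\,(2)\,$\Rightarrow$\,(1). You instead kill the \emph{target}: the same corollary shows $\fF^\bu$ is right-orthogonal to the generating set, so by the very definition of compact generation $\fF^\bu\simeq 0$ in $\sD^\bctr(\R\Contra)$, i.e.\ $\fF^\bu$ is contraacyclic. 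Your route is a touch more direct and avoids the detour through the cone-and-coproduct characterization. One minor quibble: in your (1)\,$\Rightarrow$\,(2), the remark that the target has split idempotents because of countable coproducts is superfluous---all that is needed is that $\sD^\bb(\coh\R)$ is idempotent-complete and $\Xi$ is fully faithful, so the essential image is already closed under summands.
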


\begin{proof}
 The implication (2)~$\Longrightarrow$~(1) is obvious.
 
 (1)~$\Longrightarrow$~(2) Since we already know from the construction
that $\Xi$ is a fully faithful triangulated functor, it suffices to
point out that the bounded derived category $\sD^\bb(\coh\R)$ is
idempotent-complete.
 In fact, the triangulated category $\sD^\bb(\sA)$ is idempotent
complete for any abelian category $\sA$, and even for any
idempotent-complete exact category~$\sA$ \,~\cite[Theorem~2.8]{BS}.

 (1)~$\Longleftrightarrow$~(3)
 Let $\fP^\bu\in\Hot(\R\Contra_\proj)$ be a complex of projective
left $\R$\+con\-tra\-mod\-ules.
 Then, in view of Theorem~\ref{becker-contraderived-theorem},
condition~(1) means that $\fP^\bu$ is contractible whenever
$\Hom_{\sD^\bctr(\R\Contra)}(\Xi(\N^\bu),\fP^\bu)=0$ for all
bounded complexes of coherent discrete right $\R$\+modules~$\N^\bu$.
 By Corollary~\ref{Hom-from-Xi-to-complex-of-flats-vanishing-criterion},
the latter condition is equivalent to the complex $\fP^\bu$ being
acyclic with flat $\R$\+contramodules of cocycles.
 
 (4)~$\Longrightarrow$~(3)
 Take $\fF^\bu=\fP^\bu$, and consider the identity morphism
$\fP^\bu\rarrow\fF^\bu$.

 (1)~$\Longrightarrow$~(4)
 In view of Theorem~\ref{becker-contraderived-theorem} and the result
of~\cite[Theorem~2.1(2) or~4.1]{Neem} cited above, condition~(1) implies
that all complexes of projective left $\R$\+contramodules are homotopy
equivalent to complexes obtained from the complexes belonging to
the image of $\Xi$ using the operations of cone and infinite coproduct.

 Therefore, it suffices to prove assertion~(4) for complexes
of projective contramodules $\fP^\bu$ of the form $\fP^\bu=\Xi(\N^\bu)$,
where $\N^\bu\in \sD^\bb(\coh\R)$.
 In this case, it remains to refer to
Corollary~\ref{Hom-from-Xi-to-complex-of-flats-vanishing-criterion}\,%
(2)\,$\Rightarrow$\,(1) in order to show that the abelian group
$H^0\Hom^\R(\fP^\bu,\fF^\bu)\simeq
\Hom_{\sD^\bctr(\R\Contra)}(\fP^\bu,\fF^\bu)$ vanishes.
\end{proof}

 We can already deduce the following corollary describing
the compact generators of the contraderived category\/
$\sD^\bctr(\R\Contra)$ under the additional assumption of a topology
base of two-sided ideals in\/~$\R$.
 A more general result, assuming only a topology base of open right
ideals, will be obtained in Section~\ref{compact-generation-secn}.

\begin{cor}
 Let\/ $\R$ be a complete, separated, topologically right coherent
topological ring with a countable base of neighborhoods of zero
consisting of open \emph{two-sided} ideals.
 Then the triangulated category\/ $\sD^\bctr(\R\Contra)$ is compactly
generated, and the functor\/ $\Xi$ provides an anti-equivalence between
the full subcategory of compact objects in\/ $\sD^\bctr(\R\Contra)$ and
the bounded derived category\/ $\sD^\bb(\coh\R)$.
\end{cor}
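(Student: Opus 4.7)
The plan is to reduce the corollary to verifying condition~(3) of Proposition~\ref{four-equivalent-conditions-prop}, and to quote the flat and projective periodicity for contramodules established in the first-named author's earlier papers~\cite{Pflcc,Pbc} under the extra hypothesis of a topology base of open two-sided ideals.

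More precisely, I would first observe that the hypotheses of the corollary are a strict strengthening of the standing hypotheses of Proposition~\ref{four-equivalent-conditions-prop} (a countable base of open right ideals is the weaker assumption). So the equivalence of conditions~(1)--(4) of Proposition~\ref{four-equivalent-conditions-prop} is available. Since condition~(2) is exactly the statement of the corollary, it suffices to verify any one of the other three conditions. Condition~(3) is most convenient: it asserts that any acyclic complex of projective left $\R$\+contramodules with flat contramodules of cocycles is contractible (equivalently, the flat cocycles are projective). This is precisely the flat and projective periodicity property for $\R$\+contramodules.

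The key input is then the following already-proved periodicity theorem: if $\R$ has a countable base of neighborhoods of zero formed by open \emph{two-sided} ideals, then in any acyclic complex of projective left $\R$\+contramodules with flat modules of cocycles, the cocycles are projective. This is~\cite[Proposition~12.1]{Pflcc} together with~\cite[Theorems~5.1 and~6.1]{Pbc}; it is explicitly recalled in the introduction of the present paper as the case in which flat/projective periodicity was previously available. Under our stronger hypothesis, this result applies directly and establishes condition~(3) of Proposition~\ref{four-equivalent-conditions-prop}.

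Combining, condition~(3) holds, hence by Proposition~\ref{four-equivalent-conditions-prop} so does condition~(2), which is exactly what we wanted to prove. There is no essential obstacle here beyond verifying that our hypotheses entail those of~\cite{Pflcc,Pbc}, which is immediate since a countable base of open two-sided ideals is a special case of the two-sided setup treated there. The more substantial work, removing the two-sided assumption, is what Section~\ref{compact-generation-secn} will address; here we simply profit from the equivalence of conditions in Proposition~\ref{four-equivalent-conditions-prop} to upgrade the older periodicity theorem into a compact generation statement.
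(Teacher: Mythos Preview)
Your proposal is correct and follows essentially the same approach as the paper: verify one of the equivalent conditions in Proposition~\ref{four-equivalent-conditions-prop} (you choose condition~(3), the paper cites both~(3) via~\cite[Proposition~12.1(b)]{Pflcc} and~(4) via~\cite[Theorem~5.1]{Pbc}) using the already-established flat/projective periodicity under the two-sided ideal hypothesis, and conclude condition~(2). The only minor imprecision is your citation bundling: condition~(3) alone is~\cite[Proposition~12.1(b)]{Pflcc}, while~\cite[Theorem~5.1]{Pbc} actually gives condition~(4), but since either suffices this does not affect the argument.
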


\begin{proof}
 By~\cite[Proposition~12.1(b)]{Pflcc}, condition~(3) of
Proposition~\ref{four-equivalent-conditions-prop} holds for any
complete, separated topological ring $\R$ with a countable base of
neighborhoods of zero consisting of open two-sided ideals.
 Moreover, by~\cite[Theorem~5.1]{Pbc}, condition~(4) of
Proposition~\ref{four-equivalent-conditions-prop} holds for
such topological rings $\R$ as well.
 Applying the proposition, we conclude that condition~(2) holds,
as desired.
\end{proof}

\Section{Bounded Below Complexes of Countably Generated Projectives}
\label{countably-generated-projectives-secn}

 Let $\R$ be a complete, separated, right linear, topologically right
coherent topological ring with a countable base of neighborhoods of
zero.
 The aim of this section and the next one is to show that
the homotopy category of complexes of projective contramodules
$\Hot(\R\Contra_\proj)$ coincides with its minimal full triangulated
subcategory containing the image of the functor
$$
 \Xi\:\sD^\bb(\coh\R)^\sop\rarrow\Hot(\R\Contra_\proj)
$$
and closed under infinite coproducts.

 By construction, the image of the functor $\Xi$ is contained in
the full subcategory $\Hot^+(\R\Contra_\proj^\omega)\subset
\Hot(\R\Contra_\proj)$.
 Let us denote the essential image of $\Xi$ by
$$
 \sC\subset\Hot^+(\R\Contra_\proj^\omega).
$$
 In this section we will show that all objects of
$\Hot^+(\R\Contra_\proj^\omega)$ (and in fact, more generally all
objects of $\Hot(\R\Contra_\proj^\omega)$) can be obtained from objects
of $\sC$ using cones and countable coproducts.

 Let us recall the definitions of sequential homotopy colimits and
homotopy limits in triangulated categories.
 Let $\sT$ be a triangulated category with countable coproducts,
and let $X_0\overset{\phi_0}\rarrow X_1\overset{\phi_1}\rarrow X_2
\rarrow\dotsb$ be an $\omega$\+indexed diagram in~$\sT$.
 Then the \emph{homotopy colimit} $\hocolim_{n\in\omega}X_n\in\sT$
\,\cite{BN}, \cite[Sections~1.6\+-1.7]{Neem-book} is defined as
the object appearing in the distinguished triangle
$$
 \coprod\nolimits_{n\in\omega}X_n
 \xrightarrow{\id-shift}
 \coprod\nolimits_{n\in\omega}X_n\lrarrow
 \hocolim\nolimits_{n\in\omega}X_n\lrarrow
 \coprod\nolimits_{n\in\omega}X_n[1]
$$
 Here $shift\:\coprod_{n\in\omega}X_n\rarrow\coprod_{n\in\omega}X_n$
is the morphism with the components $\phi_n\:X_n\allowbreak\rarrow
X_{n+1}$, \,$n\in\omega$.
 As any cone in a triangulated category, the homotopy colimit is
defined uniquely up to a non-unique isomorphism.

 Dually, let $\sT^\sop$ be a triangulated category with countable
products, and let $Y_0\overset{\psi_0}\larrow Y_1\overset{\psi_1}
\larrow Y_2\larrow\dotsb$ be an $\omega^\sop$\+indexed diagram
in~$\sT^\sop$.
 Then the \emph{homotopy limit} $\holim_{n\in\omega}Y_n\in\sT^\sop$ is
defined as the object appearing in the distinguished triangle
$$
 \holim\nolimits_{n\in\omega}Y_n\lrarrow
 \prod\nolimits_{n\in\omega}Y_n
 \xrightarrow{\id-shift}
 \prod\nolimits_{n\in\omega}Y_n\lrarrow
 (\holim\nolimits_{n\in\omega}Y_n)[1].
$$
 Here $shift\:\prod_{n\in\omega}Y_n\rarrow\prod_{n\in\omega}Y_n$
is the morphism with the components $\psi_n\:Y_{n+1}\allowbreak\rarrow
Y_n$, \,$n\in\omega$.
 The homotopy limit of a sequence of objects and morphisms in
a triangulated category is defined uniquely up to a non-unique
isomorphism.

\begin{lem} \label{canonical-truncations-telescope-lemma}
 Let\/ $\sB$ be an additive category with countable products and
cokernels.
 Then every complex from\/ $\Hot^-(\sB)$ is a homotopy limit of
(uniformly bounded above) complexes from\/ $\Hot^\bb(\sB)$ in
the unbounded homotopy category\/ $\Hot(\sB)$.
\end{lem}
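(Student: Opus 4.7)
The plan is to represent $X^\bu\in\Hot^-(\sB)$ as the homotopy limit of its brutal (stupid) truncations from below. Fix $N$ with $X^i=0$ for $i>N$. For each $n\in\omega$, let $Y_n=\sigma^{\geq -n}X^\bu$ denote the brutal truncation, so $Y_n^i=X^i$ for $-n\leq i\leq N$ and $Y_n^i=0$ otherwise. Each $Y_n$ lies in $\Hot^\bb(\sB)$ with uniform upper bound $N$, and the canonical projections $\psi_n\:Y_{n+1}\rarrow Y_n$ (the identity in degrees $\geq -n$, zero in degree $-n-1$) form an $\omega^\sop$\+indexed tower whose strict projective limit, if it existed, would just recover $X^\bu$.

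I would then construct the short sequence of complexes
$$
 0\lrarrow X^\bu\xrightarrow{\,\iota\,}\prod\nolimits_{n\in\omega}Y_n
 \xrightarrow{\id-shift}\prod\nolimits_{n\in\omega}Y_n\lrarrow 0,
$$
where $\iota$ has components the canonical projections $X^\bu\rarrow Y_n$, the products exist componentwise because $\sB$ has countable products, and $shift$ is induced by the $\psi_n$ as in the definition of $\holim$ recalled above. The key computation is that in each cohomological degree $k\leq N$ this reduces to the telescoping sequence
$$
 0\rarrow X^k\rarrow\prod\nolimits_{n\geq n_0}X^k
 \xrightarrow{\id-shift}\prod\nolimits_{n\geq n_0}X^k\rarrow 0
$$
with $n_0=\max(0,-k)$, where $shift(y)_n=y_{n+1}$. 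This sequence is split in the additive category $\sB$: the inclusion $\iota^k$ (the ``constant sequence'' map) has the retraction $y\longmapsto y_{n_0}$, and $\id-shift$ has the section $r^k(z)_n=-\sum_{m=n_0}^{n-1}z_m$, which is a \emph{finite} sum in each component and hence defines a morphism in $\sB$; a short verification of the four splitting identities confirms that these data present $\prod Y_n^k$ as $X^k\oplus\prod Y_n^k$.

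A degreewise split short exact sequence of complexes induces a distinguished triangle in $\Hot(\sB)$, so after rotation we obtain the triangle
$$
 X^\bu\lrarrow\prod\nolimits_{n\in\omega}Y_n
 \xrightarrow{\id-shift}\prod\nolimits_{n\in\omega}Y_n\lrarrow X^\bu[1],
$$
which by the very definition of homotopy limit given above identifies $X^\bu$ with $\holim_{n\in\omega}Y_n$ in $\Hot(\sB)$, with $(Y_n)_{n\in\omega}$ a sequence of complexes in $\Hot^\bb(\sB)$ uniformly bounded above by $N$. The only delicate point, which is really careful bookkeeping rather than a conceptual difficulty, is handling the boundary behavior of the truncations in each degree $k$ (the product is indexed only by $n\geq n_0$) and confirming that the section $r^k$, written suggestively as a formal infinite series but in fact a finite partial sum at each component, genuinely defines a morphism in $\sB$. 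Once this indexing is in place the argument uses only the assumed countable products; the hypothesis that $\sB$ admits cokernels is not needed for this lemma and is presumably included for use elsewhere.
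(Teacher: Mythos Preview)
There is a genuine gap: your ``canonical projections'' $\psi_n\:Y_{n+1}\rarrow Y_n$ are \emph{not} chain maps. With $Y_n=\sigma^{\ge -n}X^\bu$, the map you describe is the identity in degrees $\ge -n$ and zero in degree $-n-1$. But then the square
\[
 \xymatrix{
  Y_{n+1}^{-n-1}=X^{-n-1} \ar[r]^-{d} \ar[d]_{0}
  & Y_{n+1}^{-n}=X^{-n} \ar[d]^{\id} \\
  Y_n^{-n-1}=0 \ar[r] & Y_n^{-n}=X^{-n}
 }
\]
commutes only if $d^{-n-1}=0$. In general it does not, so $\psi_n$ fails to respect the differentials. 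Brutal truncations from below are \emph{subcomplexes}, not quotients: the natural chain maps go the other way, $Y_n\hookrightarrow Y_{n+1}$, and this inductive system realizes $X^\bu$ as a homotopy \emph{colimit} (this is exactly the content of Lemma~\ref{silly-truncations-telescope-lemma} in the paper), not a homotopy limit.

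This is precisely why the paper uses the \emph{canonical} truncations $\tau_{\ge n}B^\bu$, whose degree~$n$ term is $\coker(B^{n-1}\to B^n)$. These are genuine quotient complexes of $B^\bu$, and the transition map $\tau_{\ge n-1}B^\bu\rarrow\tau_{\ge n}B^\bu$ is a chain map because the induced differential $\coker(B^{n-2}\to B^{n-1})\rarrow B^n$ lands in the image of $B^{n-1}$ and hence dies in $\coker(B^{n-1}\to B^n)$. So your closing remark is backwards: the cokernels hypothesis is not incidental but essential, and the rest of your telescope argument (degreewise split exactness, etc.) goes through verbatim once you replace $\sigma^{\ge -n}$ by $\tau_{\ge -n}$.
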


\begin{proof}
 Given a complex $B^\bu$ in $\sB$ and an integer $n\in\boZ$, denote
by $\tau_{\ge n}B^\bu$ the quotient complex of canonical truncation
$$
 \dotsb\lrarrow0\lrarrow 0\lrarrow\coker(B^{n-1}\to B^n)\lrarrow
 B^{n+1}\lrarrow B^{n+2}\lrarrow\dotsb
$$
 If the complex $B^\bu$ is bounded above, then the complex
$\tau_{\ge n}B^\bu$ is bounded (on both sides) for every $n\in\boZ$.
 Now we have an $\omega^\sop$\+indexed diagram of complexes
\begin{equation} \label{projective-system-of-canonical-truncations}
 \tau_{\ge0}B^\bu\llarrow\tau_{\ge-1}B^\bu\llarrow\tau_{\ge-2}B^\bu
 \llarrow\dotsb,
\end{equation}
which stabilizes into an eventually constant diagram $B^i$ at every
fixed cohomological degree $i\in\boZ$.
 Consider the related short sequence of complexes
\begin{equation} \label{telescope-of-canonical-truncations}
 0\lrarrow B^\bu\lrarrow\prod\nolimits_{n\in\omega}
 \tau_{\ge-n}B^\bu\xrightarrow{\id-shift}\prod\nolimits_{n\in\omega}
 \tau_{\ge-n}B^\bu\lrarrow0.
\end{equation}
 The degreewise eventual stabilization of the $\omega^\sop$\+indexed
diagram of canonical
truncations~\eqref{projective-system-of-canonical-truncations} implies
that the short sequence~\eqref{telescope-of-canonical-truncations} is
degreewise split exact.
 Therefore, we have $B^\bu=\holim_{n\in\omega}(\tau_{\ge-n}B^\bu)$
in $\Hot(\sB)$.
\end{proof}

\begin{lem} \label{plim-omega-telescope-in-one-degree-lemma}
 Let\/ $\sA$ be an abelian category and $\bC=\plim_{n\in\omega}C_n$
be an object of the abelian category\/ $\Pro_\omega(\sA)$.
 Then the short sequence
\begin{equation} \label{plim-omega-telescope}
 0\lrarrow\bC\lrarrow\prod\nolimits_{n\in\omega}^{\Pro_\omega(\sA)}C_n
 \xrightarrow{\id-shift}
 \prod\nolimits_{n\in\omega}^{\Pro_\omega(\sA)}C_n\lrarrow0
\end{equation}
is exact in the abelian category\/ $\Pro_\omega(\sA)$.
 Here the countable products of objects $C_n\in\sA\subset
\Pro_\omega(\sA)$ are taken in the category\/ $\Pro_\omega(\sA)$,
or equivalently, in the category\/ $\SPro_\omega(\sA)$.
 In particular, if $\bC\in\SPro_\omega(\sA)$, then the short
sequence~\eqref{plim-omega-telescope} is admissible exact
in\/ $\SPro_\omega(\sA)$.
\end{lem}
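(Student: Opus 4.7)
The plan is to exhibit the sequence~\eqref{plim-omega-telescope} as the image under the exact functor $\plim_\omega\:\sA^{\omega^\sop}\rarrow\Pro_\omega(\sA)$ of a short exact sequence of $\omega^\sop$\+indexed diagrams in~$\sA$; exactness in $\Pro_\omega(\sA)$ will then be automatic from Corollary~\ref{pro-objects-abelian}(b). Using the explicit description of countable products in the proof of Corollary~\ref{products-of-pro-objects}(b), both middle and right copies of the product in~\eqref{plim-omega-telescope} are represented by the $\omega^\sop$\+indexed diagram $k\longmapsto\bigoplus_{n=0}^k C_n$ whose transition maps are the subproduct projections $\bigoplus_{n=0}^{k+1}C_n\rarrow\bigoplus_{n=0}^k C_n$. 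Write $\phi_n\:C_{n+1}\rarrow C_n$ for the transition maps in the diagram representing~$\bC$ and $\phi_{i,j}=\phi_i\circ\dotsb\circ\phi_{j-1}\:C_j\rarrow C_i$ (with $\phi_{i,i}=\id$) for their composites.

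Next, I would define morphisms $u_k\:C_k\rarrow\bigoplus_{n=0}^k C_n$ and $v_k\:\bigoplus_{n=0}^k C_n\rarrow\bigoplus_{n=0}^{k-1}C_n$ by
\[
 u_k(c)=(\phi_{0,k}(c),\dotsc,\phi_{k-1,k}(c),c),\qquad
 v_k(c_0,\dotsc,c_k)=(c_n-\phi_n(c_{n+1}))_{n=0}^{k-1}.
\]
Each $u_k$ is a split monomorphism (projection onto the last coordinate is a retraction), and each $v_k$ is a split epimorphism with explicit section $(d_0,\dotsc,d_{k-1})\longmapsto\bigl(\bigl(\sum_{j=i}^{k-1}\phi_{i,j}(d_j)\bigr)_{i=0}^{k-1},\,0\bigr)$. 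A direct calculation yields $v_k\circ u_k=0$ and shows that $u_k$ identifies $C_k$ with $\ker v_k$, so that each row is a (termwise split) short exact sequence $0\rarrow C_k\rarrow\bigoplus_{n=0}^k C_n\rarrow\bigoplus_{n=0}^{k-1}C_n\rarrow0$ in~$\sA$. The compatibility of $(u_k)_k$ and $(v_k)_k$ with the transition maps $\phi_k$ and the subproduct projections is an easy verification.

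Applying $\plim_\omega$ then produces a short exact sequence in $\Pro_\omega(\sA)$ whose leftmost term is~$\bC$. The only point requiring care is the identification of $\plim_\omega(v_k)$ with $\id-shift$: this follows from the universal property characterizing $shift$ by $pr_n\circ shift=\phi_n\circ pr_{n+1}$, since at target level $k-1$ both $\id$ and $shift$ pull back along the respective projections to morphisms from source level~$k$ whose difference is exactly~$v_k$. For the ``in particular'' clause, observe that both outer terms automatically belong to $\SPro_\omega(\sA)$, since their defining diagram has split epimorphic transition maps; combined with the hypothesis $\bC\in\SPro_\omega(\sA)$, Proposition~\ref{strict-pro-objects-closed-under-extensions}(b) then upgrades the sequence to an admissible short exact sequence in $\SPro_\omega(\sA)$. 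The main technical step is the identification of $\plim_\omega(v_k)$ with $\id-shift$; all other verifications are routine diagram-chasing.
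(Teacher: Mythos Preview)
Your proposal is correct and follows essentially the same route as the paper: construct, for each $k\ge0$, the finite split short exact sequence $0\to C_k\to\prod_{m=0}^k C_m\xrightarrow{\id-shift}\prod_{m=0}^{k-1}C_m\to0$ in~$\sA$, observe that these form an $\omega^\sop$\+indexed diagram, and then apply the exact functor $\plim_{n\in\omega}$, invoking the explicit description of countable products from Corollary~\ref{products-of-pro-objects}(b) to identify the middle and right terms. The paper's write-up is considerably terser (it does not spell out the componentwise formulas for $u_k$, $v_k$, or the section), and for the ``in particular'' clause it simply observes that all three terms lie in $\SPro_\omega(\sA)$ and appeals to the definition of the inherited exact structure rather than to Proposition~\ref{strict-pro-objects-closed-under-extensions}(b); your citation there is harmless but not needed.
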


\begin{proof}
 For every $n\ge0$, consider the split short exact sequence
\begin{equation} \label{finite-telescope}
 0\lrarrow C_n\lrarrow\prod\nolimits_{m=0}^n C_m
 \xrightarrow{\id-shift}\prod\nolimits_{m=0}^{n-1}C_m\lrarrow0
\end{equation}
in the abelian category~$\sA$.
 Here $shift\:\prod_{m=0}^nC_m\rarrow\prod_{m=0}^{n-1}C_m$
is the morphism whose components are the transition maps
$C_m\rarrow C_{m-1}$, \,$0\le n\le m$, while
$C_n\rarrow\prod_{m=0}^nC_m$ is the morphism whose components
are the transition maps $C_n\rarrow C_m$.
 As the integer $n\ge0$ varies, the split short exact
sequences~\eqref{finite-telescope} form an $\omega^\sop$\+indexed
diagram with respect to the transition maps $C_{n+1}\rarrow C_n$
in the leftmost terms and the subproduct projections in
the middle and rightmost terms.

 Notice that $\plim_{n\in\omega}\prod_{m=0}^nC_m=
\prod_{n\in\omega}^{\Pro_\omega(\sA)}C_n=
\prod_{n\in\omega}^{\SPro_\omega(\sA)}C_n$ according to the explicit
proof of Corollary~\ref{products-of-pro-objects}(b).
 Applying the functor $\plim_{n\in\omega}\:\sA^{\omega^\sop}\lrarrow
\Pro_\omega(\sA)$ to the $\omega^\sop$\+indexed diagram of short exact
sequences~\eqref{finite-telescope} in $\sA$, we obtain the desired
short exact sequence~\eqref{plim-omega-telescope} in
$\Pro_\omega(\sA)$ (cf.\ Corollary~\ref{pro-objects-abelian}(b)).

 If $\bC\in\SPro_\omega(\sA)$, then all the terms of the short
exact sequence~\eqref{plim-omega-telescope} in $\Pro_\omega(\sA)$
belong to $\SPro_\omega(\sA)$, so it is an admissible short exact
sequence in $\SPro_\omega(\sA)$.
\end{proof}

\begin{lem} \label{plim-omega-telescope-for-complexes-lemma}
 Let\/ $\sA$ be an abelian category.
 Then any complex from\/ $\sD^\bb(\SPro_\omega(\sA))$ is a homotopy
limit of (uniformly bounded) complexes from\/ $\sD^\bb(\sA)$ in
the derived category\/ $\sD(\SPro_\omega(\sA))$.
\end{lem}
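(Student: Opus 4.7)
The plan is to represent a given bounded complex $\bC^\bu$ in $\SPro_\omega(\sA)$ (say, concentrated in degrees $[a,b]$) as the $\plim_{n\in\omega}$ of an $\omega^\sop$-indexed diagram of genuine bounded complexes in $\sA$ concentrated in the same degree range, and then to apply the telescope construction of Lemma~\ref{plim-omega-telescope-in-one-degree-lemma} degreewise.

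The first step is the technical heart of the argument. Each term $\bC^i$ can be written as $\plim_{n\in\omega}C^i_n$ with $(C^i_n)_{n\in\omega}$ an $\omega^\sop$-indexed diagram of epimorphisms in $\sA$. Using the explicit construction from the proof of Lemma~\ref{morphism-in-pro-objects-arise-from}(b), I would lift each of the finitely many differentials $d^i\:\bC^i\to\bC^{i+1}$ to a morphism of $\omega^\sop$-indexed diagrams $(d^i_n\:C^i_n\to C^{i+1}_n)_{n\in\omega}$, successively passing to common cofinal subdiagrams (renumbered back to $\omega$) so that all the $d^i_n$ for $a\le i<b$ are defined simultaneously on one index set. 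The crucial observation is then that the compositions $d^{i+1}_n\circ d^i_n\:C^i_n\to C^{i+2}_n$ automatically vanish in $\sA$: both $d^{i+1}_n\circ d^i_n$ and the zero morphism represent the same morphism $\bC^i\to C^{i+2}_n$ (namely zero, since $d^{i+1}\circ d^i=0$ in $\SPro_\omega(\sA)$), and the uniqueness argument in the proof of Lemma~\ref{strict-omega-pro-unambigous}---which exploits that the transition maps in $(C^i_n)_n$ are epimorphisms---forces these two morphisms in $\sA$ to coincide. Thus $(C^\bu_n)_{n\in\omega}$ is a genuine $\omega^\sop$-indexed diagram of bounded complexes in $\sA$ with $\bC^\bu\simeq\plim_{n\in\omega}C^\bu_n$ as complexes in $\SPro_\omega(\sA)$.

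Next, I would apply Lemma~\ref{plim-omega-telescope-in-one-degree-lemma} separately in each cohomological degree $i\in[a,b]$. This yields a short sequence of complexes in $\SPro_\omega(\sA)$,
\[
 0\lrarrow\bC^\bu\lrarrow\prod\nolimits_{n\in\omega}^{\SPro_\omega(\sA)}C^\bu_n\xrightarrow{\id-shift}\prod\nolimits_{n\in\omega}^{\SPro_\omega(\sA)}C^\bu_n\lrarrow 0,
\]
which is admissible exact in each individual degree. The products of complexes are computed degreewise, and both the inclusion and the ``identity minus shift'' map commute with the differentials because the latter are induced termwise from those of each $C^\bu_n$.

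Finally, any degreewise admissible short exact sequence of complexes in an exact category induces a distinguished triangle in its derived category. The resulting triangle in $\sD(\SPro_\omega(\sA))$ is precisely the defining triangle of the homotopy limit $\holim_{n\in\omega}C^\bu_n$, so $\bC^\bu\simeq\holim_{n\in\omega}C^\bu_n$ in $\sD(\SPro_\omega(\sA))$, as required. The main obstacle is the passage from pro-object differentials back to diagram-level differentials in the first step; once this diagram-level presentation of $\bC^\bu$ is in hand, the remaining steps are essentially formal consequences of the already-established Lemma~\ref{plim-omega-telescope-in-one-degree-lemma}.
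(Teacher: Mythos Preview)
Your proposal is correct and follows essentially the same approach as the paper: represent the bounded complex as $\plim_{n\in\omega}$ of an $\omega^\sop$-indexed diagram of bounded complexes in $\sA$, then apply Lemma~\ref{plim-omega-telescope-in-one-degree-lemma} degreewise to obtain the telescope short exact sequence and hence the homotopy limit presentation. The only notable difference is in the lifting step: the paper invokes the general technique from the proof of Lemma~\ref{filtered-limits-in-pro-objects}(b) (which handles both the commutativity squares and the relation $d^2=0$ by further passage to cofinal subdiagrams, without needing epimorphic transition maps), whereas you insist on diagrams of epimorphisms from the outset and exploit the resulting uniqueness of lifts to get $d^{i+1}_n\circ d^i_n=0$ for free---the paper explicitly remarks that the epimorphism refinement is available but not needed.
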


\begin{proof}
 First of all, we recall that countable products exist in
$\SPro_\omega(\sA)$ by Corollaries~\ref{products-of-pro-objects}(b)
and~\ref{products-preserve-strict-pro-objects}, and are exact
in the exact category structure of $\SPro_\omega(\sA)$ by
Corollary~\ref{products-exact-in-strict-pro-objects}.
 By~\cite[Lemma~1.5]{BN} or~\cite[Lemma~3.2.10]{Neem-book}, it
follows that the countable products exist in $\sD(\SPro_\omega(\sA))$
and the Verdier quotient functor $\Hot(\SPro_\omega(\sA))\rarrow
\sD(\SPro_\omega(\sA))$ preserves them.
 Furthermore, $\sD^\bb(\sA)$ is a full subcategory
in $\sD^\bb(\SPro_\omega(\sA))$ by
Corollary~\ref{fully-faithful-on-D-b-and-D-plus}(b).

 Let $\bC^\bu$ be a bounded complex (or more generally, a bounded
above complex) in $\Pro_\omega(\sA)$.
 Arguing as in the proof of
Lemma~\ref{filtered-limits-in-pro-objects}(b), one can show that
there exists a $\omega^\sop$\+indexed diagram $(C_n^\bu)_{n\in\omega}$
of uniformly bounded (respectively, uniformly bounded above)
complexes in $\sA$ such that $\plim_{n\in\omega}C_n^\bu=\bC^\bu$.
 Moreover, if $\bC^\bu$ is a complex in $\SPro_\omega(\sA)$,
then one can choose $(C_n^\bu)_{n\in\omega}$ to be a diagram of
degreewise epimorphisms of complexes in~$\sA$ (though we do not need
to use this observation).

 Assume that $\bC^\bu$ is a bounded complex in $\SPro_\omega(\sA)$,
and pick a related $\omega^\sop$\+indexed diagram of complexes
$(C_n^\bu)_{n\in\omega}$ as in the previous paragraph.
 By Lemma~\ref{plim-omega-telescope-in-one-degree-lemma}, the induced
short sequence of complexes
\begin{equation} \label{plim-omega-telescope-for-complexes}
 0\lrarrow\bC^\bu\lrarrow\prod
 \nolimits_{n\in\omega}^{\SPro_\omega(\sA)}C_n^\bu
 \xrightarrow{\id-shift}
 \prod\nolimits_{n\in\omega}^{\SPro_\omega(\sA)}C_n^\bu\lrarrow0
\end{equation}
is degreewise admissible exact in $\SPro_\omega(\sA)$.
 Therefore, we have $\bC^\bu=\holim_{n\in\omega}C_n^\bu$ in
$\sD(\SPro_\omega(\sA))$.
\end{proof}

\begin{cor} \label{d-minus-spro-omega-as-double-hocolims-of-d-b-a}
 Let\/ $\sA$ be an abelian category.
 Then any complex from\/ $\sD^-(\SPro_\omega(\sA))$ is a homotopy
limit of homotopy limits of (uniformly bounded above) complexes from\/
$\sD^\bb(\sA)$ in the derived category\/ $\sD(\SPro_\omega(\sA))$.
\end{cor}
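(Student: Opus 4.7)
The plan is to combine Lemmas~\ref{canonical-truncations-telescope-lemma} and~\ref{plim-omega-telescope-for-complexes-lemma} in succession. First, I would apply Lemma~\ref{canonical-truncations-telescope-lemma} with $\sB=\SPro_\omega(\sA)$. The hypotheses are satisfied: countable products exist in $\SPro_\omega(\sA)$ by Corollaries~\ref{products-of-pro-objects}(b) and~\ref{products-preserve-strict-pro-objects}, and cokernels exist because $\SPro_\omega(\sA)$ is closed under quotients in the abelian category $\Pro_\omega(\sA)$ (Proposition~\ref{strict-pro-objects-closed-under-extensions}(b)). Given a bounded above complex $\bC^\bu$ in $\SPro_\omega(\sA)$ with upper bound~$N$, the canonical truncations produce a presentation $\bC^\bu=\holim_{n\in\omega}\tau_{\ge-n}\bC^\bu$ in $\Hot(\SPro_\omega(\sA))$, where every $\tau_{\ge-n}\bC^\bu$ is a bounded complex in $\SPro_\omega(\sA)$, and all of these are uniformly bounded above by the single integer~$N$.

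Next, I would transfer this identity to the derived category $\sD(\SPro_\omega(\sA))$. Countable products are exact in the exact category $\SPro_\omega(\sA)$ by Corollary~\ref{products-exact-in-strict-pro-objects}, so (as already invoked in the proof of Lemma~\ref{plim-omega-telescope-for-complexes-lemma}) countable products exist in $\sD(\SPro_\omega(\sA))$ and the Verdier quotient functor $\Hot(\SPro_\omega(\sA))\rarrow\sD(\SPro_\omega(\sA))$ preserves them. Since cones of morphisms are preserved by any triangulated functor, the homotopy limit of the canonical truncations descends, yielding $\bC^\bu\simeq\holim_{n\in\omega}\tau_{\ge-n}\bC^\bu$ also in $\sD(\SPro_\omega(\sA))$.

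Finally, for every $n\in\omega$, the complex $\tau_{\ge-n}\bC^\bu$ is bounded and hence falls within the scope of Lemma~\ref{plim-omega-telescope-for-complexes-lemma}, which presents it in $\sD(\SPro_\omega(\sA))$ as a homotopy limit of uniformly bounded complexes in $\sA$ whose common upper bound can be taken to be the upper bound of $\tau_{\ge-n}\bC^\bu$, namely~$N$. Substituting these inner homotopy limits into the outer one expresses $\bC^\bu$ as a homotopy limit of homotopy limits of complexes from $\sD^\bb(\sA)$, all uniformly bounded above by~$N$, as desired. I do not expect any serious obstacle: the essential content sits in the two lemmas being composed, and the only genuinely new verifications are that the hypotheses of Lemma~\ref{canonical-truncations-telescope-lemma} apply to $\SPro_\omega(\sA)$ and that the Verdier localization respects the outer homotopy limit, both of which are immediate from the results of the previous sections.
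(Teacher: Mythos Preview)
Your proposal is correct and follows essentially the same approach as the paper: apply Lemma~\ref{canonical-truncations-telescope-lemma} to $\sB=\SPro_\omega(\sA)$ (justifying cokernels via closure under quotients from Proposition~\ref{strict-pro-objects-closed-under-extensions}(b)) and then invoke Lemma~\ref{plim-omega-telescope-for-complexes-lemma} on each bounded truncation. Your version is more explicit than the paper's terse ``compare the results of Lemmas~\ref{canonical-truncations-telescope-lemma} and~\ref{plim-omega-telescope-for-complexes-lemma}'', in particular spelling out why the outer homotopy limit descends from $\Hot$ to $\sD$, but the underlying argument is identical.
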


\begin{proof}
 Notice that Lemma~\ref{canonical-truncations-telescope-lemma} is
applicable to $\sB=\SPro_\omega(\sA)$, because the full subcategory
$\SPro_\omega(\sA)$ is closed under quotients in the abelian
category $\Pro_\omega(\sA)$ by
Proposition~\ref{strict-pro-objects-closed-under-extensions}(b).
 Then compare the results of
Lemmas~\ref{canonical-truncations-telescope-lemma}
and~\ref{plim-omega-telescope-for-complexes-lemma}.
\end{proof}

\begin{lem} \label{silly-truncations-telescope-lemma}
 Let\/ $\sE$ be an additive category with countable coproducts.
 Then every complex from\/ $\Hot(\sE)$ is a homotopy colimit of
complexes from\/ $\Hot^+(\sE)$.
\end{lem}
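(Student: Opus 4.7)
The plan is to prove this as the homotopy-colimit dual of Lemma~\ref{canonical-truncations-telescope-lemma}, but using \emph{silly} (brutal) truncations instead of canonical ones. The point is that the silly truncation only requires a zero object and the ability to drop terms, so no cokernels are needed; countable coproducts in $\sE$ suffice. Specifically, given a complex $E^\bu\in\Hot(\sE)$, I would define, for every $n\in\omega$, the silly truncation
$$
 F_n^\bu \,=\, \bigl(\dotsb\lrarrow0\lrarrow E^{-n}\lrarrow E^{-n+1}\lrarrow E^{-n+2}\lrarrow\dotsb\bigr),
$$
which lies in $\Hot^+(\sE)$, together with the evident chain maps $F_n^\bu\rarrow F_{n+1}^\bu$ that are the identity in degrees $\ge -n$ and the zero morphism $0\rarrow E^{-(n+1)}$ in degree $-(n+1)$. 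These transition maps are degreewise split monomorphisms in $\sE$.

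Next, I would form the $\omega$\+indexed sequence $F_0^\bu\rarrow F_1^\bu\rarrow F_2^\bu\rarrow\dotsb$ and consider the short sequence of complexes
$$
 0\lrarrow\coprod\nolimits_{n\in\omega}F_n^\bu
 \xrightarrow{\id-shift}\coprod\nolimits_{n\in\omega}F_n^\bu
 \lrarrow E^\bu\lrarrow0,
$$
where $shift$ has components $F_n^\bu\rarrow F_{n+1}^\bu$ and the rightmost map is induced by the canonical morphisms $F_n^\bu\rarrow E^\bu$ (identity in degrees $\ge -n$, zero elsewhere). I would check that in each cohomological degree $i\in\boZ$ this is a \emph{split} short exact sequence in $\sE$: indeed, for $n\ge\max(0,-i)$ one has $F_n^i=E^i$ with identity transition maps, so in degree $i$ the sequence reduces to the standard telescope short exact sequence for the eventually constant diagram $\dotsb\to E^i\xrightarrow{\id}E^i\xrightarrow{\id}\dotsb$, which is always split (one explicit splitting of the surjection $\coprod_{n}E^i\twoheadrightarrow E^i$ is given by inclusion of any one summand, and one splitting of $\id-shift$ is the partial sum in the opposite direction).

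Because the short sequence is degreewise split exact, it remains a distinguished triangle in the homotopy category $\Hot(\sE)$. By the very definition of sequential homotopy colimit recalled earlier in this section, this exhibits $E^\bu\simeq\hocolim_{n\in\omega}F_n^\bu$ in $\Hot(\sE)$, with each $F_n^\bu\in\Hot^+(\sE)$, as required.

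There is no real obstacle here: the only non-routine point is the degreewise splitness of the telescope sequence, and this reduces in each individual degree to the well-known split telescope identity for a constant $\omega$\+diagram with identity transitions. The construction does not need kernels, cokernels, or any exact structure on $\sE$ beyond existence of countable coproducts, which matches the hypothesis of the lemma and makes this statement strictly dual in spirit (though weaker in hypotheses) to Lemma~\ref{canonical-truncations-telescope-lemma}.
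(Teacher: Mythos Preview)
Your proof is correct and takes essentially the same approach as the paper: silly truncations $F_n^\bu=\sigma_{\ge -n}E^\bu$, the telescope sequence with $\id-shift$, and the observation that degreewise eventual stabilization makes it degreewise split exact. The paper states this more tersely, but the argument is identical.
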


\begin{proof}
 Given a complex $E^\bu$ in $\sE$ and an integer $n\in\boZ$,
denote by $\sigma_{\ge n}E^\bu$ the subcomplex of silly truncation
$$
 \dotsb\lrarrow0\lrarrow0\lrarrow E^n\lrarrow E^{n+1}\lrarrow
 E^{n+2}\lrarrow\dotsb
$$
 Now we have an $\omega$\+indexed diagram of complexes
$$
 \sigma_{\ge0}E^\bu\lrarrow\sigma_{\ge-1}E^\bu\lrarrow
 \sigma_{\ge-2}E^\bu\lrarrow\dotsb,
$$
which stabilizes into an eventually constant diagram $E^i$ at every
fixed cohomological degree $i\in\boZ$.
 The related short sequence of complexes
$$
 0\lrarrow\coprod\nolimits_{n\in\omega}\sigma_{\ge-n}E^\bu
 \xrightarrow{\id-shift}\coprod\nolimits_{n\in\omega}
 \sigma_{\ge-n}E^\bu\lrarrow E^\bu\lrarrow0
$$
is degreewise split exact.
 Consequently, $E^\bu=\hocolim_{n\in\omega}(\sigma_{\ge-n}E^\bu)$
in $\Hot(\sE)$.
\end{proof}

 Let us introduce some additional pieces of notation relevant to
uniformly bounded families of complexes.
 Let $m$~be an integer.
 For any exact category $\sB$, denote by $\sD^{\le m}(\sB)\subset
\sD^-(\sB)$ the full additive subcategory formed by all
the complexes concentrated in the cohomological degrees~$\le m$.
 For any additive category $\sE$, denote by $\Hot^{\ge-m}\subset
\Hot^+(\sB)$ the full additive subcategory formed by all
the complexes concentrated in the cohomological degrees~$\ge-m$.

 Finally, we can deduce the main result of this section.

\begin{prop} \label{cplxes-of-cntbly-generated-projs-are-generated}
 Let $\R$ be a complete, separated topologically right coherent
topological ring with a countable base of neighborhoods of zero.
 Then the minimal full triangulated subcategory of the homotopy
category\/ $\Hot(\R\Contra_\proj^\omega)$ containing the essential
image\/ $\sC$ of the functor\/ $\Xi\:\sD^\bb(\coh\R)^\sop\rarrow
\Hot(\R\Contra_\proj^\omega)$ and closed under countable coproducts
coincides with the whole category\/ $\Hot(\R\Contra_\proj^\omega)$.
\end{prop}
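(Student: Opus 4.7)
The plan is to use the Pontryagin anti-equivalence of Corollary~\ref{countable-pontryagin-duality} to transport the question into the derived category $\sD^-(\Cohpro\R)$, where the two-step homotopy-limit decomposition provided by Corollary~\ref{d-minus-spro-omega-as-double-hocolims-of-d-b-a} is already at our disposal, and then transport everything back.

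First, I would reduce to the bounded below case. The additive category $\R\Contra_\proj^\omega$ admits countable coproducts, because a countable coproduct of countably generated projective contramodules is of the form $\R[[X]]$ for a countable set $X$, hence countably generated projective. So Lemma~\ref{silly-truncations-telescope-lemma} applies, and any $\fP^\bu\in\Hot(\R\Contra_\proj^\omega)$ is a homotopy colimit of its silly truncations $\sigma_{\ge-n}\fP^\bu\in\Hot^+(\R\Contra_\proj^\omega)$. Since homotopy colimits in triangulated categories are built from countable coproducts and cones, it suffices to establish the claim for complexes $\fP^\bu\in\Hot^+(\R\Contra_\proj^\omega)$.

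Next, the Pontryagin anti-equivalence of Corollary~\ref{countable-pontryagin-duality} induces a triangulated anti-equivalence
$$
 \Hot^+(\R\Contra_\proj^\omega)^\sop\simeq\Hot^-(\Prod_\omega(\R)),
$$
under which countable coproducts in the source correspond to countable products in the target. By Proposition~\ref{projectives-in-cohpro}, $\Prod_\omega(\R)$ is the full subcategory of projective objects of the exact category $\Cohpro\R=\SPro_\omega(\coh\R)$, which has enough projectives; hence there is a further triangulated equivalence $\Hot^-(\Prod_\omega(\R))\simeq\sD^-(\Cohpro\R)$. Because countable products in $\Cohpro\R$ are exact (Corollary~\ref{products-exact-in-strict-pro-objects}) and preserve the class of projective objects (Lemma~\ref{projectives-closed-under-countable-products}), the termwise products in $\Hot^-(\Prod_\omega(\R))$ are the products in $\sD^-(\Cohpro\R)$; consequently, homotopy limits on the two sides agree. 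By the very construction of $\Xi$ in Section~\ref{construction-of-Xi-secn}, the essential image $\sC$ of $\Xi$ is transported to the image of the fully faithful functor $\sD^\bb(\coh\R)\to\sD^-(\Cohpro\R)$ under this chain of (anti-)equivalences.

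Now I would invoke Corollary~\ref{d-minus-spro-omega-as-double-hocolims-of-d-b-a} with $\sA=\coh\R$: every complex in $\sD^-(\Cohpro\R)$ is a homotopy limit of homotopy limits of uniformly bounded above complexes from $\sD^\bb(\coh\R)$. Transporting this decomposition back through the triangulated anti-equivalence converts the two successive homotopy limits into two successive homotopy colimits, while the bounded complexes from $\sD^\bb(\coh\R)$ get identified with objects of $\sC$. Since each homotopy colimit is built from countable coproducts and cones, this places $\fP^\bu$ in the minimal full triangulated subcategory of $\Hot(\R\Contra_\proj^\omega)$ that contains $\sC$ and is closed under countable coproducts, yielding the claim. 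The main point that requires a careful check is the compatibility of the two successive countable products appearing in Corollary~\ref{d-minus-spro-omega-as-double-hocolims-of-d-b-a} with all of the intervening equivalences; this will be handled by the exactness and projective-preservation properties of countable products in $\Cohpro\R$ cited above, together with the general fact that an additive anti-equivalence interchanges products and coproducts (and hence homotopy limits and homotopy colimits).
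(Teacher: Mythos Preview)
Your approach is the same as the paper's, and the overall logic is correct.  The one place where your sketch is looser than the paper's proof is the transport of homotopy limits.  You write that ``the termwise products in $\Hot^-(\Prod_\omega(\R))$ are the products in $\sD^-(\Cohpro\R)$; consequently, homotopy limits on the two sides agree.''  But a termwise countable product of bounded-above complexes need not be bounded above, so neither $\Hot^-(\Prod_\omega(\R))$ nor $\sD^-(\Cohpro\R)$ has all countable products, and the homotopy limits of Corollary~\ref{d-minus-spro-omega-as-double-hocolims-of-d-b-a} are actually taken in the \emph{unbounded} $\sD(\Cohpro\R)$, which is not anti-equivalent to $\Hot(\R\Contra_\proj^\omega)$.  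The justification you offer (exactness of countable products in $\Cohpro\R$ and preservation of projectives) addresses a different point---that termwise products of projectives compute derived products---and does not by itself close this gap.

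The paper's fix, which you mention in passing but do not actually use, is the \emph{uniform} boundedness in Corollary~\ref{d-minus-spro-omega-as-double-hocolims-of-d-b-a}: all the products involved are taken within some $\sD^{\le m}(\Cohpro\R)$ for a fixed $m$, and the anti-equivalence restricts to an anti-equivalence of additive categories $\sD^{\le m}(\Cohpro\R)\simeq\Hot^{\ge-m}(\R\Contra_\proj^\omega)$, on each of which countable products (resp.\ coproducts) exist and are computed termwise.  This is what makes the transport of homotopy limits to homotopy colimits legitimate.  Once you insert this observation, your argument and the paper's coincide.
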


\begin{proof}
 We have equivalences of triangulated categories
\begin{equation} \label{d-minus-op-equiv-hot-minus-op-equiv-hot-plus}
 \sD^-(\Cohpro\R)^\sop\simeq\Hot^-(\Prod_\omega(\R))^\sop
 \simeq\Hot^+(\R\Contra_\proj^\omega),
\end{equation}
as per the discussion in Section~\ref{construction-of-Xi-secn}.
 By Corollary~\ref{d-minus-spro-omega-as-double-hocolims-of-d-b-a}
(applied to the abelian category $\sA=\coh\R$), every object of
$\sD^-(\Cohpro\R)$ can be obtained as a homotopy limit of homotopy
limits of uniformly bounded above objects from $\sD^\bb(\coh\R)$.

 Notice that the homotopy limits here are constructed in the unbounded
derived category $\sD(\Cohpro\R)$, which is \emph{not} anti-equivalent
to $\Hot(\R\Contra_\proj^\omega)$, generally speaking.
 However, all the actual countable products involved in the relevant
constructions are computed within the full subcategory of uniformly
bounded above complexes $\sD^{\le m}(\Cohpro\R)\subset\sD(\Cohpro\R)$
for some fixed $m\in\boZ$.

 The triangulated
equivalences~\eqref{d-minus-op-equiv-hot-minus-op-equiv-hot-plus}
transform such homotopy limits into the homotopy colimits in
$\Hot(\R\Contra_\proj^\omega)$, with all the countable coproducts
involved being computed within the full subcategory of uniformly
bounded below complexes $\Hot^{\ge-m}(\R\Contra_\proj^\omega)$
for some $m\in\boZ$.
 The latter assertion holds because the full subcategory
$\sD^{\le m}(\Cohpro\R)$ is closed under countable products in
$\sD(\Cohpro\R)$, the full subcategory
$\Hot^{\ge-m}(\R\Contra_\proj^\omega)$ is closed under countable
coproducts in $\Hot(\R\Contra_\proj^\omega)$, and the triangulated
anti-equivalence~\eqref{d-minus-op-equiv-hot-minus-op-equiv-hot-plus}
restricts to an anti-equivalence of additive categories
$\sD^{\le m}(\Cohpro\R)\simeq\Hot^{\ge-m}(\R\Contra_\proj^\omega)$.

 We have shown that every object of $\Hot^+(\R\Contra_\proj^\omega)$
can be obtained from the objects of $\sC$ using cones and countable
coproducts.
 In order to conclude that all the objects of
$\Hot(\R\Contra_\proj^\omega)$ can be so obtained, it remains
to refer to Lemma~\ref{silly-truncations-telescope-lemma} (for
the additive category $\sE=\R\Contra_\proj^\omega$).
\end{proof}

\Section{Compact Generation and Flat/Projective Periodicity}
\label{compact-generation-secn}

 The following very general result is an application of the theory
of well-generated triangulated categories~\cite{Neem-book,Kra2}.
 It is a part of~\cite[Proposition~4.9]{SaoSt}.
 For a basic background discussion, see~\cite[Sections~6\+-7]{PS4}.

\begin{prop} \label{well-generated-prop}
 Let $\kappa$~be a regular cardinal and\/ $\sB$ be a locally
presentable abelian category with a $\kappa$\+presentable
projective generator\/ $P\in\sB$.
 Denote by\/ $\sU\subset\Hot(\sB_\proj)$ the full subcategory of all
bounded below complexes whose terms are coproducts of less than~$\kappa$
copies of~$P$.
 Then the minimal full triangulated subcategory of\/ $\Hot(\sB_\proj)$ 
containing\/ $\sU$ and closed under coproducts coincides with the whole
homotopy category of projective objects\/ $\Hot(\sB_\proj)$.
\end{prop}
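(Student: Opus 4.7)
The plan is to invoke \cite[Proposition~4.9]{SaoSt}, which establishes exactly this conclusion for the homotopy category of projectives over a locally $\kappa$-presentable abelian category with a $\kappa$-presentable projective generator. Concretely, the statement is deduced by showing that $\sT := \Hot(\sB_\proj)$ is $\kappa$-well-generated in the sense of Neeman--Krause, with $\sU$ providing a set of $\kappa$-compact generators; the general theory of well-generated triangulated categories then yields that $\sT$ coincides with the smallest triangulated subcategory containing $\sU$ and closed under all coproducts.

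First I would observe that $\sT$ has arbitrary coproducts, computed termwise, since coproducts of projectives are projective in $\sB$. The class $\sU$ is essentially small, being parameterized by bounded below sequences of sets of cardinality less than $\kappa$ together with differentials. Next I would show that each $U^\bu \in \sU$ is $\kappa$-compact in $\sT$: any chain map $U^\bu \to \coprod_{i \in I} F_i^\bu$ factors degreewise through some subcoproduct $\coprod_{i \in J_n} F_i^n$ with $|J_n| < \kappa$ (using $\kappa$-presentability of $P$ together with the fact that $U^n$ is itself a coproduct of fewer than $\kappa$ copies of $P$), and, because $U^\bu$ is bounded below, the union $J = \bigcup_n J_n$ has cardinality less than $\kappa$ by regularity of $\kappa$; the same argument applied to chain homotopies promotes the factorization to one in $\Hot$.

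The crucial step is to verify that $\sU^\perp = 0$ in $\sT$, that is, that any $F^\bu \in \sT$ with $\Hom_\sT(U, F^\bu[n]) = 0$ for all $U \in \sU$ and all $n \in \boZ$ is contractible. Taking $U = P$ concentrated in a single degree shows that $\Hom_\sB(P, F^\bu)$ is an acyclic complex of abelian groups; since $P$ is a projective generator, the functor $\Hom_\sB(P,-)$ is faithful exact, whence $F^\bu$ is already acyclic in $\sB$. To upgrade from acyclicity to contractibility, one applies the vanishing to two-term test complexes $(P^{(X)} \to P^{(Y)})$ with $|X|, |Y| < \kappa$ and their iterated cones: combined with the factorization property of morphisms out of $\kappa$-presentable objects in $\sB$, this shows that the cocycles $Z^n = \ker(F^n \to F^{n+1})$ are retracts of the $F^n$, hence projective, and the acyclic complex of projectives $F^\bu$ is therefore contractible.

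The main obstacle is precisely this last bootstrap from acyclicity to contractibility, which requires a subtle use of the $\kappa$-presentable structure on $\sB$ together with closure properties of projectives under appropriate directed colimits of split monomorphisms; this is the technical content of the proof in \cite{SaoSt}. Once contractibility is established, the desired equality $\Hot(\sB_\proj) = \operatorname{Loc}(\sU)$ follows at once from the general theory of $\kappa$-well-generated triangulated categories (see \cite{Neem-book,Kra2}).
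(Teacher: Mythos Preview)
Your proposal is correct and takes essentially the same approach as the paper: both invoke \cite[Proposition~4.9]{SaoSt}. The paper's proof is terser, simply verifying condition~(1) of that proposition (the projective objects of~$\sB$ are precisely the direct summands of coproducts of copies of~$P$) and reading off the equivalent condition~(3), whereas you additionally sketch the internal well-generation mechanism before deferring the technical bootstrap step back to~\cite{SaoSt}.
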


\begin{proof}
 In the notation of~\cite{SaoSt}, put $\cA=\sB$, \ $\cB=\sB_\proj$,
and $\cS=\{P\}$.
 Then $\cU=\sU$.
 The projective objects of $\sB$ are precisely all the direct summands
of the coproducts of copies of $P$ in $\sB$, so condition~(1)
of~\cite[Proposition~4.9]{SaoSt} is satisfied.
 Thus the equivalent condition~(3) of the same proposition
from~\cite{SaoSt} holds as well.
\end{proof}

 Now we can prove the main result of this paper.

\begin{thm} \label{compact-generators-of-contraderived-category}
 Let\/ $\R$ be a complete, separated, right linear, topologically
right coherent topological ring with a countable base of neighborhoods
of zero.
 Then the contraderived category\/ $\sD^\bctr(\R\Contra)$ is compactly
generated.
 The fully faithful triangulated functor\/ $\Xi\:\sD^\bb(\coh\R)^\sop
\rarrow\sD^\bctr(\R\Contra)$ from Section~\ref{construction-of-Xi-secn}
establishes a triangulated anti-equivalence between the bounded derived
category of coherent discrete right\/ $\R$\+modules $\sD^\bb(\coh\R)$
and the full subcategory of compact objects in the triangulated
category\/ $\sD^\bctr(\R\Contra)$.
\end{thm}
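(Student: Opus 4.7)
The plan is to combine the preparations from the preceding two sections. The anti-equivalence between $\sD^\bb(\coh\R)$ and the compact objects of $\sD^\bctr(\R\Contra)$ will follow from Proposition~\ref{four-equivalent-conditions-prop} via the implication (1)$\Rightarrow$(2), once it is proved that the essential image $\sC$ of $\Xi$ is a set of compact generators for $\sD^\bctr(\R\Contra)$. As recalled in Section~\ref{contramodules-preliminaries-secn}, the abelian category $\R\Contra$ is locally $\aleph_1$\+presentable with $\aleph_1$\+presentable projective generator $\R=\R[[\{\ast\}]]$. So Proposition~\ref{well-generated-prop} applies with $\kappa=\aleph_1$, $\sB=\R\Contra$, and $P=\R$: the minimal full triangulated subcategory of $\Hot(\R\Contra_\proj)$ closed under arbitrary coproducts and containing the class $\sU$ of bounded below complexes whose terms are coproducts of at most countably many copies of $\R$ is all of $\Hot(\R\Contra_\proj)$.

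Now $\sU\subset\Hot^+(\R\Contra_\proj^\omega)$, because any coproduct of countably many copies of $\R$ is the free contramodule on a countable set, hence countably generated projective. Denote by $\sT$ the minimal full triangulated subcategory of $\Hot(\R\Contra_\proj)$ containing $\sC$ and closed under coproducts. By Proposition~\ref{cplxes-of-cntbly-generated-projs-are-generated}, every object of $\Hot(\R\Contra_\proj^\omega)$ lies in the minimal full triangulated subcategory of $\Hot(\R\Contra_\proj^\omega)$ containing $\sC$ and closed under countable coproducts; in particular every object of $\sU$ lies in $\sT$. Combined with the conclusion of the preceding paragraph, this gives $\sT=\Hot(\R\Contra_\proj)$.

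Transferring along the triangulated equivalence $\Hot(\R\Contra_\proj)\simeq\sD^\bctr(\R\Contra)$ of Theorem~\ref{becker-contraderived-theorem}, the minimal full triangulated subcategory of $\sD^\bctr(\R\Contra)$ containing $\sC$ and closed under coproducts equals $\sD^\bctr(\R\Contra)$. All objects of $\sC$ are compact by Corollary~\ref{image-of-Xi-consists-of-compacts}, and $\sC$ is essentially small, being the essential image of the essentially small category $\sD^\bb(\coh\R)$. Hence the characterization of compact generation recalled before Corollary~\ref{image-of-Xi-consists-of-compacts} identifies $\sC$ as a set of compact generators for $\sD^\bctr(\R\Contra)$. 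This is condition~(1) of Proposition~\ref{four-equivalent-conditions-prop}, whose implication (1)$\Rightarrow$(2) yields the desired anti-equivalence. Since the bulk of the work was done in the preparatory sections, no serious obstacle remains; the single subtle point is the matching of the countable coproducts produced by Proposition~\ref{cplxes-of-cntbly-generated-projs-are-generated} with the arbitrary coproducts required in $\sT$, which is harmless because $\sT$ is closed under all coproducts by definition.
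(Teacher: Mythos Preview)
Your proof is correct and follows essentially the same route as the paper: both apply Proposition~\ref{well-generated-prop} with $\kappa=\aleph_1$ to reduce to $\sU\subset\Hot^+(\R\Contra_\proj^\omega)$, then invoke Proposition~\ref{cplxes-of-cntbly-generated-projs-are-generated} to get $\sU\subset\sT$, and finally use Proposition~\ref{four-equivalent-conditions-prop}\,(1)\,$\Rightarrow$\,(2). Your explicit remark that the countable coproducts in $\Hot(\R\Contra_\proj^\omega)$ agree with those in $\Hot(\R\Contra_\proj)$ (so that the generation statement transfers to $\sT$) and that $\sC$ is essentially small are useful clarifications the paper leaves implicit.
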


\begin{proof}
 It is convenient to identify $\sD^\bctr(\R\Contra)$ with
$\Hot(\R\Contra_\proj)$, as per
Theorem~\ref{becker-contraderived-theorem}.
 The contravariant triangulated functor $\Xi$ is fully faithful by
construction (see Section~\ref{construction-of-Xi-secn}), and
the objects in its image are compact in $\sD^\bctr(\R\Contra)$ by
Corollary~\ref{image-of-Xi-consists-of-compacts}.
 Let $\sC$ be the essential image of $\Xi$, as in
Section~\ref{countably-generated-projectives-secn}.
 According to
Proposition~\ref{cplxes-of-cntbly-generated-projs-are-generated},
all complexes of countably generated projective objects in $\R\Contra$
can be obtained from the objects of $\sC$ using cones and countable
coproducts, up to the homotopy equivalence.

 Put $\sB=\R\Contra_\proj$ and $P=\R[[\{{*}\}]]=\R\in\R\Contra$.
 Then $P$ is an $\aleph_1$\+presentable projective generator of
the locally $\aleph_1$\+presentable abelian category~$\sB$.
 So Proposition~\ref{well-generated-prop} is applicable for
$\kappa=\aleph_1$, and it tells us that, up to the homotopy
equivalence, all complexes of projective objects in $\R\Contra$ can be
obtained from bounded below complexes of free $\R$\+contramodules with
countable sets of generators using cones and coproducts.
 Thus the minimal full triangulated subcategory of
$\Hot(\R\Contra_\proj)$ containing $\sC$ and closed under coproducts
coincides with the whole homotopy category $\Hot(\R\Contra_\proj)$.

 We have shown that the image of $\Xi$ forms a set of compact
generators in $\sD^\bctr(\R\Contra)$.
 So condition~(1) of
Proposition~\ref{four-equivalent-conditions-prop} is satisfied.
 Hence the equivalent condition~(2) holds as well, proving
the desired description of the full subcategory of compact objects
in $\sD^\bctr(\R\Contra)$.
\end{proof}

 Finally, we can apply Proposition~\ref{four-equivalent-conditions-prop}
in order to deduce our contramodule version of the Benson--Goodearl
flat/projective periodicity theorem~\cite[Theorem~2.5]{BG} and its
extension due to
Neeman~\cite[Theorem~8.6\,(i)\,$\Leftrightarrow$\,(iii)
and Remark~2.15]{Neem}.

\begin{thm} \label{extended-flat-projective-periodicity}
 Let\/ $\R$ be a complete, separated, right linear, topologically
right coherent topological ring with a countable base of neighborhoods
of zero.
 In this context: \par
\textup{(a)} Let\/ $\fP^\bu$ be an acyclic complex of projective left\/
$\R$\+contramodules with flat\/ $\R$\+contramodules of cocycles.
 Then the\/ $\R$\+contramodules of cocycles are actually projective
(so the complex\/ $\fP^\bu$ is contractible). \par
\textup{(b)} Let\/ $\fP^\bu$ be a complex of projective left\/
$\R$\+contramodules and\/ $\fF^\bu$ be an acyclic complex of flat
left\/ $\R$\+contramodules with flat\/ $\R$\+contramodules of cocycles.
 Then any morphism of complexes\/ $\fP^\bu\rarrow\fF^\bu$ is homotopic
to zero.
 In other words, any acyclic complex of flat left\/ $\R$\+contramodules
with flat\/ $\R$\+contramodules of cocycles is contraacyclic. \par
\textup{(c)} Conversely, let\/ $\fF^\bu$ be a complex of flat left\/
$\R$\+contramodules such that, for any complex of projective left\/
$\R$\+contramodules\/ $\fP^\bu$, any morphism of complexes\/ $\fP^\bu
\rarrow\fF^\bu$ is homotopic to zero.
 Then the complex\/ $\fF^\bu$ is acyclic with flat\/ $\R$\+contramodules
of cocycles.
\end{thm}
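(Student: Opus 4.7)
The plan is to observe that all three parts are essentially immediate consequences of the results already assembled: Theorem~\ref{compact-generators-of-contraderived-category} establishes the compact generation, which is condition~(1) of Proposition~\ref{four-equivalent-conditions-prop}; the equivalences in that proposition then deliver (a) and (b) at once, while Theorem~\ref{flat-contraacyclic-are-pure-acyclic} directly yields (c). So the proof is really a bookkeeping exercise matching the three assertions to the statements of the prior results.

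For part~(a): by Theorem~\ref{compact-generators-of-contraderived-category} the image of $\Xi$ generates $\sD^\bctr(\R\Contra)$, i.e.\ condition~(1) of Proposition~\ref{four-equivalent-conditions-prop} is satisfied. The implication (1)\,$\Rightarrow$\,(3) in that proposition is exactly assertion~(a). For the first sentence of part~(b), the same implication (1)\,$\Rightarrow$\,(4) of Proposition~\ref{four-equivalent-conditions-prop} applies. The ``in other words'' clause of (b) is just unfolding definitions: a complex $\fF^\bu$ in $\R\Contra$ is Becker-contraacyclic precisely when every morphism $\fP^\bu\rarrow\fF^\bu$ from a complex of projective contramodules is null-homotopic (this is the definition recalled in Section~\ref{construction-of-Xi-secn}), so condition~(4) says exactly that acyclic complexes of flat contramodules with flat cocycles are contraacyclic.

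For part~(c): the hypothesis on $\fF^\bu$ is that every morphism from any $\fP^\bu\in\Hot(\R\Contra_\proj)$ to $\fF^\bu$ is null-homotopic, which by the definition of Becker-contraacyclicity means precisely that $\fF^\bu\in\Ac^\bctr(\R\Contra)$. Thus $\fF^\bu$ is a contraacyclic complex of flat left $\R$-contramodules, and Theorem~\ref{flat-contraacyclic-are-pure-acyclic} guarantees that such a complex is acyclic with flat contramodules of cocycles.

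The only potential pitfall is to check that Theorem~\ref{flat-contraacyclic-are-pure-acyclic} and Proposition~\ref{four-equivalent-conditions-prop}, whose proofs were conditional on compact generation, are available at this point in the argument; but this is guaranteed since Theorem~\ref{compact-generators-of-contraderived-category} has already been proved (unconditionally, via Proposition~\ref{well-generated-prop} and Proposition~\ref{cplxes-of-cntbly-generated-projs-are-generated}). In particular, the real work — the construction of $\Xi$, the $\Hom$-computation of Theorem~\ref{Hom-from-Xi-computed}, the well-generatedness input of Proposition~\ref{well-generated-prop}, and the reduction in Proposition~\ref{cplxes-of-cntbly-generated-projs-are-generated} — has already been carried out, so the proof of the theorem itself amounts to a short chain of citations and the trivial observation identifying the hypothesis of~(c) with Becker-contraacyclicity.
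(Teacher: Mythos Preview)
Your proof is correct and follows essentially the same approach as the paper: parts~(a) and~(b) are obtained by combining Theorem~\ref{compact-generators-of-contraderived-category} with the implications (1)\,$\Rightarrow$\,(3) and (1)\,$\Rightarrow$\,(4) of Proposition~\ref{four-equivalent-conditions-prop}, while part~(c) is Theorem~\ref{flat-contraacyclic-are-pure-acyclic}. One small inaccuracy in your commentary: Theorem~\ref{flat-contraacyclic-are-pure-acyclic} was \emph{not} conditional on compact generation (its proof uses only Corollary~\ref{Hom-from-Xi-to-complex-of-flats-vanishing-criterion}\,(1)\,$\Rightarrow$\,(2)), so part~(c) is available independently of the later sections---the paper in fact stresses this point.
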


\begin{proof}
 Part~(c) has been already proved as
Theorem~\ref{flat-contraacyclic-are-pure-acyclic}; its proof does not
use the results of
Sections~\ref{countably-generated-projectives-secn}\+-%
\ref{compact-generation-secn}.
 To prove parts~(a) and~(b), we point out that conditions~(1) and/or~(2)
of Proposition~\ref{four-equivalent-conditions-prop} are satisfied
by Theorem~\ref{compact-generators-of-contraderived-category}.
 Therefore, the equivalent conditions~(3) and~(4) hold as well.
\end{proof}

\begin{cor}
 Let\/ $\R$ be a complete, separated topologically right coherent
topological ring with a countable base of neighborhoods of zero.
 Then there are natural equivalences of triangulated categories
\begin{equation} \label{contraderived-equivalent-constructions}
 \Hot(\R\Contra_\proj)\simeq\sD(\R\Contra_\flat)\simeq
 \sD^\bctr(\R\Contra).
\end{equation}
 Here\/ $\sD(\R\Contra_\flat)$ is the derived category of the exact
category flat left\/ $\R$\+con\-tra\-mod\-ules, with the exact category
structure on\/ $\R\Contra_\flat$ inherited from the abelian exact
structure of the ambient abelian category\/ $\R\Contra$.
\end{cor}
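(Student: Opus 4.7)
The plan is to reduce the corollary to establishing the single triangulated equivalence $\sD(\R\Contra_\flat)\simeq\sD^\bctr(\R\Contra)$; the other equivalence $\Hot(\R\Contra_\proj)\simeq\sD^\bctr(\R\Contra)$ is exactly Theorem~\ref{becker-contraderived-theorem} and needs neither coherence nor countability hypotheses. The exact embedding $\R\Contra_\flat\rarrow\R\Contra$ yields a triangulated functor
$$
\Phi\:\sD(\R\Contra_\flat)\lrarrow\sD^\bctr(\R\Contra),
$$
which is well-defined because every pure-acyclic complex of flat $\R$\+contramodules is contraacyclic by Theorem~\ref{extended-flat-projective-periodicity}(b).

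Essential surjectivity of $\Phi$ is immediate: any object of $\sD^\bctr(\R\Contra)$ is represented by a complex of projective (hence flat) contramodules in view of Theorem~\ref{becker-contraderived-theorem}. Faithfulness reduces to showing that an object $\fF^\bu\in\sD(\R\Contra_\flat)$ with $\Phi(\fF^\bu)=0$ must vanish; but this condition says precisely that $\fF^\bu$ is a contraacyclic complex of flat contramodules, which is pure-acyclic by Theorem~\ref{flat-contraacyclic-are-pure-acyclic}, hence zero in $\sD(\R\Contra_\flat)$.

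The main obstacle is fullness of $\Phi$. The crucial preliminary observation is that for any complex of flats $\fF^\bu$, one may choose a Becker-style projective resolution $\fP^\bu\rarrow\fF^\bu$ with $\fP^\bu\in\Hot(\R\Contra_\proj)$ and contraacyclic cone (such a resolution exists via the cotorsion pair underlying the proof of Theorem~\ref{becker-contraderived-theorem}). Since both $\fP^\bu$ and $\fF^\bu$ consist of flat contramodules, so does the mapping cone, whose terms are $\fP^{n+1}\oplus\fF^n$. Thus the cone is a contraacyclic complex of flats, which Theorem~\ref{flat-contraacyclic-are-pure-acyclic} forces to be pure-acyclic, so $\fP^\bu\rarrow\fF^\bu$ is in fact an isomorphism in $\sD(\R\Contra_\flat)$. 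Given a morphism $\fF^\bu\rarrow\fG^\bu$ in $\sD^\bctr(\R\Contra)$ between complexes of flats, applying this observation to source and target and invoking Theorem~\ref{becker-contraderived-theorem} lets us represent the morphism by an actual map $\fP_F^\bu\rarrow\fP_G^\bu$ in $\Hot(\R\Contra_\proj)$; viewing this map in $\sD(\R\Contra_\flat)$ and conjugating with the resolution isomorphisms there produces the desired lift. This establishes fullness of $\Phi$, and combining the three parts completes the proof. The whole difficulty is concentrated in recognizing that the mapping cone of a Becker projective resolution of a flat complex is again a flat complex, which is exactly what makes Theorem~\ref{flat-contraacyclic-are-pure-acyclic} applicable.
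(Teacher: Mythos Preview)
Your argument is correct and follows essentially the same route as the paper: both use Theorem~\ref{becker-contraderived-theorem} for the equivalence $\Hot(\R\Contra_\proj)\simeq\sD^\bctr(\R\Contra)$, then appeal to the existence of projective resolutions with contraacyclic cone together with Theorem~\ref{extended-flat-projective-periodicity}(b\textendash c) (equivalently Theorem~\ref{flat-contraacyclic-are-pure-acyclic}) to identify $\sD(\R\Contra_\flat)$ with $\sD^\bctr(\R\Contra)$. The paper compresses this into a single sentence, while you unpack it as essential surjectivity, fullness, and faithfulness of~$\Phi$; the key observation that the cone of the projective resolution of a flat complex is again a complex of flats is exactly what the paper is tacitly using.

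One small logical point: the reduction ``faithfulness of a triangulated functor amounts to reflecting zero objects'' is not valid on its own---it requires fullness (full plus conservative implies faithful). Since you do prove fullness, your argument is complete, but the paragraphs should be reordered: establish fullness first, then invoke zero-reflection to deduce faithfulness.
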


\begin{proof}
 First of all, let us point out that the full subcategory
$\R\Contra_\flat$ is closed under extensions in $\R\Contra$ by
Lemma~\ref{flat-contramodules-closure-properties}, so it inherits
an exact category structure.
 The triangulated equivalence $\Hot(\R\Contra_\proj)\rarrow
\sD^\bctr(\R\Contra)$ is provided by
Theorem~\ref{becker-contraderived-theorem}.
 The nontrivial aspect of the latter theorem is the assertion
that, for any complex of left $\R$\+contramodules $\fM^\bu$, there
exists a complex of projective left $\R$\+contramodules $\fP^\bu$
together with a morphism of complexes $\fP^\bu\rarrow\fM^\bu$ with
a contraacyclic cone.
 Notice that any projective left $\R$\+contramodule is flat.
 In view of these observations, the assertion that the inclusions of
additive/exact/abelian categories $\R\Contra_\proj\rarrow
\R\Contra_\flat\rarrow\R\Contra$ induce the desired triangulated
equivalences~\eqref{contraderived-equivalent-constructions} follows
immediately from
Theorem~\ref{extended-flat-projective-periodicity}(b\+-c).
\end{proof}

\begin{rem}
 Let us explain how our
Theorem~\ref{extended-flat-projective-periodicity} compares to
the preceding results of~\cite[Proposition~12.1]{Pflcc}
and~\cite[Theorems~5.1 and~6.1]{Pbc}.
 Theorem~\ref{extended-flat-projective-periodicity}(a) is a version
of~\cite[Proposition~12.1]{Pflcc},
Theorem~\ref{extended-flat-projective-periodicity}(b) is a version
of~\cite[Theorem~5.1]{Pflcc}, and
Theorem~\ref{extended-flat-projective-periodicity}(c) is a version
of~\cite[Theorem~6.1]{Pflcc}.

 We would like to emphasize that the assertions of
Theorem~\ref{extended-flat-projective-periodicity} are \emph{both
more and less general} than the respective assertions
from~\cite{Pflcc,Pbc}.
 Theorem~\ref{extended-flat-projective-periodicity} is more general
in the important aspect that it only presumes a topology base of
open \emph{right} ideals in the ring $\R$, while the results
of~\cite{Pflcc,Pbc} assume a topology base of open
\emph{two-sided} ideals.
 However, Theorem~\ref{extended-flat-projective-periodicity} is
also less general in that it is based on the topological right
\emph{coherence} assumption, which was not made in~\cite{Pflcc,Pbc}.
\end{rem}

\bigskip

\end{document}